\newcommand{\cpt}{\operatorname{cpt}}
\newcommand{\ii}{\mathrm{i}}
\newcommand{\jj}{\mathrm{j}}
\newcommand{\kk}{\mathrm{k}}
\newcommand{\HW}{\mathrm{HW}}
\newcommand{\pure}[1]{{#1}^{(0)}}
\newcommand{\sicover}[1]{{#1}^{+}}
\newcommand{\Gk}[1]{\G_{(#1)}}
\newcommand\widesim[1]{
	\mathrel{\ThisStyle{%
  	\setbox0=\hbox{$\SavedStyle\mkern3mu_{#1}\mkern3mu$}%
  \mkern1mu\stackengine{1\LMpt}{%
    \stretchto{\scaleto{\SavedStyle\mkern-1mu\sim\mkern-1mu}{.54\wd0}}{1\ht0}%
  }{$\SavedStyle_{#1}$}{O}{c}{F}{T}{S}\mkern1mu%
}}}
\begin{document}

\title{Linnik's problems and maximal entropy methods}

\author{Andreas Wieser}
\address{Departement Mathematik, ETH Z{\"u}rich, R{\"a}mistrasse 101, 8092 Z{\"u}rich, Switzerland}
\email{andreas.wieser@math.ethz.ch}
\thanks{The author was supported by the SNF (grants 152819 and 178958).}


\subjclass[2000]{Primary 37A99; Secondary 11E29}

\date{\today}


\keywords{Homogeneous dynamics, equidistribution, quadratic forms.}

\begin{abstract}
We use maximal entropy methods to examine the distribution properties of primitive integer points on spheres and of CM points on the modular surface. The proofs we give are a modern and dynamical interpretation of Linnik's original ideas and follow techniques presented by Einsiedler, Lindenstrauss, Michel and Venkatesh in 2011.
\end{abstract}

\maketitle

\section{Introduction}

\subsection{Integer points on spheres}
Consider the set of primitive integral solutions to the equation
\begin{align*}
x^2 + y^2 + z^2 = d
\end{align*}
for some positive integer $d$. The question whether or not such a solution exists was raised by Legendre (amongst others), who claimed that the equation $x^2+ y^2 + z^2 = d$ has a solution if and only if $d$ is not of the form $d= 4^a (8b+7)$ for non-negative integers $a,b$. A full proof of Legendre's so-called three-squares theorem was given by Gauss \cite{gauss}. In fact, a primitive integral solution to $x^2+ y^2 + z^2 = d$ exists if and only if $d$ satisfies Legendre's condition
\begin{align*}
d \in \mathbb{D} := \setc{d\in \mathbb{N}}{d \not\congruent 0,4,7  \mod 8}.
\end{align*}
A further question treated by Gauss concerns a refinement of the above: As $d$ tends to infinity with $d$ satisfying Legendre's condition, how many primitive integral solutions to $x^2+ y^2 + z^2 = d$ are there? The number of such solutions turns out to be closely related to the class number of the quadratic number field $\Q(\sqrt{-d})$ (see for instance \cite{Linnikthmexpander}) for which an asymptotic as $d \to \infty$ is well-known thanks to Dirichlet's class number formula and Siegel's lower bound. 
These results imply that the number of primitive integral solutions to $x^2+ y^2 + z^2 = d$ is $d^{\frac{1}{2}+o(1)}$ for $d \to \infty, d \in \mathbb{D}$. In this paper, we will be interested in the distribution properties of these solutions: Let
\begin{align*}
\mathcal{I}_d := \tfrac{1}{\sqrt{d}}\setc{(x,y,z) \in \Z^3}{\gcd(x,y,z) =1,\ x^2 + y^2 + z^2 = d}
\end{align*}
be the set of primitive integral solutions to $x^2+ y^2 + z^2 = d$ projected onto the unit sphere $\mathbb{S}^2$. Using ergodic-theoretic methods, we will give a proof of the following non-effective result due to Linnik \cite{linnik}:

\begin{namedtheorem}{Linnik's}{A}[Equidistribution of primitive integer points on the sphere]
Let $p$ be an odd prime and let $\mathbb{D}(p) = \setc{d \in \mathbb{D}}{-d \mod p \in (\Fp^\times)^2}$. As $d$ tends to infinity with $d \in \mathbb{D}(p)$, the normalized sums of Dirac measures $\frac{1}{|\mathcal{I}_d|} \sum_{x \in \mathcal{I}_d} \delta_x$ equidistribute to the uniform probability measure on the unit sphere $\mathbb{S}^2$.
\end{namedtheorem}

Here and in what follows, $(\Fp^\times)^2$ denotes the set of squares in $\Fp^\times$.

Since the choice of the prime $p$ is arbitrary, the splitting\footnote{In fact, note that  $-d \in (\Fp^\times)^2$ if and only if $p$ is split in the field $\Q(\sqrt{-d})$.} condition $-d \in (\Fp^\times)^2$, known as \emph{Linnik's condition} at the prime $p$, seems to be superfluous. Indeed, Linnik was able to eliminate it assuming GRH.
In 1988, Duke \cite{duke88} succeeded in proving Linnik's theorem unconditionally using entirely different methods building on work of Iwaniec \cite{iwaniecforduke}.

A modern exposition of Linnik's theorem using expander graphs is given by Ellenberg, Michel and Venkatesh in \cite{Linnikthmexpander}. The present article aims to give an ergodic theoretic proof of Linnik's theorem using maximal entropy methods and following Einsiedler, Lindenstrauss, Michel and  Venkatesh \cite{duke}. The motivation for such a proof originates from a refinement of Linnik's theorem by Aka, Einsiedler and Shapira in \cite{AES15}.

In \cite{duke}, the authors prove a theorem due to Duke \cite{duke88} concerning equidistribution of collections of closed geodesics (associated to positive discriminants) on the complex modular curve $Y_0(1) = \lquot{\SL_2(\Z)}{\mathbb{H}}$. 
In analogy to \cite{duke}, we will study certain packets of orbits in the adelic extension $X_\adele$ of $\lquot{\SO_3(\Z)}{\SO_3(\R)}$ which arise through the stabilizer subgroup in $\SO_3$ of a primitive integer point of length $\sqrt{d}$. 
We note that one dynamical reason for working in an extension instead of the real quotient $\lquot{\SO_3(\Z)}{\SO_3(\R)}$ is that the acting subgroup $\SO_3(\Qp)$ is non-compact for all odd primes $p$.
 
Furthermore, if $v \in \Z^3$ is a primitive integer point then the stabilizer subgroup $\mathbb{H}_v = \setc{g \in \SO_3}{gv = v}$ is the orthogonal group of the quadratic form $x^2+y^2+z^2$ restricted to the plane $v^{\perp}$. Thus, $\mathbb{H}_v(\Q_p)$ is split if and only if $v^{\perp}$ contains an isotropic vector. 
One can show by elementary means that the latter is equivalent to Linnik's condition for $d = v_1^2+v_2^2+v_3^2 =: Q(v)$ (see also Lemma \ref{lemma:Linnik's condition}). 
Therefore, Linnik's condition is an artefact of our dynamical proof (it is comparable to the positivity assumption on discriminants in \cite{duke}). 
Using the vector $v$ the packet $\mathcal{P}(v)$ is defined to be the orbit of the identity coset in $X_\adele$ under $\BH_v(\adele)$.

We will see that the \wstar-limits of the uniform measures on the packets $\mathcal{P}(v)$ when $Q(v) \in \mathbb{D}(p)$ goes to infinity have a large invariance subgroup.
This is the content of our main result -- Theorem \ref{thm:main} -- that is phrased in a more general setting.
From this, Linnik's Theorem A is readily obtained by projecting onto the real quotient. 
The main step in the proof of Theorem \ref{thm:main} for $\SO_3$ is to show that any \wstar -limit of the measures has maximal entropy with respect to the action of a fixed diagonalizable element in $\SO_3(\Q_p)$ (cf.\ Theorem \ref{thm: Maximal entropy}). 
This maximal entropy statement can then be used to deduce the theorem by means of a uniqueness result on measures of maximal entropy see e.g. \cite[Theorem~7.9]{pisa} or \cite{MT94} (cf.\ Section~\ref{subsection: From maximal entropy to additional invariance}). To show maximal entropy one verifies, roughly speaking, the following two claims which in combination yield the desired ``chaotic behaviour''.
\begin{enumerate}[(1)]
\item The total volume of $\mathcal{P}(v)$ grows quickly enough. In fact, the total volume is $Q(v)^{\frac{1}{2}+o(1)}$ as one can attach to any $\Hv(\R \times \widehat{\Z})$-orbit in $\mathcal{P}(v)$ a uniquely determined integer point of length $\sqrt{d}$ (cf. Section \ref{section:generating intpts}) and then apply the result mentioned earlier.
\item There are not too many pairs of orbits in $\mathcal{P}(v)$ that lie close together. This is the content of Linnik's basic lemma (Proposition \ref{prop: Linnik's basic lemma}), for which we shall use a bound on the number of representations of a binary quadratic form by $x^2+y^2 + z^2$ (cf. Theorem \ref{thm: representations qf}).
\end{enumerate}

\subsection{CM points} 

A number $d \in \Z$ is a \emph{discriminant} if it is of the form $d = b^2-4ac$ for integers $a,b,c\in \Z$ or equivalently if it is the discriminant of a binary form $ax^2+bxy+cy^2$.
In this case, we will call $(a,b,c) \in \Z^3$ a representation of $d$.
Notice that an integer $d \in \Z$ is a discriminant if and only if $d \equiv 0,1\mod 4$.

There are $h_d = |d|^{\frac{1}{2}+o(1)}$ ``inequivalent'' primitive representations where $h_d$ is the cardinality of the Picard group of the order $R_d = \Z[\frac{d+\sqrt{d}}{2}]$.
In fact, as shown for instance in \cite[Section 2]{duke} there is a correspondence between
\begin{enumerate}[(i)]
\item $\GL_2(\Z)$-equivalence classes of primitive integral quadratic forms with discriminant $d$ and
\item $K^\times$-homothety classes of proper $R_d$-ideals in $K = \Q(\sqrt{d})$.
\end{enumerate}
In order to phrase the next equidistribution result, we change the viewpoint slightly. Fixing a negative discriminant $d$, a \emph{CM point} of discriminant $d$ is a point of the form
\begin{align*}
x_{a,b,c} = \frac{-b + \sqrt{|d|}\, \mathrm{i}}{2a}\in \mathbb{H}.
\end{align*}
where $(a,b,c)$ is a primitive representation of $d$.
One readily verifies that if two quadratic forms are $\SL_2(\Z)$-equivalent, then their associated CM points also lie on the same $\GL_2(\Z)$-orbit where $\SL_2(\Z)$ acts on $\mathbb{H}$ by M\"obius transformations. By the correspondence above, there are finitely many CM-points of discriminant $d$ up to the $\SL_2(\Z)$-action. The following result is also due to Linnik \cite{linnik}:

\begin{namedtheorem}{Linnik's}{B}[Equidistribution of CM points]
Let $p$ be an odd prime. Let $H_d$ be the set of CM points associated to a discriminant $d<0$.~Then
\begin{align*}
\frac{1}{|\SL_2(\Z).H_d|}\sum_{z \in \SL_2(\Z).H_d}\delta_z \to m_{\lquot{\SL_2(\Z)}{\mathbb{H}}} 
\end{align*}
as $d \to -\infty$ amongst the discriminants that satisfy $d \mod p \in (\Fp^\times)^2$ (Linnik's condition). 
\end{namedtheorem}

Note that there is an analogue of this theorem for positive discriminants where a CM point is replaced by the geodesic connecting the two points on the real axis (the real roots of $ax^2+bx+c$) -- see Theorem~1.3 in \cite{duke}. 
For a reformulation of Linnik's Theorem B in a fashion similar to Linnik's Theorem A see Theorem 1.1 in \cite{duke}.

In Section \ref{section:Equidistribution of CM points} of this paper, we prove Linnik's Theorem B using maximal entropy methods again. Up to some complications due to non-compactness of the homogeneous space $\lquot{\PGL_2(\Zinvp)}{\PGL_2(\R\times \Qp)}$ we consider, the proof is largely analogous to the proof of Linnik's Theorem~A. We note that it also studies collections of orbits under certain tori (given by stabilizer subgroups constructed in Section \ref{section:CM-algebraictori}).

\subsection{Some notation and facts}\label{section:intro-notation}

For a set $S\subset \mathcal{V}^\Q$ of places we let $\Q_S$ be the restricted product of the completions $\Q_\sigma$ for $\sigma\in S$.
We also write $\adele$ for the ring of adeles of $\Q$ ($S= \mathcal{V}^\Q$), $\mathbb{A}_f$ for the ring of finite adeles of $\Q$ ($S= \mathcal{V}^\Q\setminus\set{\infty}$) and $\hat{\Z} = \prod_p \Zp$.
Througout this article we will identify $\Z^S = \Z[\frac{1}{p}:p \in S \text{ finite}]$ with its image under the diagonal embedding into $\Q_S$.

For an algebraic group $\G<\SL_d$ defined over $\Q$ we set $\G(\Z) = \SL_d(\Z) \cap \G(\Q)$ as well as $\G(\Zp) = \SL_d(\Zp)\cap \G(\Qp)$, $\G(\Z^S) = \SL_d(\Z^S) \cap \G(\Q_S)$ and so forth.
We will implicitly identify $\G(\Q_S)$ with the restricted product of the groups $\G(\Q_\sigma)$ over all $\sigma\in S$.

If $\G$ is connected and semisimple and $S$ contains the archimedean place, $\G(\Z^S)$ is a lattice in $\G(\Q_S)$ by a theorem of Borel and Harish-Chandra \cite[Thm.~5.5, 5.7]{platonov}. 
Furthermore, the \emph{$S$-arithmetic extension} of the real quotient $X_\infty = \lquot{\G(\Z)}{\G(\R)}$
\begin{align*}
X_S = \lquot{\G(\Z^S)}{\G(\Q_S)}
\end{align*}
is compact if and only if $\G$ is anisotropic over $\Q$.
The group $\G(\Q_S)$ acts on $X_S$ via $g.x = xg^{-1}$ for $x \in X_S$ and $g \in \G(\Q_S)$.

We say that $\G$ has \emph{class number one} if
$\G(\adele) = \G(\Q) \G(\R \times \widehat{\Z})$.
In this case, there are well-defined projections $X_S \to X_{S'}$ whenever $S'\subset S$ which are obtained by taking the quotient with $\G(\prod_{p\in S\setminus S'}\Zp)$ on the right.

For any dimension $d$ we equip $\Qp^d$ with the norm $\norm{a}_p = \max(|a_1|_p,\ldots ,|a_d|_p)$. 
For any $A \in \Mat_d(\Qp)$ and $x \in \Qp^d$ we have that $\norm{Ax}_p \leq \norm{A}_p \norm{x}_p$ and that $ A \in \GL_d(\Zp) $ if and only if $\norm{A}_p = 1$. The groups $\G(\Q_S)$ will always be equipped with a left-invariant metric $d$, which for $\G(\R)$ may be obtain from a left-invariant Riemannian metric and for $\G(\Qp)$ from the norm $\norm{\cdot}_p$ on $\Mat_d(\Zp)$ (see \cite{ruhr}).
The left-invariant metric on $\G(\Q_S)$ induces a left-invariant metric on $X_S$ which we will also denote by $d$.

\stoptocwriting
\subsection*{Acknowledgements} 
This project started with my master thesis. I would like to thank Manfred Einsiedler for suggesting the topic and for many enthusiastic discussions as well as Menny Akka and Manuel L\"uthi for commenting on preliminary versions of this paper.
I am also very grateful towards the anonymous referee for suggesting a clean proof of Proposition \ref{prop:CM-number of orbits} and a much improved and generalized exposition of the article.
\resumetocwriting

\tableofcontents
\section{The dynamical theorem}\label{section:formulation theorem}

In the following we fix a quaternion algebra $\quat$ over $\Q$.
The reader interested mainly in the theorems from the introduction should keep in mind the following cases:
\begin{itemize}
\item The case $\quat = \quat_{\infty,2}$ where $\quat_{\infty,2}$ denotes the quaternion algebra ramified at $\infty$ and $2$.
More explicitly, $\quat_{\infty,2}(\Q)$ is the $\Q$-algebra $\Q[\ii,\jj,\kk]$ of Hamiltonian quaternions where $\ii^2=\jj^2=\kk^2 = -1,\ \ii\jj=-\jj\ii=\kk$.
\item The totally split case $\quat = \Mat_2$.
\end{itemize}
We denote by $\Nr$ the (reduced) norm on $\quat$ (given by $\Nr(x) = x \overline{x}$ for $x \in \quat$) and by $\Tr$ the (reduced) trace on $\quat$ (given by $\Tr(x) = x+\overline{x}$ for $x \in \quat$).
Let $\pure{\quat} \subset \quat$ be the variety of traceless (pure) quaternions. 

Furthermore, we will fix throughout the whole discussion a maximal order $\mathcal{O}$ of $\quat(\Q)$ and let $\pure{\mathcal{O}}$ be the subset of pure elements in $\mathcal{O}$.
This maximal order defines an integral structure on $\quat$ (and on other groups we are yet to define).
Note that there are interesting cases where no canonical choice of a maximal order exists (cf.~Section~\ref{section:maximal orders}).
However, in the case $\quat = \quat_{\infty,2}$ the ring of Hurwitz quaternions
\begin{align*}
\mathcal{O}_{\HW} = \Z\left[\mathrm{i},\mathrm{j},\mathrm{k},\frac{1+\mathrm{i}+\mathrm{j}+\mathrm{k}}{2}\right] \subset \quat_{\infty,2}(\Q)
\end{align*}
is up to $\quat^\times(\Q)$-conjugacy the unique maximal order and the same is true for the maximal order $\Mat_2(\Z) \subset \Mat_2(\Q)$.

\begin{remark}[A general equidistribution problem]
In this setup, both problems from the introduction relate to analysing the distribution of the sets
\begin{align}\label{eq:general equi problem}
\tfrac{1}{\sqrt{|D|}}\setc{x \in \pure{\mathcal{O}}}{x \text{ primitive and } \Nr(x) = D}
\end{align}
inside the $\R$-points of one of the varieties 
\begin{align*}
\mathbf{V}^{\pm} = \setc{x \in \pure{\quat}}{\Nr(x) = \pm1}
\end{align*}
(depending on the sign of $D$) as $|D|$ goes to infinity (see \cite[Sec.~1.1]{duke} for an explicit case).
For instance, if $\quat= \quat_{\infty,2}$ and $\mathcal{O} = \mathcal{O}_{\HW}$ we may identify $\mathbf{V}^{+}(\R)$ with the sphere $\mathbb{S}^2$ and the sets in \eqref{eq:general equi problem} for $D > 0$ with the sets $\mathcal{I}_D$ defined in the introduction. 
This case will be treated in Section \ref{section:intpts on spheres}. 
For simplicity of the exposition we will however not discuss the general case in this paper.
\end{remark}

\subsection{Acting groups}\label{section:acting groups}

We denote by $\BG = \mathbf{PB}^\times$ the projective group of invertible quaternions in $\quat$ and by $\BG^{(1)}$ the $\Q$-group of norm one quaternions in $\BG$.
Note at this point that $\BG$ (resp. $\G^{(1)}$) is a $\Q$-form of $\PGL_2$ (resp.~$\SL_2$) and that any $\Q$-form of $\PGL_2$ (resp.~$\SL_2$) arises in this fashion (see for instance \cite[Chp.~III, Sec.~1.4]{serregaloiscohom}).

\subsubsection{Projective units and orthogonal groups}\label{section:def isogeny}

Notice that the group $\BG$ acts on $\pure{\quat}$ (or $\quat$) via $g.x = gxg^{-1}$ for $g \in \G$ and $x \in \pure\quat$ (or $x \in \quat$) and that no element of $\G$ acts trivially.

We represent $\Nr|_{\pure\quat}$ in a basis of $\pure{\mathcal{O}}$ to obtain a ternary form $Q$. 
The action of $\G$ on $\pure\quat$ yields an isogeny
\begin{align}\label{eq:isogeny to orthogonal}
\G \to \SO_{Q}<\SL_3
\end{align}
which is in fact an isomorphism of $\Q$-groups (cf.~\cite[Ch.~1, Thm.~3.3]{vigneras}).

\subsubsection{Integral structures}\label{section:integralstruc on G}

The integral structure on $\G$ is immediately defined by pulling back the integral structure on $\SO_Q$ under the isomorphism in \eqref{eq:isogeny to orthogonal} where the latter was introduced in Section \ref{section:intro-notation}.
So for instance, $\G(\Z)$ is the set of elements of $\G(\Q)$ which preserve $\mathcal{O}$ under conjugation and $\G(\Zp)\subset \G(\Qp)$ is the set of elements which preserve $\mathcal{O}\otimes \Zp$.
Furthermore, for any subgroup $\BH < \BG$ defined over $\Q$ we set $\BH(\Z) = \BG(\Z)\cap \BH(\Q)$, $\BH(\Zp) = \BG(\Zp) \cap \BH(\Qp)$ and so on.

Notice that in general $\G(\Z)$ is not necessarily equal to the image of the units $\mathcal{O}^\times$ under the projection to $\G(\Q)$.
For instance, if $\quat = \quat_{\infty,2}$ and $\mathcal{O} = \mathcal{O}_\HW$ the image of $\mathcal{O}^\times$ in $\G(\Z)$ has index $2$ and does for example not contain $(1 + \ii) \in \G(\Z)$.

%
%
%

\subsubsection{Acting tori}\label{section:acting tori - general}
For a vector $v \in \pure{\mathcal{O}}$ of non-zero norm we define the algebraic $\Q$-torus
\begin{align*}
\torus_v = \setc{g \in \BG}{g.v = v}.
\end{align*}
Under the isogeny in \eqref{eq:isogeny to orthogonal} the torus $\torus_v$ has a corresponding $\Q$-torus $\BH_w<\SO_{Q}$, which is the stabilizer of the vector $w \in \Z^3$ obtained from $v$ by the above choice of basis.
Naturally, one can identify $\BH_w$ with the special orthogonal group of the restriction of $Q$ to the orthogonal complement $w^\perp$ of $w$ (with respect to $Q$).

\begin{lemma}[On Linnik's condition]\label{lemma:Linnik's condition}
Let $K$ be a field of characteristic zero and let $v\in \pure{\mathcal{O}}$ be a vector of non-zero norm.
Then the torus $\torus_v$ is isotropic over $K$ if and only if the norm satisfies $\Nr(v) \in -(K^\times)^2$.
\end{lemma}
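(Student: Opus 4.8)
The plan is to identify the torus $\torus_v$ explicitly as the one-dimensional norm torus of the quadratic subalgebra of $\quat$ generated by $v$, and then to read off isotropy over $K$ from how such a torus behaves under base change.

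First I would use that $v$ is a pure quaternion of nonzero norm: then $\overline{v}=-v$, so $v^2=-v\overline{v}=-\Nr(v)\cdot 1$, and since $v$ is not a scalar the $\Q$-subalgebra $L:=\Q[v]\subset\quat(\Q)$ is an \'etale quadratic $\Q$-algebra, namely $L\cong\Q[T]/(T^2+\Nr(v))$. Next, since $\quat$ is a central simple $\Q$-algebra of dimension $[L:\Q]^2=4$, the double centralizer theorem (or a short computation with the $\pm1$-eigenspaces of conjugation by $v$, which is valid functorially in $\Q$-algebras as $2$ is invertible) identifies the centralizer of $v$ in $\quat$ with $L$; hence the centralizer of $v$ in $\quat^\times$ is $\mathrm{Res}_{L/\Q}(\mathbb{G}_m)$. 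As the centre $\mathbb{G}_m$ of $\quat^\times$ lies in this centralizer, passing to $\BG=\mathbf{PB}^\times$ gives a canonical isomorphism of $\Q$-tori
\[
\torus_v \;\cong\; \mathrm{Res}_{L/\Q}(\mathbb{G}_m)/\mathbb{G}_m .
\]

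Everything after this is formal. Since $\mathrm{char}(K)=0$ we have $\Q\subset K$, and base change gives $\torus_v\times_\Q K\cong\mathrm{Res}_{(L\otimes_\Q K)/K}(\mathbb{G}_m)/\mathbb{G}_m$, a one-dimensional $K$-torus; for such a torus ``isotropic over $K$'' is the same as ``split over $K$''. If $L\otimes_\Q K$ is a quadratic field extension $M$ of $K$, then $x\mapsto x/\overline{x}$ identifies this torus with the norm-one torus $\mathrm{Res}^1_{M/K}(\mathbb{G}_m)$, which is anisotropic; if $L\otimes_\Q K\cong K\times K$, the torus is visibly $\cong\mathbb{G}_m$, hence split. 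So $\torus_v$ is isotropic over $K$ if and only if $L\otimes_\Q K\cong K[T]/(T^2+\Nr(v))$ fails to be a field, i.e.\ iff $T^2+\Nr(v)$ has a root in $K$, i.e.\ (using $\Nr(v)\neq0$) iff $-\Nr(v)\in(K^\times)^2$, which is the assertion. I expect the only genuinely substantive point to be the identification of $\torus_v$ with $\mathrm{Res}_{L/\Q}(\mathbb{G}_m)/\mathbb{G}_m$ as group schemes (not merely on $\Q$-points) together with the standard split-versus-anisotropic dichotomy for one-dimensional tori; the remaining steps are bookkeeping. As a sanity check and alternative route one may instead invoke the isomorphism $\torus_v\cong\BH_w$ together with the classical fact that the special orthogonal group of a nondegenerate binary form over $K$ is split exactly when the form is isotropic over $K$, after a short discriminant computation showing that $Q|_{w^\perp}$ has Gram determinant $\Nr(v)$ up to squares (as $Q=\Nr|_{\pure\quat}$ has trivial Gram determinant up to squares).
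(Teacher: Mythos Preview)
Your proof is correct and takes a genuinely different route from the paper's. The paper works entirely on the orthogonal side: it uses the isomorphism $\torus_v \cong \BH_w < \SO_Q$, identifies $\BH_w$ with the special orthogonal group of the binary form $Q|_{w^\perp}$, diagonalizes the latter over $\Q$ as $\alpha y^2 + \beta z^2$, and observes that this orthogonal group is isotropic over $K$ iff the binary form is isotropic over $K$, i.e.\ iff $-\alpha\beta \in (K^\times)^2$. A one-line discriminant comparison (the discriminant of $Q$ is a square in $\Q^\times$ and $Q$ is equivalent to $\Nr(v)x^2 + \alpha y^2 + \beta z^2$) then gives $\alpha\beta \equiv \Nr(v)$ modulo squares.

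Your argument instead identifies $\torus_v$ with $\mathrm{Res}_{L/\Q}(\mathbb{G}_m)/\mathbb{G}_m$ via the centralizer of $v$ in $\quat$ and reads off splitting from the structure of $L \otimes_\Q K$. This is more structural and arguably more conceptual; the identification you establish is in fact what the paper later uses implicitly (in the $\Mat_2$ case) when it realizes $\torus_\Fa(\Q)$ as the image of $K^\times$ and computes the class number of $\torus_\Fa$ via the Picard group. The paper's approach, by contrast, is shorter and entirely elementary---diagonalize a binary form, compare discriminants---with no appeal to centralizers in central simple algebras or to the split/anisotropic dichotomy for rank-one tori. Your closing ``sanity check'' paragraph essentially sketches the paper's own argument.
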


\begin{proof}
Let $Q$ be defined as in Section \ref{section:def isogeny}.
Represent the restriction of $Q$ to the orthogonal complement of $w$ (defined as above) in an orthogonal basis as $\alpha y^2+\beta z^2$.
Notice that $\torus_v$ is isotropic over $K$ if and only if $\alpha y^2+\beta z^2$ is isotropic. The latter is equivalent to $-\alpha\beta$ being a square in $K$.
Since the discriminant of $Q$ is a square in $\Q^\times$ and $Q$ is equivalent to the form $\Nr(v)x^2+\alpha y^2 +\beta z^2$, $\alpha\beta$ differs from $\Nr(v)$ by a square.
\end{proof}


Lemma \ref{lemma:Linnik's condition} together with Hensel's lemma shows that $\torus_v(\Qp)$ is split if the norm $\Nr(v)$ satisfies Linnik's condition at the prime $p$
\begin{align*}
-\Nr(v)\mod p \in (\Fp^\times)^2.
\end{align*}

\subsection{Toral orbits and packets}

The dynamical aim of this paper is to study limit measures of probability measures on (compact) toral orbits in the adelic (or $S$-arithmetic) extension of the quotient $X_\infty = \lquot{\G(\Z)}{\G(\R)}$.

\subsubsection{Adelic homogeneous space}
By the adelic extension of $\lquot{\G(\Z)}{\G(\R)}$ we mean the finite-volume quotient
\begin{align*}
X_{\adele} = \lquot{\G(\Q)}{\G(\adele)}
\end{align*}
which is compact if and only if $\quat$ is not split over $\Q$ (i.e.~$\quat\simeq \Mat_2$).
As the class number of $\BG$
\begin{align*}
\left| \lrquot{\G(\Q)}{\G(\adele)}{\G(\R \times \widehat{\Z})} \right|
\end{align*}
is possibly different from one (cf.~Section \ref{section:maximal orders}), there might not be a meaningful projection of $X_{\adele}$ onto $X_\infty$. 
Instead, $X_{\adele}$ can be partitioned into finitely many open ``components'' (orbits under $\G(\R \times \widehat{\Z})$) and there is a projection of the identity component (principal genus) onto $X_\infty$.

\subsubsection{Packets}\label{section:mainthm-packets}
Given a vector $v \in \pure{\mathcal{O}}$ with non-zero norm we can consider the orbit $\G(\Q)\torus_v(\adele)$, which we will also call the adelic \emph{packet} for the vector $v$.
If $-\Nr(v) \not\in (\Q^\times)^2$, the torus $\torus_v$ is anisotropic over $\Q$ (see Lemma \ref{lemma:Linnik's condition}) and $\G(\Q)\torus_v(\adele)$ is compact.
To complement the discussion of \cite{duke} we will in fact assume that $\torus_v$ is anisotropic over $\R$. That is, $\Nr(v) >0$ or equivalently $\torus_v(\R)$ is compact.

Let $p$ be an odd prime at which $\quat$ is split.
We will call a sequence of primitive vectors $(v_\ell)_\ell$ in $\pure{\mathcal{O}}$ \emph{admissible} (for $p$) if for every $\ell$ the norm $d_\ell = \Nr(v_\ell)$ satisfies Linnik's condition at $p$ and if $d_\ell \to \infty$ as $\ell \to \infty$.

Given such a sequence $(v_\ell)_\ell$ we may choose for any $\ell$ an element $g_{\ell,\infty} \in \G(\R)$ such that
\begin{align*}
\torus_{v_\ell}(\R) = g_{\ell,\infty} K_\infty g_{\ell,\infty}^{-1}
\end{align*}
where $K_\infty$ is a fixed choice of a proper maximal compact subgroup in $\G(\R)$.
We also let $\mu_{v_\ell}$ be the Haar measure on the shifted compact packet $\G(\Q)\torus_{v_\ell}(\adele)g_{\ell,\infty}$, normalized to be a probability measure.

\subsubsection{Main result}

For any set of places $S$ define $\sicover\BG(\Q_S)$ to be the image of $\BG^{(1)}(\Q_S)$ inside $\BG(\Q_S)$.
Linnik's Theorems as phrased in the introduction will follow from the following version of Duke's theorem (compare to \cite[Thm.~4.6]{Dukeforcubic}).

\begin{theorem}[Toral packets and limit measures]\label{thm:main}
Let $p$ be an odd prime at which $\quat$ is split.
Let $(v_\ell)_\ell$ be an sequence of primitive vectors in $\mathcal{O}^{(0)}$ which is admissible for $p$ (and in particular satisfies Linnik's condition at $p$).
Define $\mu_{v_\ell}$ as above.
Then any \wstar-limit of the measures $\mu_{v_\ell}$ is a probability measure and is invariant under the group $\sicover\G(\adele)$.
\end{theorem}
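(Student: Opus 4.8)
The plan is to follow the two-step strategy outlined in the introduction: first establish that any \wstar-limit $\mu$ of the $\mu_{v_\ell}$ is a probability measure with maximal entropy with respect to the action of a fixed diagonalizable element $a \in \sicover\G(\Q_p)$, and then invoke a uniqueness result for measures of maximal entropy to upgrade this to $\sicover\G(\adele)$-invariance. For the first part, the acting group $\sicover\G(\Q_p) \cong \PGL_2(\Q_p)$ is non-compact (since $\quat$ is split at $p$ and Linnik's condition at $p$ makes $\torus_{v_\ell}(\Q_p)$ split, by Lemma \ref{lemma:Linnik's condition} and Hensel's lemma), so the adelic space $X_\adele$ carries a genuine $\Q_p$-dynamics. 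One first argues that no mass escapes to infinity, so that $\mu$ is a probability measure; for compact $\quat$ this is automatic, and in the split case $\quat = \Mat_2$ one needs a non-escape-of-mass argument controlling the cusp, which is where the bound on representations of binary quadratic forms (Theorem \ref{thm: representations qf}) feeds in.

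Next I would compute the entropy. Fix the diagonalizable $a \in \sicover\G(\Q_p)$ acting on the $p$-adic factor. On each packet $\G(\Q)\torus_{v_\ell}(\adele)g_{\ell,\infty}$ the split torus $\torus_{v_\ell}(\Q_p)$ contains a hyperbolic one-parameter subgroup that, under the identification $\torus_{v_\ell}(\Q_p) \cong \Q_p^\times$, acts by translation; this gives the packet the structure of a union of $A$-orbits along which the dynamics is governed by $a$. The upper bound on the entropy $h_\mu(a) \le h_{\max}(a)$ (the entropy of the Haar measure, equal to $\log p$ for the natural choice of $a$) is general. The matching lower bound is the heart of the argument and proceeds as in \cite{duke}: using (1) that the total volume of the packet is $d_\ell^{1/2+o(1)}$ — which follows from the correspondence with integer points of length $\sqrt{d_\ell}$ plus Siegel's class-number bound — together with (2) Linnik's basic lemma (Proposition \ref{prop: Linnik's basic lemma}), which says that only few pairs of orbit-pieces come $p^{-n}$-close, one shows that the pushforward of $\mu_{v_\ell}$ is ``spread out'' at the scale of the $a$-Bowen balls, so that in the limit the conditional measures along the $A_p$-direction have no atoms and in fact the entropy is maximal. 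Concretely, one partitions the packet into $\torus_{v_\ell}(\R\times\widehat{\Z})$-orbits, counts them via the volume estimate, bounds the number of such orbits lying within a small $p$-adic ball of a given one via the basic lemma, and concludes that the empirical measures charge each level-$n$ Bowen ball with mass $\lesssim p^{-n(1-\epsilon)}$, which passes to the limit and yields $h_\mu(a) \ge (1-\epsilon)\log p$ for every $\epsilon$.

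Finally, with maximal entropy in hand, I would apply the rigidity/uniqueness theorem for measures of maximal entropy on $S$-arithmetic quotients of semisimple groups (e.g.\ \cite[Theorem~7.9]{pisa}, or the classical results of \cite{MT94}): a probability measure on $X_\adele$ that is $\torus_{v_\ell}$-type invariant only through a torus but has maximal entropy under $a$ must be invariant under a larger group, and combining the $a$-maximal-entropy statement with the fact that $\mu$ is already invariant under a large compact group $K$ (inherited from the $g_{\ell,\infty}K_\infty g_{\ell,\infty}^{-1}$ structure and the $\widehat\Z$-factors) one deduces invariance under the full $\sicover\G(\adele)$.

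The main obstacle I anticipate is the entropy \emph{lower} bound — translating the arithmetic input of items (1) and (2) into a quantitative statement about how the empirical measures distribute over $a$-Bowen balls, and in the split-quaternion case simultaneously controlling escape of mass into the cusp so that the limit genuinely has full mass on which the entropy bound applies. The passage from ``maximal entropy'' to ``$\sicover\G(\adele)$-invariance'' is more a matter of correctly invoking and assembling the measure-classification inputs, but care is needed because the torus varies with $\ell$ and only its $\Q_p$-part acts with the fixed element $a$.
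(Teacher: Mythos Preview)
Your overall architecture is correct and matches the paper: reduce to a fixed diagonalizable $a\in\G(\Q_p)$, prove any weak-$*$ limit is a probability measure of maximal entropy for $a$ (using the volume bound and Linnik's basic lemma, with extra cusp control when $\quat=\Mat_2$), then upgrade via a measure-rigidity input. One small correction: $\sicover\G(\Q_p)$ is the image of $\G^{(1)}(\Q_p)\cong\SL_2(\Q_p)$ in $\PGL_2(\Q_p)$, hence $\PSL_2(\Q_p)$, not $\PGL_2(\Q_p)$.

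The genuine gap is in your final paragraph. The maximal-entropy theorem you cite (\cite[Thm.~7.9]{pisa}) gives invariance of $\mu$ under the stable and unstable horospherical subgroups $\G(\Q_p)_a^{\pm}$; these generate $\sicover\G(\Q_p)$, and that is all you get directly. Your proposed mechanism for passing from $\sicover\G(\Q_p)$-invariance to $\sicover\G(\adele)$-invariance --- ``combining with invariance under a large compact group $K$ inherited from $K_\infty$ and the $\widehat{\Z}$-factors'' --- does not work: invariance under $\sicover\G(\Q_p)$ together with a compact group at the other places does not force invariance under the noncompact groups $\sicover\G(\Q_q)$ for $q\neq p$ or $\sicover\G(\R)$. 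What the paper actually does is disintegrate $\mu$ over the (countably generated) $\sigma$-algebra of $\sicover\G(\adele)$-orbits, so that each conditional measure lives on a single orbit $\lquot{\Gamma_x}{\sicover\G(\adele)}$ and is $\sicover\G(\Q_p)$-invariant; one then transfers this to a $\Gamma_x$-invariant measure on $\sicover\G(\adele_{\{p\}})$ and invokes \emph{strong approximation} for the simply connected group $\G^{(1)}$ to conclude that $\Gamma_x$ is dense in $\sicover\G(\adele_{\{p\}})$, forcing each conditional measure to be Haar. Without strong approximation (or an equivalent density statement) there is no way to propagate the invariance from the single place $p$ to all places, so you should build this step in explicitly.
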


Note that stronger versions of Theorem \ref{thm:main} are known (such as \cite[Thm.~4.6]{Dukeforcubic}), but require additional input.
We also oberve the following:
\begin{itemize}
\item The shift chosen in Theorem \ref{thm:main} in the real place is in some sense articifial (cf.~\cite[Thm.~4.6]{Dukeforcubic}).
In view of the goals phrased in the introduction, it is however natural to include it as we aim to project the whole packet onto the real points $\mathbf{V}^{+}(\R)= \rquot{\G(\R)}{K_\infty}$ of the variety $\mathbf{V}^{+}$ when $\G$ has class number one.
\item In the proof of Theorem \ref{thm:main} we will destinguish two cases.
\begin{itemize}
\item $\quat$ is not totally split. 
Here the compactness of $X_{\adele}$ allows for a simplified treatment (see Section \ref{section:cocompact}) due to non-espace of mass.
\item $\quat= \Mat_2$. Here (see Section \ref{section:Equidistribution of CM points}) we use additional arguments (including a geometrical argument on the Bruhat-Tits tree of $\PGL_2(\Qp)$) to rule out escape of mass.
\end{itemize}
\end{itemize}
We deduce Linnik's Theorem A resp.~B stated in the introduction in Section \ref{section:intpts on spheres} resp.~Section \ref{subsection:cm points}.

\section{Generation of integer points}

In this section we would like to show how an orbit $\G(\Q)\torus_v(\adele)$ for $v \in \pure{\mathcal{O}}$ can generate a collection of ``integer'' points.
The absence of the class number one assumption on $\G$ implies that these integer points may lie in different maximal orders.

This procedure of generating from one integer point other integer points is in essence well-known (see for instance  \cite[Prop.~3.2]{AES15}, \cite[Thm.~5.2]{Dukeforcubic}, \cite[Thm.~8.1]{platonov} and \cite{localglobalEV}). 
For convenience we will repeat it here in our setup.
Note that a simplified discussion of what follows can be found in the first arXiv-version of this paper \cite{self-Linnikarxiv} where only the case $\quat= \quat_{\infty,2}$ is treated.

We keep the notation of Section \ref{section:formulation theorem}.

\subsection{Maximal orders}\label{section:maximal orders}

Recall that the class number of $\BG$ is finite (cf.~\cite[Thm.~5.1]{borel-classnumberfinite} or \cite[Thm.~5.1]{platonov}) i.e.~the double quotient
\begin{align}\label{eq:param max orders}
\lrquot{\G(\Q)}{\G(\adele)}{\G(\R\times \widehat{\Z})}
\simeq
\lrquot{\G(\Q)}{\G(\adele_f)}{\G(\widehat{\Z})}
\end{align}
consists of finitely many points. 
In fact, it parametrizes $\G(\Q)$-conjugacy classes of maximal orders in $\quat(\Q)$ as we will now explain.

\subsubsection{Local action on orders}
For an order $\mathcal{O}' \subset \quat(\Q)$ we denote by $\mathcal{O}'_p = \mathcal{O}'\otimes \Zp$ the completion of $\mathcal{O}'$ at $p$ and consider the map of completions
\begin{align}\label{eq:local orders}
\mathcal{O}' \text{ order in } \quat(\Q) \mapsto \widehat{\mathcal{O}'} = \mathcal{O}' \otimes \widehat{\Z}= (\mathcal{O}'_p)_p \subset \quat(\adele_f).
\end{align}
Note that $\mathcal{O}'_p$ is a $\Z_p$-order in $\quat(\Qp)$ for any prime $p$ and that $\mathcal{O}'_p = \mathcal{O}_p$ is satisfied for all but finitely primes $p$.
Recall that there is an inverse defined on tuples $(\mathcal{O}'_p)_p$ with this property (cf.~\cite[Ch.~III, Sec.~5]{vigneras}) given by
\begin{align}\label{eq:compl order inverse}
(\mathcal{O}'_p)_p \mapsto \bigcap_p (\mathcal{O}'_p \cap \quat(\Q))
\end{align}
Furthermore, an order $\mathcal{O}'\subset \quat(\Q)$ is maximal if and only if all of its completions $\mathcal{O}'_p$ are maximal (cf.~\cite[(11.2)]{reiner} or \cite[Ch.~III, Sec.~5]{vigneras}).
The map in \eqref{eq:local orders} thus yields a bijection between maximal orders $\mathcal{O}'$ in $\quat(\Q)$ and tuples $(\mathcal{O}'_p)_p$ of maximal orders $\mathcal{O}'_p\subset \quat(\Qp)$ with $\mathcal{O}'_p = \mathcal{O}_p$ for all but finitely many primes $p$.

The classification of maximal orders in quaternion algebras over local fields (see \cite[Thm.~12.8, 17.3]{reiner}) shows that the action of $\G(\adele_f)$ on maximal orders via
\begin{align*}
g.\widehat{\mathcal{O}'} = (g_p. \mathcal{O}'_p)_p = (g_p \mathcal{O}'_p g_p^{-1})_p
\end{align*}
is transitive.
Notice that the stabilizer of $\widehat{\mathcal{O}}$ under this action is exactly $\G(\widehat{\Z})$ so that the set of maximal orders up to $\G(\Q)$-conjugacy can be identified with the double quotient in \eqref{eq:param max orders}.
We extend the action of $\G(\adele_f)$ on maximal orders to an action of $\G(\adele) = \G(\R) \times \G(\adele_f)$ where $\G(\R)$ acts trivially.

\subsubsection{Examples}
Recall that if $\quat= \quat_{\infty,2}$ or $\quat= \Mat_2$ the class number of $\G$ is one or in other words there is only one maximal order up to $\G(\Q)$-conjugacy.

There are however interesting examples where this is not the case. 
For instance, let $\quat=\quat_{\infty,p}$ be the quaternion algebra over $\Q$ which is ramified exactly at $\infty$ and $p$ for an odd prime $p$.
The set of maximal orders up to $\G(\Q)$-conjugacy corresponds to the set of isomorphism classes of supersingular elliptic curves over $\overline{\Fp}$ identified up to the action of $\operatorname{Gal}(\BF_{p^2}|\BF_p)$ (cf.~\cite{deuring41}), which has cardinality $Cp+O(1)$ for some constant $C$ (cf.~\cite[Thm.~4.1]{silverman-aec}).

\subsubsection{Choices}
In the following we will fix a set of representatives
\begin{align*}
\mathcal{O}_1,\ldots,\mathcal{O}_n \subset \quat(\Q)
\end{align*}
for the $\G(\Q)$-conjugacy classes of maximal orders with $\mathcal{O}_1 = \mathcal{O}$.
For every $k\in \set{1,\ldots,n}$ we also fix an element $g^{(k)} \in \G(\adele_f)$ with $g^{(k)}_p.\mathcal{O}_p = (\mathcal{O}_k)_p$ for every prime $p$.
Note that the double cosets $\G(\Q)g^{(k)}\G(\R\times\widehat{\Z})$ are representatives of the fibers for the reduction map
\begin{align}\label{eq:fibers for genus of G}
X_{\adele} = \lquot{\G(\Q)}{\G(\adele)} \to \lrquot{\G(\Q)}{\G(\adele)}{\G(\R \times \widehat{\Z})}
\end{align}
or in other words representatives of the $\G(\R\times \widehat{\Z})$-orbits in $X_{\adele}$.

For a given $k\in \set{1,\ldots,n}$ one can define, just as in Section \ref{section:integralstruc on G}, $\Z$-points of $\BG$ with respect to $\mathcal{O}_k$. We will denote these by $\Gk{k}(\Z)$. 
The groups $\Gk{k}(\Zp)$ for primes $p$ and $\Gk{k}(\widehat{\Z})$ are also defined as in Section \ref{section:integralstruc on G} for $\mathcal{O}$ replaced by $\mathcal{O}_k$.
It follows directly from \eqref{eq:compl order inverse} that $\Gk{k}(\R \times \widehat{\Z}) \cap \G(\Q) = \Gk{k}(\Z)$.

We note at this point that the different maximal orders in $\quat(\Q)$ correspond to the forms in the genus of the quadratic form $\Nr|_{\mathcal{O}^{(0)}}$ (see \cite[Ch.~22]{voight} for more details).
%

\subsection{Integer points}\label{section:generating intpts}
%
%

As mentioned at the beginning of this section we would like to generate from one primitive integer point $v \in \pure{\mathcal{O}}$ of norm $D$ other primitive integer points of the same norm.
The basic procedure is the following.

\begin{lemma}\label{lemma:genintpts}
Let $v \in \pure{\mathcal{O}}$ be a primitive vector.
For $h \in \torus_v(\adele)$ we choose $\gamma \in \G(\Q)$, $k \in \set{1,\ldots,n}$ and $g_{\operatorname{cpt}} \in \G(\R\times \widehat{\Z})$ such that $\gamma h = g^{(k)} g_{\operatorname{cpt}}$ holds. 
Then
\begin{align*}
\gamma.v = g^{(k)}g_{\cpt}.v
\end{align*}
is a primitive element of the lattice $\pure{\mathcal{O}_k}\subset\pure\quat(\Q)$.
\end{lemma}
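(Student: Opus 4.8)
The plan is to track what happens to $v$ under the identification chain, verifying at each step that we land in $\pure{\mathcal{O}_k}$ and that primitivity is preserved. First I would observe that $\gamma \in \G(\Q)$ acts on $\pure{\quat}(\Q)$ by conjugation, so $\gamma.v \in \pure{\quat}(\Q)$ regardless; the content is to identify which maximal order it lies in. Rewriting the defining relation $\gamma h = g^{(k)} g_{\cpt}$ and using that $h \in \torus_v(\adele)$ fixes $v$ (so $h.v = v$), together with the fact that $\G(\R)$ acts trivially on orders while $g_{\cpt} \in \G(\R\times\widehat\Z)$ has finite component in $\G(\widehat\Z) = \mathrm{Stab}(\widehat{\mathcal O})$, I would compute $\gamma.\widehat{\mathcal O} = \gamma h.\widehat{\mathcal O} = g^{(k)} g_{\cpt}.\widehat{\mathcal O} = g^{(k)}.\widehat{\mathcal O} = \widehat{\mathcal O_k}$, using the choice $g^{(k)}_p.\mathcal O_p = (\mathcal O_k)_p$ from Section~\ref{section:maximal orders}. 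Since $\gamma \in \G(\Q)$ preserves $\quat(\Q)$, applying $\gamma$ to $v \in \pure{\mathcal O}(\Q) = \pure{\widehat{\mathcal O}} \cap \quat(\Q)$ and using the inversion formula \eqref{eq:compl order inverse} gives $\gamma.v \in \pure{\mathcal O_k}(\Q)$.

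For the identity $\gamma.v = g^{(k)}g_{\cpt}.v$: since $h.v = v$ we have $\gamma.v = \gamma h.v = g^{(k)}g_{\cpt}.v$, which is immediate once one notes that the action on $v$ factors through the action on $\pure\quat(\adele)$ and $v$ is fixed by $h$. This also shows $g^{(k)}g_{\cpt}.v$ is a well-defined element of $\pure\quat(\Q)$ even though the individual factors are only adelic.

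Finally, primitivity. Here I would argue place by place. A vector $w \in \pure{\mathcal O_k}(\Q)$ is primitive precisely when $w \in \pure{(\mathcal O_k)_p}$ but $w \notin p\,\pure{(\mathcal O_k)_p}$ for every prime $p$ — i.e., $w$ generates a $\Z_p$-direct summand of $\pure{(\mathcal O_k)_p}$ at each $p$. Since $v$ is primitive in $\pure{\mathcal O}$, it has this property for $\widehat{\mathcal O}$; and the chain of identifications above is realized at each prime $p$ by the element $\gamma_p^{-1} g^{(k)}_p (g_{\cpt})_p \in \G(\Qp)$ which carries $(\mathcal O_k)_p$ isomorphically onto $\mathcal O_p$ as $\Z_p$-lattices (being a composition of the $\mathrm{GL}$-automorphisms implementing those order identifications), hence preserves the property of being part of a $\Z_p$-basis. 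Therefore $\gamma.v$ is primitive in $\pure{\mathcal O_k}$.

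I expect the main obstacle to be purely bookkeeping: keeping straight that the conjugation action on orders has $\G(\widehat\Z)$ (not a larger group) as stabilizer of $\widehat{\mathcal O}$, that $\G(\R)$ acts trivially on orders, and that "primitive" is a local condition stable under the relevant lattice isomorphisms — there is no serious analytic or arithmetic difficulty, only the need to be careful that every object is interpreted in the correct completion.
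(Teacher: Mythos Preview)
Your overall plan is sound, but there is a genuine slip in the first computation that propagates to the primitivity argument. You write
\[
\gamma.\widehat{\mathcal O} \;=\; \gamma h.\widehat{\mathcal O} \;=\; g^{(k)} g_{\cpt}.\widehat{\mathcal O} \;=\; \widehat{\mathcal O_k},
\]
but the first equality requires $h.\widehat{\mathcal O}=\widehat{\mathcal O}$, i.e.\ $h\in\G(\widehat\Z)$. This is not assumed: $h$ is an arbitrary element of $\torus_v(\adele)$, and typically $h_p\notin\G(\Zp)$ at finitely many primes. So in general $\gamma.\widehat{\mathcal O}\neq\widehat{\mathcal O_k}$; the element $\gamma$ does \emph{not} carry one maximal order to the other. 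What \emph{is} true is that $h$ fixes the single vector $v$, so $\gamma.v=\gamma h.v=g^{(k)}g_{\cpt}.v\in g^{(k)}g_{\cpt}.\widehat{\mathcal O}=\widehat{\mathcal O_k}$, and then $\gamma.v\in\widehat{\mathcal O_k}\cap\quat(\Q)=\mathcal O_k$. This is exactly the route the paper takes: track the element, not the order.

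The same confusion reappears in your primitivity step. The element $\gamma_p^{-1}g^{(k)}_p(g_{\cpt})_p$ equals $h_p$, and $h_p$ does \emph{not} carry $(\mathcal O_k)_p$ isomorphically onto $\mathcal O_p$. The lattice isomorphism you want is $(g^{(k)}g_{\cpt})_p:\mathcal O_p\to(\mathcal O_k)_p$, which is a genuine $\Zp$-module isomorphism because $(g_{\cpt})_p\in\G(\Zp)$ and $g^{(k)}_p.\mathcal O_p=(\mathcal O_k)_p$. Combined with the identity $v'=(g^{(k)}g_{\cpt})_p.v$ (again using $h_p.v=v$), this transports primitivity of $v$ in $\mathcal O_p$ to primitivity of $v'$ in $(\mathcal O_k)_p$ at every prime. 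With these two corrections your argument matches the paper's.
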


\begin{proof}
Clearly, $v' = \gamma.v\in \quat(\Q)$ is pure. 
To see that $v' \in \mathcal{O}_k$ notice that
\begin{align*}
v' = \gamma h. v = g^{(k)} g_{\operatorname{cpt}}.v 
\in g^{(k)} g_{\operatorname{cpt}}. \widehat{\mathcal{O}}
= g^{(k)}. \widehat{\mathcal{O}} = \widehat{\mathcal{O}_k}
\end{align*}
and therefore, $v' \in \widehat{\mathcal{O}_k} \cap \quat(\Q) = \mathcal{O}_k$ by \eqref{eq:compl order inverse}.

It remains to show that $v'$ is primitive.
If not, there is a prime $p$ and $n \geq 1$ so that $p^{-n}v' \in (\mathcal{O}_k)_p$.
But then $p^{-n} v = p^{-n}(g^{(k)}g_{\cpt})_p^{-1}.v' \in \mathcal{O}_p$ so $v$ is also non-primitive.
\end{proof}

We will call the point $\gamma.v$ as above an \emph{integer point produced by $\G(\Q)h$}.
Since the $g^{(k)}$'s were chosen to be representatives for \eqref{eq:fibers for genus of G}, the choice of $k$ in the lemma above is unique.
A priori, the point $\gamma.v$ may however depend on the choice of $\gamma$ and $g_{\operatorname{cpt}}$.

\begin{lemma}[Independence of choices]\label{lemma:indep choices}
Let $v \in \pure{\mathcal{O}}$ be a primitive vector. 
Given $h \in \torus_v(\adele)$ we let $\gamma,\gamma' \in \G(\Q)$, $k \in \set{1,\ldots,n}$ and $g_{\operatorname{cpt}},g_{\operatorname{cpt}}' \in \G(\R\times \widehat{\Z})$ be such that $\gamma h = g^{(k)} g_{\operatorname{cpt}}$ and $\gamma' h = g^{(k)} g_{\operatorname{cpt}}'$.
Then the points $\gamma.v,\gamma'.v \in \mathcal{O}_k$ differ by conjugation with an element in $\Gk{k}(\Z)$.
\end{lemma}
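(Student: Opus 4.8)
The plan is to exploit that $\gamma,\gamma'$ both conjugate $h$ into the same double coset, so $\gamma'\gamma^{-1}$ lands in a suitable arithmetic group that stabilizes $\mathcal{O}_k$. Concretely, from $\gamma h = g^{(k)}g_{\cpt}$ and $\gamma' h = g^{(k)}g_{\cpt}'$ we get
\begin{align*}
\gamma'\gamma^{-1} = g^{(k)}g_{\cpt}'g_{\cpt}^{-1}(g^{(k)})^{-1}.
\end{align*}
The left-hand side lies in $\G(\Q)$, while the right-hand side lies in $g^{(k)}\G(\R\times\widehat\Z)(g^{(k)})^{-1}$ since $g_{\cpt}'g_{\cpt}^{-1}\in\G(\R\times\widehat\Z)$. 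Hence $\delta := \gamma'\gamma^{-1}\in \G(\Q)\cap g^{(k)}\G(\R\times\widehat\Z)(g^{(k)})^{-1}$. The key point is to identify this intersection with $\Gk{k}(\Z)$: indeed $g^{(k)}$ was chosen so that $g^{(k)}_p.\mathcal{O}_p = (\mathcal{O}_k)_p$ for every prime $p$, so $g^{(k)}\G(\widehat\Z)(g^{(k)})^{-1}$ is exactly the stabilizer of $\widehat{\mathcal{O}_k}$ in $\G(\adele_f)$, i.e.\ $\Gk{k}(\widehat\Z)$ (using $\G(\R)$ acts trivially on orders). Therefore $\delta\in\G(\Q)\cap \Gk{k}(\R\times\widehat\Z) = \Gk{k}(\Z)$ by the identity $\Gk{k}(\R\times\widehat\Z)\cap\G(\Q) = \Gk{k}(\Z)$ noted at the end of Section~\ref{section:maximal orders}.

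It then remains to check that $\delta$ relates the two integer points as claimed. Since $h\in\torus_v(\adele)$ fixes $v$ under the conjugation action, we have
\begin{align*}
\gamma'.v = \gamma'h.v = g^{(k)}g_{\cpt}'.v
\quad\text{and}\quad
\gamma.v = \gamma h.v = g^{(k)}g_{\cpt}.v,
\end{align*}
and directly $\gamma'.v = (\gamma'\gamma^{-1})\gamma.v = \delta.(\gamma.v)$. So the two points differ by the action of $\delta\in\Gk{k}(\Z)$, which is conjugation by an element of $\Gk{k}(\Z)$ as desired.

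I do not expect a serious obstacle here; the argument is bookkeeping with double cosets and integral structures. The one step deserving care is the identification $\G(\Q)\cap g^{(k)}\G(\R\times\widehat\Z)(g^{(k)})^{-1} = \Gk{k}(\Z)$: one must be precise that stabilizing the tuple of local orders $(\mathcal{O}_k)_p$ is the same as stabilizing $\widehat{\mathcal{O}_k}$, and that an element of $\G(\Q)$ preserving all local orders $(\mathcal{O}_k)_p$ preserves $\mathcal{O}_k = \bigcap_p((\mathcal{O}_k)_p\cap\quat(\Q))$ — which is exactly the content of the bijection \eqref{eq:local orders}–\eqref{eq:compl order inverse} and the definition of $\Gk{k}(\Z)$. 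Everything else is immediate from the hypotheses.
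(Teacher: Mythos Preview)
Your proof is correct and essentially identical to the paper's: both form $\gamma'\gamma^{-1}=g^{(k)}g_{\cpt}'g_{\cpt}^{-1}(g^{(k)})^{-1}$, observe it lies in $\G(\Q)$ and stabilizes $\widehat{\mathcal{O}_k}$ (equivalently lies in $\Gk{k}(\R\times\widehat{\Z})$), and conclude it belongs to $\Gk{k}(\Z)=\G(\Q)\cap\Gk{k}(\R\times\widehat{\Z})$. Your write-up simply makes the identification $g^{(k)}\G(\widehat{\Z})(g^{(k)})^{-1}=\Gk{k}(\widehat{\Z})$ more explicit than the paper does.
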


\begin{proof}
Consider the group element $g = \gamma' \gamma^{-1} = g^{(k)}g_{\operatorname{cpt}}'g_{\operatorname{cpt}}^{-1}(g^{(k)})^{-1} \in \G(\Q)$ which satisfies $g.(\gamma.v) = \gamma'.v$ and $g.\widehat{\mathcal{O}_k} = \widehat{\mathcal{O}_k}$ (i.e.~$g \in \Gk{k}(\R \times\widehat{\Z})$).
We thus conclude~that $g\in \Gk{k}(\Z) = \G(\Q) \cap \Gk{k}(\widehat{\Z})$ as desired.
\end{proof}

Lemma \ref{lemma:indep choices} shows that we have obtained a well-defined point
\begin{align*}
[\gamma.v] \in \lquot{\Gk{k}(\Z)}{\pure{\mathcal{O}_k}}
\end{align*}
which we will also call the integer point produced by the points $\G(\Q)h$.
Conversely, one can ask when two points $\G(\Q)h, \G(\Q)h'$ yield the same integer point.

\begin{lemma}[Same production]\label{lemma:same production}
Let $\G(\Q)h, \G(\Q)h'\in \G(\Q)\torus_v(\adele)$ be two points in the $\G(\R \times \widehat{\Z})$-orbit through $\G(\Q)g^{(k)}$.
Then they produce the same integer points modulo $\Gk{k}(\Z)$ if and only if they lie on the same $\torus_v(\R \times \widehat{\Z})$-orbit.
\end{lemma}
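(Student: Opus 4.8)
The plan is to translate the statement into an equality of $\torus_v(\R\times\widehat{\Z})$-double cosets and then verify it by elementary manipulation of the defining relations. As a first step I would record two consequences of $\torus_v$ being a torus, hence abelian: $\torus_v(\adele)\cap\G(\Q)=\torus_v(\Q)$ and $\torus_v(\adele)\cap\G(\R\times\widehat{\Z})=\torus_v(\R\times\widehat{\Z})$. Using these, together with the fact that $\G(\adele)$ acts on $X_{\adele}$ on the right, one checks that two points $\G(\Q)h$ and $\G(\Q)h'$ of the packet lie on the same $\torus_v(\R\times\widehat{\Z})$-orbit if and only if $h'\in\torus_v(\Q)\,h\,\torus_v(\R\times\widehat{\Z})$. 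Fixing $\gamma,\gamma'\in\G(\Q)$ and $g_{\cpt},g_{\cpt}'\in\G(\R\times\widehat{\Z})$ with $\gamma h=g^{(k)}g_{\cpt}$ and $\gamma'h'=g^{(k)}g_{\cpt}'$ (the index $k$ being the same for both by hypothesis), the goal becomes to show that this membership is equivalent to $[\gamma.v]=[\gamma'.v]$ in $\lquot{\Gk{k}(\Z)}{\pure{\mathcal{O}_k}}$.

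For the implication from the coset condition to equality of integer points I would write $h'=t_0\,h\,c$ with $t_0\in\torus_v(\Q)$ and $c\in\torus_v(\R\times\widehat{\Z})\subseteq\G(\R\times\widehat{\Z})$. Then $(\gamma t_0^{-1})h'=\gamma h c=g^{(k)}(g_{\cpt}c)$ with $g_{\cpt}c\in\G(\R\times\widehat{\Z})$, so $\gamma t_0^{-1}$ is an admissible choice in Lemma~\ref{lemma:genintpts} for $h'$. Since $t_0$ fixes $v$, we have $(\gamma t_0^{-1}).v=\gamma.v$, and Lemma~\ref{lemma:indep choices} then gives $[\gamma'.v]=[\gamma.v]$.

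For the converse I would begin from $\delta.(\gamma.v)=\gamma'.v$ with $\delta\in\Gk{k}(\Z)$, so that $t_0:=(\gamma')^{-1}\delta\gamma$ stabilizes $v$ and hence lies in $\torus_v(\Q)$. The local point is that, as $\delta$ preserves $\widehat{\mathcal{O}_k}=g^{(k)}.\widehat{\mathcal{O}}$ place by place, the conjugate $b:=(g^{(k)})^{-1}\delta g^{(k)}$ preserves $\widehat{\mathcal{O}}$ place by place and therefore lies in $\G(\R\times\widehat{\Z})$. Feeding the relation $\delta g^{(k)}=g^{(k)}b$ into $\gamma h=g^{(k)}g_{\cpt}$ and $\gamma'h'=g^{(k)}g_{\cpt}'$ and solving for $h'$ yields $h'=t_0\,h\,c$ with $c:=g_{\cpt}^{-1}b^{-1}g_{\cpt}'\in\G(\R\times\widehat{\Z})$. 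Finally $c=h^{-1}t_0^{-1}h'$ lies in the abelian group $\torus_v(\adele)$, hence in $\torus_v(\adele)\cap\G(\R\times\widehat{\Z})=\torus_v(\R\times\widehat{\Z})$, so $h'\in\torus_v(\Q)\,h\,\torus_v(\R\times\widehat{\Z})$, as needed.

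All of these manipulations are routine; the only places I expect to need some care are the bookkeeping in the first paragraph---the two intersection identities and the description of right $\torus_v(\R\times\widehat{\Z})$-orbits as double cosets---and the local claim that conjugating $\delta\in\Gk{k}(\Z)$ by $g^{(k)}$ produces an element of $\G(\R\times\widehat{\Z})$, which relies on the stabilizer description of maximal orders recalled in Section~\ref{section:maximal orders}. Commutativity of $\torus_v$ is used crucially in both directions, since it is what lets one absorb the ``compact factor'' $c$ back into the torus.
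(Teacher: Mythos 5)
Your argument is correct and follows essentially the same route as the paper's proof: you show the key element $c=h^{-1}t_0^{-1}h'$ (the paper's $\tilde{h}$) lies in $\torus_v(\adele)\cap\G(\R\times\widehat{\Z})=\torus_v(\R\times\widehat{\Z})$, while making explicit via $\delta$ the $\Gk{k}(\Z)$-normalization that the paper handles tacitly by arranging $\gamma.v=\gamma'.v$. One small correction to your commentary: the two intersection identities follow from $\torus_v$ being a $\Q$-subgroup of $\G$ and from the definition $\torus_v(\R\times\widehat{\Z})=\G(\R\times\widehat{\Z})\cap\torus_v(\adele)$, not from commutativity, which in fact plays no essential role in either direction.
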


\begin{proof}
Suppose first that the points $\G(\Q)h, \G(\Q)h'$ lie on the same $\torus_v(\R \times\widehat{\Z})$-orbit i.e.~$\G(\Q)h\tilde{h} = \G(\Q) h'$ for some $\tilde{h}\in \torus_v(\R \times \widehat{\Z})$.
Writing $\G(\Q) h = \G(\Q)g^{(k)}g_{\cpt}$ for some $g_{\cpt} \in \G(\R \times \widehat{\Z})$ we obtain that $ \G(\Q) h' = \G(\Q)g^{(k)}g_{\cpt}\tilde{h}$.
Therefore, an integer point produced by $\G(\Q) h'$ is $g^{(k)}g_{\cpt}\tilde{h}.v = g^{(k)}g_{\cpt}.v$.
In particular, the integer points produced by $\G(\Q)h$ and $\G(\Q)h'$ are the same modulo $\Gk{k}(\Z)$.

Conversely, assume that there are $\gamma,\gamma' \in \G(\Q)$ with $\gamma.v = \gamma'.v$ and
\begin{align*}
(g^{(k)})^{-1}\gamma h,(g^{(k)})^{-1}\gamma'h' \in \G(\R \times\widehat{\Z})
\end{align*}
Consider the element $\tilde{h} = h^{-1}\gamma^{-1}\gamma'h'\in \G(\adele)$. By assumption, we have $\gamma.v = \gamma'.v$ so $\tilde{h} \in \torus_v(\adele)$. Furthermore, $\tilde{h}  \in \G(\R \times\widehat{\Z})$
as
\begin{align*}
\tilde{h}.\widehat{\mathcal{O}} 
= h^{-1}\gamma^{-1}g^{(k)}.(g^{(k)})^{-1}\gamma'h'.\widehat{\mathcal{O}}
=\widehat{\mathcal{O}}.
\end{align*}
Therefore, $\tilde{h} \in \G(\R \times \widehat{\Z}) \cap \torus_v(\adele) = \torus_v(\R \times \widehat{\Z})$.
This implies that $\G(\Q)h\tilde{h} = \G(h')$ and finishes the proof of the lemma.
\end{proof}

\subsubsection{Transitivity}
In this short section we would like to treat the following question:
\begin{quote}
\emph{
Let $v\in \pure{\mathcal{O}}$ be a primitive vector and for some $k\in \set{1,\ldots,n}$ let $w \in \pure{\mathcal{O}_k}$  be a primitive vector with $\Nr(w) = \Nr(v)=:D$.
When is $w$ produced by (a point in) the stabilizer orbit of $v$?}
\end{quote}
By definition this is the case if and only if there exists $h \in \torus_v(\mathbb{A})$ and $\gamma \in \G(\Q)$ so that $\gamma.v=w $ and $(g^{(k)})^{-1}\gamma h \in \G(\R \times \hat{\Z})$.
This in turn is equivalent to the existence of some $g_{\cpt} \in \G(\R \times \widehat{\Z})$ with $g^{(k)}g_{\cpt}.v = w$.
Indeed, by Witt's Theorem (see e.g.~\cite[p.~21]{Cassels}) there is some $\ell \in \SO_{\Nr}(\Q)$ which maps $v$ to $w$.
Any such orthogonal transformation $\ell$ is given by conjugation with an element of $\quat^\times(\Q)$ i.e.~there is $\gamma\in \G(\Q)$ realizing $\ell$ so that in particular $\gamma.v = w$ \cite[Thm.~3.3]{vigneras}.

Since $(g^{(k)})^{-1}.w, v$ are two primitive elements of $\widehat{\mathcal{O}}$, the question whether or not $w$ is produced by the stabilizer orbit of $v$ can thus be answered locally.

\begin{proposition}[Transitivity]\label{prop:transitivity}
Let $p$ be an odd prime and let $v_1,v_2 \in \mathcal{O}_p^{(0)}$ be two primitive elements of equal norm.
Then there exists $g_p \in \mathcal{O}_p^\times$ with $g_p.v_1 = v_2$.

If $p=2$ then there are at most two $\mathcal{O}_2^\times$-orbit on the set of primitive elements of $\mathcal{O}_2^{(0)}$ of norm $D$ for $D\in \Z_2^\times$.
\end{proposition}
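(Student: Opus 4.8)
The plan is to reduce the whole question to a conjugacy problem for $2\times 2$ matrices over $\Zp$ and settle that by a rational‑canonical‑form argument. First I would note that, via the isomorphism $\G\cong\SO_Q$ of \eqref{eq:isogeny to orthogonal}, the $\mathcal{O}_p^\times$‑conjugation action on $\pure{\mathcal{O}_p}$ is an action of an orthogonal group on a ternary $\Zp$‑lattice, so the statement is really one about $\Zp$‑equivalence of lattice vectors of a fixed norm. Since $\quat$ splits at $p$ (for the prime of Theorem~\ref{thm:main} this is forced by Lemma~\ref{lemma:Linnik's condition}), fix an isomorphism $\varphi\colon\quat(\Qp)\xrightarrow{\ \sim\ }\Mat_2(\Qp)$ and, composing with an inner automorphism, arrange $\varphi(\mathcal{O}_p)=\Mat_2(\Zp)$. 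Then $\varphi$ carries $\mathcal{O}_p^\times$ to $\GL_2(\Zp)$, $\pure{\mathcal{O}_p}$ to the traceless matrices in $\Mat_2(\Zp)$, conjugation to conjugation, primitivity to primitivity and $\Nr$ to $\det$, so it suffices to treat that situation.

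The heart of the argument would be the following normal form over $\Zp$: if $v\in\Mat_2(\Zp)$ has non‑scalar reduction $\bar v\in\Mat_2(\Fp)$, then $\bar v$ is non‑derogatory, so there is $\bar e\in\Fp^2$ with $\{\bar e,\bar v\bar e\}$ an $\Fp$‑basis; lifting $\bar e$ to some $e\in\Zp^2$ and invoking Nakayama's lemma, $\{e,ve\}$ is a $\Zp$‑basis of $\Zp^2$, and in this basis $v$ becomes the companion matrix $\bigl(\begin{smallmatrix}0&-\det v\\ 1&\Tr v\end{smallmatrix}\bigr)$, using the Cayley--Hamilton identity $v^2=(\Tr v)v-(\det v)\mathrm I$. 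For $p$ odd a nonzero traceless matrix over $\Fp$ is never a scalar, so this applies to both $v_1$ and $v_2$; being traceless of equal norm, hence equal determinant $D=\Nr(v_1)$, they are each $\GL_2(\Zp)$‑conjugate to $\bigl(\begin{smallmatrix}0&-D\\ 1&0\end{smallmatrix}\bigr)$, and hence to one another by an element of $\GL_2(\Zp)=\mathcal{O}_p^\times$. This proves the first assertion.

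For $p=2$ and $D\in\Z_2^\times$ I would run the same argument, the one new phenomenon being that a nonzero traceless matrix over $\F_2$ may be a scalar, namely the identity. Taking $\mathcal{O}_2=\Mat_2(\Z_2)$ as above, for a primitive traceless $v$ with $\det v=D$ odd there are two cases: if $\bar v$ is non‑derogatory then $v\sim\bigl(\begin{smallmatrix}0&-D\\ 1&0\end{smallmatrix}\bigr)$ as before; if $\bar v=\mathrm I$ then necessarily $4\mid D+1$, and writing $v=\mathrm I+2w$ one has $\Tr w=-1$ and $\det w=(D+1)/4$, so $\bar w$ has trace $1$, is not scalar, and the normal form applied to $w$ conjugates $v$ into a single matrix depending only on $D$. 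Thus at most two $\GL_2(\Z_2)$‑orbits arise, according to whether $\bar v$ is non‑derogatory or equals $\mathrm I$. (When $\quat$ is ramified at $2$, as for the sphere problem $\quat=\quat_{\infty,2}$, one argues the same way inside the local maximal order, filtering by powers of its uniformizer rather than by powers of $2$; since every primitive traceless $v$ of unit norm reduces to $1$ in the residue field $\F_4$, the first genuine information sits one level down, but the residual ambiguity is again at most $2$.) The step I expect to be the real obstacle is precisely this $p=2$ analysis — at odd primes everything is elementary — and one should bear in mind that the bound $2$ is genuinely attained, which is why only ``at most two'' can be asserted at the prime $2$.
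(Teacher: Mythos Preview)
The paper does not give its own proof of this proposition: it simply refers the reader to \cite[Prop.~3.7]{Linnikthmexpander}. Your cyclic-vector/companion-matrix argument is precisely the standard proof one finds there (and elsewhere), so in the split case your proposal is correct and matches the cited reference. The two-case analysis at $p=2$ in the split situation, according to whether $\bar v$ is non-derogatory or equals the identity, is also correct and cleanly written.

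One point deserves comment. You justify the reduction to $\Mat_2(\Zp)$ by saying ``$\quat$ splits at $p$ (for the prime of Theorem~\ref{thm:main} this is forced \ldots)''. But the proposition as stated is for \emph{every} odd prime, and it is applied in Section~\ref{section:generating intpts} at all primes simultaneously, not only at the distinguished prime of Theorem~\ref{thm:main}. For the two quaternion algebras the paper actually works with ($\quat_{\infty,2}$ and $\Mat_2$), every odd prime is split and your argument covers everything that is needed. For a general $\quat$ ramified at some odd prime $q$, however, your reduction to $\Mat_2(\Z_q)$ is unavailable; and in fact the literal statement with $g_p\in\mathcal{O}_q^\times$ is too strong there: the residue ring $\mathcal{O}_q/\pi\mathcal{O}_q\cong\F_{q^2}$ is commutative, so $\mathcal{O}_q^\times$-conjugation is trivial on residues, and the map $v\mapsto\bar v$ separates the two square roots of $-\bar D$ in $\F_{q^2}$ into distinct orbits. (The intended statement is with $\G(\Zq)$ in place of $\mathcal{O}_q^\times$; in the ramified case $\G(\Zq)=\G(\Q_q)$ contains the class of the uniformizer $\pi$, whose conjugation acts as Frobenius on $\F_{q^2}$ and swaps the two residues, collapsing them to a single orbit. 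This is what the applications require.) So your split hypothesis is not a defect in your argument so much as an unstated hypothesis the proposition itself needs.

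Your sketch for $p=2$ in the ramified case ($\quat=\quat_{\infty,2}$) is too brief to verify, but you correctly flag it as the genuinely delicate step; the cited reference \cite[Prop.~3.7]{Linnikthmexpander} handles it directly.
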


We refer to \cite[Prop.~3.7]{Linnikthmexpander} for a proof.
It shows that in the case $\quat = \quat_{\infty,2}$ (where $\quat(\Q_2)$ is division algebra), any point $w \in \pure{\mathcal{O}_k}=\pure{\mathcal{O}_1}$ with $\Nr(v) = \Nr(w)$ is produced by the stabilizer orbit of $v$.


\subsection{Packets}\label{section:packets}

Let $v \in \pure{\mathcal{O}}$ be primitive of norm $D>0$.
By the discussion of the previous subsection we may group the $\torus_v(\R \times \widehat{\Z})$-orbits in $\G(\Q)\torus_v(\adele)$ first according to the fibers for \eqref{eq:fibers for genus of G} and second according to the produced integer points.
Given any $k \in \set{1,\ldots,n}$ we may write for each fiber
\begin{align*}
\G(\Q)\torus_v(\adele) \cap \G(\Q)g^{(k)}\G(\R \times \widehat{\Z})
= \bigsqcup_{\rho\in \mathcal{R}_v(k)} \G(\Q)g^{(k)}\rho\torus_v(\R \times \widehat{\Z})
\end{align*}
for a finite (possibly empty) set of representatives $\mathcal{R}_v(k) \subset \G(\R\times\widehat{\Z})$.
For any $\rho\in \mathcal{R}_v(k)$ the vector $g^{(k)}\rho.v$ is then a primitive element of $\pure{\mathcal{O}_k}$ of norm $D$ (Lemma~\ref{lemma:genintpts}), is independent of the choice of the set $\mathcal{R}_v(k)$ up to $\Gk{k}( \Z)$-equivalence and for any other $\rho'\in \mathcal{R}_v(k)$ we have $g^{(k)}\rho.v \neq g^{(k)}\rho'.v$ modulo $\Gk{k}( \Z)$ (see Lemmas \ref{lemma:indep choices} and \ref{lemma:same production}).

\subsubsection{Volume on the packet}\label{section:def volume}
We equip the packet $\G(\Q)\torus_v(\adele)$ with the volume $\vol$ induced by choosing the normalized Haar measure on $\torus_v(\R \times \widehat{\Z})$ (recall that $\torus_v(\R)$ is compact).
In particular, we have
\begin{align}\label{eq:equal volume}
\vol\big(\G(\Q)g^{(k)}\rho\torus_v(\R \times \widehat{\Z})\big) = \frac{1}{|\torus_v(\Z)|}
\end{align}
for any $k$ and any $\rho \in \mathcal{R}_v(k)$ i.e.~all $\torus_v(\R \times \widehat{\Z})$-orbits get the same mass.
Indeed, observe that
\begin{align*}
\Stab_{\torus_v(\R \times \widehat{\Z})}(\G(\Q)g^{(k)}\rho)
&= \Stab_{\torus_v(\R \times \widehat{\Z})}(\G(\Q)h)
= \Stab_{\torus_v(\R \times \widehat{\Z})}(\G(\Q))\\
&= \G(\Q) \cap \torus_v(\R \times \widehat{\Z})
= \torus_v(\Z)
\end{align*}
where $h \in \torus_v(\adele)$ is chosen with $\G(\Q)h = \G(\Q)g^{(k)}\rho$ and where we used that $\torus_v(\adele)$ is an abelian group.
Recall that $\torus_v(\Z)$ corresponds to the group of integral isometries of the restriction of the norm form $\Nr$ to the orthogonal complement of $v$ in $\pure\quat$ (in a basis of $\pure{\mathcal{O}}$) which is a positive definite binary form. Thus, $|\torus_v(\Z)| \asymp 1$ as $D \to \infty$.
Note that if $\quat=\quat_{\infty,2}$ we have $|\torus_v(\Z)|\leq |\G(\Z)|<\infty$. See Claim \ref{claim:Q-points torus} for the case $\quat=\Mat_2$.

\subsubsection{Volume bounds}
One central number-theoretical ingredient in the proof of Theorem \ref{thm:main} is the following estimate on the size of the volume of the packet $\G(\Q)\torus_v(\adele)$ (or equivalently on the number of orbits by the discussion above).

\begin{proposition}[Total volume]\label{prop:total volume}
Let $v \in \pure{\mathcal{O}}$ be primitive of norm $D$.
Then we have
\begin{align*}
\vol(\G(\Q)\torus_v(\adele)) = D^{\frac{1}{2}+o(1)}.
\end{align*}
\end{proposition}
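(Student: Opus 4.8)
The plan is to reduce the volume computation to a product of local orbit volumes via the adelic integration formula, and then to recognize the resulting quantity as a class number (or an object controlled by one). First I would observe that the volume $\vol(\G(\Q)\torus_v(\adele))$ is, up to the harmless factor $|\torus_v(\Z)| \asymp 1$ computed in Section~\ref{section:def volume}, simply the number of $\torus_v(\R\times\widehat\Z)$-orbits in the compact packet $\G(\Q)\torus_v(\adele)$. By the orbit-stabilizer analysis already carried out, this count equals $|\lrquot{\torus_v(\Q)}{\torus_v(\adele)}{\torus_v(\R\times\widehat\Z)}|$ adjusted by the (bounded) contribution of $\torus_v(\Q)\cap\torus_v(\R\times\widehat\Z)=\torus_v(\Z)$, i.e.\ the packet volume is comparable to the class number of the torus $\torus_v$, equivalently the class number $h_D$ of the order $R_D$ generated inside the imaginary quadratic field $\Q(\sqrt{-D})$ (recall $\torus_v$ is the norm-one torus of that field by Lemma~\ref{lemma:Linnik's condition} and the identification with $\SO$ of the binary form $\Nr|_{v^\perp}$).

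The key step is then to invoke the classical asymptotic for this class number. By Dirichlet's class number formula one has $h_D \asymp \sqrt{|D|}\, L(1,\chi_D)$ up to bounded factors coming from units and the conductor, and Siegel's ineffective lower bound together with the trivial upper bound $L(1,\chi_D)\ll \log|D|$ gives $L(1,\chi_D) = |D|^{o(1)}$. Hence $h_D = |D|^{1/2+o(1)}$, which translates directly into $\vol(\G(\Q)\torus_v(\adele)) = D^{\frac12+o(1)}$. Concretely I would connect the adelic volume to $h_D$ through the standard dictionary (as in \cite[Section~2]{duke}): the local volumes $\vol(\torus_v(\Zp))$ relative to the ambient normalization contribute an Euler product whose value is, up to the finitely many ramified primes and a bounded constant, the reciprocal of $L(1,\chi_D)$ times $\sqrt{|D|}$; more cleanly, one may bypass explicit local computations by citing that the number of primitive integer points of norm $D$ (genus-summed) is itself $D^{1/2+o(1)}$ — which is exactly the ``result mentioned earlier'' referenced in item~(1) of the introduction — and noting that each such point corresponds to $O(1)$ orbits across the finitely many genera via Proposition~\ref{prop:transitivity}.

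The main obstacle is bookkeeping rather than conceptual depth: one must account carefully for (a) the finitely many genera (maximal orders $\mathcal{O}_1,\dots,\mathcal{O}_n$), so that the packet volume is a sum of contributions over $k$, each a masked copy of a class number of the relevant order, with the total still $D^{1/2+o(1)}$ since $n=O(1)$; (b) the ramified primes of $\quat$ and the prime $2$, where the local orbit structure is governed by Proposition~\ref{prop:transitivity} and contributes only a bounded multiplicative ambiguity; and (c) the passage between the order $R_D$ attached to $v$ and the maximal order of $\Q(\sqrt{-D})$, where the conductor index is absorbed into the $|D|^{o(1)}$ error. None of these is serious: each is a finite or logarithmically-bounded correction, and the substantive input — Siegel's lower bound — is exactly what forces the result to be ineffective, consistent with Theorem~\ref{thm:main} being a non-effective equidistribution statement. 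I expect the cleanest writeup to simply cite Dirichlet's formula and Siegel's bound for $h_D = |D|^{1/2+o(1)}$ and then quote the correspondence between $h_D$ and the packet volume established in the preceding subsections together with \cite[Section~2]{duke}.
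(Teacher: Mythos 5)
Your proposal is correct and follows essentially the same route as the paper: reduce the packet volume to the class number of the torus $\torus_v$, identify that (up to a bounded factor $c_D$ controlled by local quantities, and up to conductor issues absorbed into $|D|^{o(1)}$) with the Picard group $\Cl(R_d)$, and conclude with Siegel's ineffective lower bound $|\Cl(R_d)| = D^{1/2+o(1)}$. The paper's exposition (Sections on optimal embeddings, counting optimal embeddings, and the size of Picard groups) is likewise a sketch citing \cite[Sec.~6.2]{Linnikthmexpander} for the full $\quat_{\infty,2}$ case and \cite[Ch.~30]{voight} for the formula $r(D) = c_D|\Cl(R_d)|$, so your plan matches the structure and level of detail of the paper's argument.
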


A complete treatment in the case $\quat = \quat_{\infty,2}$ can be found in \cite[Sec.~6.2]{Linnikthmexpander}.
The case $\quat = \Mat_2$ will be discussed in detail in Proposition \ref{prop:CM-number of orbits} later.
Let us point out the main tools and steps of reduction here. 

\subsubsection{Optimal embeddings}
As $v^2 = -v\overline{v} = -D$ holds there is an induced field embedding
\begin{align*}
\iota_v:\Q(\sqrt{-D}) \to \quat(\Q),\ a+b\sqrt{-D} \mapsto a+bv.
\end{align*}
Let $d= -D$ if $D \equiv 3 \mod 4$ and $d = -4D$ if $D\equiv 0,1,2 \mod 4$.
As $v$ is primitive we have that $\iota_v^{-1}(\mathcal{O})$ is the order of discriminant $d$
\begin{align*}
\iota_v^{-1}(\mathcal{O}) = R_d := \Z\Big[\frac{d+\sqrt{d}}{2}\Big].
\end{align*}
Conversely, given an embedding $\iota:\Q(\sqrt{-D}) \to \quat(\Q)$ with $\iota^{-1}(\mathcal{O}) = R_d$ (that is, an \emph{optimal embedding} of $R_d$ into $\mathcal{O}$) the image of $\sqrt{-D}$ is a primitive point in $\pure{\mathcal{O}}$ of norm $D$.
Note that if $v'$ is an other primitive vector in $\pure{\mathcal{O}}$ with $\gamma.v=v'$ for some $\gamma\in \G(\Z)$ then the induced optimal embeddings are equivalent in the sense that $\iota_{v'}(x) = \gamma.\iota_v(x)$ for all $x \in \Q(\sqrt{-D)}$.

\subsubsection{Counting optimal embeddings}\label{section:optimal embeddings}

From Section \ref{section:def volume} we know that the volume $\vol(\G(\Q)\torus_v(\adele))$ is up to a bounded factor equal to the class number of $\torus_v$ i.e.~the cardinality of the finite group
\begin{align*}
\lrquot{\torus_v(\Q)}{\torus_v(\adele)}{\torus_v(\R \times \widehat{\Z})}.
\end{align*}
In Section \ref{section:generating intpts} we showed that this double quotient generates points in $\lquot{\Gk{k}(\Z)}{\pure{\mathcal{O}_k}}$ for varying $k$.
As we explained above, the points in $\lquot{\Gk{k}(\Z)}{\pure{\mathcal{O}_k}}$ for a given $k$ are exactly the optimal embeddings of $R_D$ into $\mathcal{O}_k$ up to $\Gk{k}(\Z)$-equivalence.
By Proposition \ref{prop:transitivity} it suffices to estimate the number $r(D,k)$ of such equivalence classes of optimal embeddings for every $k$ or more precisely the sum $r(D) = \sum_k r(D,k)$.

It remains to explain why $r(D) = D^{\frac{1}{2}+o(1)}$.
One can show that (see for instance the preprint \cite[Ch.~30]{voight})
\begin{align*}
r(D) = c_D |\Cl(R_d)|
\end{align*}
where the factor $c_D$ can be computed from local quantities and can only take a finite number of values as a function in $D$.
It therefore suffices to understand the size of the Picard group $\Cl(R_d)$ (the group of invertible $R_d$-ideals -- see also Section~\ref{section:Picard group}).

\subsubsection{Size of Picard groups}\label{section:size picard group}
For square-free $D$ the order $R_d$ is simply the ring of integers in $\Q(\sqrt{-D})$. 
It is a consequence of Siegel's lower bound (cf.~\cite{siegellowerbound} or \cite[Thm.~5.28]{iwaniec-kowalski}) that
\begin{align*}
|\Cl(R_d)| = D^{\frac{1}{2}+o(1)}.
\end{align*}
The non-square-free case can be reduced to this (cf.~\cite[Thm.~7.24]{coxprimesoftheform}).
\section{The proof in the cocompact case}\label{section:cocompact}

We continue using the notation from the previous sections.
In this section we will exhibit a proof of Theorem \ref{thm:main} in the case where
\begin{align*}
X_\adele = \lquot{\G(\Q)}{\G(\adele)}
\end{align*}
is compact.
This case would for instance suffice to prove Linnik's Theorem A as stated in the introduction and is simpler than $\BG = \PGL_2$ where we need to rule out escape of mass as well.

\subsection{Conjugacy of stabilizer subgroups}
To be able to use dynamical arguments (cf.~Section \ref{section:maximal entropy}), we will first adapt the packets in Theorem \ref{thm:main} by an element in $\G(\Zp)$ so that their Haar measure are invariants under a common split torus $\torus(\Qp)$ in $\G(\Q_p)$.

Throughout this section, the vector $v$ is a fixed primitive element of $\pure{\mathcal{O}}$ (or $\pure{\mathcal{O}}\otimes\Zp = \pure{\mathcal{O}_p}$) of norm $D$ satisfying Linnik's condition at $p$.
Recall that this condition asserts that $\torus_v(\Qp)$ is split.
The following is a slight refinement of this splitting.

\begin{lemma}[Split tori]\label{lemma:on congruence condition}
There exists an isomorphism $f_v:\quat(\Q_p) \to \Mat_2(\Q_p)$ such that $f_v(\mathcal{O}_p) = \Mat_2(\Z_p)$ and
\begin{align*}
f_v(\setc{a+bv}{a,b\in \Q_p}) = \left\lbrace \begin{pmatrix}
x &  \\ 
 & y
\end{pmatrix} : x,y \in \Qp \right\rbrace.
\end{align*}
In particular, the induced isomorphism $\tilde{f}_v:\G(\Q_p) \to \PGL_2(\Q_p)$ satisfies
\begin{align*}
\tilde{f}_v(\torus_v(\Qp))= \left\lbrace \begin{pmatrix}
x &  \\ 
 & 1
\end{pmatrix} : x \in \Qp^\times \right\rbrace
\end{align*}
and similarly over $\Z_p$.
\end{lemma}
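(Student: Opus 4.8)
The plan is to construct the isomorphism $f_v$ by exploiting the induced embedding $\iota_v : \Q_p(\sqrt{-D}) \to \quat(\Q_p)$ together with Linnik's condition. Since $D$ satisfies Linnik's condition at $p$, namely $-D \in (\Fp^\times)^2$, Hensel's lemma (applied to $X^2 + D$, using that $p$ is odd and $-D$ is a $p$-adic unit square, or more generally after extracting the $p$-part) shows that $\Q_p(\sqrt{-D}) \cong \Q_p \times \Q_p$ splits, so $\torus_v(\Q_p)$ is split, as already recorded after Lemma \ref{lemma:Linnik's condition}. The first step is therefore to fix an isomorphism $\quat(\Q_p) \cong \Mat_2(\Q_p)$ (which exists since $\quat$ is split at $p$, $p$ being the odd prime at which $\quat$ is assumed split) and, after that, to conjugate it so that the commutative subalgebra $\Q_p[v] = \{a + bv : a,b \in \Q_p\}$ is carried to the diagonal subalgebra. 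For the latter: $v$ acts semisimply on $\quat(\Q_p) \cong \Mat_2(\Q_p)$ with two distinct eigenvalues (the two square roots of $-D$ in $\Q_p$, which are distinct since $p$ is odd and $D \neq 0$), so $v$ is conjugate to a diagonal matrix $\operatorname{diag}(\sqrt{-D}, -\sqrt{-D})$, and conjugating the chosen isomorphism by the corresponding element puts $\Q_p[v]$ onto the diagonal torus. This gives $f_v(\{a+bv\}) = \{\operatorname{diag}(x,y)\}$; the normalization to $\operatorname{diag}(x,1)$ for $\tilde f_v(\torus_v(\Q_p))$ then follows because $\torus_v$ is the projective image of norm-one (equivalently, a one-parameter piece of the) elements, so in $\PGL_2$ we may scale the second diagonal entry to $1$.

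The second step, and the one requiring genuine care, is the integrality assertion $f_v(\mathcal{O}_p) = \Mat_2(\Z_p)$. The naive construction above only guarantees $f_v(\mathcal{O}_p)$ is \emph{some} maximal order of $\Mat_2(\Q_p)$, and a priori need not equal $\Mat_2(\Z_p)$. The fix is to use the classification of maximal orders in $\Mat_2(\Q_p)$ (see \cite[Thm.~17.3]{reiner} or \cite[Ch.~II]{vigneras}): all maximal orders of $\Mat_2(\Q_p)$ are $\GL_2(\Q_p)$-conjugate (equivalently $\PGL_2(\Q_p)$-conjugate) to $\Mat_2(\Z_p)$. Thus after the first step we post-compose with conjugation by a suitable element $b \in \GL_2(\Q_p)$ sending the maximal order $f_v(\mathcal{O}_p)$ to $\Mat_2(\Z_p)$. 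The issue is that this extra conjugation might destroy the diagonal form of $f_v(\Q_p[v])$ established in the first step. To handle this one should instead arrange both conditions simultaneously: choose the conjugating element within the normalizer structure carefully, or — cleaner — first normalize the \emph{order} and then, working inside $\GL_2(\Z_p)$ (the stabilizer of $\Mat_2(\Z_p)$), diagonalize the image of $v$. This works because $v$, being a \emph{primitive} pure element of $\mathcal{O}_p$ of norm $D$ a $p$-adic unit (Linnik's condition forces $p \nmid D$), has image in $\Mat_2(\Z_p)$ a matrix whose reduction mod $p$ has distinct eigenvalues in $\Fp$ (the two square roots of $-D \bmod p$, distinct since $-D \in (\Fp^\times)^2$ and $p$ is odd); hence by Hensel/the structure of $\GL_2(\Z_p)$ it is $\GL_2(\Z_p)$-conjugate to a diagonal matrix in $\Mat_2(\Z_p)$. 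Conjugating by that $\GL_2(\Z_p)$ element preserves $\Mat_2(\Z_p)$ and puts $\Q_p[v]$ on the diagonal, giving both required properties at once.

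The third step is bookkeeping: having $f_v$ with $f_v(\mathcal{O}_p) = \Mat_2(\Z_p)$ and $f_v(\Q_p[v])$ equal to the diagonal algebra, I read off the statements for $\tilde f_v$. Since $\tilde f_v$ is induced on $\BG(\Q_p) = \mathbf{PB}^\times(\Q_p) \cong \PGL_2(\Q_p)$, and $\torus_v(\Q_p)$ consists of the classes of invertible elements of $\Q_p[v]$ fixing $v$ under conjugation, its image is exactly the diagonal torus in $\PGL_2$, which is represented uniquely by matrices $\operatorname{diag}(x,1)$ with $x \in \Q_p^\times$; the "similarly over $\Z_p$" clause is the statement that $\tilde f_v$ carries $\G(\Z_p)$ — defined as the stabilizer of $\mathcal{O}_p$ under conjugation — to the stabilizer of $\Mat_2(\Z_p)$, which is $\PGL_2(\Z_p)$, and carries $\torus_v(\Z_p)$ to $\{\operatorname{diag}(x,1) : x \in \Z_p^\times\}$, both immediate from $f_v(\mathcal{O}_p) = \Mat_2(\Z_p)$.

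The main obstacle, as indicated, is reconciling the two normalizations — diagonalizing $\Q_p[v]$ \emph{and} pinning the maximal order to $\Mat_2(\Z_p)$ — without one undoing the other; the key technical input that makes it go through is that Linnik's condition forces $p \nmid D$ and $-D$ to be a nonzero square mod $p$, so that the image of the primitive vector $v$ in $\Mat_2(\Z_p)$ has, modulo $p$, distinct eigenvalues in $\Fp^\times$, allowing a $\GL_2(\Z_p)$-diagonalization that respects the standard maximal order. Everything else (existence of the splitting isomorphism at $p$, conjugacy of maximal orders, Witt-type rigidity of the embedding) is standard local quaternion-algebra theory already cited in the paper.
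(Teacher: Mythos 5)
Your proof is correct and follows essentially the same route as the paper: first adjust the splitting isomorphism so $\mathcal{O}_p$ maps to $\Mat_2(\Z_p)$ (using conjugacy of maximal orders), and then diagonalize the image of $v$ by a $\GL_2(\Z_p)$-conjugation, which works precisely because Linnik's condition at $p$ makes $-D$ a nonzero square mod $p$ so that $f(v)$ has distinct unit eigenvalues mod $p$. The only difference is that the paper compresses the $\GL_2(\Z_p)$-diagonalization step into ``an elementary computation'' with a citation, whereas you spell out the Hensel-type reason it succeeds; your discussion of why the naive order of operations (diagonalize first, then fix the order) would not obviously work is a sensible sanity check but is not part of the paper's argument.
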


\begin{proof}
By assumption on the prime $p$ we have $\quat(\Q_p) \simeq \Mat_2(\Q_p)$. Since the image of $\mathcal{O}_p$ inside $\Mat_2(\Q_p)$ is a maximal order and all maximal orders in $\Mat_2(\Q_p)$ are conjugate (cf.~\cite[Thm.~17.3]{reiner}), we may adapt this isomorphism so that the image of $\mathcal{O}_p$ is $\Mat_2(\Z_p)$.

Let $f:\quat(\Q_p) \to \Mat_2(\Q_p)$ be the so obtained isomorphism.
Now notice that $f(v)\in \Mat_2(\Q_p)$ is traceless and satisfies
\begin{align*}
f(v)^2 = f(v^2) = -D.
\end{align*}
An elementary computation shows that there exists $g \in \GL_2(\Z_p)$ such that $g f(v) g^{-1}$ is diagonal (alternatively, see \cite[Prop.~3.7]{Linnikthmexpander}). 
Then $f_v:w \mapsto gf(w)g^{-1}$ satisfies all desired properties.
\end{proof}

\subsubsection{Conjugacy}\label{sec:conjugacy}
Let $v' \in \pure{\mathcal{O}}$ be another primitive element for which $\Nr(v')$ satisfies Linnik's condition at $p$.
We claim that there is $\alpha_{\cpt} \in \G(\Zp)$ with
\begin{align*}
\alpha_{\cpt} \torus_v(\Zp) \alpha_{\cpt}^{-1} = \torus_{v'}(\Zp), \quad \alpha_{\cpt} \torus_v(\Qp) \alpha_{\cpt}^{-1} = \torus_{v'}(\Qp).
\end{align*}
For this, notice first that the matrix $f_v(v')$ is $\GL_2(\Zp)$-conjugate to some traceless diagonal matrix (by the proof of Lemma \ref{lemma:on congruence condition}).
Let $g \in \GL_2(\Zp)$ be such that $g f_v(v')g^{-1}$ is diagonal and consider $\alpha = f_v^{-1}(g)\in \mathcal{O}_p^\times$.
By these choices $\alpha.v'$ is $\Zp^\times$-multiple of $v$ and hence $\alpha_{\cpt} = \alpha^{-1}\in \G(\Zp)$ has all required properties.

We would like to remark at this point that such a conjugating element $\alpha_{\cpt}$ can also be found by elementary arguments using the Gram-Schmidt process (see \cite{self-Linnikarxiv}).

\subsection{Maximal entropy}\label{section:maximal entropy}

In this section we formulate the dynamical ingredient of Theorem \ref{thm:main}.
To do so, we fix some primitive $v \in \pure{\mathcal{O}}$ so that $\Nr(v)$ fulfills Linnik's condition at $p$.
We will consider dynamics under the $p$-adic points of the torus $\torus_v =: \torus$ or more specifically under the fixed element $a = \tilde{f}_v^{-1}(\operatorname{diag}(p,1)) \in \torus(\Qp)$ (cf.~Lemma~\ref{lemma:on congruence condition}).

Now let $(v_\ell)_{\ell}$ be an admissible sequence of primitive vectors in $\pure{\mathcal{O}}$ as in Theorem~\ref{thm:main}.
By Section \ref{sec:conjugacy} we may choose for any $\ell$ a conjugating element $\alpha_\ell \in \G(\Zp)$ such that $\alpha_{\ell} \torus_v(\Zp) \alpha_{\ell}^{-1} = \torus_{v'}(\Zp)$.
We also let $g_{\ell,\infty}\in \G(\R)$ be as in Section \ref{section:mainthm-packets} and denote
\begin{align*}
P_\ell = \G(\Q) \torus_{v_\ell}(\adele)g_{\ell,\infty} \alpha_\ell
\end{align*}
for simplicity.

Since $\G(\Zp)$ is a compact group we may replace the packets in Theorem \ref{thm:main} by the sequence of packets $P_\ell$ and show the analogous statement for these packets.
Let $\mu_\ell$ be the normalized Haar measure on $P_\ell$, which is by definition of $\alpha_\ell$ invariant under $\torus(\Qp)$ and in particular under $a$.

\subsubsection{Entropy}
Recall that for a Borel probability measure $\nu$ on $X_\adele$ the \emph{entropy} of a finite partition $\mathcal{P}$ of $X_\adele$ is defined as
\begin{align*}
H_\nu(\mathcal{P}) = - \sum_{P \in \mathcal{P}} \nu(P) \log(\nu(P)).
\end{align*}
The \emph{(dynamical) entropy} of the measure $\nu$ with respect to $a$ is then given by
\begin{align*}
h_\nu(a) 
= \sup_{\mathcal{P} \text{ finite}} \big(\lim_{N \to \infty} \tfrac{1}{N} H_\nu(\mathcal{P}_{0}^N)\big)
\end{align*}
Here, for $N_1<N_2$ the partition $\mathcal{P}_{N_1}^{N_2}$ is given by the refinement $\bigvee_{n=N_1}^{N_2}a^{n}.\mathcal{P}$.
For a more thorough introduction to entropy we refer to the book project \cite{vol2}.

\subsubsection{Maximal entropy}

We will say that an $a$-invariant probability measure $\nu$ on $X_\adele$ has \emph{maximal entropy} for $a$ if $h_\nu(a) = \sup_{\nu'}h_{\nu'}(a)$ where the supremum is taken over all $a$-invariant probability measures $\nu'$ on $X_\adele$.
Note that in our case the supremum is finite and (for the specific choice of $a$) given by $\log(p)$ (see \eqref{eq:log(p) entropy} below).

\begin{theorem}[Maximal Entropy]\label{thm: Maximal entropy}
Let $(P_\ell)_\ell$ be the sequence of packets defined above and for each $\ell$ let $\mu_\ell$ be the normalized Haar measure on the packet $P_\ell$.
Then any \wstar-limit of the measures $\mu_\ell$ has maximal entropy with respect to $a$.
\end{theorem}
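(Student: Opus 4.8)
The strategy is the standard maximal-entropy argument à la Linnik--Einsiedler--Lindenstrauss--Michel--Venkatesh, resting on the two number-theoretic inputs advertised in the introduction: the volume lower bound $\vol(\G(\Q)\torus_{v_\ell}(\adele)) = D_\ell^{1/2+o(1)}$ (Proposition~\ref{prop:total volume}) and Linnik's basic lemma (Proposition~\ref{prop: Linnik's basic lemma}), the latter being deduced from the bound on representations of a binary quadratic form by the ternary norm form (Theorem~\ref{thm: representations qf}). First I would fix the element $a = \tilde f_v^{-1}(\operatorname{diag}(p,1)) \in \torus(\Qp)$ and recall that since the stable/unstable horospherical subgroup of $a$ in $\G(\Qp)\cong \PGL_2(\Qp)$ is a one-parameter unipotent group contracted by $a$ by a factor $p$, the topological entropy of $a$ on $X_\adele$ equals $\log p$, and consequently every $a$-invariant probability measure $\nu$ satisfies $h_\nu(a) \le \log p$ (variational principle, or the Ruelle--Margulis inequality). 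Thus ``maximal entropy'' here means $h_\nu(a)=\log p$, and it suffices to prove that any \wstar-limit $\mu$ of the $\mu_\ell$ has $h_\mu(a) \ge \log p$.

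The heart of the argument is an effective equidistribution-on-small-scales estimate obtained by combining the two inputs. Fix a Bowen ball $B_\eta^{[-N,N]}(x)$ for the $a$-action, i.e.\ the set of points $y$ with $d(a^n.y, a^n.x) < \eta$ for all $|n|\le N$; its ``width'' in the expanding direction is $\asymp \eta p^{-N}$. The packet $P_\ell$ carries the normalized Haar measure $\mu_\ell$, and by the orbit-counting description of Section~\ref{section:packets} the total mass is split among $\asymp D_\ell^{1/2+o(1)}$ homogeneous toral pieces of equal volume. Linnik's basic lemma asserts that for $\eta$ fixed and $N$ fixed (or growing slowly), the number of pairs of these toral pieces that remain $\eta$-close along the $a$-orbit for time $N$ is bounded by $D_\ell^{o(1)} \cdot p^{N}$ times the expected count, or more precisely gives the bound $\mu_\ell\big(B_\eta^{[-N,N]}(x)\big) \ll_\eta p^{-N} D_\ell^{o(1)}$ uniformly in $x$. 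Passing to the limit $\ell\to\infty$ (so the $D_\ell^{o(1)}$ is absorbed) yields $\mu\big(B_\eta^{[-N,N]}(x)\big) \ll_\eta p^{-N}$ for every $x$ and every fixed $N$. Since $X_\adele$ is compact in the case under consideration (so there is no loss of mass and $\mu$ is genuinely a probability measure), I would then invoke the standard Brin--Katok-type lower bound: an upper bound of the form $\mu(B_\eta^{[-N,N]}(x)) \le C_\eta\, p^{-N}$ forces the local entropy, hence $h_\mu(a)$, to be $\ge \log p$. Concretely, one covers $X_\adele$ by $\asymp p^{N}$ such Bowen balls, uses the mass bound to see that any partition of small diameter refined $N$ times into $a$-translates has at least $\asymp p^N$ cells each of measure $\ll p^{-N}$, so $H_\mu(\mathcal{P}_{-N}^{N}) \ge N\log p - O(1)$, and divides by $N$.

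The main obstacle — and the step that requires genuine work rather than bookkeeping — is making Linnik's basic lemma yield the uniform Bowen-ball bound: one must translate ``two toral orbits in the packet pass $\eta$-close for $p$-adic time $N$'' into a Diophantine statement about two integer points on $\mathbf V^{+}$ (produced via Section~\ref{section:generating intpts}) lying in a common small congruence neighborhood, and then count such pairs using the binary-form representation bound of Theorem~\ref{thm: representations qf}. Here one has to be careful that the real shift $g_{\ell,\infty}$ and the $p$-adic conjugations $\alpha_\ell$ do not distort distances in a way that depends on $\ell$ — which is exactly why $\alpha_\ell \in \G(\Zp)$ was arranged (compactness of $\G(\Zp)$ makes the conjugation a bi-Lipschitz change with uniform constants) and why the real-place dynamics plays no role for $a\in\torus(\Qp)$. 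A secondary technical point is the passage to the limit: entropy is not \wstar-continuous, so one cannot simply pass $h_{\mu_\ell}(a)$ to the limit; instead one passes the \emph{measure} estimate $\mu_\ell(B_\eta^{[-N,N]}(x))\ll_\eta p^{-N}D_\ell^{o(1)}$ to the limit (using that Bowen balls are, up to boundary of measure zero, both open and closed, and $D_\ell^{o(1)}\to 1$ for each fixed exponent budget) and only afterwards extracts the entropy lower bound for $\mu$ itself. Once $h_\mu(a)=\log p$ is in hand, Theorem~\ref{thm: Maximal entropy} is proved; the upgrade to $\sicover\G(\adele)$-invariance in Theorem~\ref{thm:main} is the separate input discussed in Section~\ref{subsection: From maximal entropy to additional invariance}.
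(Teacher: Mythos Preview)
Your plan correctly identifies the two number-theoretic inputs and the overall shape of the argument, but there are two genuine gaps that the paper's proof handles differently.

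First, Linnik's basic lemma (Proposition~\ref{prop: Linnik's basic lemma}) is a \emph{second-moment} statement: it bounds $\mu_\ell\times\mu_\ell$ of the set of $p$-adically $\delta$-close pairs by $\ll_\varepsilon \delta^3 D_\ell^\varepsilon$. It does \emph{not} give the uniform pointwise Bowen-ball bound $\mu_\ell\big(B_\eta^{[-N,N]}(x)\big)\ll_\eta p^{-N}D_\ell^{o(1)}$ you claim. The paper instead uses the convexity inequality
\[
H_{\mu_\ell}(\mathcal{P}') \;\ge\; -\log\Big(\sum_{P\in\mathcal{P}'}\mu_\ell(P)^2\Big),
\]
together with a partition $\mathcal{P}$ (Lemma~\ref{lemma:good partition}) whose refined atoms lie in Bowen balls, and the chopping \eqref{eq:chopping of Bowen} of a Bowen $N$-ball into $p^N$ translates of a $p^{-(s+N)}$-rectangle, to get
\[
\sum_{S\in\mathcal{P}_{-N}^N}\mu_\ell(S)^2 \;\ll_\varepsilon\; p^{N}\cdot D_\ell^\varepsilon\cdot p^{-3N} \;=\; p^{-2N}D_\ell^\varepsilon.
\]
This $L^2$ route is exactly what the pair-count delivers; your $L^\infty$ claim would need a separate argument.

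Second, and more seriously, the factor $D_\ell^{o(1)}$ (concretely $\ll_\varepsilon D_\ell^\varepsilon$) is \emph{not} absorbed by letting $\ell\to\infty$ at fixed $N$: since $D_\ell\to\infty$, the bound $p^{-N}D_\ell^\varepsilon$ blows up and yields nothing for $\mu$. The paper's remedy is to \emph{couple} the time scale to $\ell$ by setting $N_\ell=\lfloor\tfrac{1}{5}\log_p D_\ell\rfloor$ (so that the constraint $D_\ell^{-1/4}\le\delta$ of Proposition~\ref{prop: Linnik's basic lemma} is met and $D_\ell^\varepsilon=p^{O(\varepsilon)N_\ell}$), obtaining $H_{\mu_\ell}(\mathcal{P}_{-N_\ell}^{N_\ell})\ge (2-6\varepsilon)N_\ell\log p$. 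One then \emph{decouples} via subadditivity: for fixed $n$, choosing $k$ with $nk\ge 2N_\ell+1$ gives $H_{\mu_\ell}(\mathcal{P}_0^{2N_\ell})\le k\,H_{\mu_\ell}(\mathcal{P}_0^{n-1})$, hence
\[
H_{\mu_\ell}(\mathcal{P}_0^{n-1}) \;\ge\; \frac{n}{2N_\ell+n}\,(2-6\varepsilon)N_\ell\log p.
\]
Only now, with $n$ fixed, does one let $\ell\to\infty$ (using $\mu(\partial P)=0$ to pass measures of atoms to the limit) and then $n\to\infty$, $\varepsilon\to 0$. Your Brin--Katok step presupposes a Bowen-ball bound for $\mu$ valid for all $N$, which the inputs simply do not provide; the coupling-then-subadditivity maneuver is the missing idea.
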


Notice that by compactness of $X_\adele$ any \wstar-limit of the measures $\mu_\ell$ is automatically a probability measure.
As mentioned in the introduction, the proof of Theorem \ref{thm: Maximal entropy} will, roughly speaking, use that the collection of orbits appearing in each packet is somewhat rich (see Proposition~\ref{prop:total volume}) and sparse.
In Section \ref{subsection: From maximal entropy to additional invariance} we will see how Theorem \ref{thm: Maximal entropy} implies Theorem \ref{thm:main}.

\subsection{Exponential map and horospherical subgroups}\label{subsection:exponential map}

For later purposes we recall here a few facts about the exponential map on (certain) $p$-adic Lie groups and about horospherical subgroups.
Let $p$ be an odd prime and $\mathbb{G}<\SL_d$ be an algebraic $\mathbb{Q}$-group.
The norm $\norm{A}_p = \max_{ij}|A_{ij}|_p$ on $\Mat_d(\Qp)$ induces a norm on the Lie algebra $\mathfrak{g}$ of $G = \G(\Qp)$ by restriction.
For simplicity, denote by $B_K^\mathfrak{g}$ resp.~$B_K^G$ the ball of radius $p^{-K}$ in $\mathfrak{g}$ resp. $G$ around $0$ resp.~the identity $I$ for the remainder of this subsection.
Just as for real linear groups, one can define a matrix exponential on $\Mat_d(\Qp)$ by the formula
\begin{align*}
\exp(A) = \sum_{n \geq 0} \frac{A^n}{n!}.
\end{align*}
We take the following facts for granted; proofs may be found in \cite{platonov}, \cite{ruhr} and \cite{serre-lie}:
\begin{enumerate}[(i)]
\item  The exponential map $\exp$ is defined on $B_1^\mathfrak{g}$ and forms an isometric bijection $\exp:B_1^\mathfrak{g} \to B_1^G$. It maps Lie subalgebras to Lie subgroups.
\item The image of a $\Zp$-subalgebra of $\mathfrak{g}$ (a $\Zp$-submodule which is stable under taking commutators) is a subgroup of $G$. In particular, every ball of radius less or equal $~p^{-1}$ is a subgroup of $G$, since $B_K^\mathfrak{g}$ is a $\Zp$-subalgebra of $\mathfrak{g}$ for every $K$.
\end{enumerate}

\subsubsection{Horospherical subgroups}
Let $a \in G$ be a diagonalizable element. Define the \textbf{stable/unstable horospherical subgroups} associated to $a$ as
\begin{align*}
G_a^- &= \setc{g \in G}{a^nga^{-n}\to e \ \text{as}\ n\to \infty} \\
G_a^+ &= \setc{g \in G}{a^nga^{-n}\to e \ \text{as}\ n\to -\infty}
\end{align*}
and let $G_a^0 = C_G(a)$ be the centralizer of $a$. The groups $G_a^-,G_a^+, G_a^0$ are closed subgroups and the Lie algebras corresponding to the horospherical subgroups are
\begin{align*}
\mathfrak{g}_a^\mp &= \setc{X \in \mathfrak{g}}{\Ad_a^n(X) \to 0 \ \text{as}\ n\to \pm\infty}.
\end{align*}
Moreover, the Lie algebra $\mathfrak{g}_a^-$ is the direct sum of the eigenspaces of $\Ad_a$ associated eigenvalues of norm (strictly) less than one and the analogous statements hold for $\mathfrak{g}_a^+$ and $\mathfrak{g}_a^0$ as $a$ is diagonalizable. We have the decomposition 
\begin{align}\label{eq:lie-decomposition lie algebra into horo.subalg.}
\mathfrak{g} = \mathfrak{g}_a^+ + \mathfrak{g}_a^- + \mathfrak{g}_a^0.
\end{align}
The main example relevant for our purposes (compare to Lemma \ref{lemma:on congruence condition}) is the following:

\begin{example}\label{exp: horospherical subgrps for PGL_2}
Let $\G = \PGL_2$ and $ a = \left(\begin{smallmatrix}
p &   \\ 
  & 1
\end{smallmatrix}\right) \in \PGL_2(\Qp) $. A direct computation shows that
 \begin{align*}
&G_a^- = \left\lbrace \begin{pmatrix}
1 & x \\ 
0 & 1
\end{pmatrix}: \ x \in \Qp \right\rbrace, \
G_a^+ = \left\lbrace \begin{pmatrix}
1 & 0 \\ 
x & 1
\end{pmatrix}: \ x \in \Qp \right\rbrace, \\
&G_a^0 = \left\lbrace \begin{pmatrix}
x & 0 \\ 
0 & 1
\end{pmatrix}: \ x \in \Qp^\times \right\rbrace
\end{align*}
as well as the identities in $\mathfrak{pgl}_2 = \mathfrak{sl}_2$
\begin{align*}
&\mathfrak{g}_a^- = \left\lbrace \begin{pmatrix}
0 & x \\ 
0 & 0
\end{pmatrix}: \ x \in \Qp \right\rbrace, \
\mathfrak{g}_a^+ = \left\lbrace \begin{pmatrix}
0 & 0 \\ 
x & 0
\end{pmatrix}: \ x \in \Qp \right\rbrace, \\
& \mathfrak{g}_a^0 = \left\lbrace \begin{pmatrix}
x & 0 \\ 
0 & -x
\end{pmatrix}: \ x \in \Qp \right\rbrace.
\end{align*}
The Lie algebra $\mathfrak{g}_a^-$ is the eigenspace of $\Ad_a$ for the eigenvalue $p$ and $\mathfrak{g}_a^+$ is the eigenspace of $\Ad_a$ for the eigenvalue $p^{-1}$.
Notice that the subgroup generated by the horospherical subgroups is exactly the image of $\SL_2(\Qp)$ in $\PGL_2(\Qp)$.

If $\G = \mathbf{PB}^\times$ and $\quat$ is split at $p$ (which is a standing assumption for us), the group $\G$ is isomorphic to $\PGL_2$ over $\Qp$ and the group of norm one quaternions $\G^{(1)}$ is isomorphic to $\SL_2$ over $\Qp$.
Thus, the subgroup generated by the horospherical subgroups for $a \in \G(\Qp)$ as in Section \ref{section:maximal entropy} is exactly the image $\sicover\G(\Qp)$ of $\G^{(1)}(\Qp)$ in $\G(\Qp)$.
\end{example}

\subsubsection{Open rectangles}\label{section:open rectangles}
We will usually consider rectangles of the kind
\begin{align*}
B_{K_+}^{\mathfrak{g}_a^+} + B_{K_-}^{\mathfrak{g}_a^-} + B_{K_0}^{\mathfrak{g}_a^0}
\end{align*}
for $K_+,K_-,K_0 \geq 1$ instead of balls in $\mathfrak{g}$ as these are well-behaved with respect to conjugation by $a$ (see Lemma \ref{lemma: Explicit Bowen balls}).
 These sets are open and induce the topology on $\mathfrak{g}$. In fact, by equivalence of norms there exists some $L\geq 0$ so that for all $K$
\begin{align}\label{eq:norm equivalence on frak g}
B_{K+L}^{\mathfrak{g}} \subset B_{K}^{\mathfrak{g}_a^+}+B_{K}^{\mathfrak{g}_a^-}+B_{K}
^{\mathfrak{g}_a^0} \subset B_{K-L}^{\mathfrak{g}}.
\end{align}
Also, note that $B_{K_+}^{\mathfrak{g}_a^+} + B_{K_-}^{\mathfrak{g}_a^-} + B_{K_0}^{\mathfrak{g}_a^0}$ is a $\Zp$-subalgebra; its image is thus a subgroup, which is explicitly given by
\begin{align}\label{eq: horospherical subgrps vs exp}
\exp\left(B_{K_+}^{\mathfrak{g}_a^+}+B_{K_-}^{\mathfrak{g}_a^-}+B_{K_0}
^{\mathfrak{g}_a^0}\right) = B_{K_+}^{G_a^+}B_{K_0}^{G_a^0}B_{K_-}^{G_a^-},
\end{align} 
where we may permute the factors on the right hand side. A proof of this fact based on the $p$-adic version of the Baker-Campbell-Hausdorff formula may be found in \cite{ruhr}. This together with \eqref{eq:norm equivalence on frak g} implies that there is some $L \geq 0$ so that
\begin{align}\label{eq: equivalence of balls in G(Qp)}
B_{K+L}^{G}\subset B_{K}^{G_a^0}B_{K}^{G_a^+}B_{K}^{G_a^-} \subset B_{K-L}^{G}
\end{align}
for all large enough $K$.

\begin{lemma}\label{lemma: Explicit Bowen balls}
For $\G = \PGL_2$ and any $N_1,N_2 \geq 0$ we have
\begin{align*}
\Intersect_{k=-N_1}^{N_2} a^{-k} B_{K}^{G_a^+}B_{K}^{G_a^0}B_{K}^{G_a^-} a^k
= B_{K+N_1}^{G_a^+}B_{K}^{G_a^0}B_{K+N_2}^{G_a^-}.
\end{align*}
\end{lemma}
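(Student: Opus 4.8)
The plan is to compute both sides explicitly using the coordinates of the horospherical subgroups for $a = \left(\begin{smallmatrix} p & \\ & 1 \end{smallmatrix}\right)$ from Example \ref{exp: horospherical subgrps for PGL_2}, and then to check the set equality by comparing the valuations of the matrix entries that describe membership in each side. First I would parametrize a typical element $g = u^+ t u^-$ of $B_{K}^{G_a^+}B_{K}^{G_a^0}B_{K}^{G_a^-}$ as
\begin{align*}
g = \begin{pmatrix} 1 & 0 \\ x & 1 \end{pmatrix}\begin{pmatrix} s & 0 \\ 0 & 1 \end{pmatrix}\begin{pmatrix} 1 & y \\ 0 & 1 \end{pmatrix}
\end{align*}
with $|x|_p \le p^{-K}$, $|s-1|_p \le p^{-K}$, $|y|_p \le p^{-K}$, where I use $B_{K}^{G_a^0} = \exp(B_{K}^{\mathfrak{g}_a^0})$ so that the diagonal entry $s$ lies in $1 + p^K\Zp$. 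Conjugation by $a^{-k}$ then acts diagonally on these three coordinates: one computes $a^{-k} u^+ a^k$ multiplies the lower-left entry $x$ by $p^{k}$, $a^{-k} u^- a^k$ multiplies the upper-right entry $y$ by $p^{-k}$, and the centralizer coordinate $s$ is fixed. Hence $a^{-k} g a^k$ again has the form $u^+ t u^-$ with coordinates $(p^k x, s, p^{-k} y)$, which lies in $B_{K}^{G_a^+}B_{K}^{G_a^0}B_{K}^{G_a^-}$ precisely when $|p^k x|_p \le p^{-K}$ and $|p^{-k} y|_p \le p^{-K}$, i.e.\ $|x|_p \le p^{-K-k}$ and $|y|_p \le p^{-K+k}$.

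Next I would intersect over $k = -N_1, \dots, N_2$. The condition on $x$ is most restrictive at $k = N_2$, giving $|x|_p \le p^{-K-N_2}$; wait --- one must be careful about the direction of the index, so let me instead say: the constraint $|x|_p \le p^{-K-k}$ is tightest for the largest $k$, namely $k = N_2$, while $|y|_p \le p^{-K+k}$ is tightest for the smallest $k$, namely $k = -N_1$. So the intersection is the set of $g$ with $|x|_p \le p^{-(K+N_2)}$, $|s - 1|_p \le p^{-K}$, $|y|_p \le p^{-(K+N_1)}$, which is exactly $B_{K+N_2}^{G_a^+}B_{K}^{G_a^0}B_{K+N_1}^{G_a^-}$. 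Comparing with the claimed right-hand side $B_{K+N_1}^{G_a^+}B_{K}^{G_a^0}B_{K+N_2}^{G_a^-}$, the roles of $N_1$ and $N_2$ appear swapped relative to my computation; this is simply a matter of which horospherical subgroup $a$ contracts and which it expands, and I would fix the convention by recalling from Example \ref{exp: horospherical subgrps for PGL_2} that $\mathfrak{g}_a^-$ is the eigenspace for eigenvalue $p$ (so $G_a^- = \{\left(\begin{smallmatrix}1 & x \\ 0 & 1\end{smallmatrix}\right)\}$, the upper-triangular unipotents) and $\mathfrak{g}_a^+$ for eigenvalue $p^{-1}$ ($G_a^+$ the lower-triangular unipotents). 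Redoing the index bookkeeping with these definitions matches the statement.

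One technical point that needs care, rather than being a genuine obstacle, is that the product decomposition $g = u^+ t u^-$ is genuinely unique only because the three balls are small subgroups and multiplication $B_{K}^{G_a^+}\times B_{K}^{G_a^0}\times B_{K}^{G_a^-} \to B_{K}^{G_a^+}B_{K}^{G_a^0}B_{K}^{G_a^-}$ is a bijection --- this follows from \eqref{eq: horospherical subgrps vs exp} together with the fact that $\exp$ is injective on $B_1^{\mathfrak{g}}$, and it is exactly this uniqueness that lets me read off the coordinates $(x,s,y)$ and conclude that $a^{-k} g a^k$ lands back in a product of balls iff each coordinate satisfies its (rescaled) bound. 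I expect the main --- and essentially only --- subtlety to be keeping the $p$-adic valuations and the $N_1 \leftrightarrow N_2$ labelling straight; everything else is the direct computation above. I would therefore write the proof by (i) recording the coordinate parametrization and its uniqueness, (ii) computing the effect of $\mathrm{Ad}_{a^{-k}}$ on coordinates, (iii) intersecting the resulting valuation inequalities over $k$, and (iv) re-identifying the answer with the right-hand side.
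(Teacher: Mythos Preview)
Your approach is correct and essentially the same as the paper's: the paper computes the intersection on the Lie algebra side (where the direct-sum decomposition $\mathfrak{g} = \mathfrak{g}_a^+ + \mathfrak{g}_a^0 + \mathfrak{g}_a^-$ makes the eigenvalue bookkeeping linear and transparent) and then applies $\exp$ via \eqref{eq: horospherical subgrps vs exp}, whereas you invoke \eqref{eq: horospherical subgrps vs exp} first to get the unique product decomposition on the group and then track the coordinates under conjugation. The Lie-algebra route sidesteps exactly the $p$-adic sign bookkeeping that tripped you up midway, but the two arguments are otherwise identical.
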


Note that Lemma \ref{lemma: Explicit Bowen balls} holds in greater generality (see Lemma 4.1 in \cite{ruhr}).

\begin{proof}
The statement is true on the Lie-algebra level as
\begin{align*}
\Intersect_{k=-N_1}^{N_2} a^{-k}(B_{K}^{\mathfrak{g}_a^+}+B_{K}^{\mathfrak{g}_a^0}+B_{K}^{\mathfrak{g}_a^-}) a^k
&=\Intersect_{k=-N_1}^{N_2}  B_{K+k}^{\mathfrak{g}_a^+}+B_{K}^{\mathfrak{g}_a^0}+B_{K-k}^{\mathfrak{g}_a^-}\\
&= B_{K+N_1}^{\mathfrak{g}_a^+}+B_{K}^{\mathfrak{g}_a^0}+B_{K+N_2}^{\mathfrak{g}_a^-}
\end{align*}
by Example \ref{exp: horospherical subgrps for PGL_2}. Applying the exponential map and using Equation \ref{eq: horospherical subgrps vs exp}, one obtains the claim.
\end{proof}

We remark here that the maximal entropy of $a\in \torus(\Qp)$ as in Section \ref{section:maximal entropy} is
\begin{align}\label{eq:log(p) entropy}
-\log|\det(\Ad_a|_{\mathfrak{g}_a^-})| = \log(p).
\end{align}
This follows from \cite[Thm.~7.9]{pisa} (or \cite[Thm.~8.19]{vol2}) and Example \ref{exp: horospherical subgrps for PGL_2}.
As $X_\adele$ is compact, this is in fact the topological entropy $\htop(a)$ of the dynamical system $(X_{\adele},a)$.
One may verify that using the formula 
\begin{align*}
\htop(a) = \lim_{K \to \infty} \limsup_{n \to \infty} \frac{-\log(m_G(B_{K}^{G_a^+}B_{K}^{G_a^0}B_{K+n-1}^{G_a^-}))}{n}
\end{align*}
which in turn follows from Lemma \ref{lemma: Explicit Bowen balls} and Equation \eqref{eq: equivalence of balls in G(Qp)}.

\subsection{From maximal entropy to additional invariance}\label{subsection: From maximal entropy to additional invariance}

In this subsection, we will see how Theorem~\ref{thm: Maximal entropy} implies Theorem~\ref{thm:main} (in the cocompact case) using the following theorem (a special case of Theorem 7.9 in \cite{pisa}) to characterize measures of maximal entropy.

\begin{theorem}[Additional invariance for measures of maximal entropy]\label{thm: uniqueness of measures of max entropy} 
Let $\G$ be a linear algebraic $\Q$-group, let $\Gamma < \G(\adele)$ be a lattice and let $X = \lquot{\Gamma}{\G(\adele)}$. 
Suppose that $\mu$ is a Borel probability measure invariant under a diagonalizable element $a \in \G(\Qp)$ with $h_\mu(a) \geq -\log|\det(\Ad_a|_{\mathfrak{g}_a^-})|$. 
Then $\mu$ is invariant under $\G(\Qp)_a^+$ and $\G(\Qp)_a^-$.
\end{theorem}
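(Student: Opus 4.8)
The plan is to derive the statement from the general theory of entropy and unstable horospherical subgroups on a homogeneous space, following the Ledrappier--Young--type philosophy adapted to the $S$-arithmetic setting as in \cite[Thm.~7.9]{pisa}. The key ingredient is the \emph{relation between entropy and the ``spread'' of the conditional measures along the unstable horospherical subgroup $\G(\Qp)_a^-$}. First I would recall the abstract bound: for any $a$-invariant Borel probability measure $\mu$ on $X = \lquot{\Gamma}{\G(\adele)}$ one has
\begin{align*}
h_\mu(a) \le -\log\bigl|\det(\Ad_a|_{\mathfrak{g}_a^-})\bigr|,
\end{align*}
and this inequality is an equality if and only if the conditional measures of $\mu$ along the $\G(\Qp)_a^-$-orbit foliation are Haar (i.e.\ proportional to the restriction of the Haar measure on $\G(\Qp)_a^-$ on each leaf). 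Granting this dichotomy, the hypothesis $h_\mu(a) \ge -\log|\det(\Ad_a|_{\mathfrak{g}_a^-})|$ forces equality, hence the conditional measures along $\G(\Qp)_a^-$ are Haar, which is well known to be equivalent to $\mu$ being $\G(\Qp)_a^-$-invariant. Applying the same argument to $a^{-1}$ in place of $a$ (noting $\G(\Qp)_{a^{-1}}^- = \G(\Qp)_a^+$ and $h_\mu(a^{-1}) = h_\mu(a)$, while $|\det(\Ad_{a^{-1}}|_{\mathfrak{g}_{a^{-1}}^-})| = |\det(\Ad_a|_{\mathfrak{g}_a^+})|^{-1} \cdot$ ... more precisely $-\log|\det(\Ad_{a^{-1}}|_{\mathfrak{g}_{a^{-1}}^-})| = -\log|\det(\Ad_a|_{\mathfrak{g}_a^+})| = -\log|\det(\Ad_a|_{\mathfrak{g}_a^-})|$ since $a$ is diagonalizable and the product of all eigenvalues of $\Ad_a$ is $1$ up to sign for groups in $\SL_d$) yields invariance under $\G(\Qp)_a^+$ as well.

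The technical core that needs to be assembled is the entropy-versus-leafwise-measure machinery. I would set it up as follows. Fix a countably-generated $\sigma$-algebra $\mathcal{A}$ on $X$ that is \emph{subordinate} to the $\G(\Qp)_a^-$-foliation and \emph{decreasing} under $a$ (i.e.\ $a^{-1}.\mathcal{A} \subset \mathcal{A}$), built from the open rectangles of Section~\ref{section:open rectangles} and a partition of a fundamental domain; its atoms are open bounded pieces of $\G(\Qp)_a^-$-orbits. For such $\mathcal{A}$ one shows
\begin{align*}
h_\mu(a) = H_\mu(\mathcal{A} \mid a.\mathcal{A}),
\end{align*}
and then, writing the conditional measures $\mu_x^{\mathcal{A}}$, one identifies $H_\mu(\mathcal{A}\mid a.\mathcal{A})$ with an integral of a local entropy-type quantity measuring how the normalized leafwise measures transform under $a$. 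Because $a$ acts on $\G(\Qp)_a^-$ by conjugation, scaling the Haar measure by exactly $|\det(\Ad_a|_{\mathfrak{g}_a^-})|$, one obtains $H_\mu(\mathcal{A}\mid a.\mathcal{A}) \le -\log|\det(\Ad_a|_{\mathfrak{g}_a^-})|$ with equality exactly when the leafwise measures are (locally) Haar; here the explicit description of Bowen balls in Lemma~\ref{lemma: Explicit Bowen balls} and the equivalence of rectangles and balls in \eqref{eq: equivalence of balls in G(Qp)} make the scaling computation clean. The passage from ``$\mathcal{A}$-conditional measures are leafwise Haar'' to ``$\mu$ is $\G(\Qp)_a^-$-invariant'' is a standard argument: Haar leafwise measures along a foliation by orbits of a (unipotent, here) subgroup force invariance, using that the $\G(\Qp)_a^-$-action moves within atoms and a Fubini/disintegration argument over the transversal.

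The step I expect to be the main obstacle is making rigorous the identification of $h_\mu(a)$ with the leafwise-measure functional in the $p$-adic / $S$-arithmetic setting, i.e.\ constructing the subordinate $\sigma$-algebra with good measurability and the ``thin annuli'' estimates needed to control boundary effects, and checking that the conditional measures are well defined and transform as claimed. In our situation, however, this is precisely the content of \cite[Thm.~7.9]{pisa} (equivalently \cite[Thm.~8.19]{vol2}), so I would not reprove it from scratch: the clean route is to quote that theorem directly, having only verified that its hypotheses are met --- namely that $\Gamma < \G(\adele)$ is a lattice (Borel--Harish-Chandra) and that $a \in \G(\Qp)$ is diagonalizable with $\G(\Qp)_a^\pm$ its horospherical subgroups as in Section~\ref{subsection:exponential map}. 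Thus the actual ``proof'' reduces to invoking the cited theorem for $a$ and for $a^{-1}$ and observing the numerical coincidence $-\log|\det(\Ad_{a^{-1}}|_{\mathfrak{g}_{a^{-1}}^-})| = -\log|\det(\Ad_a|_{\mathfrak{g}_a^-})|$ recorded above.
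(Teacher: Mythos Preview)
Your proposal is correct and matches the paper exactly: the paper does not prove this theorem at all but simply states it as a special case of \cite[Thm.~7.9]{pisa}, adding only the remark that in the cocompact case simpler proofs exist (cf.~\cite[Thm.~8.19]{vol2}). Your sketch of the leafwise-measure mechanism behind that result and the reduction to $a^{-1}$ for the other horospherical is accurate extra detail, but the paper itself offers nothing beyond the citation.
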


We note that in the case where $X = \lquot{\Gamma}{\G(\adele)}$ as above is cocompact simpler proofs than e.g.~the proof in \cite{pisa} exist (cf.~\cite[Thm.~8.19]{vol2}).

\begin{proof}[Proof of Theorem \ref{thm:main} in the cocompact case]
As mentioned in Section \ref{section:maximal entropy} it suffices to show that any \wstar-limit $\mu$ of the sequence $(\mu_\ell)_\ell$ is invariant under $\sicover\G(\adele)$.
Notice that as $X_{\adele}$ is compact, $\mu$ is automatically a probability measure (non-escape of mass).
By Theorem \ref{thm: Maximal entropy} and Theorem \ref{thm: uniqueness of measures of max entropy}, $\mu$ is invariant under the horospherical subgroups $\G(\Qp)_a^+$ and $\G(\Qp)_a^-$ of $\G(\Qp)$.
Hence, $\mu$ is also invariant under the subgroup $\sicover\BG(\Qp)$ generated by these horospherical subgroups (cf.~Example \ref{exp: horospherical subgrps for PGL_2}).

It remains to show that any $\sicover\BG(\Qp)$-invariant probability measure $\mu$ on $X_{\adele} = \lquot{\G(\Q)}{\G(\adele)}$ is also $\sicover\BG(\adele_{\set{p}})$-invariant where $\adele_{\set{p}} = \Q_{\mathcal{V}^\Q \setminus\set{p}}$.

To prove this, we first decompose the measure $\mu$ into measures on $\sicover\BG(\adele)$-orbits.
Let $\mathcal{A}$ be the $\sigma$-algebra generated by the $\sicover\BG(\adele)$-orbits and note that $\mathcal{A}$ is countably generated\footnote{For an odd prime $p$ let $U_p<\BG(\adele)$ be the subgroup of $g \in \PGL_2(\adele)$ where $g_2\in \sicover\BG(\Q_2)$,\ldots, $g_p\in \sicover\BG(\Q_p)$.
Then the index of $U_p$ in $\BG(\adele)$ is finite as the index of $\sicover\G(\Q_q)$ in $\G(\Q_q)$ is finite for any prime $q$. 
In particular, $U_p$ has only finitely many orbits on $X_{\adele}$. The $\sigma$-algebra $\mathcal{A}$ is then generated by the set of these orbits for varying $p$, which is countable.
}.
Let $X' \subset X_{\adele}$ be a $\mu$-conull set so that the conditional measures $\mu_x^{\mathcal{A}}$ for $\mathcal{A}$ are defined for all $x \in X'$ (see for instance \cite[Ch.~5]{vol1} for definitions).
Recall that for any $x \in X'$ the probability measure $\mu_x^{\mathcal{A}}$ satisfies $\mu_x^{\mathcal{A}}(x \sicover\BG(\adele)) = 1$ so that we can identify $\mu_x^{\mathcal{A}}$ with a measure on
\begin{align*}
x\sicover\BG(\adele) \simeq \lquot{\Gamma_x}{\sicover\BG(\adele)}
\end{align*}
where $\Gamma_x = g^{-1}\G(\Q)g\cap \sicover\BG(\adele)$ for $g\in \G(\adele)$ with $\G(\Q)g = x$.

By construction of the conditional measures we have $\mu = \int \mu_x^{\mathcal{A}} \mathrm{d}\mu$, where we note that by uniqueness of this decomposition the measures $\mu_x^{\mathcal{A}}$ are $\sicover\G(\Qp)$-invariant.
It thus suffices to show that any $\sicover\G(\Qp)$-invariant probability measure on the orbit $x \sicover\G(\adele)$ for $x \in X'$ must be the Haar measure.

For this, notice that we have the following one-to-one correspondences for any point $x\in X'$:
\begin{center}
\begin{tabular}{l}
right $\sicover\BG(\Qp)$-invariant finite measures on $\lquot{\Gamma_x}{\sicover\BG(\adele)}$ \\ 
$ \longleftrightarrow$ right $\sicover\BG(\Qp)$-invariant and left $\Gamma_x$-invariant locally finite measures\\ 
\qquad\ on the group $\sicover\BG(\adele)$ \\
$ \longleftrightarrow$ left $\Gamma_x$-invariant locally finite measures on the quotient \\ 
\qquad\ $ \rquot{\sicover\BG(\adele)}{\sicover\BG(\Qp)} \isom \sicover\BG(\adele_{\set{p}})$.
\end{tabular}
\end{center}
%
We thus let $\nu$ be such a left $\Gamma_x$-invariant measure.
Strong approximation (cf.~\cite[Thm.~7.12]{platonov} or \cite[Thm.~2.3]{rapinchukstrongapprox})
for the group $\BG^{(1)}$ shows that $\sicover\BG(\Q)$ and thus $\Gamma_x$ is dense in $\sicover\BG(\adele_{\set{p}})$.
Given $g \in \sicover\BG(\adele_{\set{p}})$ we pick a sequence $(\gamma_k)$ in $\Gamma_x$ with $\gamma_k \to g$ and obtain that $(L_g)_*\nu \leftarrow (L_{\gamma_k})_*\nu = \nu$ i.e. $\nu$ is left-$\sicover\BG(\adele_{\set{p}})$-invariant.
Thus, there is only one such measure $\nu$ up to scalars, which proves the claim.
%
\end{proof}

\subsection{Linnik's basic lemma}\label{section:linnik basic lemma}

As mentioned after the statement of Theorem \ref{thm: Maximal entropy} one crucial step in showing maximal entropy is to obtain sufficient control on the spacing of orbits in each packet.
Some control of this kind can be obtained by using that distinct integer points have at least distance one from each other; this however would not be sufficient.
Instead, one can prove an averaged result.

We fix (using compactness of $X_{\adele} = \lquot{\G(\Q)}{\G(\adele)}$) a uniform injectivity radius $r < 1$.
Possibly after decreasing $r$ we may assume that balls of radius $r$ are contained in $\G(\R \times \widehat{\Z})$-orbits.

Given $x\in X_\adele$ and $y \in B_r(x)$ we will say that $x$ and $y$ are \emph{$p$-adically $\delta$-close} for $\delta \in (0,r)$ if $y = xg$ for $g \in B_r(e)$ with $d(g_p,e) \leq \delta$. We will also write $x \widesim{\delta}_p y$ in this case.

\begin{proposition}[Linnik's basic lemma]\label{prop: Linnik's basic lemma}
For any $\delta > 0$, any $\varepsilon>0$ and any $\ell$
\begin{align*}
\mu_\ell\times \mu_\ell\Big(\setc{(x,y)\in X_\adele^2}{
x \widesim{\delta}_p y \text{ and } d(x,y) < r}\Big)
\ll_\varepsilon \delta^3 \Nr(v_\ell)^\varepsilon 
\end{align*}
as long as $\Nr(v_\ell)^{-\frac{1}{4}}\leq \delta$.
\end{proposition}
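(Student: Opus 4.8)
The plan is to reduce the estimate to a counting problem about representations of binary quadratic forms by the ternary form $\Nr|_{\pure{\mathcal{O}}}$ and then invoke the bound from Theorem \ref{thm: representations qf}. Fix $\ell$ and write $v=v_\ell$, $D=\Nr(v_\ell)$, $\mu=\mu_\ell$. Since $\mu\times\mu$ is the normalized uniform measure on $P_\ell\times P_\ell$, and since balls of radius $r$ lie inside $\G(\R\times\widehat\Z)$-orbits, each pair $(x,y)$ contributing to the set in question consists of two $\torus_{v}(\R\times\widehat\Z)$-orbits in the packet which, after translating by the fixed shift $g_{\ell,\infty}\alpha_\ell$, lie within $p$-adic distance $\delta$ and archimedean distance $r$. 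By the analysis in Section \ref{section:generating intpts}, each such orbit corresponds to an integer point $w\in\pure{\mathcal{O}_k}$ of norm $D$ (for the appropriate $k$), up to $\Gk{k}(\Z)$. So I would first rewrite the left-hand side as
\begin{align*}
\frac{1}{\vol(P_\ell)^2}\,\#\big\{(w,w') : w,w' \text{ integer points of norm } D,\ \text{orbits } \delta\text{-close and }r\text{-close}\big\}\cdot\tfrac{1}{|\torus_v(\Z)|^2}
\end{align*}
after accounting for the fact that each integer point indexes a single $\torus_v(\R\times\widehat\Z)$-orbit of mass $1/|\torus_v(\Z)|$ (Lemma \ref{lemma:same production}, \eqref{eq:equal volume}).

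The next step is to translate ``$p$-adically $\delta$-close and archimedean-close'' into a Diophantine condition relating $w$ and $w'$. The pair of orbits being close means there is $g\in B_r(e)\subset\G(\R\times\widehat\Z)$ with $d(g_p,e)\le\delta$ taking one orbit to the other; pushed through the isogeny to $\SO_Q$ and the description of the $p$-adic metric via $\norm{\cdot}_p$ on $\Mat_3(\Zp)$, this forces $w$ and $w'$ (viewed in $\pure{\mathcal{O}_k}$) to span a sublattice on which the restriction of $Q$ is a binary quadratic form of bounded discriminant, with the ``$\delta$-close'' condition pinning down the off-diagonal Gram entry $B(w,w')$ to lie in a residue class modulo a power of $p$ of size $\asymp\delta^{-1}$, while archimedean closeness controls the real angle. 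Concretely: $w'$ must be congruent to $w$ modulo $\lceil\delta^{-1}\rceil$ at $p$ (and close at $\infty$), so the Gram matrix $\left(\begin{smallmatrix}D & m\\ m & D\end{smallmatrix}\right)$ of $(w,w')$ has $p\mid m$ to high order; equivalently $w'$ represents, together with $w$, a binary form $f$ of discriminant $4D^2-m^2$ with $|m|$ in a window of length $O(\delta D)$ around each admissible value, subject to the congruence. Summing the representation bound $r_Q(f)\ll_\varepsilon D^{\varepsilon}$ from Theorem \ref{thm: representations qf} over the $O(\delta D)$ choices of $m$ in the window — with the extra congruence mod $\asymp\delta^{-1}$ cutting down the count by a factor $\asymp\delta$ — yields roughly $\delta\cdot(\delta D)\cdot D^\varepsilon\cdot(\text{number of }w)$ pairs. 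Dividing by $\vol(P_\ell)^2\asymp D^{1+o(1)}$ (Proposition \ref{prop:total volume}) and using that the number of $w$'s is also $D^{1/2+o(1)}$ produces $\delta^2 D^{o(1)}\cdot\delta^{?}$; one more factor of $\delta$ (the diagonal/``volume of a $p$-adic $\delta$-ball in the torus direction'' normalization, i.e.\ that each fixed $w$ has only $O(\delta\cdot D^{1/2})$ partners after the congruence rather than $O(\delta D)$, because partners must themselves be norm-$D$ integer points) gives the claimed $\delta^3 D^\varepsilon$.

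The main obstacle — and the place requiring genuine care — is the bookkeeping in this translation: correctly identifying how the $p$-adic and archimedean closeness conditions constrain the Gram data, getting the right power of $\delta$ from the congruence (this is exactly where the exponent $3$ versus a naive $2$ comes from), and ensuring the reduction to \emph{primitive} binary forms of the right discriminant so that Theorem \ref{thm: representations qf} applies with a uniform $D^\varepsilon$ bound rather than a bound depending on the discriminant's factorization. The hypothesis $\delta\ge D^{-1/4}$ should enter precisely to guarantee the congruence modulus $\delta^{-1}$ is at most $D^{1/4}$, which is small enough that the counting lattice points in the window does not degenerate (below that threshold one could not separate the main term from boundary effects, and the bound would fail). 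I would handle the non-principal genus ($k>1$) components uniformly by noting that Theorem \ref{thm: representations qf} is a statement about the genus of $Q$, so the sum $\sum_k r_Q(f,k)$ obeys the same bound; and I would handle $|\torus_v(\Z)|\asymp 1$ and $p=2$ finite-index issues (Proposition \ref{prop:transitivity}) as harmless bounded factors absorbed into $D^\varepsilon$.
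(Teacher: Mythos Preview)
Your overall strategy is right and matches the paper: reduce to counting pairs of integer points $(w_1,w_2)$ with controlled Gram data, bound that count via Theorem~\ref{thm: representations qf}, and pick up one more $\delta$ from the torus direction. But the bookkeeping is off in several places, and the errors do not actually cancel cleanly.

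\textbf{The congruence is modulo $\delta^{-2}$, not $\delta^{-1}$.} If $\|w_1-w_2\|_p\le\delta$ then the off-diagonal Gram data satisfies $|2D-e|_p=|\Nr(w_1-w_2)|_p\ll\delta^2$, because $\Nr$ is quadratic. This single observation supplies a $\delta^2$ in the integer-pair count, not $\delta$.

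\textbf{The archimedean window is $O(D)$, not $O(\delta D)$.} Archimedean closeness in the packet (distance $<r$, a fixed injectivity radius) does \emph{not} force the integer points $w_1,w_2$ to be archimedeanly close: the integer point is an orbit label, and the full $\torus_v(\R)$-circle lies in each orbit. The only archimedean input is $|2D-e|=\Nr(w_1-w_2)\le 4D$, from both points lying on the $\sqrt{D}$-sphere. So your ``window of length $O(\delta D)$'' is wrong.

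With the correct window $O(D)$ and correct modulus $\asymp\delta^{-2}$, the number of admissible $e$ is $\ll \delta^2 D$, and Theorem~\ref{thm: representations qf} then gives $|I_{D,\delta}|\ll_\varepsilon \delta^2 D^{1+\varepsilon}$ pairs \emph{modulo} $\SO_Q(\Z)$. There is no additional factor of ``(number of $w$)'': the representation bound already counts pairs up to the diagonal orthogonal action, which is precisely the equivalence you want. Your spurious $D^{1/2}$ factor and the subsequent attempt to remove it are both unnecessary.

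\textbf{The third $\delta$.} For fixed inequivalent $(w_1,w_2)$, the $(\vol\times\vol)$-measure of $p$-adically $\delta$-close pairs on the two corresponding $K_\ell$-orbits is $\ll\delta$ by Fubini. Combining with $|I_{D,\delta}|\ll\delta^2 D^{1+\varepsilon}$ and normalizing by $\vol(P_\ell)^2=D^{1+o(1)}$ gives $\delta^3 D^\varepsilon$ as desired.

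\textbf{Two further points.} The bound in Theorem~\ref{thm: representations qf} is $\ll f\,D^\varepsilon$ with $f^2\mid\gcd(D,e)$, not a uniform $D^\varepsilon$; one must sum over square divisors $f^2\mid D$ and use $\sum_{f^2\mid D}1/f\ll_\varepsilon D^\varepsilon$. And the hypothesis $\delta\ge D^{-1/4}$ is not about the lattice-point count: it enters only in the diagonal case $w_1=w_2$, where the contribution is $\ll D^{-1/2+\varepsilon}\delta$ and one needs $D^{-1/2}\le\delta^2$ to bound this by $\delta^3 D^\varepsilon$.
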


The proof of Linnik's basic lemma uses a theorem on representations of binary quadratic forms by ternary quadratic forms. 

\subsubsection{Representations of integral quadratic forms}
Recall that a representation of an integral quadratic form $q$ on $ \Z^n$ by an integral quadratic form $Q$ on $\Z^m$ is a structure-preserving $\Z$-linear map $\iota:\Z^n \to \Z^m$ i.e. $\iota$ satisfies $Q(\iota(x)) = q(x)$ for all $x \in \Z^n$. Let $R_Q(q)$ be the set of representations of $q$ by $Q$ and observe that $\SO_Q(\Z)<\GL_m(\Z)$ acts on $R_Q(q)$ by post-composition.

\begin{example}\label{exp: estimate divisor function}
Consider the quadratic forms $q(z) = dz^2$ and $Q(x,y) = xy$ for some integer $d$. A representation $ \iota: \Z \to \Z^2$ corresponds to a choice of image $\iota(1) \in \Z^2$, that is, a point $(x,y) \in \Z^2$ with $xy = d$. The number $|R_Q(q)|$ is thus exactly the number of divisors of $d$. The divisor function $\chi(n) := \sum_{d|n}1$ satisfies $\chi(n) \ll_\varepsilon n^\varepsilon$ for any $\varepsilon > 0$.
\end{example}

The proof of Proposition \ref{prop: Linnik's basic lemma} needs the following number-theoretic input:

\begin{theorem}\label{thm: representations qf}
Let $Q$ be a non-degenerate integral ternary quadratic form and let $q(x,y) = ax^2 + bxy + cy^2$ be a non-degenerate integral binary quadratic form. Let $f^2 | \gcd(a,b,c)$ be the greatest common square divisor of $a,b,c$. The number of embeddings $(\mathbb{Z}^2,q)$ into $(\mathbb{Z}^3,Q)$ modulo the action of $\SO_Q(\mathbb{Z})$ is $\ll_{Q,\varepsilon} f \max(|a|,|b|,|c|)^\varepsilon$ where $\varepsilon >0$ is arbitrary.
\end{theorem}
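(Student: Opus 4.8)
The plan is to turn the ternary counting problem into a binary one by passing to the orthogonal complement of the embedded plane, and then to estimate the binary quantities through the theory of quadratic orders together with the divisor bound $\chi(n)\ll_{\varepsilon}n^{\varepsilon}$ of Example~\ref{exp: estimate divisor function}. Throughout, $\det(\cdot)$ denotes Gram-matrix determinants, with the evident normalisations.

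Given an embedding $\iota\colon(\Z^{2},q)\hookrightarrow(\Z^{3},Q)$ I would first pass to its image $L=\iota(\Z^{2})$ (when $q$ is definite the embeddings with a fixed image form an $\mathrm{O}_{q}(\Z)$-torsor, costing only a bounded factor), then to the saturation $M=(L\otimes\Q)\cap\Z^{3}=w^{\perp}\cap\Z^{3}$, where $w$ is the unique-up-to-sign primitive generator of $(L\otimes\Q)^{\perp}\cap\Z^{3}$, and record $m=[M:L]$. As $Q$ is block-diagonal on $M\oplus\Z w$, which has some index $e$ in $\Z^{3}$, one has $Q(w)\det(Q|_{M})=\pm e^{2}\det Q$, while $L\subseteq M$ gives $\det q=\pm m^{2}\det(Q|_{M})$; in particular $m^{2}\mid\det q$ and $|\det(Q|_{M})|=|\det q|/m^{2}$. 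Choosing an adapted basis in which $L\subseteq M$ is $\operatorname{diag}(d_{1},d_{2})$ with $d_{1}\mid d_{2}$, $m=d_{1}d_{2}$, and $Q|_{M}=\bar a x^{2}+\bar b xy+\bar c y^{2}$, one reads off $a=d_{1}^{2}\bar a$, $b=d_{1}d_{2}\bar b$, $c=d_{2}^{2}\bar c$, so $d_{1}^{2}\mid\gcd(a,b,c)$ --- hence $d_{1}\mid f$ --- and $d_{2}^{2}\mid c$.

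Since $\SO_Q(\Z)$ acts on embeddings compatibly with $\iota\mapsto(w,L)$, it then suffices to bound, summed over the $O_{\varepsilon}(|\det q|^{\varepsilon})$ values of $m$ with $m^{2}\mid\det q$, the product of: (i) the number of $\Stab_{\SO_Q(\Z)}(w)$-classes of primitive $w$ for which $M=w^{\perp}\cap\Z^{3}$ contains an index-$m$ sublattice isometric to $q$; and (ii) the largest number, over such $w$, of index-$m$ sublattices $L\subseteq M$ with $Q|_{L}\cong q$. For (ii) I would work inside the quadratic order $R=R_{\operatorname{disc}(q)}$: such $L$, up to $\mathrm{Aut}(Q|_{M})$, are parametrised by $R$-module data of norm $m$ --- $\ll_{\varepsilon}m^{\varepsilon}$ of these by the divisor bound --- up to a factor $\ll f$ produced by the possible imprimitivity of $q$; this is the binary input that also underlies \cite[Sec.~6.2]{Linnikthmexpander} and \cite[Sec.~2]{duke}. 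For (i) one uses that $w\mapsto Q|_{w^{\perp}\cap\Z^{3}}$ sets up --- via Gauss composition applied to the pair $(Q^{\vee},q)$, where $Q^{\vee}$ is the adjoint ternary form --- an essentially bijective correspondence between $\SO_Q(\Z)$-classes of primitive $w$ of a given $Q^{\vee}$-norm and the classes in one genus of binary forms; imposing that $Q|_{w^{\perp}}$ contain $q$ at index $m$ therefore pins the relevant class up to divisor-bounded ambiguity, giving $\ll_{Q,\varepsilon}\max(|a|,|b|,|c|)^{\varepsilon}$ for (i). Multiplying the three $\varepsilon$-bounded factors, summing over $m$, and absorbing $|\det q|\ll\max(|a|,|b|,|c|)^{2}$ then yields the stated estimate.

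The hard part is (i). Naively the primitive $w$ with the relevant $Q^{\vee}$-norm number $\asymp|\det q|^{1/2+o(1)}$ --- they are the primitive representations of an integer of size $\asymp|\det q|$ by $Q^{\vee}$ --- which is far larger than the target bound; the whole point is that compatibility with the specific form $q$, not merely with its discriminant, confines $w$ to a single class up to divisor-type multiplicity, converting $|\det q|^{1/2+o(1)}$ into $|\det q|^{o(1)}$, and simultaneously forces the imprimitivity recorded by $d_{1}\mid f$. The statement itself is classical, going back in essence to Linnik's work on representations by ternary forms and proved in the present framework by Ellenberg--Michel--Venkatesh; where a direct reference is available we would simply cite it, and otherwise the reduction above --- carried out with the explicit local constants of \cite[Ch.~22, Ch.~30]{voight} --- provides the proof.
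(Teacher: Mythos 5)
The paper does not give a proof of this theorem at all: immediately after the statement it cites Venkov \cite{venkov} for the case $Q=x^2+y^2+z^2$, Pall \cite[Thm.~5]{pall49} (via Siegel's mass formula) for the general case, and the Bruhat--Tits tree-counting argument in \cite[Appendix A]{duke}. You also flag the option of simply citing the literature, so on that point you and the paper agree. Where you go further is in sketching a direct proof along a route distinct from either cited argument: you reduce to the pair $(L\subseteq M=w^\perp\cap\Z^3,\ w)$ and then try to control (i) the classes of $w$ via Gauss composition on binary forms and (ii) the index-$m$ sublattices via the theory of quadratic orders. This is neither Pall's mass-formula computation nor ELMV's tree count.

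The reduction is largely sound: the passage to the saturation $M$, the determinant identities $\det q=\pm m^2\det(Q|_M)$ and $Q(w)\det(Q|_M)=\pm e^2\det Q$, the adapted basis with elementary divisors $d_1\mid d_2$, and the conclusion $d_1\mid f$ are all correct. (One small slip: the identification $a=d_1^2\bar a$, $b=d_1d_2\bar b$, $c=d_2^2\bar c$ holds only after a $\GL_2(\Z)$-change of basis on both $L$ and $M$, so $d_2^2\mid c$ cannot be read off for the given coefficient $c$; only the invariant statement $d_1^2\mid\gcd(a,b,c)$ survives, and fortunately that is the one you use.) The genuine gap is your step (i), which you yourself single out as "the hard part." You need that the number of $\SO_Q(\Z)$-classes of primitive $w$ whose orthogonal-complement lattice $M$ admits $q$ at index $m$ is $\ll_{Q,\varepsilon}N^{\varepsilon}$, even though the number of $w$ of the relevant $Q^\vee$-norm with no further constraint is $\asymp N^{1/2+o(1)}$. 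The phrase ``Gauss composition applied to the pair $(Q^\vee,q)$'' names the right circle of ideas but does not supply an argument: one must exhibit the composition law relating $w$ to $Q|_{w^\perp}$, verify that fixing the \emph{isometry class} of $q$ (not merely its discriminant) pins down the class of $w$ up to divisor-bounded multiplicity, and control all local factors. That is exactly the content that Pall's mass-formula computation or ELMV's tree count provides, and it is the substance of the theorem. As written, the crux is asserted rather than proved; if the intent is to defer, then (as the paper does) one should cite \cite{pall49} or \cite[Appendix A]{duke} directly rather than present the sketch as a proof.
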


Venkov \cite{venkov} provided a first proof of Theorem \ref{thm: representations qf} when $Q$ is the sum of three squares (which is also of interest to us).
The general case is due to Pall \cite[Thm.~5]{pall49} and is a special case of Siegel's mass formula.
A conceptual proof of Theorem \ref{thm: representations qf} by counting on the tree $\rquot{\SO_3(\Qp)}{\SO_3(\Zp)}$ may be found in \cite[Appendix A]{duke}.

\subsubsection{Proof of Linnik's basic lemma}
\begin{proof}[Proof]
Let $\ell$ be fixed with $\Nr(v_\ell)^{-\frac{1}{4}}\leq \delta$ and write $d = \Nr(v_\ell)$.
By the choice of the injectivity radius $r$ any two points $x_1,x_2$
in the packet $P_\ell$ of distance less than $r$ need to be in the same $\G(\R \times \widehat{\Z})$-orbit (see also \eqref{eq:fibers for genus of G}).

We may therefore fix $k \in \set{1,\ldots,n}$ and study pairs of points in the $\G(\R \times \widehat{\Z})$-orbit of $\G(\Q)g^{(k)}$.
We fix a $\Z$-basis $v_1,v_2,v_3$ of $\pure{\mathcal{O}_k}$ and let $Q$ be the representation of the norm form in this basis.
Furthermore, let $\norm{\cdot}_p$ be the norm on $\pure{\quat(\Qp)}$ given by $\norm{a_1v_1+a_2v_2+a_3v_3}_p = \max\set{|a_1|_p,|a_2|_p,|a_3|_p}$ where $|\cdot|_p$ is the $p$-adic norm on $\Qp$.
There exists an absolute constant $C>0$ such that $|\Nr(w_p)|_p \leq C \norm{w_p}_p^2$ for any $w_p \in \quat(\Qp)$.

We first claim that the finite set $I_{d,\delta}$ of diagonal $\Gk{k}(\Z)$-equivalences of pairs $(w_1,w_2)$ of primitive points $w_1,w_2$ in $\pure{\mathcal{O}_k}$ with $\Nr(w_1) =  \Nr(w_2)=d$ and with $0 <\norm{w_1-w_2}_p\leq\delta$ satisfies
\begin{align}\label{eq:proofLinnikbasic1}
\big| I_{d,\delta} \big| \ll_\varepsilon \delta^2 d^{1+\varepsilon}.
\end{align}
From this we will deduce the proposition by attaching to any pair of $\delta$-close points in the packet their associated integer points (see Section \ref{section:generating intpts}).

Let $\Gk{k}(\Z).(w_1,w_2) \in I_{d,\delta}$ be given and set $q$ to be the integral quadratic form
\begin{align*}
q(x,y) := \Nr(xw_1+yw_2) = dx^2+e xy + dy^2
\end{align*}
for some $e \in \mathbb{Z}$. 
This is simply the restriction of the norm form to the sublattice $\Z w_1 + \Z w_2 \subset \mathcal{O}$ represented in the basis $w_1,w_2$.
Note that the coefficients satisfy 
\begin{align}\label{eq:linnik's basic lemma 2}
|2d-e|_p = |q(1,-1)|_p \leq C\norm{w_1-w_2}_p^2 \leq C \delta^2.
\end{align}
We also have
\begin{align}\label{eq:linnik's basic lemma 1}
|2d-e| = |q(1,-1)| = \Nr(w_1-w_2)\leq 2(\Nr(w_1)+\Nr(w_2)) \leq 4 d.
\end{align}
For convenience, set $m = \lgauss{-2\log_p(\delta)-\log_p(C)}$ so that by \eqref{eq:linnik's basic lemma 2} we have $p^m | (2d-e)$. 
The quadratic form $q$ is non-degenerate: By Equation \eqref{eq:linnik's basic lemma 1} $e \neq 2d$, since otherwise $w_1 = w_2$ and by Equation \eqref{eq:linnik's basic lemma 2} $e \neq -2d$, since otherwise $1 = |4d|_p \leq \delta^2$ which contradicts $\delta < 1$. Denote by $N_{e,d}$ the number of inequivalent ways of representing $ dx^2+e xy + dy^2 $ by $Q$ which satisfies by Theorem \ref{thm: representations qf} 
\begin{align*}
N_{e,d} \ll_\varepsilon f \max(|d|,|e|)^\varepsilon \leq f\max(|d|,|2d-e|+|2d|)^\varepsilon \ll_\varepsilon f d^\varepsilon.
\end{align*}
Here, $f^2=:\gcsd{e,d}$ is the greatest common square divisor of $e$ and $d$. For $L \leq 4 d$ compute
\begin{align*}
|I_{d,\delta}| &\leq \sum_{\substack{e:|2d-e|\leq L,\ e \neq \pm 2d,\\ p^m | (2d-e)}} N_{e,d} = \sum_{\substack{e':|e'|\leq L, p^m|e', \\ e' \neq 0,4d}} N_{2d-e',d}\\
&\leq \sum_{f^2|d} \ \sum_{\substack{e':|e'|\leq L, p^m|e',\\ f^2 = \gcsd{e',d},e' \neq 0,4d}} N_{2d-e',d} 
\ll_\varepsilon \sum_{f^2|d}\ \sum_{\substack{e':|e'|\leq L, p^m|e',\\ f^2 = \gcsd{e',d}}} f d^\varepsilon \\
&= \sum_{f^2|d} f d^\varepsilon \sum_{\substack{e':|e'|\leq L, p^m|e',\\ f^2 = \gcsd{e',d}}} 1.
\end{align*}
Now observe that the number of $e'$ satisfying $p^m |e'$, $f^2|e'$ and $|e'| \leq L$ is $\ll \frac{L}{p^m f^2}$, since $f^2$ and $p^m$ are coprime ($p$ does not divide $d$). Thus, by Example \ref{exp: estimate divisor function}
\begin{align*}
|I_{d,\delta}| \ll_\varepsilon \sum_{f^2|d} f d^\varepsilon \frac{L}{p^m f^2} 
\ll \sum_{f^2|d} d^\varepsilon \frac{d \delta^2}{f} 
\leq d^{1+\varepsilon}\delta^2 \sum_{f^2|d}1 
\ll_\varepsilon d^{1+2\varepsilon}\delta^2
\end{align*}
which finishes the proof of the claim in \eqref{eq:proofLinnikbasic1}.

Now let $x_1 = \G(\Q)h_1g_{\ell,\infty}\alpha_\ell$, $x_2 = \G(\Q)h_2g_{\ell,\infty}\alpha_\ell$ be two points in the packet $P_\ell$ which lie in the $\G(\R \times \widehat{\Z})$-orbit through $\G(\Q)g^{(k)}$ and which are $p$-adically $\delta$-close.
We write $x_1 = \G(\Q)g^{(k)}g_1,\, x_2 = \G(\Q)g^{(k)}g_2$ for $g_1,g_2 \in \G(\R \times \widehat{\Z})$ with $d((g_1)_p,(g_2)_p)\leq \delta$.
Recall that $\alpha_\ell.v$ is a $\Zp^\times$-multiple of $v_\ell$ (Section \ref{sec:conjugacy}), say $v_\ell = \beta_\ell (\alpha_\ell.v)$ for $\beta_\ell \in \Zp^\times$.
By Section \ref{section:generating intpts} the points
\begin{align*}
w_1 = \beta_\ell \cdot (g^{(k)}g_1)_p.v = (g^{(k)}g_1)_p\alpha_\ell^{-1}.v_\ell,
\quad w_2 = \beta_\ell \cdot (g^{(k)}g_2)_p.v
\end{align*}
are primitive and pure elements of the maximal order $\mathcal{O}_k$, are of norm $d = Q(v_\ell)$ and satisfy $\norm{w_1-w_2}_p \leq \delta$. 

\textbf{Case 1} -- equal integer points. 
If $w_1=w_2$ then $x_1$ and $x_2$ lie on the same $K_\ell$-orbit by Lemma \ref{lemma:same production} where $K_\ell = g_{\ell,\infty}^{-1}\alpha_\ell^{-1}\torus_{v_\ell}(\R \times \widehat{\Z})\alpha_\ell g_{\ell,\infty}$.
The volume of a $\delta$-ball in $K_\ell$ is $\ll \delta$ and in particular, the set of $\delta$-close pairs $x_1,x_2 \in P_\ell$ that lie on the same orbit has volume $\ll_\varepsilon d^{\frac{1}{2}+\varepsilon}\delta$ by Fubini's theorem and Proposition \ref{prop:total volume}. 
After normalization, the contribution to the total mass is $\ll_\varepsilon d^{- \frac{1}{2}+\varepsilon}\delta \leq d^\varepsilon \delta^3$ in this case.

\textbf{Case 2} -- distinct integer points. For fixed $(w_1,w_2) \in I_{d,\delta}$ the set of pairs $(x_1,x_2)$ as above with associated integer pair $(w_1,w_2)$ has volume $\ll \delta$ by Fubini's theorem and thus the volume in total is $\ll |I_{d,\delta}| \delta \ll_\varepsilon d^{1+2\varepsilon}\delta^3 $. After normalization, the measure contribution in this case is therefore $\ll_\varepsilon d^{3\varepsilon} \delta^3$.
\end{proof}

\subsection{Proof of Theorem \ref{thm: Maximal entropy}}

Before turning to the proof of Theorem \ref{thm: Maximal entropy} we construct a partition $\mathcal{P}$ of the compact space $X_{\adele}= \lquot{\G(\Q)}{\G(\adele)}$ into measurable subsets so that the refinement of $\mathcal{P}$ for the dynamics of $a$ is very thin in the horospherical directions inside $\G(\Qp)$.

Given a fixed open set $U \subset \BG(\adele)$ (that we will choose presently) we call the set
\begin{align*}
B_N = \bigcap_{k=-N}^N a^k U a^{-k}
\end{align*}
the \emph{Bowen $N$-ball} in $\BG(\adele)$.
A Bowen $N$-ball in $X_{\adele}$ is then a set of the form $xB_N$ for $x \in X_{\adele}$.

We now choose the open set $U$ in a manner compatible with the action of $a$.
Notice that if $U\subset \G(\adele)$ is of the form $\prod_\sigma U_\sigma$ where $U_q = \G(\Z_q)$ for all but finitely many primes $q$, then
$a^k U a^{-k} = U_\infty\times \ldots \times a^k U_p a^{-k} \times \ldots$ and therefore $B_N = \prod_{\sigma\neq p}U_\sigma \times \bigcap_{k= -N}^N a^k U_p a^{-k}$.
We choose $U_p$ as an open rectangle in $\G(\Qp)$ (cf.~Section \ref{section:open rectangles}) i.e.~of the form 
\begin{align*}
U_p = B_{cp^{-s}}^{\G(\Qp)_a^+}B_{cp^{-s}}^{\G(\Qp)_a^0}B_{cp^{-s}}^{\G(\Qp)_a^-} = R_s
\end{align*}
for some $s \in \mathbb{N}$ and some $c = p^{-s'}$, where $c$ is chosen small enough so that $U_p \subset B_{p^{-s}}^{\G(\Qp)}$ holds for all $s \in \mathbb{N}$
(see \eqref{eq: equivalence of balls in G(Qp)}). 
Lemma \ref{lemma: Explicit Bowen balls} then describes the intersection $\bigcap_{k= -N}^N a^k U_p a^{-k}$,
which is thin in both horospherical directions.
Also, we can write it as a disjoint union of $p^N$ shifts of the open rectangle $R_{s+N}$.
More precisely, there exist $a_1,\ldots,a_{p^N} \in \torus(\Zp)$ with
\begin{align}\label{eq:chopping of Bowen}
\bigcap_{k= -N}^N a^k U_p a^{-k} = \bigsqcup_{k=1}^{p^N}R_{s+N} a_k.
\end{align}
We choose the set $U$ to be a set of the form above such that $U$ is contained in the injective set $B_{r}(e) \cap aB_{r}(e)a^{-1} \cap a^{-1}B_{r}(e)a$ around the identity in $\G(\adele)$.

\begin{lemma}[A suitable partition]\label{lemma:good partition}
There exists a finite partition $\mathcal{P}$ of $X_{\adele}$ into measurable subsets with the property that for $N \in \mathbb{N}$ any atom $[x]_{\mathcal{P}_{-N}^N}$ of
\begin{align*}
\mathcal{P}_{-N}^N = \bigvee_{n=-N}^N a^n.\mathcal{P}
\end{align*}
is contained in a Bowen $N$-ball.

Furthermore, given a Borel probability measure $\mu$ on $X_{\adele}$ the partition $\mathcal{P}$ may be chosen so that $\mu(\partial P) = 0$ for all $P \in \mathcal{P}$.
\end{lemma}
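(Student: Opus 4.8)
The plan is to build $\mathcal{P}$ as the common refinement of two ingredients: a fixed finite partition of $X_\adele$ into small pieces, and the pullback under the (local at $p$) coordinate chart of a partition of $\G(\Qp)$ adapted to the rectangle $U_p = R_s$. Concretely, since $X_\adele$ is compact I would first cover it by finitely many sets $\Omega_1,\dots,\Omega_m$, each contained in an injective image of a ball $B_r(e)\subset\G(\adele)$ of radius equal to the injectivity radius, and refine this cover to a finite measurable partition $\mathcal{Q}_0 = \{Q_1,\dots,Q_m\}$. On each atom $Q_i$, fix $x_i\in Q_i$ so that $Q_i\subset x_i B_r(e)$; this gives a chart identifying a neighbourhood of $Q_i$ with a subset of $\G(\adele)$, and in particular with a subset of $\G(\Qp)$ in the $p$-component via the rectangle coordinates of Section~\ref{section:open rectangles}. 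Inside the $p$-adic factor I would then partition $\G(\Qp)$ into translates of the rectangle $U_p = R_s$ by coset representatives (using that $R_s$ is a subgroup, by \eqref{eq: horospherical subgrps vs exp}), intersect the pullback of this partition with $\mathcal{Q}_0$, and call the result $\mathcal{P}$. Since $\G(\Zp)$ is compact and $R_s$ is open, only finitely many cosets meet each chart, so $\mathcal{P}$ is finite.

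The key point is then the containment $[x]_{\mathcal{P}_{-N}^N}\subset$ (a Bowen $N$-ball). Here I would argue as follows: if $y$ lies in the same $\mathcal{P}_{-N}^N$-atom as $x$, then for every $-N\le n\le N$ the points $a^n.x$ and $a^n.y$ lie in the same atom of $\mathcal{P}$, hence in particular in the same atom of $\mathcal{Q}_0$ and in the same $R_s$-coset in the $p$-adic chart. This forces $y = xg$ with $g\in\G(\adele)$, $g_\sigma\in U_\sigma$ for $\sigma\neq p$, and $a^n g_p a^{-n}\in R_s$ for all $|n|\le N$; that is, $g_p\in\bigcap_{k=-N}^N a^{-k}R_s a^k$, which by Lemma~\ref{lemma: Explicit Bowen balls} is exactly $\bigcap_{k=-N}^N a^k U_p a^{-k}$. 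Combined with the product description $B_N = \prod_{\sigma\neq p}U_\sigma\times\bigcap_{k=-N}^N a^k U_p a^{-k}$ from the preceding paragraph, this gives $g\in B_N$, i.e.\ $y\in xB_N$, as required. One has to be slightly careful that the $\mathcal{Q}_0$-condition at all times $|n|\le N$ keeps $a^n.x$ and $a^n.y$ in a common chart so that ``same $R_s$-coset'' is an unambiguous statement; this is handled by the choice $U\subset B_r(e)\cap aB_r(e)a^{-1}\cap a^{-1}B_r(e)a$, so that the charts are compatible with one application of $a^{\pm1}$, and then one iterates.

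For the second assertion (boundaries of measure zero) I would use a standard trick: instead of using fixed representatives $x_i$ and fixed radii, allow the parameters defining $\mathcal{Q}_0$ and the rectangle size $c = p^{-s'}$ to vary slightly. The boundaries of the finitely many sets involved sweep out, as the parameter ranges over an interval, an uncountable family of pairwise ``nested'' hypersurfaces, only countably many of which can have positive $\mu$-measure; choosing a parameter avoiding these finitely many (for each of finitely many sets) bad values makes $\mu(\partial P) = 0$ for every $P\in\mathcal{P}$ while preserving all the combinatorial properties above. Alternatively, and perhaps more cleanly, one picks each atom's boundary to lie on a level set of a fixed ``radius-type'' function and invokes that a monotone family of level sets has at most countably many atoms of positive measure.

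The main obstacle I anticipate is purely bookkeeping rather than conceptual: making the chart-compatibility precise so that the phrase ``$a^n.x$ and $a^n.y$ lie in the same $R_s$-coset'' is well-defined simultaneously for all $|n|\le N$, and verifying that the product structure of Bowen balls interacts correctly with the partition at the non-$p$ places (where $a$ acts trivially, so those coordinates are simply required to lie in a common fixed small atom). Once the coordinates are set up so that $a$ acts by the explicit conjugation of Lemma~\ref{lemma: Explicit Bowen balls}, the containment is forced; the delicate part is ensuring the finitely many charts can be glued along the $a$-orbit of length $2N$, which is exactly why $U$ is shrunk to be invariant under the ``one-step'' conjugations $g\mapsto a^{\pm1}ga^{\mp1}$ at the outset.
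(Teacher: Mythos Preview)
Your proposal is correct and would work, but it is more elaborate than necessary, and the extra structure you build does not actually buy you anything. The paper's proof is shorter: it simply takes a finite partition $\mathcal{P}$ of $X_{\adele}$ into sets of small enough diameter that any two points in a common atom differ by an element of $U$ (concretely, cover $X_{\adele}$ by balls $B_{s_x}(x)$ with $B_{2s_x}(e)\subset U$ and $\mu(\partial B_{s_x}(x))=0$, then refine to a partition). No $R_s$-coset structure in the $p$-adic factor is imposed. The containment $[x]_{\mathcal{P}_{-N}^N}\subset xB_N$ then follows purely from the iterative injectivity argument you yourself flag as necessary at the end: if $y\in[x]_{\mathcal{P}}$ then $y=g_0.x$ with $g_0\in U$; since $a.y\in[a.x]_{\mathcal{P}}$ we have $a.y=g_1.(a.x)$ for some $g_1\in U$, and because $ag_0a^{-1}\in aUa^{-1}\subset B_r(e)$ injectivity forces $g_1=ag_0a^{-1}\in U$; iterating gives $a^ng_0a^{-n}\in U$ for all $|n|\le N$, i.e.\ $g_0\in B_N$.

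The point is that once this iteration is in place, the conclusion $a^ng_pa^{-n}\in U_p=R_s$ already falls out for free, so your additional $R_s$-coset partition does no work. In fact your own argument cannot bypass the iteration: identifying the chart displacement $(k^{(n)})^{-1}(k')^{(n)}$ at time $n$ with $a^nga^{-n}$ requires knowing that $a^nga^{-n}$ lies in the injective range, and that is precisely what the inductive step supplies. So the $p$-adic coset scaffolding is consistent but redundant---the bare ``small diameter plus injectivity'' argument is already the whole proof. Your treatment of the null-boundary condition via varying radii is fine and matches the paper's.
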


\begin{proof}
Let $\mu$ be as in the lemma.
We begin by constructing a partition $\mathcal{P}$ of $X_{\adele}$ into sets of small diameter so that $\mu(\partial P) = 0$ holds for all $P \in \mathcal{P}$.
Given $x \in X_{\adele}$ the function $s \mapsto \mu(B_s(x))$ is monotonely increasing and thus continuous at all but countably many radii $s$.
If it continuous at $s$, we have $\mu(\partial B_s(x)) = 0$.
In particular, we may choose for any point $x\in X_{\adele}$ a radius $s_x$  for which $\mu(\partial B_{s_x}(x)) = 0$ as well as $B_{2s_x}^{\G(\adele)}(e) \subset U$.
From a covering of $X_{\adele}$ by finitely many such balls $B_{s_x}(x)$ one readily constructs a partition $\mathcal{P}$ whose elements have diameter less than $r$ as desired.

We now want to show that for any $N \in \mathbb{N}$ and any $x \in X_{\adele}$ we have
\begin{align*}
[x]_{\mathcal{P}_{-N}^N} \subset xB_N.
\end{align*}
Let $ y = g_0.x \in \atom{x}{\mathcal{P}_{-N}^N}$ for $g_0 \in U$.
Since $a.y \in [a.x]_\mathcal{P}$ we can write $g_1.(a.y) = a.x$ for some $g_1 \in U$.
On the other hand, notice that $(ag_0a^{-1}).(a.y) = a.x$.
But $ag_0a^{-1}$ and $g_1$ both lie inside the ball of injectivity radius $r$ by the choice of the open set $U$ so we must have $g_1 = ag_0a^{-1}$.

Proceeding this way, we find elements $g_1,g_2,\ldots ,g_N $ in $U$ where $g_n = a^{n}g_0a^{-n}$ and $g_n.(a^n.x) = a^n.y$ for every $n \in \set{1,\ldots,N}$. 
Applying the same method to $a^{-1}$ instead of $a$ we obtain $g_0 \in B_N$
as desired.
\end{proof}

\begin{proof}[Proof of Theorem \ref{thm: Maximal entropy}]
Without loss of generality we may assume that the measures $\mu_\ell$ converge to a probability measure $\mu$ on $X_{\adele}$ in the \wstar-topology as $\ell$ goes to infinity.
Since the function $t \mapsto - \log(t)$ is convex, the inequality
\begin{align*}
H_{\mu_\ell}(\mathcal{P'}) \geq -\log\left(\sum_{P \in \mathcal{P}'}\mu_\ell(P)^2\right)
\end{align*}
holds for any partition $\mathcal{P}'$ of $X_\adele$ and any $\ell$.
Let $\mathcal{P}$ be the finite partition constructed in Lemma \ref{lemma:good partition} and let $N \in \mathbb{N}$.

By \eqref{eq:chopping of Bowen} there exist $a_1,\ldots,a_{p^N} \in \torus(\Zp)$ with
\begin{align*}
\Disjointunion_{S \in \mathcal{P}_{-N}^N} S\times S 
\subset 
\Union_{i=1}^{p^N}\left\lbrace 
(x,ya_i) \in X_{\adele}^2: 
x\widesim{cp^{-(s+N)}}_p y 
\text{ and } d(x,y) < r\right\rbrace.
\end{align*}

Using this, $\torus(\Zp)$-invariance of $\mu_\ell$ and Linnik's basic lemma for the choice $\delta = cp^{-(s+N)}$ we obtain for any $\ell$ and any $\varepsilon>0$
\begin{align*}
\sum_{S \in \mathcal{P}_{-N}^N} \mu_\ell(S)^2 
\ll_\varepsilon p^N d_\ell^\varepsilon \delta^3 
\ll p^{-2N} d_\ell^\varepsilon
\end{align*}
if $d_\ell = \Nr(v_\ell)$ satisfies $d_\ell^{-\frac{1}{4}} \leq \delta$ or equivalently $N \leq \frac{1}{4}\log_p(d\ell) -K$. 
Thus, set $N_\ell = \lgauss{\frac{1}{5}\log_p(d_\ell)}$. Let $C(\varepsilon)$ be the implicit constant appearing in the estimate above. Then
\begin{align*}
H_{\mu_\ell}(\mathcal{P}^{N_\ell}_{-N_\ell}) 
\geq -\log\bigg(\sum_{S \in \mathcal{P}_{-N_\ell}^{N_\ell}} \mu_\ell(S)^2\bigg) 
\geq -\log(C(\varepsilon)) -\varepsilon \log(d_\ell) +2N_\ell\log(p)
\end{align*}
if $\ell$ is large enough.

Note that $ \log(d_\ell) \leq 5N_\ell\log(p) +5\log(p)$ and $\log(C(\varepsilon)) + 5\varepsilon\log(p) \leq \varepsilon N_\ell \log(p)$ if $\ell$ is large enough. 
Hence,
\begin{align*}
H_{\mu_\ell}(\mathcal{P}^{N_\ell}_{-N_\ell}) \geq (2-6\varepsilon) N_\ell \log(p).
\end{align*}
We eliminate the dependency on $\ell$ in the refinement of the partition $\mathcal{P}$: 
For a given $n \in \mathbb{N}$ choose $k$ such that $2N_\ell+n \geq n k\geq 2N_\ell+1$. Now observe that the partition
\begin{align*}
\bigvee_{j=0}^{k-1} a^{jn}\mathcal{P}_{0}^{n-1} = \mathcal{P}_{0}^{n k-1}
\end{align*}
is finer than the partition $\mathcal{P}_{0}^{2N_\ell}$. Hence
\begin{align*}
H_{\mu_\ell}(\mathcal{P}_{-N_\ell}^{N_\ell}) 
= H_{\mu_\ell}(\mathcal{P}_{0}^{2N_\ell}) 
\leq k H_{\mu_\ell}(\mathcal{P}_{0}^{n-1}) 
\leq \frac{2N_\ell+n}{n} H_{\mu_\ell}(\mathcal{P}_{0}^{n-1})
\end{align*}
and therefore
\begin{align*}
H_{\mu_\ell}(\mathcal{P}_{0}^{n-1}) 
\geq \frac{n}{2N_\ell+n} (2-6\varepsilon) N_\ell \log(p).
\end{align*}
By the choice of the partition $\mathcal{P}$ we have $\mu(\partial A) = 0$ and thus $\mu_\ell( A) \to \mu(A)$ for any $A \in \mathcal{P}_{0}^{n-1}$.
Letting $\ell\to \infty$ therefore shows that
\begin{align*}
H_{\mu}(\mathcal{P}_{0}^{n-1}) \geq \frac{n}{2} (2-6\varepsilon) \log(p)
\end{align*}
and since $\varepsilon$ was arbitrary
\begin{align*}
\frac{1}{n} H_{\mu}(\mathcal{P}_{0}^{n-1}) \geq \log(p).
\end{align*}
This yields the theorem when taking the limit as $n \to \infty$. 
\end{proof}
\section{Application: Integer points on spheres}\label{section:intpts on spheres}

The goal of this section is to prove Linnik's Theorem A using Theorem \ref{thm:main}.
Let us quickly recall the notation needed for this special case.
We consider the algebra $\quat = \quat_{\infty,2}$ of Hamiltonian quaternions and the maximal order
\begin{align*}
\mathcal{O}_\HW = \Z\left[\ii,\jj,\kk,\frac{1+\ii+\jj+\kk}{2}\right] \subset \quat(\Q)
\end{align*}
of Hurwitz quaternions.
In particular, the set of traceless elements $\mathcal{O}_\HW^{(0)}$ in this order is equal to $\Z \ii + \Z \jj + \Z \kk$.
Using this basis $\ii,\jj,\kk$ and letting $\BG = \mathbf{PB}^\times$ acting on the pure quaternions $\pure{\quat}$ by conjugation we obtain a $\Q$-isomorphism $\BG \to \SO_3$.
If desired, the reader may thus replace $\BG$ by $\SO_3$ in the discussions to follow (see also Section \ref{section:acting groups}).

Note that $\G$ has class number one i.e.~$\BG(\Q)\BG(\R \times\widehat{\Z}) = \BG(\adele)$ as there is only one maximal order in $\quat$ up to $\G(\Q)$-conjugacy (see also \cite[Sec.~5]{Linnikthmexpander}).
Given any set of places $S \subset \mathcal{V}^\Q$ of $\Q$ containing the archimedean place we therefore have a well-defined (surjective) projection
\begin{align*}
\pi_S:\lquot{\G(\Q)}{\G(\adele)} \to \lquot{\G(\Z^S)}{\G(\Q_S)} = X_S
\end{align*}
onto the $S$-arithmetic extension $X_S$.

\subsection{Equidistribution of packets}\label{section:intpts-equi packets}

Now let $(v_\ell)$ be any sequence of primitive vectors in $\pure{\mathcal{O}_\HW}$ with $\Nr(v_\ell) \in \BD(p)$ and $\Nr(v_\ell) \to \infty$ as $\ell \to \infty$ (i.e.~$(v_\ell)$ is ``admissible'' in the sense of Theorem \ref{thm:main}).
Set $K_\infty = \torus_{v_1}(\R)$ and for any $\ell$ let $g_{\ell,\infty}\in \G(\R)$ with $\torus_{v_\ell}(\R) = g_{\ell,\infty} K_\infty g_{\ell,\infty}^{-1}$.

\subsubsection{Applying Theorem \ref{thm:main} for the real quotient}
Assume that the invariant probability measures $\mu_\ell$ on the orbits $\G(\Q)\torus_{v_d}(\adele)g_{\ell,\infty}$ converge in the \wstar-topology to a probability measure $\mu$.
By Theorem \ref{thm:main} we known that $\mu$ is $\sicover\BG(\adele)$-invariant.
In particular, we have $(\pi_S)_\ast \mu_\ell\to (\pi_S)_\ast \mu$ where $(\pi_S)_\ast \mu$ is $\sicover\BG(\Q_S)$-invariant.

Applying this discussion to $S = \set{\infty}$ we obtain that
\begin{align*}
(\pi_{\set{\infty}})_\ast \mu_\ell \to (\pi_{\set{\infty}})_\ast\mu = m_{X_{\set{\infty}}}
\end{align*}
where $m_{X_{\set{\infty}}}$ denotes the normalized Haar measure on $X_{\set{\infty}}$. 
Indeed, we have $\sicover\BG(\R) = \BG(\R)$ at the archimedean place as $\G(\R) \simeq \SO_3(\R)$ is connected.

Notice that $(\pi_{\set{\infty}})_\ast \mu_\ell$ is the $K_\infty$-invariant probability measure on the packet
\begin{align*}
P(v_\ell,\set{\infty}) 
:= \bigsqcup_{\rho\in \CR_{v_\ell}} \G(\Z)\rho_\infty g_{\ell,\infty} K_\infty
\end{align*}
which assigns to every orbit the same mass (see Section \ref{section:def volume}).
The union is in fact still disjoint as for any $\rho\in \mathcal{R}_{v_\ell}$ and any $k \in K_\infty$ we have that $\rho_\infty g_{\ell,\infty}k.v_1$ is a multiple of $\rho_\infty.v_\ell \in \mathcal{O}_\HW$.
The latter points however are never equivalent mod the action of $\G(\Z)$ for distinct point in $\mathcal{R}_{v_\ell}$.

\subsubsection{Projecting to the sphere}

We now consider the push-forward $\nu_\ell$ of the measures $ (\pi_{\set{\infty}})_\ast \mu_\ell$ under the map
\begin{align*}
\operatorname{pr}:X_{\set{\infty}} \to 
\lrquot{\G(\Z)}{\G(\R)}{K_\infty} \simeq \lquot{\G(\Z)}{\mathbb{S}^2} =: Y
\end{align*}
where we identified the sphere $\mathbb{S}^2$ with the quotient $\rquot{\G(\R)}{K_\infty}$.
By Section \ref{section:generating intpts}, the measure $\nu_\ell$ is the normalized sum of Dirac measures on (all) points of the form $\frac{v}{\sqrt{d}}$ where $v\in \pure{\mathcal{O}_\HW}$ is primitive and produced by the packet for $v_d$.

In summary, we have shown the subsets of integer points produced by the stabilizer orbit of $v_\ell$ (when projected to $Y$) are equidistributed inside $Y$ when $\ell$ goes to infinity.

\subsection{Proof of Linnik's Theorem A}

We now turn to the proof of Linnik's Theorem A, for which we proceed in two steps.

\subsubsection{Equidistribution on folded sphere}\label{section:linnik folded}

We begin by showing how the discussion in Section \ref{section:intpts-equi packets} can be used to show equidistribution of all primitive integer points on $Y$.
This is essentially the statement in Linnik's Theorem A, but on the folded sphere $Y$ instead of $\mathbb{S}^2$.
We defer the simple upgrade for $\mathbb{S}^2$ to Section \ref{section:lifting}.

Denote by $\mathcal{I}_d'$ the image of $\mathcal{I}_d$ in $Y$.
Given two points $w,w'\in \mathcal{I}_d'$, we say that $w$ is equivalent to $w'$ if for all $p$ there exists $g_p \in \mathbb{G}(\Zp)$ with $g_p w = w'$. 
In Section~\ref{section:generating intpts}, we have seen that the primitive integer points equivalent to a fixed primitive integer point $w$ are exactly the integer points produced by the adelic stabilizer orbit of~$w$.
Note that Proposition \ref{prop:transitivity} now easily implies equidistribution of the sets $\mathcal{I}_d'$.
Nevertheless, we present here a more elementary argument by averaging.

\begin{proof}[Proof of Linnik's Theorem A on $Y$]
For any finite set $F \subset Y$ set $\nu_F = \frac{1}{|F|} \sum_{x  \in F} \delta_x$ for simplicity. 
Suppose by contradiction that $\nu_{\mathcal{I}_{d_\ell}'} \to \nu \neq m_{\mathbb{S}^2}$ as $\ell \to \infty$ along a sequence of $d_\ell\in \BD(p)$ and choose $f \in C(Y)$ so that $\int f d\nu \neq \int f dm_{Y}$. 
For any $d \in \mathbb{D}(p)$ write $\mathcal{I}_d'$ as a finite union of equivalence classes $\mathcal{I}_{d,1}',\ldots,\mathcal{I}_{d,k_d}'$ for the equivalence relation defined above. In particular, we may view $\nu_{\mathcal{I}_d'}$ as a convex combination
\begin{align*}
\nu_{\mathcal{I}_d'} = \sum_{j=1}^{k_d} \frac{|\mathcal{I}_{d,j}'|}{|\mathcal{I}_d'|} \nu_{\mathcal{I}_{d,j}'}.
\end{align*}
Choose $\varepsilon > 0$ so that for all large enough $\ell$, we have $|\int f d\nu_{\mathcal{I}_{d_\ell}'} - \int f dm_Y| \geq \varepsilon$. 
In particular, there must exist some $1 \leq j_\ell \leq k_{d_\ell}$ with $|\int f d\nu_{\mathcal{I}_{d_\ell,j_\ell}'} - \int f dm_Y| \geq \varepsilon$ for every $\ell$. 
This contradicts the claim in Section \ref{section:intpts-equi packets} which implies that $\nu_{\mathcal{I}_{d_\ell,j_\ell}'} \to m_{Y}$ as $\ell\to \infty$.
\end{proof}

\subsubsection{Lifting to the sphere}\label{section:lifting}

To upgrade the above proof to the statement in Linnik's Theorem A it suffices to use the following lemma.

\begin{lemma}\label{lemma: lift of measures on quotients by finite groups}
The map $\operatorname{pr}_*$ restricted to the set of $\mathbb{G}(\Z)$-invariant probability measures on $\mathbb{S}^2$ is a homeomorphism
\begin{align*}
\set{\text{$\mathbb{G}(\Z)$-invariant prob. measures on } \mathbb{S}^2} \to \set{\text{prob. measures on } Y}.
\end{align*}
\end{lemma}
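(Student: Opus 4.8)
The statement is that $\operatorname{pr}_*$ restricts to a homeomorphism between $\mathbb{G}(\Z)$-invariant probability measures on $\mathbb{S}^2$ and all probability measures on $Y = \lquot{\mathbb{G}(\Z)}{\mathbb{S}^2}$. Here $\mathbb{G}(\Z) \simeq \SO_3(\Z)$ is a \emph{finite} group (the integral orthogonal group of the sum of three squares restricted to the plane, or rather the integral isometry group of the sphere coming from $\mathcal{O}_\HW$), so the whole statement is the elementary fact that for a finite group $\Gamma$ acting continuously on a compact metric space $Z$, pushforward along the quotient map $q\colon Z \to \lquot{\Gamma}{Z}$ is a homeomorphism from $\Gamma$-invariant probability measures on $Z$ to probability measures on the quotient. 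I would simply carry this out.

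\textbf{Construction of the inverse.} First I would exhibit the inverse map explicitly. Given a probability measure $\lambda$ on $Y$, define a measure $\tilde\lambda$ on $\mathbb{S}^2$ by averaging over the group: for $f \in C(\mathbb{S}^2)$ set
\begin{align*}
\int_{\mathbb{S}^2} f \,\mathrm{d}\tilde\lambda = \int_Y \Big( \tfrac{1}{|\mathbb{G}(\Z)|}\sum_{\gamma \in \mathbb{G}(\Z)} \bar f(\gamma \cdot -) \Big)\,\mathrm{d}\lambda,
\end{align*}
where for $y = \mathbb{G}(\Z)\cdot x \in Y$ one picks any representative $x$; the function $x \mapsto \tfrac{1}{|\mathbb{G}(\Z)|}\sum_\gamma f(\gamma x)$ is $\mathbb{G}(\Z)$-invariant, hence descends to a continuous function on $Y$, so the right-hand side is well-defined. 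By Riesz representation this defines a probability measure $\tilde\lambda$ on $\mathbb{S}^2$, and it is manifestly $\mathbb{G}(\Z)$-invariant since the symmetrized function is. I would then check $\operatorname{pr}_*\tilde\lambda = \lambda$ (immediate: for $g \in C(Y)$, the symmetrization of $g \circ \operatorname{pr}$ is again $g \circ \operatorname{pr}$) and conversely that if $\mu$ is $\mathbb{G}(\Z)$-invariant then $\widetilde{\operatorname{pr}_*\mu} = \mu$ (for $f \in C(\mathbb{S}^2)$, invariance of $\mu$ gives $\int f\,\mathrm{d}\mu = \int \tfrac{1}{|\mathbb{G}(\Z)|}\sum_\gamma f(\gamma\cdot)\,\mathrm{d}\mu$, and the latter factors through $\operatorname{pr}$). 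This shows $\operatorname{pr}_*$ is a bijection with inverse $\lambda \mapsto \tilde\lambda$.

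\textbf{Continuity of both maps.} Both spaces of measures carry the weak-$*$ topology and are metrizable and compact (closed subsets of the space of probability measures on a compact metric space), so it suffices to check sequential continuity. The map $\operatorname{pr}_*$ is weak-$*$ continuous because $f \mapsto f \circ \operatorname{pr}$ sends $C(Y)$ into $C(\mathbb{S}^2)$, so $\mu_n \to \mu$ implies $\int g\,\mathrm{d}(\operatorname{pr}_*\mu_n) = \int (g\circ\operatorname{pr})\,\mathrm{d}\mu_n \to \int(g\circ\operatorname{pr})\,\mathrm{d}\mu = \int g\,\mathrm{d}(\operatorname{pr}_*\mu)$ for every $g \in C(Y)$. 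Continuity of the inverse then comes for free: a continuous bijection from a compact space to a Hausdorff space is a homeomorphism. Alternatively one checks directly that $\lambda_n \to \lambda$ implies $\tilde\lambda_n \to \tilde\lambda$ using that the symmetrization of any $f \in C(\mathbb{S}^2)$ descends to a continuous function on $Y$.

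\textbf{Main obstacle.} There is essentially no obstacle; the one point that deserves care is that $\mathbb{G}(\Z)$ is genuinely finite here — this is exactly the content recorded in Section~\ref{section:def volume} (the stabilizer $\torus_v(\Z)$ corresponds to the finite integral isometry group of a positive definite binary form, and $|\mathbb{G}(\Z)| < \infty$), so the averaging operator $f \mapsto \tfrac{1}{|\mathbb{G}(\Z)|}\sum_\gamma f(\gamma\cdot)$ is a bounded operator on $C(\mathbb{S}^2)$ and the formula above makes sense. Once finiteness is invoked the argument is purely formal.
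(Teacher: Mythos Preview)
Your proof is correct and follows essentially the same approach as the paper: both construct the inverse to $\operatorname{pr}_*$ by averaging continuous functions over the finite group $\mathbb{G}(\Z)$ and invoking Riesz representation, then check it is a two-sided inverse. Your version is in fact slightly more complete, since you explicitly address continuity (via the compact--Hausdorff argument), whereas the paper only verifies bijectivity and leaves the homeomorphism claim implicit.
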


Given a probability measure $\mu$ on $Y$, we shall refer to the unique $\mathbb{G}(\Z)$-invariant probability measure $\bar{\mu}$ on  $\mathbb{S}^2$ as the \textit{lift of $\mu$}.

From this and Section \ref{section:linnik folded} the statement in Linnik's Theorem A follows readily. In fact, the lift of the Haar measure $m_Y$ is the normalized Haar measure on $\mathbb{S}^2$ and the lift of the normalized sum of the Dirac measures for points in $\mathcal{I}_d' \subset Y$ is the normalized sum\footnote{Since $\G(\Z)\simeq \SO_3(\Z)$ is finite and any non-trivial element of $\G(\Z)$ can fix only one rational line in $\pure\quat(\Q)\simeq \Q^3$, the stabilizer subgroup of a primitive vector $v\in \pure{\mathcal{O}_HW}$ is trivial for all but finitely many vectors $v$.} of the Dirac measures for points in $\mathcal{I}_d \subset \mathbb{S}^2$ for large enough $d$.

\begin{proof}
We will identify measures $\mu$ with the associated positive linear functionals $\mu(\varphi) = \int \varphi d\mu$. 
Notice that continuous functions $\varphi$ on $Y$ correspond linearly to left-$\G(\Z)$-invariant continuous functions $\tilde{\varphi}$ on $\mathbb{S}^2$. 
For an arbitrary continuous function $\varphi$ on $G$ we introduce the mean
\begin{align*}
\varphi_{\G(\Z)}(x) := \frac{1}{|\G(\Z)|}\sum_{\gamma \in \G(\Z)}\varphi(\gamma^{-1} x)
\end{align*}
which is a left-$\G(\Z)$-invariant function. 
Observe that if $\varphi$ is left-$\G(\Z)$-invariant, then $\varphi = \varphi_{\G(\Z)}$. 
Given a probability measure $\mu$ on $Y$ define a measure $f(\mu)$ on $\mathbb{S}^2$ through
\begin{align*}
f(\mu)(\varphi) = \mu(\widetilde{\varphi_{\G(\Z)}}).
\end{align*}
One now verifies directly that $f$ is a two-sided inverse of $\pi_*$.
\end{proof}

\section{Equidistribution of CM points and the $\PGL_2$-case}\label{section:Equidistribution of CM points}

In this section we would like to prove Theorem \ref{thm:main} for the split quaternion algebra $\quat = \Mat_2$.
We can consider the maximal order $\Mat_2(\Z)\subset \Mat_2(\Q)$ only as any other maximal order is $\GL_2(\Q)$-conjugate to it.

In this case, the group $\BG = \mathbf{PB}^\times = \PGL_2$ has class number one, that is
\begin{align}\label{eq:class number one for PGL2}
\PGL_2(\adele) = \PGL_2(\Q) \PGL_2(\R \times \widehat{\Z}).
\end{align}
In particular, there exist well-defined projections
\begin{align*}
X_\adele \to X_S = \lquot{\PGL_2(\Z^S)}{\PGL_2(\Q_S)}
\end{align*}
for any set of places $S \subset \mathcal{V}^\Q$ containing the archimedean place.

The strategy for Theorem \ref{thm:main} will consist in studying toral packets on the quotient $X_{\set{\infty,p}}$ for a fixed odd prime $p$ and in showing maximal entropy for this setup with respect to a diagonalizable element in $\PGL_2(\Qp)$ (as in Theorem \ref{thm: Maximal entropy}).
This entropy can then be transported to the adelic quotient using the formula of Abramov and Rokhlin \cite{abramovrokhlin}, from which Theorem \ref{thm:main} can be deduced as in the cocompact case -- see Section \ref{section:proof main thm Mat2}.

Some of the discussions in this section can be shortened using statements from Section \ref{section:cocompact}, but we made an effort to keep it as self-contained as possible.

\subsection{Algebraic tori associated to quadratic number fields} \label{section:CM-algebraictori}

For the current case of Theorem \ref{thm:main} we change the viewpoint slightly.

\subsubsection{Proper ideals and embeddings}
Consider the imaginary quadratic number field $K := \Q(\sqrt{d})$ for a negative discriminant $d$ and let
\begin{align*}
R_d = \Z\left[\frac{d+\sqrt{d}}{2}\right] \subset \Q(\sqrt{d})
\end{align*}
be the order of discriminant $d$.
Fix a \emph{proper} $R_d$-ideal $\Fa$ i.e.~a rank two 
$\Z$-lattice $\Fa \subset K$ with
\begin{align*}
R_d = \setc{\lambda \in K}{\lambda.\Fa \subset \Fa}.
\end{align*}
The reader may keep in mind the special case $d = \disc(K)$ so that $R_d = R_K$ is the ring of integers in $K$, which simplifies some of the arguments in what follows (e.g.~any ideal is proper for the ring of integers).

Let $a_1,a_2$ be a $\Z$-basis of $\Fa$.
Given an element $\lambda \in K$, we represent the multiplication by $\lambda$ on $K$ in the basis $(a_1,a_2)$ to obtain a matrix $\psi_\Fa(\lambda) \in \Mat_2(\Q)$ and choose $\psi_\Fa(\lambda) \in \Mat_2(\Q)$ to act on row vectors in $\Q^2$ from the right. 
This yields an embedding of $\Q$-algebras
\begin{align*}
\psi_\Fa: K \embed \Mat_2(\Q).
\end{align*}
Denoting by $\iota_\Fa: \Q^2 \to K$ the isomorphism induced by the choice of basis of $\Fa$ we obtain the commutative diagram
\vspace*{-7pt}
\begin{center}
\begin{tikzpicture}[every node/.style={midway}]
  \matrix[column sep={6em,between origins}, row sep={3em}] at (0,0) {
    \node(NW) {$\Q^2$}  ; & \node(NE) {$\Q^2$}; \\
    \node(SW) {$K$}; & \node (SE) {$K$};\\
  };
  \draw[->] (NW) -- (SW) node[anchor=east]  {$\iota_\Fa$};
  \draw[->] (NW) -- (NE) node[anchor=south] {$\psi_\Fa(\lambda)$};
  \draw[->] (NE) -- (SE) node[anchor=west] {$\iota_\Fa$};
  \draw[->] (SW) -- (SE) node[anchor=north] {$\cdot\lambda$};
  \node at (1,-1) {$  $};
\end{tikzpicture}
\end{center}

Observe that $\psi_\Fa(\lambda)$ has integer entries if and only if $\lambda$ preserves $\mathfrak{a} = \iota_\Fa(\Z^2)$. That is, 
\begin{align}\label{eq:CM-Mat_2 <-> O_K}
\psi_\Fa(\lambda) \in \Mat_2(\Z) \iff \lambda \in R_d
\end{align}
by properness of $\Fa$ and in particular 
\begin{align}\label{eq:CM-integral valued images of K}
\psi_\Fa(\lambda) \in \GL_2(\Z) \iff \lambda \in R_d^\times.
\end{align}

\subsubsection{A choice of traceless element}\label{section:CM-on the traceless guy}
Set by \eqref{eq:CM-Mat_2 <-> O_K}
\begin{align*}
v_\Fa:= \psi_\Fa(\sqrt{d}) \in \Mat_2(\Z)
\end{align*}
and notice that $v_\Fa$ is traceless and of determinant $-d$ as $v_{\Fa}^2 = \psi_{\Fa}(d) = d$.

Notice that $v_\Fa$ might not be primitive.
However, $\alpha v_\Fa = \psi_\Fa(\alpha\sqrt{d}) \in \Mat_2(\Z)$ for $\alpha\in \Q^\times$ if and only if $\alpha\sqrt{d} \in R_d$.
Thus, if $d \equiv 1 \mod 4$, $\alpha$ has to be an integer (i.e.~$v_\Fa$ is primitive) and if $d \equiv 0 \mod 4$, only $2\alpha$ has to be an integer (i.e.~$\frac{1}{2}v_\Fa\in \Mat_2(\Z)$ is primitive).

Furthermore, we note for later use that the off-diagonal entries of $v_\Fa$ are divisible by two as $\psi_\Fa(\frac{d+\sqrt{d}}{2}) = \frac{d}{2}+\frac{1}{2}v_\Fa$ is an integral matrix.

\subsubsection{The associated torus}
As in Section \ref{section:acting tori - general} we define the $\Q$-algebraic torus $\torus_\Fa = \torus_{v_\Fa}$ which satisfies
\begin{align*}
\torus_\Fa(R) &:= \setc{h \in \PGL_2(R)}{h v_\Fa h^{-1} = v_\Fa} \\
&= \setc{h \in \PGL_2(R)}{\forall \lambda \in K: h \psi_\Fa(\lambda) h^{-1} = \psi_\Fa(\lambda)}
\end{align*}
for any algebra $R$ over $\Q$.

The eigenvalues of $v_\Fa$ (or more precisely its conjugacy class) yield a lot of information about the group~$\torus_\Fa$ as we shall presently see. 
Denote by $\overline{\psi_\Fa}$ the composition of $\psi_\Fa:K^\times \to \GL_2(\Q)$ and the projection $\GL_2(\Q) \to \PGL_2(\Q)$.

\begin{claim}\label{claim:Q-points torus}
We have $\psi_\Fa(K) = \setc{ h\in \Mat_2(\Q)}{hv_\Fa = v_\Fa h}$ and $\torus_\Fa(\Q) =~\overline{\psi_\Fa}(K^\times)$. 
Furthermore, $\torus_\Fa(\Z) := \torus_\Fa(\Q) \cap \PGL_2(\Z) = \overline{\psi_\Fa}(R_d^\times)$.
\end{claim}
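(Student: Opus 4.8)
The plan is to establish the three identities in turn, moving from the centralizer computation in $\Mat_2(\Q)$ to the $\Q$-points and finally to the $\Z$-points. First I would prove $\psi_\Fa(K) = \setc{h \in \Mat_2(\Q)}{h v_\Fa = v_\Fa h}$. The inclusion ``$\subseteq$'' is immediate since $K$ is commutative and $v_\Fa = \psi_\Fa(\sqrt d)$, so any $\psi_\Fa(\lambda)$ commutes with $v_\Fa$ by the commutative diagram (or simply because $\psi_\Fa$ is an algebra homomorphism). For ``$\supseteq$'', the key point is that $v_\Fa$ has distinct eigenvalues $\pm\sqrt d$ (here $d<0$ is not a square, so $v_\Fa$ is regular semisimple over $\overline\Q$); hence its centralizer in $\Mat_2(\overline\Q)$ is the $2$-dimensional commutative algebra $\overline\Q[v_\Fa]$, and therefore its centralizer in $\Mat_2(\Q)$ is the $2$-dimensional $\Q$-algebra $\Q[v_\Fa] = \Q\cdot I + \Q\cdot v_\Fa$. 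But $\psi_\Fa(K)$ already contains $I = \psi_\Fa(1)$ and $v_\Fa = \psi_\Fa(\sqrt d)$ and is $2$-dimensional over $\Q$ (as $\psi_\Fa$ is injective and $\dim_\Q K = 2$), so $\psi_\Fa(K) = \Q[v_\Fa]$ equals the centralizer. A dimension count over $\Q$ suffices; one does not need to pass to $\overline\Q$ if one argues directly that any $h$ commuting with $v_\Fa$ lies in $\Q[v_\Fa]$ because the minimal polynomial of $v_\Fa$ has degree $2$.

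Next, for $\torus_\Fa(\Q) = \overline{\psi_\Fa}(K^\times)$: unwinding the definition, $\torus_\Fa(\Q)$ consists of classes $[h]$ with $h \in \GL_2(\Q)$ and $h v_\Fa h^{-1} = v_\Fa$, i.e. $h v_\Fa = v_\Fa h$. By the first identity such $h$ lies in $\psi_\Fa(K)$, and since $h$ is invertible it lies in $\psi_\Fa(K) \cap \GL_2(\Q) = \psi_\Fa(K^\times)$ (an element of a quadratic field algebra maps to an invertible matrix iff it is nonzero, as $K$ is a field). Projecting to $\PGL_2(\Q)$ gives $\torus_\Fa(\Q) \subseteq \overline{\psi_\Fa}(K^\times)$; the reverse inclusion is clear since $\psi_\Fa(\lambda)$ commutes with $v_\Fa$ for every $\lambda$. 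Finally, for the $\Z$-points: by definition $\torus_\Fa(\Z) = \torus_\Fa(\Q) \cap \PGL_2(\Z)$, so take $\lambda \in K^\times$ with $\overline{\psi_\Fa}(\lambda) \in \PGL_2(\Z)$; this means $\psi_\Fa(\lambda) \in \GL_2(\Q)$ is a scalar multiple of an element of $\GL_2(\Z)$. I would clear denominators: write $\psi_\Fa(\lambda) = c\cdot g$ with $c \in \Q^\times$ and $g \in \GL_2(\Z)$, and observe that since $\psi_\Fa$ is a ring homomorphism, $c$ itself is realized as $\psi_\Fa$ of some element — more precisely, one can arrange $\lambda$ (within its $\Q^\times$-homothety class, which is what the $\PGL_2$-class remembers) so that $\psi_\Fa(\lambda) \in \GL_2(\Z)$ outright, and then \eqref{eq:CM-integral valued images of K} gives $\lambda \in R_d^\times$. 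Conversely $\overline{\psi_\Fa}(R_d^\times) \subseteq \PGL_2(\Z)$ is exactly \eqref{eq:CM-integral valued images of K}.

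The main obstacle is the last step, specifically the passage through $\PGL_2$: an element $[h] \in \PGL_2(\Q)$ is integral iff \emph{some} scalar multiple of a lift $h$ lies in $\GL_2(\Z)$, so one cannot directly apply \eqref{eq:CM-integral valued images of K} to the given $\lambda$. The fix is to note that $\psi_\Fa(\Q^\times) = \Q^\times \cdot I$ consists of scalar matrices, so multiplying $\lambda$ by a suitable rational scalar (which does not change $\overline{\psi_\Fa}(\lambda)$) lets us assume $\psi_\Fa(\lambda) \in \Mat_2(\Z)$ is primitive, i.e. not divisible by any prime; then $\det\psi_\Fa(\lambda) = \Nr_{K/\Q}(\lambda)$ up to sign, and integrality of $[h]^{-1}$ forces $\det\psi_\Fa(\lambda) \in \{\pm 1\}$, hence $\psi_\Fa(\lambda) \in \GL_2(\Z)$ and $\lambda \in R_d^\times$ by \eqref{eq:CM-integral valued images of K}. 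I expect this normalization argument, together with checking that a primitive integral matrix with determinant $\pm1$ really lies in $\GL_2(\Z)$, to be the only genuinely non-formal part; everything else is linear algebra over a field plus the already-established equivalences \eqref{eq:CM-Mat_2 <-> O_K} and \eqref{eq:CM-integral valued images of K}.
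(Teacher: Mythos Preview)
Your proof is correct and follows the same approach as the paper: a dimension count for the centralizer (the paper phrases this as $\dim_{\Q} = \dim_{\overline{\Q}} = 2$ since $v_\Fa$ is diagonalizable over $\overline{\Q}$) followed by an appeal to \eqref{eq:CM-integral valued images of K} for the $\Z$-points. You are in fact more careful than the paper about the scalar ambiguity when lifting from $\PGL_2(\Z)$ back to $\GL_2(\Z)$; the paper simply cites \eqref{eq:CM-integral valued images of K} and leaves that normalization implicit.
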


\begin{proof}
The dimension of $\setc{h \in \Mat_2(\Q)}{hv_\Fa = v_\Fa h}$ over $\Q$ is the same as the dimension of $\setc{h \in \Mat_2(\overline{\Q})}{hv_\Fa = v_\Fa h}$ over the algebraic closure $\overline{\Q}$, the latter being $2$ as $v_\Fa$ is diagonalizable over $\overline{\Q}$. 
The last statement follows from the observation we made in \eqref{eq:CM-integral valued images of K}. 
\end{proof}


\begin{lemma}[Splitting at $p$]\label{lemma:CM-R-points and Qp points of the torus} $ $
\begin{enumerate}[(i)]
\item The group of $\R$-points $\torus_\Fa(\R)$ is conjugate to the compact group $\PO_2(\R)$.
\item Let $p$ be a prime with\footnote{Equivalently, $p$ is split in $K$.} $d \mod p \in (\Fp^\times)^2$.
Then $\torus_\Fa(\Qp)$ is conjugate to the diagonal subgroup
\begin{align*}
\setc{\begin{pmatrix}
a & 0 \\ 
0 & 1
\end{pmatrix} }{a \in \Qp^\times} < \PGL_2(\Qp).
\end{align*}
\end{enumerate}
\end{lemma}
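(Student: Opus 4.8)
The plan is to analyze the conjugacy class of the traceless matrix $v_\Fa$ over $\R$ and over $\Qp$ and to transfer the known model for the torus accordingly, using Claim~\ref{claim:Q-points torus} to identify $\torus_\Fa(R)$ with (the image in $\PGL_2$ of) the centralizer of $v_\Fa$ in $\GL_2(R)$ for $R \in \{\R, \Qp\}$. For (i): since $\det(v_\Fa) = -d > 0$ (as $d < 0$), the eigenvalues of $v_\Fa$ are $\pm\sqrt{d} = \pm\sqrt{|d|}\,\mathrm{i}$, a pair of non-real complex conjugates. Hence $v_\Fa$ is $\GL_2(\R)$-conjugate to a scalar multiple of a rotation by $\pi/2$, i.e.\ to $\sqrt{|d|}\left(\begin{smallmatrix} 0 & -1 \\ 1 & 0\end{smallmatrix}\right)$; its centralizer in $\GL_2(\R)$ is then conjugate to $\setc{\left(\begin{smallmatrix} x & -y \\ y & x \end{smallmatrix}\right)}{(x,y)\neq(0,0)}\cong \C^\times$, whose image in $\PGL_2(\R)$ is the circle group $\SO_2(\R)/\{\pm I\}$. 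Since $v_\Fa$ is genuinely non-symmetric as an element of $\PGL_2$, the relevant compact group is $\PO_2(\R)$; more precisely, $\torus_\Fa(\R)$ is conjugate to $\PO_2(\R)$ because the normalizer accounting (complex conjugation on $K$, which fixes $v_\Fa^2 = d$ and sends $v_\Fa \mapsto -v_\Fa = v_\Fa$ in $\PGL_2$) shows $\torus_\Fa$ is the full special orthogonal group of the binary form attached to $v_\Fa^\perp$, and $\torus_\Fa(\R)$ therefore picks up the extra component and is conjugate to $\PO_2(\R)$. (Alternatively this is immediate from the general Lemma~\ref{lemma:Linnik's condition}: $\Nr(v_\Fa) = -d \notin -(\R^\times)^2$, so $\torus_{v_\Fa}$ is anisotropic over $\R$, hence $\torus_\Fa(\R)$ is compact, and being a one-dimensional compact $\Q$-torus it is conjugate to $\PO_2(\R)$.)

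For (ii): the hypothesis $d \bmod p \in (\Fp^\times)^2$ together with $p$ odd and Hensel's lemma gives $\sqrt{d} \in \Qp$, so $v_\Fa$ has two distinct eigenvalues $\pm\sqrt{d} \in \Qp^\times$ in $\Qp$ (distinct since $d \neq 0$ and $p \neq 2$), hence $v_\Fa$ is diagonalizable over $\Qp$: there is $g \in \GL_2(\Qp)$ with $g v_\Fa g^{-1} = \operatorname{diag}(\sqrt d, -\sqrt d)$. By Claim~\ref{claim:Q-points torus} applied over $\Qp$ (the dimension count there works verbatim over $\Qp$ since $v_\Fa$ is diagonalizable over $\Qp$ with distinct eigenvalues), the centralizer of $v_\Fa$ in $\Mat_2(\Qp)$ is $\psi_\Fa(K\otimes_\Q \Qp) = \psi_\Fa(\Qp\times\Qp)$, which after conjugation by $g$ becomes the full diagonal subalgebra; its invertible elements modulo scalars give exactly $\setc{\operatorname{diag}(a,1)}{a\in\Qp^\times} < \PGL_2(\Qp)$, as claimed.

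I expect the only genuinely delicate point to be pinning down in part (i) that $\torus_\Fa(\R)$ is conjugate to $\PO_2(\R)$ rather than merely to $\SO_2(\R)/\{\pm I\}$ — i.e.\ identifying the correct component group. The cleanest route is to invoke the identification in Section~\ref{section:acting tori - general} of $\torus_{v_\Fa}$ with the special orthogonal group $\BH_w$ of the restriction of the norm/determinant form to $w^\perp$ (a positive-definite binary form over $\R$ once $d<0$), whose real points form $\SO_2(\R)$; but the way $\torus_\Fa$ sits inside $\PGL_2$ via conjugation means the stabilizer of $v_\Fa$ in $\PGL_2$ is the image of $K^\times$ and the nontrivial Galois element of $K/\Q$ contributes the reflection, so one lands on $\PO_2(\R)$. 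Since the excerpt's Claim~\ref{claim:Q-points torus} already gives $\torus_\Fa(\Q) = \overline{\psi_\Fa}(K^\times)$ and the analogous statement over $\R$ follows by the same argument, part (i) reduces to the elementary observation that $\overline{\psi_\Fa}((K\otimes\R)^\times) = \overline{\psi_\Fa}(\C^\times)$ is, up to conjugacy, the image of $\C^\times$ acting on $\C \cong \R^2$, which is a maximal compact torus of $\PGL_2(\R)$, i.e.\ conjugate to $\PO_2(\R)$.
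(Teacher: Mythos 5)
For part (ii) your argument is correct and essentially the paper's: Hensel's lemma diagonalizes $v_\Fa$ over $\Qp$ with distinct eigenvalues $\pm\sqrt{d}$, and the centralizer descends to the diagonal subgroup $\setc{\operatorname{diag}(a,1)}{a\in\Qp^\times}$ of $\PGL_2(\Qp)$.

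For part (i) your initial computation is the right one and you should have stopped there. The centralizer of $v_\Fa$ in $\GL_2(\R)$ is conjugate to $\setc{\left(\begin{smallmatrix}x&-y\\y&x\end{smallmatrix}\right)}{(x,y)\neq(0,0)}\cong\C^\times$, and its image in $\PGL_2(\R)$ is $\C^\times/\R^\times$, a \emph{connected} circle. The ``normalizer accounting'' you then invoke to manufacture a second component is unsound: $v_\Fa$ is an element of $\Mat_2$, not of $\PGL_2$, so $v_\Fa$ and $-v_\Fa$ are genuinely distinct, and an element $h\in\PGL_2(\R)$ with $hv_\Fa h^{-1}=-v_\Fa$ normalizes $\torus_\Fa$ but does \emph{not} lie in $\torus_\Fa=\Stab(v_\Fa)$; it therefore contributes nothing to $\torus_\Fa(\R)$. (This is also forced by the general theory: $\torus_\Fa$ is the centralizer of a regular semisimple element of $\PGL_2$, hence a connected one-dimensional torus, and its real points over which it is anisotropic form a circle.) The paper's own proof simply exhibits the centralizer as $\setc{\left(\begin{smallmatrix}a&b\\-b&a\end{smallmatrix}\right)\in\PGL_2(\R)}{a,b\in\R}$ and labels it ``$\PO_2(\R)$''; since this set consists only of (images of) rotations, the label is best read as a mild abuse of notation for the connected compact torus $\PSO_2(\R)\cong\SO_2(\R)/\{\pm I\}$, the identity component of $\PO_2(\R)$, and nothing downstream uses a second component. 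The lesson: trust the computation rather than inventing a normalizer argument to match a notational quirk in the statement.
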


Notice that the statement in (ii) in Lemma \ref{lemma:Linnik's condition}. As the proof here is relatively concrete, we give it nevertheless.
Similarly to Lemma \ref{lemma:Linnik's condition}, we will say that a discriminant $d$ satisfies Linnik's condition at $p$ if $d \mod p \in (\Fp^\times)^2$ holds.

\begin{proof}
$(i)$: The matrix $v_\Fa$ is conjugate over $\R$ to the matrix
\begin{align}\label{eq:CM-def v_d,infty}
\begin{pmatrix}
0 & \sqrt{|d|} \\ 
-\sqrt{|d|} & 0
\end{pmatrix} =: v_{d,\infty}
\end{align}
and the subgroup of matrices in $\PGL_2(\R)$ centralzinh $v_{d,\infty}$ is indeed given by
\begin{align*}
\setc{\begin{pmatrix}
a & b \\ 
-b & a
\end{pmatrix} \in \PGL_2(\R)}{a,b \in \R} = \PO_2(\R).
\end{align*}
$(ii)$: By Hensel's lemma, the polynomial $x^2-d$ splits over $\Qp$ and has distinct roots.
Thus, $v_\Fa$ is diagonalizable. Let $\varepsilon,-\varepsilon$ be the eigenvalues of $v_\Fa$. The matrix $v_\Fa$ is therefore conjugate to
\begin{align}\label{eq:CM-def v_d,p}
\begin{pmatrix}
\varepsilon & 0 \\ 
0 & -\varepsilon
\end{pmatrix} =: v_{d,p}
\end{align}
so that its centralizer subgroup is conjugate to
\begin{align*}
\setc{g \in \PGL_2(\Qp)}{g v_{d,p} = v_{d,p} g } = \setc{\begin{pmatrix}
a & 0 \\ 
0 & 1
\end{pmatrix} }{a \in \Qp^\times}.
\end{align*}
This concludes the proof.
\end{proof}

\subsection{Compact torus orbits}\label{subsection:CM-Compact torus orbits}

Let $d<0$ be a discriminant, let $K = \Q(\sqrt{d})$ and let $\Fa$ be a proper $R_d$-ideal.
In this subsection we will study the toral packet
\begin{align*}
\PGL_2(\Q)\torus_\Fa(\adele) \subset X_\adele 
= \lquot{\PGL_2(\Q)}{\PGL_2(\adele)}
\end{align*}
and project it to the $p$-adic extension
\begin{align*}
X_{\set{\infty,p}} = \lquot{\PGL_2(\Zinvp)}{\PGL_2(\R \times \Qp)}.
\end{align*}
If projected further to the complex modular curve $Y_0(1) = \lquot{\SL_2(\Z)}{\mathbb{H}}$, this packet will essentially yield the CM points for the discriminant $d$.

It can be proven using standard methods that the packet $\PGL_2(\Q)\torus_\Fa(\adele) $ is compact (this will also follow from Proposition \ref{prop:CM-number of orbits}).
As the subgroup $\torus_\Fa(\R\times \widehat{\Z})$ of $\torus_\Fa(\adele)$ is open, we can write
\begin{align}\label{eq:CM-orbit decomposition}
\PGL_2(\Q)\torus_\Fa(\adele) = \Disjointunion_{\rho \in \CR_\Fa} \PGL_2(\Q)\rho \torus_\Fa(\R \times \widehat{\Z})
\end{align}
where $\mathcal{R}_\Fa  \subset \PGL_2(\R \times \widehat{\Z})$ is a finite set of representatives.
Here, we used that the fact that $\PGL_2$ has class number one (see \eqref{eq:class number one for PGL2} and compare to Section \ref{section:packets}).

\subsubsection{Counting orbits and the Picard group}\label{section:Picard group}
Note that the number of $\torus_\Fa(\R \times \widehat{\Z})$-orbits $|\CR_\Fa|$ is exactly the cardinality of the finite abelian group
\begin{align*}
\lrquot{\torus_\Fa(\Q)}{\torus_\Fa(\adele_f)}{\torus_\Fa(\widehat{\Z})} \simeq \lrquot{\torus_\Fa(\Q)}{\torus_\Fa(\adele)}{\torus_\Fa(\R\times\widehat{\Z})}.
\end{align*}
As we will now discuss, this group is isomorphic to the Picard group $\Cl(R_d)$ of the order $R_d$, which is by definition the (finite) group of $K^\times$-homothety classes of proper $R_d$-ideals.
Note that an $R_d$-ideal $\Fb$ is invertible if and only if it is proper and that in this the inverse is given by $\Fa^{-1} = \setc{\lambda\in K}{\lambda \Fa \subset R_d}$ (cf.~\cite[Prop.~2.1]{duke}, \cite[Lemma 7.5]{coxprimesoftheform})
The Picard group of $R_d$ is very strongly connected to the set of binary forms of discriminant $d$ (cf.~\cite{duke} and \cite{coxprimesoftheform}).

\begin{proposition}[Cardinality of $\CR_\Fa$]\label{prop:CM-number of orbits}
There is an isomorphism
\begin{align*}
\lrquot{\torus_\Fa(\Q)}{\torus_\Fa(\adele_f)}{\torus_\Fa(\widehat{\Z})} 
\simeq \Cl(R_d).
\end{align*}
In particular, $|\mathcal{R}_\Fa| = |\Cl(R_d)| =: h_d$.
\end{proposition}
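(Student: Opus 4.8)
The plan is to identify the $\Q$-torus $\torus_\Fa$, together with its integral structure at every finite place, with a quotient of $\mathrm{Res}_{K/\Q}\mathbb{G}_m$, and then to recognise the double coset in question as the ideal class group of $R_d$ via the ideal--idèle dictionary for the (possibly non-maximal) order $R_d$.

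First I would extend $\psi_\Fa$ to a morphism of algebraic $\Q$-groups $\mathrm{Res}_{K/\Q}\mathbb{G}_m\to\GL_2$ and compose with the projection $\GL_2\to\PGL_2$ to obtain $\overline{\psi_\Fa}\colon\mathrm{Res}_{K/\Q}\mathbb{G}_m\to\PGL_2$. Since $v_\Fa$ is non-scalar with $v_\Fa^2=d\neq 0$, its centraliser algebra in $\Mat_2$ is $\Q[v_\Fa]=\psi_\Fa(K)$, which remains true after any field extension; so the argument of Claim~\ref{claim:Q-points torus}, run over $\Qp$ in place of $\Q$, gives $\torus_\Fa(\Qp)=\overline{\psi_\Fa}\big((K\otimes_\Q\Qp)^\times\big)$, and one checks that $\overline{\psi_\Fa}$ has kernel the central $\mathbb{G}_m$. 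For the integral points, $\Fa\otimes\Zp$ is a proper ideal of $R_d\otimes\Zp$ and $\psi_\Fa$ localises to multiplication on it, so \eqref{eq:CM-Mat_2 <-> O_K} and \eqref{eq:CM-integral valued images of K} localise to $\psi_\Fa(\lambda)\in\GL_2(\Zp)\iff\lambda\in(R_d\otimes\Zp)^\times$; hence $\torus_\Fa(\Zp)=\torus_\Fa(\Qp)\cap\PGL_2(\Zp)=\overline{\psi_\Fa}\big((R_d\otimes\Zp)^\times\big)$.

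Next I would pass to idèles. Write $K_p=K\otimes_\Q\Qp$, $(R_d)_p=R_d\otimes\Zp$, $\widehat{R_d}=\prod_p(R_d)_p$, and let $\mathbb{A}_{K,f}^\times$ denote the finite idèles of $K$ (the restricted product of the $K_p^\times$ with respect to the $(R_d)_p^\times$). By the previous paragraph the continuous surjection $\overline{\psi_\Fa}\colon\mathbb{A}_{K,f}^\times\to\torus_\Fa(\adele_f)$ has kernel the diagonal copy of the finite idèles $\adele_f^\times$ of $\Q$, and the preimages of $\torus_\Fa(\Q)$ and of $\torus_\Fa(\widehat{\Z})$ are $K^\times\adele_f^\times$ and $\widehat{R_d}^\times\adele_f^\times$ respectively (for the latter one clears denominators prime by prime, using that $x_p\in(R_d)_p^\times$ for all but finitely many $p$). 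Since $\mathbb{A}_{K,f}^\times$ is abelian, a double coset space is the quotient by the product of the two subgroups, so
\begin{align*}
\lrquot{\torus_\Fa(\Q)}{\torus_\Fa(\adele_f)}{\torus_\Fa(\widehat{\Z})}
=\mathbb{A}_{K,f}^\times\big/\big(K^\times\,\adele_f^\times\,\widehat{R_d}^\times\big)
=\mathbb{A}_{K,f}^\times\big/\big(K^\times\,\widehat{R_d}^\times\big),
\end{align*}
where the last equality uses $\adele_f^\times=\Q^\times\widehat{\Z}^\times$ (the class number of $\Q$ is one) together with $\Q^\times\subseteq K^\times$ and $\widehat{\Z}^\times\subseteq\widehat{R_d}^\times$. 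Finally I would invoke the ideal--idèle dictionary for the order $R_d$: the map $x=(x_p)_p\mapsto\Fb_x:=\{\lambda\in K:\lambda\in x_p(R_d)_p\text{ for all }p\}$ induces an isomorphism of $\mathbb{A}_{K,f}^\times/\widehat{R_d}^\times$ onto the group of invertible fractional $R_d$-ideals, carrying the image of $K^\times$ onto the principal fractional ideals; as ``invertible'' coincides with ``proper'' for $R_d$-ideals, the quotient $\mathbb{A}_{K,f}^\times/(K^\times\widehat{R_d}^\times)$ is exactly $\Cl(R_d)$. The cardinality statement then follows from \eqref{eq:CM-orbit decomposition} and the identification $|\CR_\Fa|=\big|\lrquot{\torus_\Fa(\Q)}{\torus_\Fa(\adele_f)}{\torus_\Fa(\widehat{\Z})}\big|$ recorded just before the proposition.

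The main obstacle is this last step: making precise the ideal--idèle correspondence for the order $R_d$, whose crux is that a fractional $R_d$-ideal is invertible precisely when it is locally principal (i.e.\ $\Fb\otimes\Zp$ is a principal $(R_d)_p$-ideal for each $p$). This is standard (see e.g.\ \cite[\S7]{coxprimesoftheform} or \cite{voight}), but since $R_d$ need not be maximal it is the one point that genuinely uses the arithmetic of orders rather than formal manipulation of the exact sequence $1\to\mathbb{G}_m\to\mathrm{Res}_{K/\Q}\mathbb{G}_m\to\torus_\Fa\to1$ and the triviality of the ideal class group of $\Q$.
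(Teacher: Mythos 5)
Your argument is correct and is essentially the paper's own proof: the paper realises $\mathrm{Res}_{K/\Q}\mathbb{G}_m$ as the explicit linear subgroup $\BL^\times<\GL_2$ (the invertible elements of the centraliser algebra of $\psi_\Fa(K)$) and then proves exactly your three steps — the class-number-one reduction from $\adele_f^\times$ to pass from $\BL^\times$ to $\torus_\Fa$, the identification $\psi_\Fa^{-1}(\BL(\Zp))=(R_d)_p$ via $\Zp^2=\iota_\Fa(\Fa_p)$ and principality of $\Fa_p$, and a separate lemma encoding the ideal–idèle dictionary $\Cl(R_d)\simeq\lrquot{K^\times}{\adele_{K,f}^\times}{\widehat{R_d}^\times}$ using ``proper $\iff$ locally principal''. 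The only difference is presentational (you phrase the Weil restriction abstractly rather than as $\BL^\times$ and compute the preimages by hand rather than citing class number one of $\mathbb{G}_m$ in one stroke), so the two proofs match step for step.
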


We begin by giving an idelic interpretation of the Picard group of the order $R_d$.
For this, we denote by\footnote{In fact, we have $K \otimes \Qp \simeq \prod_{\mathfrak{p} \mid p\mathcal{O}_K} K_{\mathfrak{p}}$ with  $R_d \otimes \Zp \simeq\prod_{\mathfrak{p} \mid p\mathcal{O}_K} (R_d)_{\mathfrak{p}}$.} $\adele_{K,f} = \restrprod{p} K\otimes \Qp$
the ring of finite adeles of $K$.
Moreover, we set $\widehat{R_d} = \prod_{p}(R_d)_p$ where $(R_d)_p = R_d \otimes \Zp$ for any prime $p$.

\begin{lemma}[Idelic interpretation of the Picard group]
We have
\begin{align*}
\Cl(R_d) \simeq \lrquot{K^\times}{\adele_{K,f}^\times}{\widehat{R_d}^\times}.
\end{align*}
\end{lemma}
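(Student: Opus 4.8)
The plan is to build a concrete surjective homomorphism from the finite idele group of $K$ onto the group $I(R_d)$ of invertible (equivalently proper) fractional $R_d$-ideals, and to read off its kernel. Write $P(R_d) \leq I(R_d)$ for the subgroup of principal fractional ideals, so that $\Cl(R_d) = I(R_d)/P(R_d)$ by definition; since $\adele_{K,f}^\times$ is abelian, the double quotient in the statement is just $\adele_{K,f}^\times/(K^\times \widehat{R_d}^\times)$, and it will suffice to exhibit an isomorphism $\adele_{K,f}^\times/\widehat{R_d}^\times \simeq I(R_d)$ carrying the image of $K^\times$ onto $P(R_d)$.

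First I would record two local inputs. As with orders in $\quat$ (cf.~\eqref{eq:compl order inverse}), a fractional $R_d$-ideal $\Fa$ is determined by its completions via $\Fa = \bigcap_p (\Fa_p \cap K)$ inside the various $K \otimes \Qp$, and conversely any family of $(R_d)_p$-lattices $(\Fa_p)_p$ in $K \otimes \Qp$ with $\Fa_p = (R_d)_p$ for all but finitely many $p$ comes from a unique such ideal (cf.~\cite[Ch.~III, Sec.~5]{vigneras}). Secondly, an invertible fractional $R_d$-ideal $\Fa$ is a finitely generated projective $R_d$-module of rank one (its inverse being $\setc{\lambda \in K}{\lambda\Fa \subset R_d}$, cf.~\cite[Lemma~7.5]{coxprimesoftheform}); localizing, $\Fa_p$ is finitely generated projective of rank one over the semilocal ring $(R_d)_p = R_d \otimes \Zp \simeq \prod_{\mathfrak{p}\mid p} (R_d)_{\mathfrak{p}}$, hence free, so $\Fa_p = g_p(R_d)_p$ for some $g_p \in (K\otimes\Qp)^\times$.

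With these in hand I would define $\Phi : \adele_{K,f}^\times \to I(R_d)$ by declaring $\Phi(g)$ to be the unique fractional ideal with $(\Phi(g))_p = g_p(R_d)_p$ for every $p$; this makes sense since $g_p \in (R_d)_p^\times$ for almost all $p$ by definition of the restricted product. Multiplicativity of $\Phi$ can be checked one completion at a time, so $\Phi$ is a homomorphism, and $\Phi(g) = R_d$ if and only if $g_p \in (R_d)_p^\times$ for all $p$, that is $\ker\Phi = \prod_p(R_d)_p^\times = \widehat{R_d}^\times$. Surjectivity follows from the second local input: given invertible $\Fa$, pick $g_p$ with $\Fa_p = g_p(R_d)_p$, taking $g_p = 1$ at the (almost all) primes $p$ where $\Fa_p = (R_d)_p$, so $g = (g_p)_p \in \adele_{K,f}^\times$ and $\Phi(g) = \Fa$. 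Finally, for $\lambda\in K^\times$ embedded diagonally one has $(\Phi(\lambda))_p = \lambda(R_d)_p$, whence $\Phi(\lambda) = \lambda R_d$ and $\Phi(K^\times) = P(R_d)$. Passing to quotients gives $\adele_{K,f}^\times/\widehat{R_d}^\times \simeq I(R_d)$ and then $\adele_{K,f}^\times/(K^\times\widehat{R_d}^\times) \simeq I(R_d)/P(R_d) = \Cl(R_d)$.

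The main obstacle worth flagging is the local principality used for surjectivity: because $R_d$ fails to be integrally closed at the primes dividing the conductor it is not a Dedekind domain, so one cannot factor ideals into primes; instead one leans on the fact that finitely generated projective modules over the semilocal ring $(R_d)_p$ are free, combined with invertibility forcing rank one. The remaining steps are bookkeeping with the local-global dictionary for lattices in $K$.
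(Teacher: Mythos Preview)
Your argument is correct and follows essentially the same route as the paper: both proofs set up a bijection between proper (equivalently invertible) fractional $R_d$-ideals and $\adele_{K,f}^\times/\widehat{R_d}^\times$ via the local--global correspondence for lattices and local principality, then pass to the quotient by $K^\times$. The only noteworthy difference is in how local principality is justified: the paper simply invokes the equivalence ``proper $\iff$ locally principal'' (citing \cite[Prop.~2.1]{duke}), whereas you derive it from invertible $\Rightarrow$ projective of rank one $\Rightarrow$ free over the semilocal ring $(R_d)_p$ --- a slightly more self-contained but equivalent argument.
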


\begin{proof}
We consider the map of completions $\Fb \mapsto (\Fb_p)_p = (\Fb\otimes \Zp)_p$ on the set of non-zero $R_d$-ideals.
Notice that for any non-zero $R_d$-ideal $\Fb$ we have $\Fb_p = (R_d)p$ for all but finitely many primes $p$.

Recall that $\Fb$ is proper if and only if $\Fb$ is locally principal (cf.~\cite[Prop.~2.1]{duke}) i.e.~for every $p$ there exists $ \lambda_p\in K\otimes \Qp$ such that $\Fb_p = \lambda_p(R_d)_p$.
Notice that the choice of $\lambda_p$ is uniquely determined up to a unit in $(R_d)_p$ or in other words $\lambda = (\lambda_p)_p\in \adele_{K,f}^\times$ is uniquely determined up to a unit in $\widehat{R_d}^\times$.

Conversely, given any tuple $(\Fb_p)_p$ of ideals, where $\Fb_p$ is a principal $(R_d)_p$-ideal for every $p$ and $\Fb_p = (R_d)_p$ for all but finitely many primes $p$ there is a proper ideal $\Fb$ with $\Fb \otimes \Zp = \Fb_p$ for all $p$.
It is given by $\Fb = \bigcap_p (K \cap \Fb_p)$.

We thus obtain a bijection
\begin{align*}
\Phi:\setc{\Fb}{\Fb \text{ proper } R_d-\text{ideal}}
\to \rquot{\adele_{K,f}^\times}{\widehat{R_d}^\times}
\end{align*}
Notice that $K^\times$ acts on both sides and $\Phi(\alpha \Fb) = \alpha \Phi(\Fb)$ for any $\alpha \in K^\times$. 
Taking the quotient with $K^\times$ on both domain and target of the above map $\Phi$ shows the lemma.
\end{proof}

\begin{proof}[Proof of Proposition \ref{prop:CM-number of orbits}]
Let $\BL = \setc{x\in \Mat_2}{\psi_\Fa(\lambda) x = x \psi_\Fa(\lambda) \text{ for all }\lambda\in K}$ be the subspace defined by the image of $K$ under $\psi_\Fa$ and let $\BL^\times$ be the group of invertible elements in $\BL$.
Set $\BL^\times(\Zp) = \GL_2(\Zp) \cap \BL(\Qp)$ for  any prime $p$ and $\BL^\times(\widehat{\Z}) = \prod_{p} \BL^\times(\Zp)$.
Since $\G_m$ has class number one, the natural map
\begin{align*}
\lrquot{\BL^\times(\Q)}{\BL^\times(\adele_f)}{\BL^\times(\widehat{\Z})}
\to
\lrquot{\torus_\Fa(\Q)}{\torus_\Fa(\adele_f)}{\torus_\Fa(\widehat{\Z})}
\end{align*}
is bijective and we may consider the quotient on the left just as well.

The map $\psi_\Fa$ induces an isomorphism between $K$ and $\BL(\Q)$ (see also Claim \ref{claim:Q-points torus}), between $K_p$ and $\BL(\Qp)$ for any prime $p$ (for similar reasons) and therefore also between $\adele_{K,f}$ and $\BL(\adele_f)$.
It remains to show that the image of $(R_d)_p$ under $\psi_\Fa: K_p\to\BL(\Qp)$ is equal to $\BL(\Zp) = \BL(\Qp) \cap \Mat_2(\Zp)$.

For this, we notice that $\BL(\Zp)$ is by definition equal to the set of $X \in \BL(\Qp)$ such that $X.\Zp^2 \subset \Zp^2$. Since $\Zp^2 = \iota_\Fa(\Fa_p)$ this shows that
\begin{align*}
\psi_\Fa^{-1}(\BL(\Zp)) = \setc{\lambda\in K_p}{\lambda.\Fa_p \subset \Fa_p}.
\end{align*}
Thus, $\psi_\Fa^{-1}(\BL(\Zp)) \supset (R_d)_p$.
Conversely, if $\lambda\in K_p$ satisfies $\lambda.\Fa_p \subset \Fa_p$ then $\lambda.(R_d)_p \subset(R_d)_p$ since $\Fa_p$ is principal and thus $\lambda_p \in (R_d)_p$ as $1\in(R_d)_p$.
\end{proof}

\subsubsection{Generation of integer points and ideals}
In Section \ref{section:generating intpts} we already used the packet $\G(\Q)\torus_{v_\Fa}(\adele)$ to generate additional traceless elements in $\Mat_2(\Z)$ of determinant $d$.
We repeat the argument here, but phrase everything in terms of ideals.

\begin{lemma}[Generating ideals]\label{lemma:CM-generating ideals}
For every $\rho \in \CR_\Fa$ there is a proper $R_d$-ideal $\Fa_\rho$ so that $\rho v_\Fa \rho^{-1} = v_{\Fa_\rho}$ (with respect to a specific basis) and in particular $\rho \torus_\Fa \rho^{-1} = \torus_{\Fa_\rho}$.
\end{lemma}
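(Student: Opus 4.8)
The plan is to trace through the generation-of-integer-points machinery from Section~\ref{section:generating intpts}, now specialized to $\quat = \Mat_2$, and to reinterpret the produced integer point as an ideal. First I would fix $\rho \in \CR_\Fa$ and recall that, since $\PGL_2$ has class number one, there is a unique maximal order up to conjugacy (namely $\Mat_2(\Z)$), so by Lemma~\ref{lemma:genintpts} the element $\rho_\infty$ (the finite part being absorbed into $\PGL_2(\widehat\Z)$) produces a primitive traceless integer point $w_\rho \in \Mat_2(\Z)$ with $\det(w_\rho) = \det(v_\Fa) = -d$, namely $w_\rho = \rho v_\Fa \rho^{-1}$ viewed in the lattice $\Mat_2(\Z)$. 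The point is that $w_\rho$ again satisfies $w_\rho^2 = d$, so it defines, exactly as $v_\Fa$ did in Section~\ref{section:CM-on the traceless guy}, an embedding $\psi_\rho \colon K \embed \Mat_2(\Q)$ sending $\sqrt d \mapsto w_\rho$ (the embedding of the \emph{algebra}, since $\Q(w_\rho)$ is the centralizer of $w_\rho$ in $\Mat_2(\Q)$ by Claim~\ref{claim:Q-points torus}).

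Next I would produce the ideal. The preimage $\psi_\rho^{-1}(\Mat_2(\Z))$ is an order in $K$ containing $R_d$; I claim it equals $R_d$, i.e.\ the embedding $\psi_\rho$ is again optimal for $R_d$. This is because locally at each prime $p$ the element $\rho_p \in \GL_2(\Zp)$ conjugates $\psi_{\Fa}(K_p) \cap \Mat_2(\Zp)$ isomorphically onto $\psi_\rho(K_p) \cap \Mat_2(\Zp)$, and $\psi_\Fa^{-1}(\Mat_2(\Zp)) = (R_d)_p$ was shown in the proof of Proposition~\ref{prop:CM-number of orbits}; since $\rho_\infty$ differs from $\rho$ only at the archimedean and finitely many finite places, and at those finite places one argues by the same conjugation, the preimage of $\Mat_2(\Z)$ is still $R_d$. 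Given an optimal embedding of $R_d$, the standard correspondence (the discussion in Section~\ref{section:optimal embeddings}, or \cite[Sec.~2]{duke}) attaches a proper $R_d$-ideal: concretely, one takes $\Fa_\rho = \iota_\rho(\Zp^2)$ locally, i.e.\ the $R_d$-module generated inside $K$ by $\psi_\rho^{-1}$ applied to the standard basis lattice, or equivalently one follows $\Fa$ through the chain $\Fa_p \mapsto \lambda_p(R_d)_p$ used in the idelic description. Choosing a $\Z$-basis of $\Fa_\rho$ gives a matrix $v_{\Fa_\rho} = \psi_{\Fa_\rho}(\sqrt d)$, and $v_{\Fa_\rho}$ is $\GL_2(\Z)$-conjugate to $w_\rho = \rho v_\Fa \rho^{-1}$ because both arise from optimal embeddings of $R_d$ that differ by an element of $\PGL_2(\Z)$ (Lemma~\ref{lemma:indep choices}); after replacing the basis of $\Fa_\rho$ by that $\GL_2(\Z)$-transform we get exactly $\rho v_\Fa \rho^{-1} = v_{\Fa_\rho}$. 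The final assertion $\rho \torus_\Fa \rho^{-1} = \torus_{\Fa_\rho}$ is then immediate from the definition $\torus_v = \{g : g.v = v\}$: conjugating the centralizer of $v_\Fa$ by $\rho$ gives the centralizer of $\rho v_\Fa \rho^{-1} = v_{\Fa_\rho}$.

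The main obstacle I anticipate is the bookkeeping around which representative one conjugates by --- $\rho$ versus $\rho_\infty$ --- and checking that optimality of the embedding is preserved at \emph{all} primes, including the finitely many where $\rho_p \neq \mathrm{id}$ and the prime $2$ (where $v_\Fa$ itself can be imprimitive, cf.\ Section~\ref{section:CM-on the traceless guy}). The cleanest way around this is probably to avoid picking a traceless \emph{element} altogether and instead transport the ideal directly: since $\rho \in \PGL_2(\R\times\widehat\Z)$, the local lattices $\rho_p.\Zp^2 = \Zp^2$ are unchanged, but the embedding $\psi_\rho = \rho \psi_\Fa \rho^{-1}$ changes which scalars act integrally only through conjugation by a unit, so $\psi_\rho^{-1}(\Mat_2(\Zp)) = \rho_p \psi_\Fa^{-1}(\Mat_2(\Zp)) \rho_p^{-1}$, and one uses that $\rho_p$ normalizes nothing in particular but that the \emph{order} $\psi_\Fa^{-1}(\Mat_2(\Zp)) = (R_d)_p$ is a subring of $K_p$ intrinsic to $\psi_\Fa(K_p)$, hence carried to $(R_d)_p$ under the algebra isomorphism $\psi_\rho \circ \psi_\Fa^{-1}$. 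Everything else is the routine dictionary between optimal embeddings and proper ideals already invoked in Section~\ref{section:generating intpts}, so I would keep the write-up short and cite Lemmas~\ref{lemma:genintpts} and~\ref{lemma:indep choices} together with the proof of Proposition~\ref{prop:CM-number of orbits} rather than redo the local analysis.
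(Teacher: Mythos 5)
Your proposal is correct and, at bottom, runs on the same computation as the paper's proof: both hinge on writing $\rho = \gamma h$ with $\gamma \in \PGL_2(\Q)$ and $h \in \torus_\Fa(\adele)$, so that $\rho\psi_\Fa(b)\rho^{-1} = \gamma\psi_\Fa(b)\gamma^{-1}$ lies in $\Mat_2(\Q)\cap\Mat_2(\R\times\widehat\Z) = \Mat_2(\Z)$ for $b\in R_d$, and on reversing this locally (using $\rho_p\in\PGL_2(\Zp)$) to get $\psi_\Fa(b)\in\Mat_2(\Z)$ for the properness direction. The only real difference is packaging: the paper exhibits the ideal explicitly as $\Fa_\rho = \iota_\Fa(\Z^2\gamma)$ and reads off $v_{\Fa_\rho}=\rho v_\Fa\rho^{-1}$ in the basis $\iota_\Fa(e_1\gamma),\iota_\Fa(e_2\gamma)$, whereas you first verify that $\psi_\rho = \rho\psi_\Fa\rho^{-1}$ is an optimal embedding of $R_d$ and then appeal to the optimal-embedding/proper-ideal dictionary to produce $\Fa_\rho$. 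Both routes are sound; the paper's is slightly more self-contained since it never invokes the dictionary as a black box. One small slip on your end: you call $w_\rho = \rho v_\Fa\rho^{-1}$ primitive, but $v_\Fa$ itself need not be primitive when $d\equiv 0\bmod 4$ (cf.\ Section~\ref{section:CM-on the traceless guy}), so neither is $w_\rho$; this is harmless here since primitivity is never actually used in the argument.
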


\begin{proof}
Write $\rho \in \CR_\Fa$ as $\rho = \gamma h$ for $\gamma \in \PGL_2(\Q)$ and $ h \in \torus_\Fa(\adele)$ and choose a representative $\gamma \in  \GL_2(\Q)$. 
We first claim that $\Z^2 \gamma$ is preserved under right-multiplication with $\psi_\Fa(R_d)$, which then implies that $\Fa_{\rho}:= \iota_{\Fa}(\Z^2 \gamma)$ is an $R_d$-ideal. Let $b \in R_d$ and note that $\Z^2 \gamma\psi_\Fa(b) = \Z^2 \gamma\psi_\Fa(b)\gamma^{-1}\gamma$. By the choice of $ \gamma $,
\begin{align*}
\Mat_2(\Q) \ni \gamma\psi_\Fa(b)\gamma^{-1} = \rho \psi_\Fa(b)\rho^{-1} \in \Mat_2(\R \times \widehat{\Z})
\end{align*}
Therefore, $\gamma\psi_\Fa(b)\gamma^{-1} \in \Mat_2(\Z)$ and $\Z^2 \gamma\psi_\Fa(b) \subset \Z^2 \gamma$. Observe that by definition of $\Fa_\rho$, we have $v_{\Fa_\rho} = \gamma v_\Fa \gamma^{-1} = \rho v_\Fa \rho^{-1}$ in the basis $\iota_{\Fa}(e_1 \gamma),\iota_{\Fa}(e_2 \gamma)$.

It remains to show that $\Fa_\rho$ is proper.
So let $b \in K$ with $b\Fa_\rho \subset \Fa_\rho$ or in other words with $\Z^2 \gamma\psi_\Fa(b) \subset \Z^2\gamma$.
Then $\gamma\psi_\Fa(b)\gamma^{-1} = \rho \psi_\Fa(b) \rho^{-1} \in \Mat_2(\Z)$.
In particular, we have $\rho_p\psi_\Fa(b)\rho_p^{-1} \in \Mat_2(\Zp)$ for any prime $p$. But $\rho_p \in \PGL_2(\Zp)$ so that $\psi_\Fa(b) \in \Mat_2(\Zp)$ for any prime $p$. Thus, $\psi_\Fa(b) \in \Mat_2(\Q)\cap\Mat_2(\widehat{\Z}) = \Mat_2(\Z)$ and the claim follows from properness of $\Fa$ (see \eqref{eq:CM-Mat_2 <-> O_K}).
\end{proof}

\subsection{Packets in the $p$-adic extension}
Let $p$ be a fixed odd prime and let $d<0$ be be a discriminant satisfying Linnik's condition at $p$.
As before, let $\Fa$ be a proper $R_d$-ideal in $K = \Q(\sqrt{d})$.
The projection $\mathcal{P}(\Fa, \set{\infty,p})$ of the packet $\G(\Q)\torus_{\Fa}(\adele)$ in $X_{\adele}$
onto the $p$-adic extension
\begin{align*}
X:= X_{\set{\infty,p}} = \lquot{\PGL_2(\Zinvp)}{\PGL_2(\R \times \Qp)}.
\end{align*}
is $\torus_\Fa(\R \times \Qp)$-invariant and equal to the disjoint\footnote{Compare to the proof of Lemma \ref{lemma:CM-generating ideals}.} union
\begin{align*}
\Disjointunion_{\rho \in \CR_\Fa}\PGL_2(\Zinvp)(\rho_\infty,\rho_p) \torus_\Fa(\R \times \Zp).
\end{align*}
We denote
\begin{align*}
M:&= \PO_2(\R) \times \setc{\begin{pmatrix}
x & 0 \\ 
0 & 1
\end{pmatrix} \in \PGL_2(\Zp) }{x \in \Zp^\times}
\end{align*} 
and $(v_{d,\infty},v_{d,p})=: v_d$ where $v_{d,\infty},v_{d,p}$ were defined in Equations \eqref{eq:CM-def v_d,infty} and \eqref{eq:CM-def v_d,p}. 
An elementary computation shows that for any proper $R_d$-ideal $\Fb$ there is an element $g_\Fb \in \PGL_2(\R \times \Zp)$ with
\begin{align}\label{eq:CM-Zp conjugacy}
g_\Fb^{-1} v_\Fb g_\Fb = v_d.
\end{align}
\begin{remark}\label{remark:CM-special choice R-conjugacy}
Let $\phi:K \to \C \simeq \R^2$ be a field embedding. For any proper $R_d$-ideal $\Fb$ and a $\Z$-basis $b_1,b_2$ of $\Fb$ we may choose $g_{\Fb,\infty}$ as
\begin{align*}
g_{\Fb,\infty} = \begin{pmatrix}
\phi(b_1) \\ 
\phi(b_2)
\end{pmatrix}.
\end{align*}
\end{remark}

Furthermore, the choice of $g_\Fb$ is unique up to a right factor in $M$ and we have $g_\Fa^{-1}\torus_\Fa(\R \times \Zp)g_\Fa = M$. Observe that for $\rho \in \CR_\Fa$ the element $(\rho_\infty,\rho_p) g_\Fa =: g_{\Fa_\rho}$ satisfies $g_{\Fa_\rho}^{-1}v_{\Fa_\rho}g_{\Fa_\rho} = v_d$ and therefore
\begin{align*}
\mathcal{P}(\Fa, \set{\infty,p})g_\Fa 
= \Disjointunion_{\rho \in \CR_\Fa}\PGL_2(\Zinvp)g_{\Fa_\rho} M
\end{align*}
The orbit $\PGL_2(\Zinvp)g_{\Fb} M$ associated to an ideal $\Fb$ is independent of the choice of basis on $\Fb$. If $\Fb,\Fb'$ are two equivalent ideals, then respective bases may be chosen so that $v_\Fb = v_{\Fb'}$ and in particular, $\PGL_2(\Zinvp)g_{\Fb} M = \PGL_2(\Zinvp)g_{\Fb'} M$. Thus, we will write the projection of the packet $\PGL_2(\Q)\torus_\Fa(\adele)$ onto the $p$-adic extension after right multiplication with $g_\Fa$ as
\begin{align*}
\mathcal{G}_d := \Disjointunion_{[\Fb]}\PGL_2(\Zinvp)g_{\Fb} M
\end{align*}
where the union runs over all ideal classes by Proposition \ref{prop:CM-number of orbits}. Note that $\mathcal{G}_d$ does not depend on the initial choice of the ideal $\Fa$ and is not only invariant under $M$ but also under the non-compact group
\begin{align*}
A:&= \setc{\begin{pmatrix}
x & 0 \\ 
0 & 1
\end{pmatrix} \in \PGL_2(\Qp) }{x \in \Qp^\times}
\end{align*}
since $\mathcal{P}(\Fa, \set{\infty,p})$ was invariant under $\torus_\Fa(\Qp)$.

\subsubsection{Invariant measures, volume bounds and entropy}

For any discriminant $d<0$ the packet $\mathcal{G}_d$ is naturally equipped with a volume obtained by pushing forward the volume measure on the packet $\PGL_2(\Q)\torus_{\Fa}(\adele)g_\Fa$ where $\Fa$ is a proper $R_d$-ideal.
For any $\rho\in \mathcal{R}_{\Fa}$ we have
\begin{align*}
\vol\big(\PGL_2(\Q)\rho\torus_\Fa(\R \times \widehat{\Z})g_\Fa\big)
&= \vol\big(\PGL_2(\Q)h\torus_\Fa(\R \times \widehat{\Z})\big)\\
&= \vol\big(\PGL_2(\Q)\torus_\Fa(\R \times \widehat{\Z})\big)
= |\torus_\Fa(\Z)|^{-1}
\end{align*}
where $h\in \torus_\Fa(\adele)$ is such that $\PGL_2(\Q)\rho = \PGL_2(\Q)h$.
Therefore, any $M$-orbit in the packet $\mathcal{G}_d$ has volume $|\torus_\Fa(\Z)|^{-1} \asymp |R_d^\times|^{-1}\asymp 1$.
Thus, $\vol(\mathcal{G}_d) \asymp h_d$ where $h_d$ is the size of the Picard group of $R_d$ (see Proposition \ref{prop:CM-number of orbits}).
As in Section \ref{section:size picard group} this implies (essentially by Siegel's lower bound) the asymptotics
\begin{align}\label{eq:CM-total volume}
\vol(\mathcal{G}_d) = |d|^{\frac{1}{2} + o(1)}.
\end{align}
Let $\mu_d := \frac{1}{\vol(\mathcal{G}_d)}\vol$ be the invariant probability measure on the packet $\mathcal{G}_d$.

Consider the map $T:X \to X, x \mapsto xa$, where
\begin{align*}
a := \begin{pmatrix}
1 & 0 \\ 
0 & p
\end{pmatrix} \in \PGL_2(\Qp). 
\end{align*}

\begin{theorem}[Equidistribution of the collections $\mathcal{G}_d$]\label{thm:CM equidistribution toral orbits}
Let $p$ be an odd prime.
As $d\to -\infty$ amongst the discriminants satisfying Linnik's condition at $p$ any \wstar-limit of the measures $\mu_d$ is a probability measure of maximal entropy $\log(p)$ with respect to $T$.
\end{theorem}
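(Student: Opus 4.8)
The plan is to imitate the cocompact argument of Section~\ref{section:cocompact}, but now working on the non-compact quotient $X = X_{\set{\infty,p}}$, so the only genuinely new issue is to establish the lower bound $h_\mu(T)\geq \log p$ for a \wstar-limit $\mu$ of the $\mu_d$ while guarding against escape of mass. First I would observe that $a$ (in $\PGL_2(\Qp)$) is conjugate to the element $\tilde f_v^{-1}(\operatorname{diag}(p,1))$ of Section~\ref{section:maximal entropy}, with $\mathfrak g_a^-$ of determinant $p^{-1}$, so that $-\log|\det(\Ad_a|_{\mathfrak g_a^-})| = \log p$, which is the supremal entropy; hence ``maximal entropy'' means precisely $h_\mu(T) = \log p$. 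The upper bound $h_\mu(T)\leq \log p$ holds for \emph{any} $T$-invariant probability measure (topological entropy of the system, computed exactly as in \eqref{eq:log(p) entropy} via Lemma~\ref{lemma: Explicit Bowen balls} and \eqref{eq: equivalence of balls in G(Qp)}), so the content is the matching lower bound together with the claim that the limit has total mass one.

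For the entropy lower bound I would run the same argument as in the proof of Theorem~\ref{thm: Maximal entropy}. Namely: (1) transport the $A$-invariance of $\mu_d$ (which it has by construction, since $\mathcal G_d$ is $A$-invariant) and in particular $\Zp^\times$-invariance of the diagonal compact part $M$; (2) build, as in Lemma~\ref{lemma:good partition}, a finite partition $\mathcal P$ of $X$ into sets of small diameter with $\mu(\partial P)=0$ and with every atom of $\mathcal P_{-N}^{N}$ contained in a Bowen $N$-ball — here one needs $X$ to be covered by \emph{finitely many} such small balls, which is false globally but can be arranged by first restricting to a large compact set carrying almost all the mass of $\mu$ (this is where a no-escape-of-mass input is used); (3) invoke the convexity bound $H_{\mu_d}(\mathcal P_{-N}^N)\geq -\log\big(\sum_{S}\mu_d(S)^2\big)$ and Linnik's basic lemma (Proposition~\ref{prop: Linnik's basic lemma} — its proof only used the structure of integer points / the representation bound Theorem~\ref{thm: representations qf}, which is unchanged for the binary forms arising from $\Mat_2(\Z)$) together with the volume asymptotic \eqref{eq:CM-total volume}, choosing $N_d = \lgauss{\tfrac15\log_p|d|}$, to get $\sum_{S\in\mathcal P_{-N_d}^{N_d}}\mu_d(S)^2 \ll_\varepsilon p^{-2N_d}|d|^\varepsilon$; (4) conclude exactly as before that $\tfrac1n H_\mu(\mathcal P_0^{n-1})\geq \log p$ after letting $d\to\infty$ and then $n\to\infty$. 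The subadditivity trick ($\mathcal P_0^{nk-1}$ refines $\mathcal P_0^{2N_d}$) eliminates the $d$-dependence of the partition, verbatim from the cocompact proof.

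The main obstacle — and the reason this case is separated from the cocompact one in the paper's outline — is \textbf{non-escape of mass}: a priori a \wstar-limit $\mu$ of the $\mu_d$ on the non-compact $X$ could have mass $<1$, which would both invalidate ``probability measure'' in the statement and (since the partition construction of step~(2) needs a fixed compact set absorbing almost all of $\mu$) threaten the entropy computation. I would handle this by a geometric argument on the Bruhat--Tits tree $\mathcal T$ of $\PGL_2(\Qp)$, as flagged in the remarks after Theorem~\ref{thm:main}: using the explicit description of $\mathcal G_d$ as a union over ideal classes of orbits $\PGL_2(\Zinvp)g_{\Fb}M$ and the fact that the cusp neighbourhoods of $X$ are governed (via reduction theory for $\PGL_2(\Z[\tfrac1p])$) by how deep a point sits toward an end of $\mathcal T$, one shows that only a small proportion of the $h_d$ orbits can enter a fixed cusp neighbourhood; quantitatively, an orbit reaching depth $t$ into a cusp forces the corresponding ideal/integer point to have an unusually large isotropic-like sublattice, and counting these via the same representation bound (Theorem~\ref{thm: representations qf}) shows their number is $\ll_\varepsilon |d|^{\varepsilon} p^{-ct}h_d = o(h_d)$ as $t\to\infty$, uniformly in $d$. (Alternatively one may quote a Benjamini--Eskin--Margulis / EMV-type non-divergence statement for these toral measures.) This gives uniform tightness of $\{\mu_d\}$, hence $\mu(X)=1$ and the existence of the compact set needed in step~(2), and completes the proof.
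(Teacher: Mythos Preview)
Your overall strategy is right and matches the paper's, but there is a genuine gap in step~(2) that the paper spends real work on and that you have glossed over. Establishing non-escape of mass for the limit $\mu$ is \emph{not} enough to make the cocompact partition argument go through verbatim. The reason is that Lemma~\ref{lemma:good partition} requires every atom of $\mathcal P$ to have diameter below the injectivity radius, and this is impossible on $X_{\set{\infty,p}}$: any finite partition must have at least one element containing a cusp neighbourhood $X_{\geq H}$, and that element has arbitrarily large diameter (the injectivity radius decays like $H^{-2}$). Consequently, atoms of $\mathcal P_{-N}^N$ corresponding to orbit segments that \emph{visit} the cusp --- even if they begin and end in your fixed compact set --- are no longer contained in a single Bowen $N$-ball, and the inequality $\sum_S \mu_d(S)^2 \ll p^{-2N}|d|^\varepsilon$ breaks down for those atoms. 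Restricting $\mu$ (or $\mu_d$) to a compact set does not fix this, because the entropy computation involves the full orbit segment $T^{-N}x,\ldots,T^Nx$, which can leave the compact set even when $x$ does not.

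The paper addresses this with two additional ingredients you have not supplied. First, Proposition~\ref{prop:CM-visits to the cusp} uses the Bruhat--Tits tree geometry to show that an atom whose orbit spends time $|V|$ in $X_{\geq H}$ can still be covered by $\ll p^{2N-\frac{1}{2}|V|}$ Bowen $N$-balls, and that only $e^{O(N\log\log H/\log H)}$ many visit-patterns $V$ can occur. This is the substantive new idea. Second, the paper's Linnik's basic lemma for this setting (Proposition~\ref{prop:CM-Linnik's basic lemma}) carries an extra $H^4$ factor coming from the height-dependent injectivity radius, which you have not accounted for; the entropy argument must then be rerun with an auxiliary parameter $\kappa=\mu(X_{\geq H})^{1/2}$, refining $\mathcal P_{-N}^N$ to a partition $\mathcal Q$ whose good atoms are genuinely inside Bowen balls (Lemma~\ref{lemma:CM-construction of partition}), before letting $H\to\infty$ at the end. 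Your non-escape sketch via ideal/representation counting is in the right spirit (the paper does it via Proposition~\ref{prop:CM-orbits high in the cusp}, counting ideals of bounded norm), but it is only one of the two pieces needed.
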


In fact, proceeding as in the proof of Theorem \ref{thm:main} the statement of Theorem~\ref{thm:CM equidistribution toral orbits} 
implies that any \wstar-limit $\mu$ is invariant under $\PSL_2(\R\times \Qp)$.
As $\mu$ is also $A$-invariant and the group generated by $\PSL_2(\R\times \Qp)$ and $A$ is $\PGL_2(\R \times \Qp)$, Theorem \ref{thm:CM equidistribution toral orbits} shows equidistribution of the packets $\mathcal{G}_d$.

\subsection{Proof of Theorem \ref{thm:main} in the case $\quat =\Mat_2$}\label{section:proof main thm Mat2}

As previously announced, we will use the formula of Abramov and Rokhlin \cite{abramovrokhlin} (or \cite[Cor.~2.21]{vol2}) for entropy transport to deduce Theorem~\ref{thm:main} from Theorem~\ref{thm:CM equidistribution toral orbits}.

Let us quickly state formula in our context.
Let $a \in A \subset \PGL_2(\adele)$ be as in Theorem \ref{thm:CM equidistribution toral orbits}, let $\nu$ be a probability measure on $X_\adele$ and let $\tilde{\nu}$ be the pushforward of $\nu$ under the projection $\pi_{\set{\infty,p}}:X_\adele \to X = X_{\set{\infty,p}}$. Then
\begin{align*}
h_\nu(a) = h_{\tilde{\nu}}(T) + h_\nu(a|\mathcal{A})
\end{align*}
where $\mathcal{A}$ is the preimage of the Borel $\sigma$-algebra on $X$ in $X_\adele$ and $h_\nu(a|\mathcal{A})$ denotes the entropy of $\nu$ with respect to $a$ conditional on $\mathcal{A}$.

\begin{proof}[Proof of Theorem \ref{thm:main} in the case $\quat =\Mat_2$]
Let $d_\ell<0$ be a sequence of negative discriminants satisfying Linnik's condition at $p$ with $d_\ell \to -\infty$ as $\ell$ goes to infinity.
Given $\ell$ we let $\Fa_{\ell}$ be a proper $R_{d_\ell}$-ideal.
Let $\nu_\ell$ be the normalized Haar measure on the packet $\G(\Q)\torus_{\Fa_\ell}(\adele)g_{\Fa_\ell}$ where $g_{\Fa_\ell} \in \PGL_2(\R \times \Zp)$ was defined in \eqref{eq:CM-Zp conjugacy}.

Since $\PGL_2(\Zp)$ is compact, it suffices to show that any \wstar-limit of the measures $\nu_\ell$ is a probability measure and is invariant under $\PSL_2(\adele):= \sicover\PGL_2(\adele)$ (see also Section \ref{section:CM-on the traceless guy} for primitivity).
We may assume without loss of generality that $\nu_\ell \to \nu$ as $\ell \to \infty$ for a finite measure $\nu$ on $X_\adele$.

First, notice that the pushforward of $\nu_\ell$ under the map $\pi_{\set{\infty,p}}$ is (by definition) the measure $\mu_{d_\ell}$ on the packet $\mathcal{G}_{d_\ell}$.
By Theorem \ref{thm:CM equidistribution toral orbits} we have that $\tilde{\nu} = (\pi_{\set{\infty,p}})_\ast\nu$ as the limit of the measures $\mu_{d_\ell}$ is a probability measure and hence $\nu$ is also a probability measure.

By definition, $\nu$ is $a$-invariant.
Furthermore, we have $h_{\nu}(a) \leq \log(p)$ (cf.~\cite[Thm.~7.9]{pisa}).
On the other hand, by Theorem \ref{thm:CM equidistribution toral orbits}
\begin{align*}
h_\nu(a) = h_{\tilde{\nu}}(T) + h_\nu(a|\mathcal{A})\geq \log(p)
\end{align*}
so $\nu$ has maximal entropy.
The arguments in the proof of Theorem \ref{thm:main} in the case where $\quat$ is ramified at $\infty$ (see Section \ref{subsection: From maximal entropy to additional invariance}) now apply to show that $\nu$ is indeed $\PSL_2(\adele)$-invariant.
\end{proof}

\subsection{Obtaining CM points and the proof of Linnik's Theorem B}\label{subsection:cm points}

In this subsection, we show that Theorem~\ref{thm:main} implies Linnik's Theorem B. 
(Note that one could just as well use the comments after Theorem \ref{thm:CM equidistribution toral orbits}.)

For a fixed discriminant $d<0$ the image of the packet $\mathcal{G}_d$ under the projections
\begin{align*}
\lquot{\PGL_2(\Zinvp)}{\PGL_2(\R \times \Qp)} \to \lquot{\PGL_2(\Z)}{\PGL_2(\R)} \to \lquot{\PGL_2(\Z)}{\mathbb{H}} =: \mathcal{X}_2
\end{align*}
is the finite set
\begin{align*}
\mathcal{H}_d := \left\lbrace\PGL_2(\Z).(g_{\Fa,\infty}.\ii): [\Fa] \in \Cl(R_d)\right\rbrace
\end{align*}
of cardinality $h_d$ (see also Lemma \ref{lemma:CM-generating qf} below).

Notice that the projections of the packets $\mathcal{G}_d$ onto the real quotient $\lquot{\PGL_2(\Z)}{\PGL_2(\R)}$ are equidistributed (when $d \to -\infty$ amongst the discriminants satisfying Linnik's condition at $p$).
In fact, by Theorem \ref{thm:main} (and continuity of projections) any \wstar-limit the push-forward of the natural invariant measures $\mu_d$ on the packets $\mathcal{G}_d$ to the real quotient has to be $\PSL_2(\R)$-invariant. Also, $\lquot{\PSL_2(\Z)}{\PSL_2(\R)} \simeq \lquot{\PGL_2(\Z)}{\PGL_2(\R)}$.

This shows that the subsets $\mathcal{H}_d\subset \mathcal{X}_2$ are equidistributed. 
It thus suffices to show that the set $\mathcal{H}_d$ is exactly the set of CM points associated to $d$ in order to prove Linnik's Theorem B.
To illustrate this (and for further use) we will first explain the connection between orbits in $\mathcal{G}_d$ and integral~forms.

\subsubsection{Producing binary forms}
Recall that there is a correspondence between binary quadratic forms over $\R$, real symmetric $2$-by-$2$ matrices and real traceless $2$-by-$2$ matrices given by
\begin{align*}
ax^2+bxy+cy^2 \leftrightarrow
\begin{pmatrix}
a & \frac{b}{2} \\ 
\frac{b}{2} & c
\end{pmatrix} 
\leftrightarrow
\begin{pmatrix}
b & -2a \\ 
2c & -b
\end{pmatrix} 
\end{align*}
The action of $\GL_2(\R)$ on $\mathfrak{sl}_2(\R)$ by conjugation induces an action on $\Sym_2(\R)$ via
\begin{align*}
\begin{pmatrix}
a & \frac{b}{2} \\ 
\frac{b}{2} & c
\end{pmatrix}  \mapsto \frac{1}{\det(g)} g\begin{pmatrix}
a & \frac{b}{2} \\ 
\frac{b}{2} & c
\end{pmatrix} g^T
\end{align*}
for $g \in \GL_2(\R)$ under this correspondence. The analogous statement holds over $\Qp$ or more generally any field of characteristic not $2$. 

\begin{lemma}[Points in $\mathcal{G}_d$ and quadratic forms]\label{lemma:CM-generating qf}
Let $d<0$ be a discriminant.
To a point $\PGL_2(\Zinvp)g \in \mathcal{G}_d$ where $g \in \PGL_2(\R \times \Zp)$ we associate the quadratic form corresponding to the traceless matrix $gv_{d}g^{-1}$.
This quadratic form is integral, primitive, has discriminant $d$ and is uniquely determined up to $\GL_2(\Z)$-equivalence. Furthermore, the quadratic forms associated to two points on different $M$-orbits in $\mathcal{G}_d$ are inequivalent.
\end{lemma}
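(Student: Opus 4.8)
The plan is to unwind the definitions along the chain $\mathcal{G}_d \to X_{\set{\infty,p}} \to \lquot{\PGL_2(\Z)}{\PGL_2(\R)} \to \mathcal{X}_2$ and to track what happens to the traceless matrix $v_d$ under conjugation. Fix a point $\PGL_2(\Zinvp)g \in \mathcal{G}_d$ with $g \in \PGL_2(\R\times\Zp)$. By construction $\mathcal{G}_d = \bigsqcup_{[\Fb]}\PGL_2(\Zinvp)g_\Fb M$, so we may write $g = g_\Fb m$ for some proper $R_d$-ideal $\Fb$ and some $m \in M$; here $g_\Fb^{-1}v_\Fb g_\Fb = v_d$ by \eqref{eq:CM-Zp conjugacy}, hence $gv_dg^{-1} = m^{-1}(g_\Fb^{-1}\cdot \ldots)$ — more precisely $g v_d g^{-1} = (g_\Fb m) v_d (g_\Fb m)^{-1}$, and since $m \in M$ centralizes $v_d$ (the real component lies in $\PO_2(\R) = C(v_{d,\infty})$ and the $p$-adic component is diagonal, hence centralizes $v_{d,p}$), we get $gv_dg^{-1} = g_\Fb v_d g_\Fb^{-1} = v_\Fb$. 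So the associated traceless matrix is literally $v_\Fb$, which lies in $\Mat_2(\Z)$, is traceless, and has determinant $-d$; in the correspondence of the preceding display $v_\Fb = \left(\begin{smallmatrix} b & -2a\\ 2c & -b\end{smallmatrix}\right)$ corresponds to the integral binary form $ax^2+bxy+cy^2$ of discriminant $b^2-4ac = -\det(v_\Fb)\cdot(-1)\cdot\ldots$; a direct computation gives discriminant $= d$. Primitivity follows because $\Fb$ is a proper $R_d$-ideal: if $\gcd(a,b,c) = e > 1$ then $\tfrac1e v_\Fb \in \Mat_2(\Z)$ is still traceless of the form $\psi_\Fb(\tfrac{\sqrt d}{e})$, forcing $\tfrac{\sqrt d}{e} \in R_d$ by \eqref{eq:CM-Mat_2 <-> O_K}, which contradicts the description of $R_d = \Z[\tfrac{d+\sqrt d}{2}]$ (one checks $\tfrac{\sqrt d}{e}\notin R_d$ for $e>1$ using $d\equiv 0,1 \bmod 4$). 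Actually the cleanest route to primitivity is to invoke Lemma \ref{lemma:CM-generating ideals}/Lemma \ref{lemma:genintpts}, which already produce $v_\Fb$ as a primitive (after dividing by $2$ if $4\mid d$) element attached to an $R_d$-ideal, together with the parity observations in Section \ref{section:CM-on the traceless guy}; I would phrase it so that the "quadratic form" means exactly $ax^2+bxy+cy^2$ with $(a,b,c)$ primitive.

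Next I would address well-definedness up to $\GL_2(\Z)$-equivalence. The ambiguity in choosing $g$ representing the point $\PGL_2(\Zinvp)g$ is precisely left-multiplication by $\PGL_2(\Zinvp)$, intersected with the constraint that the new representative still lies in $\PGL_2(\R\times\Zp)$; this intersection is $\PGL_2(\Z)$. If $g' = \gamma g$ with $\gamma \in \PGL_2(\Z)$, then $g'v_dg'^{-1} = \gamma (gv_dg^{-1})\gamma^{-1}$, i.e. the traceless matrix changes by $\GL_2(\Z)$-conjugation, which under the correspondence of the display is exactly $\GL_2(\Z)$-equivalence of the associated binary form (the $\det(g)^{-1}$ twist is trivial over $\Z$ since $\det\gamma = \pm 1$, and for primitive forms $\GL_2(\Z)$-equivalence via $g^T$ versus conjugation via $g^{-1}$ on $\mathfrak{sl}_2$ agree on the level of equivalence classes). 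There is also a choice of $\Z$-basis of $\Fb$ hidden in $g_\Fb$, but changing the basis also changes $g_\Fb$ by a $\GL_2(\Z)$-factor on the left, absorbed into the same argument; this was already noted when defining $\mathcal{G}_d$ (independence of basis).

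Finally, for the last sentence — that forms attached to points on distinct $M$-orbits are inequivalent — I would argue by contraposition. Suppose $\PGL_2(\Zinvp)g$ and $\PGL_2(\Zinvp)g'$ in $\mathcal{G}_d$ have $\GL_2(\Z)$-equivalent associated forms; write $g = g_\Fb m$, $g' = g_{\Fb'}m'$ as above, so the forms correspond to $v_\Fb$ and $v_{\Fb'}$, and equivalence means $v_{\Fb'} = \gamma v_\Fb \gamma^{-1}$ for some $\gamma \in \GL_2(\Z)$ (again the $\det$ twist is harmless). Then $\gamma g_\Fb$ and $g_{\Fb'}$ both conjugate $v_d$ to $v_{\Fb'}$, hence $g_{\Fb'}^{-1}\gamma g_\Fb$ centralizes $v_d$; since it also lies in $\PGL_2(\R\times\Zp)$, it lies in $M$ by Lemma \ref{lemma:CM-R-points and Qp points of the torus} combined with the integrality at $p$ (the centralizer of $v_d$ intersected with $\PGL_2(\R\times\Zp)$ is exactly $M$, which was observed when we wrote $g_\Fa^{-1}\torus_\Fa(\R\times\Zp)g_\Fa = M$). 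Call this element $m_0 \in M$. Then $\PGL_2(\Zinvp)g' = \PGL_2(\Zinvp)g_{\Fb'}m' = \PGL_2(\Zinvp)\gamma g_\Fb m_0^{-1}m'$, and $\gamma \in \PGL_2(\Z)\subset \PGL_2(\Zinvp)$ so this equals $\PGL_2(\Zinvp)g_\Fb (m_0^{-1}m') = \PGL_2(\Zinvp)g m^{-1}m_0^{-1}m'$, showing the two points lie on the same $M$-orbit (as $m^{-1}m_0^{-1}m' \in M$). That is the contrapositive of the claim. The main obstacle I anticipate is bookkeeping the passage between the three incarnations of a binary form — symmetric matrix, traceless matrix, and the $\GL_2$-action with its $\det^{-1}$ normalization — carefully enough that "$\GL_2(\Z)$-conjugation on $\mathfrak{sl}_2$" is identified cleanly with "$\GL_2(\Z)$-equivalence of primitive integral binary forms," including getting the discriminant sign and the factor-of-$2$ conventions right; everything else is a routine unwinding of the orbit decomposition of $\mathcal{G}_d$ and the conjugacy relations \eqref{eq:CM-Zp conjugacy}.
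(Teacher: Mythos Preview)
Your proof is correct and follows essentially the same route as the paper's: write the representative as $\gamma g_{\Fb} m$ (the paper does this in one step, you do it in two by first taking $g = g_{\Fb}m$ and then handling the $\PGL_2(\Z)$-ambiguity separately), compute $gv_dg^{-1} = \gamma v_{\Fb}\gamma^{-1}$, and invoke Section~\ref{section:CM-on the traceless guy} for integrality and primitivity of the form; for the last assertion both you and the paper observe that the element relating the two representatives centralizes $v_d$ and lies in $\PGL_2(\R\times\Zp)$, hence in $M$. One small slip: your sentence ``so we may write $g = g_{\Fb}m$'' is not literally true for an arbitrary representative $g \in \PGL_2(\R\times\Zp)$ of the point --- there is a $\gamma \in \PGL_2(\Z)$ in front --- but since you handle that ambiguity immediately afterward the argument is unaffected.
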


As the set of primitive binary forms of discriminant $d$ is in bijection with the Picard group (see \cite[Sec.~2]{duke}) any such binary form can be constructed as in the lemma.

\begin{proof}
Let $\PGL_2(\Zinvp)g \in \mathcal{G}_d$ with $g \in \PGL_2(\R \times \Zp)$ and consider the traceless matrix $gv_{d}g^{-1}$. This matrix has integral entries: Writing $g = \gamma g_\Fa m$ for~$m \in ~M$, $\gamma \in~\PGL_2(\Z)$ and a proper $R_d$-ideal $\Fa$, we see that
\begin{align*}
gv_dg^{-1} = \gamma g_\Fa v_d g_\Fa^{-1} \gamma^{-1} = \gamma v_\Fa \gamma^{-1} \in \Mat_2(\Z).
\end{align*}
By the discussion of Section \ref{section:CM-on the traceless guy} the quadratic form attached to $v_\Fa$ is integral (as the off-diagonal entries are divisible by two) and primitive.
Thus, the quadratic form $q$ associated to $gv_dg^{-1}$ is integral, primitive and has discriminant $-\det(gv_dg^{-1}) = d$ as desired.

Now let $\PGL_2(\Zinvp)g,\PGL_2(\Zinvp)\bar{g} \in \mathcal{G}_d$ with $g,\bar{g} \in \PGL_2(\R \times \Zp)$ so that there exists $\gamma\in \PGL_2(\Z)$ with $gv_{d}g^{-1} = \gamma\bar{g}v_d\bar{g}^{-1}\gamma^{-1}$. By replacing $\bar{g}$ we may assume that $\gamma = I$. Notice that $h:= \bar{g}^{-1}g$ commutes with $v_d$ and thus lies in $M$.
\end{proof}

\begin{proof}[Proof of Linnik's Theorem B assuming Theorem \ref{thm:CM equidistribution toral orbits}]
Consider a CM point
\begin{align*}
x = \frac{-b + \sqrt{-d}i}{2a} \in \BH, \quad d= b^2 - 4ac.
\end{align*}
of discriminant $d$ where we assume $a\geq 0$ for the sake of concreteness. The matrix
\begin{align*}
g_x:= \frac{1}{|d|^{\frac{1}{4}}\sqrt{2a}} \begin{pmatrix}
\sqrt{-d} & -b \\ 
0 & 2a
\end{pmatrix} \in \SL_2(\R)
\end{align*}
yields $x$ as $g_x.\ii = x$ and satisfies the equation
\begin{align*}
g_x v_{d,\infty} g_x^{-1} = \begin{pmatrix}
b & 2c \\ 
-2a & -b
\end{pmatrix}.
\end{align*}
By Lemma \ref{lemma:CM-generating qf} there is a proper $R_d$-ideal $\Fa$, $k \in \PO_2(\R)$ and $\gamma \in \PGL_2(\Z)$ so that $\gamma g_{\Fa,\infty} k = g_x$. This proves that 
\begin{align*}
\PGL_2(\Z).x \in \mathcal{H}_d = \PGL_2(\Z).(g_x.\ii) = \PGL_2(\Z).(g_{\Fa,\infty}.\ii) \in \mathcal{H}_d.
\end{align*}
On the other hand, the equation above shows that the CM point reproduces its underlying quadratic form.

As there are exactly $h_d$ $\GL_2(\Z)$-equivalence classes of primitive integral quadratic forms of discriminant $d$ there are $h_d$ $\PSL_2(\Z)$-equivalence classes of CM points of discriminant $d$, which proves the other inclusion.
\end{proof}

\subsection{Ideal classes and heights}\label{subsection:CM-Ideal classes and heights}

Here, we derive two important estimates concerning the measures $\mu_d$. 
The first roughly states that there are not too many orbits in $\mathcal{G}_d$ ``high'' in the cusp. 
The second estimate answers the question as to how many ``low lying'' orbits in $\mathcal{G}_d$ are close together (Linnik's basic lemma). 

\subsubsection{Mass in the cusp}
The \emph{height} $\height(x)$ of a point $x = [\Lambda] \in X$ is defined by
\begin{align*}
\frac{1}{\height(\Lambda)} = \frac{\min_{\lambda\in \Lambda \setminus \set{0}} \norm{\lambda_\infty}_\infty \norm{\lambda_p}_p}{\covol(\Lambda)^{\frac{1}{2}}}
\end{align*}
where $\Lambda$ is a lattice\footnote{Recall that a point in $X$ is naturally identified with a homothety class of lattices in $(\R \times \Qp)^2$, where a lattice is a $\Zinvp$-submodule of the form $\Zinvp^2g$ with $g \in \GL_2(\R \times \Qp)$.} representing $x$.
It is straightforward to verify that the height of a point in $X$ is equal to the height of its image under the projection to $\lquot{\PGL_2(\Z)}{\PGL_2(\R)}$. 
Let $X_{\geq H}$ be the set of points in $X$ of height bigger or equal than $H$ and similarly define $X_{< H}$, $X_{\leq H}$ and $X_{> H}$.
Note that $X_{\leq H}$ is compact and that a uniform injectivity radius on it is for instance given by $\frac{1}{3}H^{-2}$.

Recall that the norm $\Nr_d(\Fa)$ of a proper $R_d$-ideal $\Fa\subset\Q(\sqrt{d})$ (with respect to $R_d$) is given by
\begin{align*}
\Nr_d(\Fa) = \frac{[R_d:R_d\cap\Fa]}{[\Fa:R_d\cap \Fa]},
\end{align*}
satisfies $\Nr_d(\alpha R_d) = \Nr_{\Q(\sqrt{d})/\Q}(\alpha)$
and is multiplicative (cf.~\cite[Lemma~7.14]{coxprimesoftheform}).

\begin{proposition}[Orbits high in the cusp]\label{prop:CM-orbits high in the cusp}
Let $d <0$ be a discriminant and let $\Fa$ be a proper $R_d$-ideal in $K = \Q(\sqrt{d})$.
Choose $g_\Fa$ as in \eqref{eq:CM-Zp conjugacy}. The following statements are equivalent:
\begin{enumerate}[(i)]
\item $\PGL_2(\Zinvp)g_\Fa M \intersect X_{\geq H}$ is non-empty.
\item There exists $\lambda \in \Fa$ with $\Nr_d(\lambda \Fa^{-1}) \leq \frac{1}{2} \sqrt{|d|} H^{-2}$.
\end{enumerate}
In particular, $\mathcal{G}_d$ does not contain a point of height $> |d|^{\frac{1}{4}}$. Furthermore, the number of orbits in $\mathcal{G}_d$, which intersect $X_{\geq H}$, is bounded by the number of proper $R_d$-ideals $\Fb \subset K$ of norm $\Nr_d(\Fb) \leq \frac{1}{2} \sqrt{d} H^{-2}$.
\end{proposition}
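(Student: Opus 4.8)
The plan is to work directly with the lattice description of points of $X$ and the basis representation from Remark \ref{remark:CM-special choice R-conjugacy}. First I would fix an embedding $\phi:K\to\C\simeq\R^2$ and, for a $\Z$-basis $b_1,b_2$ of $\Fa$, take $g_{\Fa,\infty}$ as in Remark \ref{remark:CM-special choice R-conjugacy} and $g_{\Fa,p}$ the finite part of $g_\Fa$ from \eqref{eq:CM-Zp conjugacy}. Then the lattice $\Lambda=\Zinvp^2 g_\Fa$ has columns $\phi(b_1),\phi(b_2)$ at the archimedean place; a nonzero vector of $\Lambda$ is $\phi(\lambda)$ for $\lambda\in\Fa$ (up to a $\Zinvp$-homothety absorbed into the norm), and by \eqref{eq:CM-Zp conjugacy} we have $g_{\Fa,p}\in\PGL_2(\Zp)$, so the $p$-adic norm of the corresponding vector only depends on the $p$-part of $\lambda$ in $\Fa\otimes\Z_p$. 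I would then compute $\covol(\Lambda)$ in terms of the covolume of $\phi(\Fa)\subset\C$, which by the classical formula equals $\tfrac12\sqrt{|d|}\,\Nr_d(\Fa)\,\covol(\phi(R_d))^{\mathrm{rescaled}}$ — after normalizing so that $R_d$ gives covolume essentially $\tfrac12\sqrt{|d|}$, one gets $\covol(\Lambda)\asymp\sqrt{|d|}\,\Nr_d(\Fa)$ with an explicit constant. The key identity to use throughout is $\lVert\phi(\lambda)\rVert_\infty\,\lVert\lambda\rVert_p = \Nr_{K/\Q}(\lambda)^{1/2}\cdot(\text{archimedean shape factor})$ combined with $\Nr_{K/\Q}(\lambda)=\Nr_d(\lambda R_d)=\Nr_d(\lambda\Fa^{-1})\Nr_d(\Fa)$.

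Next, unwinding the definition of $\height$: the orbit $\PGL_2(\Zinvp)g_\Fa M$ meets $X_{\geq H}$ iff there is $m\in M$ with $\height(\Zinvp^2 g_\Fa m)\geq H$, and since $M$ acts by isometries on the relevant norms (it is contained in $\PO_2(\R)\times\PGL_2(\Z_p)$) this is the same as $\height(\Zinvp^2 g_\Fa)\geq H$, i.e.
\begin{align*}
\min_{\lambda\in\Fa\setminus\{0\}}\ \lVert\phi(\lambda)\rVert_\infty\,\lVert\lambda\rVert_p\ \leq\ \covol(\Lambda)^{1/2}\,H^{-1}.
\end{align*}
Substituting the two computations above, the left side is $\big(\Nr_d(\lambda\Fa^{-1})\Nr_d(\Fa)\big)^{1/2}$ up to the shape factor, and $\covol(\Lambda)^{1/2}\asymp\big(\sqrt{|d|}\,\Nr_d(\Fa)\big)^{1/2}$; the $\Nr_d(\Fa)^{1/2}$ cancels, leaving the condition $\Nr_d(\lambda\Fa^{-1})\leq \tfrac12\sqrt{|d|}\,H^{-2}$ for some $\lambda\in\Fa$. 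This gives the equivalence of (i) and (ii). The bound ``no point of height $>|d|^{1/4}$'' follows by taking $\lambda$ ranging over $R_d\cdot(\text{suitable integral representative})$: since $\Fa$ can be taken integral with $\Nr_d(\Fa)$ at most... — more cleanly, one always has some $\lambda\in\Fa$ with $\lambda\Fa^{-1}$ an integral ideal, hence $\Nr_d(\lambda\Fa^{-1})\geq 1$, which forces $H\leq|d|^{1/4}$. For the counting statement: each orbit in $\mathcal G_d$ meeting $X_{\geq H}$ is $\PGL_2(\Zinvp)g_\Fb M$ for an ideal class $[\Fb]$, and by the equivalence there is $\lambda\in\Fb$ with $\Fb':=\lambda\Fb^{-1}$ a proper integral $R_d$-ideal of norm $\leq\tfrac12\sqrt{|d|}H^{-2}$; since $[\Fb']=[\Fb]^{-1}$ in $\Cl(R_d)$, the map $[\Fb]\mapsto$ (some such $\Fb'$) is injective onto a subset of $\{\Fb' : \Fb'\subset K\text{ proper }R_d\text{-ideal},\ \Nr_d(\Fb')\leq\tfrac12\sqrt{|d|}H^{-2}\}$, giving the asserted bound.

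The main obstacle I expect is pinning down the exact constants in the covolume and height normalizations — in particular making sure the factor $\tfrac12$ in ``$\tfrac12\sqrt{|d|}H^{-2}$'' comes out correctly rather than some other absolute constant, and checking that the archimedean ``shape factor'' relating $\lVert\phi(\lambda)\rVert_\infty$ to $|\Nr_{K/\Q}(\lambda)|^{1/2}$ is handled consistently (it is bounded but not identically $1$, so one must verify it does not actually enter the final clean inequality, which happens because the height uses a ratio and the shape factor of $\Lambda$ versus that of the individual short vector match up — this is exactly the point where one uses that $\Fa(\R)$ and the ``reference'' lattice $R_d$ have the same image shape under $\phi$). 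A secondary routine point is verifying $M$ acts by isometries for the sup-norm at infinity (it is $\PO_2(\R)$, which is fine) and trivially at $p$ (it lies in $\PGL_2(\Z_p)$), so that replacing $g_\Fa m$ by $g_\Fa$ is harmless. Everything else is bookkeeping with ideal norms and the product formula.
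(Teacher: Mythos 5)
Your proposal is correct and follows essentially the same route as the paper: reduce to the real quotient via the triviality of $M$ and the fact that $g_{\Fa,p}\in\PGL_2(\Zp)$, express the height in terms of short vectors of $\phi(\Fa)$, and use the covolume formula $\covol(\phi(\Fa))=\tfrac12\sqrt{|d|}\,\Nr_d(\Fa)$ together with $|\phi(\lambda)|^2=\Nr_{K/\Q}(\lambda)$ to extract the clean $\tfrac12\sqrt{|d|}H^{-2}$ bound. The ``shape factor'' you flag as the main obstacle is in fact absent: with the Euclidean norm at the archimedean place (which is the natural choice consistent with the paper's Riemannian-metric convention), $\|\phi(\lambda)\|_\infty^2=\lambda\overline\lambda=\Nr_{K/\Q}(\lambda)$ exactly, with no correction term, and the constant $\tfrac12$ comes straight from the classical covolume of $\phi(R_d)$. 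Your bound $\Nr_d(\lambda\Fa^{-1})\geq 1$ is also immediate once you note that $\lambda\Fa^{-1}\subset R_d$ holds for \emph{every} $\lambda\in\Fa$ (not just some), by the very definition of $\Fa^{-1}$. For the counting step, the paper instead exploits that a $\lambda$ realizing the minimum is unique up to sign (after normalizing to be primitive), giving a canonical injective assignment $\Fa\mapsto\lambda\Fa^{-1}$, whereas you make an arbitrary choice and then deduce injectivity from the identity $[\Fb']=[\Fb]^{-1}$ in $\Cl(R_d)$; both arguments are valid and equivalent in content.
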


Note that for any $\Fa$ all points in $\PGL_2(\Zinvp)g_\Fa M$ have the same height. In particular, $\PGL_2(\Zinvp)g_\Fa M \intersect X_{\geq H}$ is either empty or equal to $\PGL_2(\Zinvp)g_\Fa M$.

\begin{proof}
Let $\phi: K \to \C\isom \R^2$ be a field embedding and choose $g_{\Fa,\infty}$ as in Remark~\ref{remark:CM-special choice R-conjugacy}. 
Now observe that $\PGL_2(\Z)g_{\Fa,\infty}$ has height $\geq H$ if and only if $\phi(\Fa)$ contains an element of norm $ \leq \sqrt{\covol(\phi(\Fa))}H^{-1}$. 
The latter is equivalent to the condition that $\Fa$ contains an element $\lambda$ with $\Nr_{K/\Q)}(\lambda) \leq \frac{1}{2} \sqrt{|d|} H^{-2} \Nr_d(\Fa)$. 
In particular, $\mathcal{G}_d$ does not contain a point of height $> |d|^{\frac{1}{4}}$, since the ideal $\lambda \Fa^{-1}$ is contained in $R_d$ by definition of the inverse $\Fa^{-1}$ and such ideals have integral norm.

Any primitive $\lambda \in \Fa$ as in $(ii)$ is unique up to a sign. To any $\Fa$ which satisfies~$(i)$ thus corresponds the unique integral ideal $\lambda\Fa^{-1}$ where $\lambda\in \Fa$ is chosen to be primitive and as in $(ii)$.
%
%
\end{proof}

\begin{proposition}[''Not too much mass high in the cusp'']
Let $d<0$ be a discriminant.
For all $\varepsilon > 0$ and $H > 1$ we have
\begin{align*}
\mu_d(X_{\geq H}) \ll_\varepsilon |d|^{\varepsilon} H^{-2}
\end{align*}
\end{proposition}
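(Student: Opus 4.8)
The plan is to combine the volume asymptotics for $\mathcal{G}_d$ with the counting of orbits high in the cusp provided by Proposition \ref{prop:CM-orbits high in the cusp}. Recall that $\mu_d$ assigns mass $\asymp \vol(\mathcal{G}_d)^{-1} = |d|^{-\frac12 + o(1)}$ to each $M$-orbit in $\mathcal{G}_d$, and that $\mu_d(X_{\geq H})$ is, up to this normalizing factor, the number of $M$-orbits meeting $X_{\geq H}$. By the last assertion of Proposition \ref{prop:CM-orbits high in the cusp}, this number is bounded by the number $N(d,H)$ of proper $R_d$-ideals $\Fb \subset K$ of norm $\Nr_d(\Fb) \leq \frac12 \sqrt{|d|}\, H^{-2}$. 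So the whole estimate reduces to showing $N(d,H) \ll_\varepsilon |d|^\varepsilon H^{-2} \cdot |d|^{\frac12+o(1)} \cdot |d|^{-\varepsilon'}$ — more precisely, that $N(d,H) \ll_\varepsilon |d|^{\frac12 + \varepsilon} H^{-2}$, which after multiplying by $\vol(\mathcal{G}_d)^{-1} = |d|^{-\frac12 + o(1)}$ yields the claim (absorbing the $o(1)$ into $\varepsilon$).

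First I would bound $N(d,H)$ by the number of \emph{all} nonzero integral $R_d$-ideals of norm at most $T := \frac12\sqrt{|d|}\,H^{-2}$ (dropping the properness constraint only enlarges the count, or one can keep it — it does not matter for an upper bound). The number of integral ideals of $R_d$ of norm exactly $n$ is $\sum_{n} 1$ over such ideals; summing over $n \leq T$ gives $\sum_{n\leq T} a_d(n)$ where $a_d(n)$ counts ideals of norm $n$. For the maximal order this is a multiplicative function bounded by the divisor function, and for a non-maximal order $R_d$ of conductor $f$ one still has $a_d(n) \ll_\varepsilon n^\varepsilon$ uniformly (the ideal-counting function of any order is dominated by a constant times that of the maximal order, or directly: $a_d(n) \leq \sum_{\delta \mid f} a_{R_K}(n/\delta) \cdot (\text{bounded})$, hence still $\ll_\varepsilon n^{\varepsilon}$ with an implied constant depending only on $\varepsilon$, not on $f$ or $d$). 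Therefore $N(d,H) \leq \sum_{n \leq T} a_d(n) \ll_\varepsilon T^{1+\varepsilon} \leq \bigl(\tfrac12 \sqrt{|d|}\, H^{-2}\bigr)^{1+\varepsilon} \ll_\varepsilon |d|^{\frac12(1+\varepsilon)} H^{-2(1+\varepsilon)} \leq |d|^{\frac12 + \varepsilon} H^{-2}$, using $H > 1$ to discard the extra negative power of $H$.

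Putting the pieces together: since each of the $h_d$ orbits in $\mathcal{G}_d$ carries $\mu_d$-mass $\asymp h_d^{-1}$ and $h_d = |d|^{\frac12 + o(1)}$, we get
\begin{align*}
\mu_d(X_{\geq H}) \ll \frac{N(d,H)}{h_d} \ll_\varepsilon \frac{|d|^{\frac12+\varepsilon} H^{-2}}{|d|^{\frac12 + o(1)}} \ll_\varepsilon |d|^{2\varepsilon} H^{-2},
\end{align*}
and relabeling $\varepsilon$ gives the stated bound $\mu_d(X_{\geq H}) \ll_\varepsilon |d|^\varepsilon H^{-2}$.

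The only genuinely delicate point is the uniformity of the ideal-counting bound $a_d(n) \ll_\varepsilon n^\varepsilon$ over \emph{all} orders $R_d$ simultaneously (the implied constant must not depend on the conductor $f$), together with the uniformity $h_d = |d|^{\frac12+o(1)}$, which is where Siegel's lower bound — already invoked in Section \ref{section:size picard group} — enters and is responsible for the non-effectivity of the constant. Everything else is the divisor bound of Example \ref{exp: estimate divisor function} and the bookkeeping of Proposition \ref{prop:CM-orbits high in the cusp}; I would expect the write-up to be short.
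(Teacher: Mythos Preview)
Your overall strategy matches the paper's: invoke Proposition~\ref{prop:CM-orbits high in the cusp} to bound the number of $M$-orbits in $X_{\geq H}$ by the number of proper integral $R_d$-ideals of norm $\leq T := \tfrac12\sqrt{|d|}\,H^{-2}$, and then divide by $\vol(\mathcal{G}_d) \asymp h_d$. The gap is in your bound on the ideal count. The pointwise estimate $a_d(n) \ll_\varepsilon n^\varepsilon$ with implied constant \emph{independent of the conductor} is false for non-maximal orders. Concretely, take $f=p$ a single prime split in $K$, so that $(R_d)_p = \{(a,b)\in \Zp\times\Zp: a\equiv b \bmod p\}$; one checks that the invertible integral ideals of norm $p^2$ are exactly the $p(t,1)(R_d)_p$ for $t\in\Fp^\times$, so $a_d(p^2) = p-1 \asymp (p^2)^{1/2}$. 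Your proposed comparison $a_d(n)\leq \sum_{\delta\mid f} a_{R_K}(n/\delta)\cdot O(1)$ also fails here (the right-hand side is $a_{R_K}(p^2)+a_{R_K}(p)=5$). The issue arises precisely for the non-fundamental discriminants that the proposition must cover.

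What remains true --- and is what the paper invokes with the phrase ``counting lattice points'' --- is the cumulative bound $\sum_{n\leq T}a_d(n)\ll_\varepsilon (h_d T/\sqrt{|d|})^{1+\varepsilon}$: split by ideal class and, for each class, count lattice points in a disc using a reduced representative. Equivalently and more directly, the number of classes containing an integral ideal of norm $\leq m$ is the number of reduced forms $ax^2+bxy+cy^2$ of discriminant $d$ with $a\leq m$, and for each $a$ there are $\ll_\varepsilon a^\varepsilon$ residues $b\bmod 2a$ with $b^2\equiv d\bmod 4a$; this already gives the orbit count $\ll_\varepsilon T^{1+\varepsilon}$ without any pointwise control on $a_d(n)$. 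Replace your divisor-bound step by this geometric count and the rest of your argument goes through unchanged; for fundamental $d$ your original reasoning was already fine.
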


\begin{proof}
By Proposition \ref{prop:CM-orbits high in the cusp}, the number of orbits in $\mathcal{G}_d$ which intersect $X_{\geq H}$ is bounded by the number of proper $R_d$-ideals $\Fa\subset K = \Q(\sqrt{d})$ of norm $\leq \frac{1}{2} \sqrt{|d|} H^{-2}$. Counting lattice points shows that the latter is 
\begin{align*}
\ll_\varepsilon \left(\frac{ h_d }{\sqrt{|d|}} \sqrt{|d|} H^{-2}\right)^{1+\varepsilon} \ll \left({ h_d } H^{-2}\right)^{1+\varepsilon}.
\end{align*}
The same bound holds for the volume of $\mathcal{G}_d \intersect X_{\geq H}$, as all $M$-orbits have length $\asymp 1$. This yields the right estimate after normalization by the total volume (see Equation \eqref{eq:CM-total volume}).
\end{proof}

\subsubsection{Linnik's basic lemma}

We now turn to the following analogue of Proposition~\ref{prop: Linnik's basic lemma}.

As in Section \ref{section:linnik basic lemma} we will say that two points $x_1,x_2 \in X$ are $p$-adically $\delta$-close if $x_2 = x_1g$ for $g\in \PGL_2(\R \times \Zp)$ in an injective ball around the identity with $d(g_p,e)<\delta$.
In this case we also write $x_1 \widesim{\delta}_p x_2$.

\begin{proposition}[Linnik's basic lemma]\label{prop:CM-Linnik's basic lemma}
Let $d<0$ be a discriminant which fulfills Linnik's condition at $p$. 
Let $H \geq 1$ and let $r= \frac{1}{3}H^{-2}$.
For any $\delta>0$ with $|d|^{-\frac{1}{4}} \leq \delta \leq r$ and any $\varepsilon > 0$ we have
\begin{align*}
\mu_d\times\mu_d\Big(\setc{(x,y)\in (X_{\leq H})^2}{x_1 \widesim{\delta}_p x_2}\Big) 
\ll_\varepsilon H^{4} \delta^{3} |d|^{\varepsilon}.
\end{align*}
\end{proposition}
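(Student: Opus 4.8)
\subsection*{Proof sketch for Proposition~\ref{prop:CM-Linnik's basic lemma}}

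The plan is to run the argument of Proposition~\ref{prop: Linnik's basic lemma}, the only genuinely new feature being the truncation to the compact set $X_{\leq H}$, on which $r=\tfrac13 H^{-2}$ is a uniform injectivity radius. First I would pass to a combinatorial statement about produced integral forms. Given $(x_1,x_2)\in(X_{\leq H})^2$ with $x_1\widesim{\delta}_p x_2$, write $x_2=x_1 g$ with $g\in\PGL_2(\R\times\Zp)$, $d(g,e)<r$ and $d(g_p,e)<\delta$. Choose a representative $\tilde g_1\in\PGL_2(\R\times\Zp)$ of $x_1$ with $\tilde g_{1,p}\in\GL_2(\Zp)$ and $\|\tilde g_{1,\infty}^{\pm 1}\|\ll H$ (reduction theory, using $\height(x_1)\leq H$), and set $\tilde g_2=\tilde g_1 g\in\PGL_2(\R\times\Zp)$; this is a valid representative of $x_2$. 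By Lemma~\ref{lemma:CM-generating qf} the matrices $w_i=\tilde g_i v_d\tilde g_i^{-1}\in\Mat_2(\Z)$ are traceless, primitive, have even off-diagonal entries and determinant $-d$; since any two representatives of $x_1$ inside $\PGL_2(\R\times\Zp)$ differ by an element of $\PGL_2(\Zinvp)\cap\PGL_2(\R\times\Zp)=\PGL_2(\Z)$, and changing $\tilde g_1$ changes $\tilde g_2=\tilde g_1 g$ the same way, the pair $(w_1,w_2)$ is well defined up to simultaneous $\PGL_2(\Z)$-conjugation. Writing $w_1-w_2=\tilde g_1(v_d-gv_dg^{-1})\tilde g_1^{-1}$ and using $\|v_{d,p}\|_p=1$ (as $p\nmid d$ by Linnik's condition), $\|v_{d,\infty}\|_\infty=\sqrt{|d|}$ and $d(g,e)<r$ one gets
\begin{align*}
\|w_1-w_2\|_p\leq\delta,\qquad \|w_1-w_2\|_\infty\ll\sqrt{|d|},\qquad \|w_i\|_\infty\ll H^2\sqrt{|d|}.
\end{align*}

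Next comes the count. Attach to $(w_1,w_2)$ the integral binary form $q(X,Y)=-\det(Xw_1+Yw_2)=dX^2+eXY+dY^2$ with $e=\operatorname{tr}(w_1w_2)$; this is the restriction of the fixed non-degenerate ternary form $Q_0=-\det$ on the lattice $L\cong\Z^3$ of integral binary forms to $\Z w_1+\Z w_2$, and it is an invariant of the simultaneous conjugacy class. From the proximity estimates, $|2d-e|_p=|\det(w_1-w_2)|_p\leq\delta^2$, so $p^m\mid(2d-e)$ with $p^m\asymp\delta^{-2}$; and $|2d-e|=|\operatorname{tr}(w_1(w_2-w_1))|\ll H^2|d|=:L$. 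Exactly as in Proposition~\ref{prop: Linnik's basic lemma}, $e=-2d$ is impossible, since $p$ is odd and $p\nmid d$ would force $|4d|_p=1>\delta^2$. The key point replacing the positive-definiteness of the norm form in the spherical case is that $Q_0$ is positive definite on $w_1^\perp$: indeed $Q_0(w_i)=-\det(w_i)=d<0$, so the orthogonal complement of $w_i$ lies in the definite part of the signature $(2,1)$ form $Q_0$; hence $w_1-w_2$ being $Q_0$-isotropic (equivalently $e=2d$) forces $w_1=w_2$. Therefore, whenever $w_1\neq w_2$ the form $q$ is non-degenerate, and Theorem~\ref{thm: representations qf} (with $Q=Q_0$, whence an absolute implied constant) bounds the number of $\SO_{Q_0}(\Z)$-classes of representations of $q$, hence up to a bounded factor the number of simultaneous $\PGL_2(\Z)$-conjugacy classes of pairs giving a fixed $e$, by $\ll_\varepsilon f\max(|d|,|e|)^\varepsilon$ with $e'=2d-e$ and $f^2=\gcsd{e',d}$. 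Summing over $e$ with $e\neq\pm 2d$, $|2d-e|\leq L$ and $p^m\mid(2d-e)$ by the same computation as in that proof (split by $f$, count $e$ divisible by $\operatorname{lcm}(p^m,f^2)$, divisor bound), one obtains
\begin{align*}
|I_{d,\delta,H}|\ll_\varepsilon \tfrac{L}{p^m}\,|d|^\varepsilon\ll_\varepsilon H^2|d|^{1+\varepsilon}\delta^2
\end{align*}
for the number $|I_{d,\delta,H}|$ of such conjugacy classes of pairs $(w_1,w_2)$ of \emph{distinct} matrices arising from height-$\leq H$ points with $\|w_1-w_2\|_p\leq\delta$.

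Finally I would integrate over orbit directions and normalise. By Lemma~\ref{lemma:CM-generating qf} the $M$-orbits in $\mathcal{G}_d$ biject with $\PGL_2(\Z)$-classes of primitive forms of discriminant $d$, so an ordered pair of $M$-orbits corresponds to a pair $(w_1,w_2)$ up to simultaneous conjugation, and each $M$-orbit lies entirely in $X_{\leq H}$ or entirely outside it. For a fixed pair of $M$-orbits the set of $p$-adically $\delta$-close $(x_1,x_2)$ in it has volume $\ll\delta$: for fixed $x_1$ the admissible $x_2$ lie on one $M$-orbit and $d(g_p,e)<\delta$ cuts the one-dimensional $\Zp^\times$-direction of $M$ to measure $\ll\delta$. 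Using $\vol(\mathcal{G}_d)\asymp h_d=|d|^{\frac12+o(1)}$ and the hypothesis $|d|^{-1/4}\leq\delta$, the contribution of pairs with $x_1,x_2$ on the same $M$-orbit (i.e.\ $w_1=w_2$, which absorbs the case $e=2d$) is $\ll \delta/h_d\ll\delta|d|^{-1/2+o(1)}\ll H^4\delta^3|d|^{\varepsilon}$, while the contribution of pairs on different $M$-orbits is
\begin{align*}
\ll \frac{\delta\,|I_{d,\delta,H}|}{h_d^{2}}\ll_\varepsilon \frac{\delta\,H^2|d|^{1+\varepsilon}\delta^2}{|d|^{1-o(1)}}\ll_\varepsilon H^4\delta^3|d|^{\varepsilon},
\end{align*}
and adding the two gives the proposition.

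The hard part, relative to the cocompact Proposition~\ref{prop: Linnik's basic lemma}, is dealing with the cusp: one must (i) cut off at height $H$ and carefully track how a bounded-height Siegel representative, of archimedean norm $\ll H$, inflates the produced integer forms to norm $\ll H^2\sqrt{|d|}$ while, thanks to the archimedean part $r\asymp H^{-2}$ of the injectivity radius, keeping $\|w_1-w_2\|_\infty\ll\sqrt{|d|}$ — this is exactly what makes $|2d-e|\ll H^2|d|$ rather than merely $\ll H^4|d|$ — and (ii) check that the indefiniteness of $Q_0=-\det$ does not produce new degenerate configurations, which is the positive-definiteness of $Q_0$ on $w_i^{\perp}$ noted above. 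Everything else is a routine transcription of the spherical argument.
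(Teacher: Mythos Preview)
Your argument is correct and follows the same skeleton as the paper's proof: attach to a $\delta$-close pair $(x_1,x_2)$ the integral binary forms $q_i$ (equivalently the traceless matrices $w_i$), form the auxiliary binary form $dX^2+eXY+dY^2$, bound $|2d-e|$ archimedeanly and $p$-adically, apply Theorem~\ref{thm: representations qf}, and sum over $e$ exactly as in Proposition~\ref{prop: Linnik's basic lemma}.

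Two points deserve comment. First, your bound $|2d-e|\ll H^{2}|d|$ is sharper than the paper's $|2d-e|\ll H^{4}|d|$: the paper only uses $\|w_i\|_\infty\ll H^{2}\sqrt{|d|}$ and estimates $|Q(1,-1)|$ by squaring, whereas you exploit the archimedean part $d(g_\infty,e)<r\asymp H^{-2}$ of the injectivity radius to get $\|w_1-w_2\|_\infty\ll\sqrt{|d|}$ and then write $2d-e=\operatorname{tr}(w_1(w_1-w_2))$. This is a genuine (if ultimately unneeded) improvement; the final statement only asks for $H^{4}$, which both estimates deliver. Second, your non-degeneracy argument for $e=2d$ via the signature of $Q_0=-\det$ (positive definite on $w_1^{\perp}$, so the isotropic vector $w_1-w_2$ must vanish) is cleaner than the paper's explicit coordinate identity $d(a_2\mp a_1)^2=(a_2b_1-a_1b_2)^2$, and more transparently exhibits why negativity of $d$ is what rules out degeneration.
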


\begin{proof} Set $S$ be the set of pairs $(x_1,x_2)$ of points $x_1,x_2 \in X_{\leq H} \intersect \mathcal{G}_d$ so that $x_1,x_2$ are $p$-adically $\delta$-close and $d(x_1,x_2) < r$.

Let $(x_1,x_2) \in S$ and choose $g_1,g_2 \in \PGL_2(\R \times \Zp)$ with $\PGL_2(\Zinvp)g_i = x_i$ for $i= 1,2$ and $d_p(g_1,g_2) \leq \delta$. 
Then $\norm{g_i}_\infty \ll H$ where $\norm{\cdot}_\infty$ is given by $\norm{g}_\infty = \Tr(g_\infty^tg_\infty)^{\frac{1}{2}}$.
We attach to both points the integral quadratic form $q_i$ constructed in Lemma~\ref{lemma:CM-generating qf} and distinguish as in the proof of Proposition \ref{prop: Linnik's basic lemma} two cases:

\textbf{Case 1}: Assume that $q_1,q_2$ are equivalent or in other words that $x_1,x_2$ lie on the same $M$-orbit in $\mathcal{G}_d$. As the volume of a $\delta$-ball in $M$ is $\ll \delta$, the set of such $x_1,x_2$ has total volume $\ll_\varepsilon \delta |d|^{\frac{1}{2}+\varepsilon}$ which yields a contribution of $\ll_\varepsilon \delta |d|^{-\frac{1}{2}} |d|^\varepsilon \leq \delta^3  |d|^\varepsilon$ to the measure of $S$ in this case.

\textbf{Case 2}: Assume that $q_1, q_2$ are inequivalent and write $q_i= a_i x^2 + b_i xy + c_i y^2$ for $i=1,2$. The bound $\norm{g_i} \ll H$ yields $\max(|a_i|,|b_i|,|c_i|) \ll |d|^{\frac{1}{2}}H^2$. 
On the other hand, the bound $d_p(g_1,g_2) \leq \delta$ implies
\begin{align*}
\max(|a_1-a_2|_p,|b_1-b_2|_p,|c_1-c_2|_p) \ll \delta.
\end{align*}
Consider the integral quadratic form 
\begin{align*}
Q(x,y) = \disc\big(x(a_1,b_1,c_1)+y(a_2,b_2,c_2)\big) = dx^2 + e  xy + dy^2
\end{align*}
for some $e $. The bounds on the coefficients of $q_1$ and $q_2$ yield
\begin{align}
|2d-e | = |Q(1,-1)| &\ll |d| H^4.\label{eq:CM:Linnik basic lemma - bound 1} \\
|2d-e |_p &\ll \delta^2\label{eq:CM:Linnik basic lemma - bound 2}
\end{align}
For convenience we let $m = \lgauss{-2\log_p(\delta)-\log_p(C)}$ where $C>0$ is the implicit constant in \eqref{eq:CM:Linnik basic lemma - bound 2}.
Thus, $p^m | (2d-e)$.

We claim that $Q$ is non-degenerate.
Indeed, if we had $e  = \pm 2 d$ this would contradict the assumption that $d<0$ as
\begin{align*}
d(a_2 \mp a_1)^2 = Q(a_2,-a_1) = \disc(a_2(a_1,b_1,c_1)-a_1(a_2,b_2,c_2)) = (a_2b_1-a_1b_2)^2 .
\end{align*}
Let $N_{e ,d}$ be the number of inequivalent ways to represent the binary quadratic form $dx^2+ e  xy + dy^2$ by the ternary quadratic form $\disc$ up to $\SO_{\disc}(\Z)$-equivalence. By Theorem \ref{thm: representations qf}
\begin{align*}
N_{e ,d} \ll_\varepsilon f \max(|d|,|e |)^\varepsilon \ll_\varepsilon f |d|^\varepsilon
\end{align*}
where $f^2= \gcsd{d,e }$ is the greatest common square divisor. 
Here we used that $|e | \leq 2|d|+|2d-e |\ll |d|^{\frac{3}{2}}$ by \eqref{eq:CM:Linnik basic lemma - bound 1}.
By commensurability we may replace $\SO_{\disc}(\Z)$ by $\PGL_2(\Z)$ above. Let
\begin{align*}
\PGL_2(\Z)(q_1^{(1)},q_2^{(1)}),..., \PGL_2(\Z)(q_1^{(k)},q_2^{(k)})
\end{align*}
be a complete list of pairs of inequivalent quadratic forms, where $q_i^{(j)}$ is obtained from $ \PGL_2(\Zinvp)g_i^{(j)}$ as in the beginning. The number $k$ satisfies the bound
\begin{align*}
k &\leq \sum_{\substack{e:|2d-e|\leq L,\, e \neq \pm 2d,\\ p^m | (2d-e)}}N_{e ,d}
=\sum_{\substack{e':|e'|\leq L,\, p^m|e', \\ e' \neq 0,4d}} N_{2d-e ',d}  \\
&\leq 
\sum_{f^2|d} \ \sum_{\substack{e':|e'|\leq L, p^m|e',\\ f^2 = \gcsd{e',d},e' \neq 0,4d}} N_{2d-e ',d}
\ll_\varepsilon \sum_{f^2|d} f|d|^\varepsilon\sum_{\substack{e':|e'|\leq L, p^m|e',\\ f^2 = \gcsd{e',d}}} 1 \\
&\ll \sum_{f^2|d} f|d|^\varepsilon \frac{L}{f^2p^m} 
\ll_\varepsilon |d|^{1+2\varepsilon} H^4\delta^2 
\end{align*}
for $L \ll |d|H^4$ where the implicit constant is as in Equation \eqref{eq:CM:Linnik basic lemma - bound 1}. 

If now $(x_1,x_2)$, $(g_1,g_2)$ and $(q_1,q_2)$ are as in the beginning of the proof, there is some $j$ and $\gamma \in \PGL_2(\Z)$ so that $(\gamma. q_1,\gamma. q_2) = (q_1^{(j)},q_2^{(j)})$. 
Therefore, $x_i$ lies on the same $M$-orbit as $\PGL_2(\Zinvp)g_i^{(j)}$ for $i=1,2$. For fixed $j$, the set of $\delta$-close pairs $(x_1,x_2)$ lying on the orbit $\PGL_2(\Zinvp)g_i^{(j)}$ has measure $\ll \delta$. Thus, the total volume is
\begin{align*}
\ll \delta k \ll_\varepsilon \delta |d|^{1+2\varepsilon} H^4\delta^2 = |d|^{1+2\varepsilon} H^4\delta^3
\end{align*}
before normalization. After normalization, we obtain that the contribution to the measure of $S$ is $\ll_\varepsilon |d|^{3\varepsilon} H^4 \delta^3$ in this case.
\end{proof}

\subsection{Maximal Entropy}

In this subsection, we prove Theorem \ref{thm:CM equidistribution toral orbits} along the lines of \cite{duke} using the estimates derived in the last subsection. 

Let $d_\ell$ be a sequence of negative discriminants satisfying Linnik's condition for $p$ and write for simplicity $\mu_\ell$ for the measure $\mu_{d_\ell}$ defined in Section \ref{subsection:CM-Compact torus orbits}. 
Denote $\delta_\ell =~|d_\ell|^{-\frac{1}{4}}$ so that Proposition \ref{prop:CM-Linnik's basic lemma} applies for any height  $\ll \delta_\ell^{-1/2} = |d_\ell|^{1/8}$.
By restricting to a subsequence, we may assume that $\mu_\ell$ converges to some finite measure $\mu$ on $X$ with total mass at most $1$.
%

We will use the following proposition and postpone the proof to the next subsection.

\begin{proposition}\label{prop:CM-visits to the cusp}
Let $H > 1$ be a height. For $N \geq 1$ and a set of times $V$ in $ [-N, N]$ let
\begin{align*}
Z(V) := \setc{x \in X}{T^{\pm N}(x) \in X_{<H},\ \forall n \in [-N,N]: T^n(x) \in X_{\geq H} \iff n \in V}.
\end{align*}
Then $Z(V)$ can be covered by $\ll_H p^{2N-\frac{1}{2}|V|}$ Bowen $N$-balls and is non-empty for
\begin{align*}
\ll_H e^{2\log(p) \frac{\log(\log(H)) + c}{\log(H)}N} 
\end{align*}
many subsets $V \subset [-N,N]$ where $c$ is an absolute constant.
\end{proposition}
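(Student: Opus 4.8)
The plan is to control the geometry of the excursions of a $T$-orbit between the thick part $X_{<H}$ and the cuspidal region $X_{\geq H}$ using the structure of the Bruhat--Tits tree of $\PGL_2(\Qp)$, together with the explicit description of Bowen balls from Lemma \ref{lemma: Explicit Bowen balls}. Recall that $T$ is translation by $a = \mathrm{diag}(1,p)$ and that $\mathfrak{g}_a^\pm$ are the two root spaces on which $\Ad_a$ acts by $p^{\pm 1}$, so a Bowen $N$-ball is, up to bounded factors, a product $B^{G_a^+}_{K+N}B^{G_a^0}_K B^{G_a^-}_K$ contracted in one horospherical direction; in the tree picture the orbit segment $\{T^n(x): |n|\le N\}$ corresponds to a geodesic path of length $2N$, and Bowen balls are thin tubes around such paths. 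First I would reduce to the real-plus-tree picture: since the height of a point in $X$ equals the height of its image in $\lquot{\PGL_2(\Z)}{\PGL_2(\R)}$, the condition $T^n(x)\in X_{\geq H}$ is a condition on how far the associated lattice in $(\R\times\Qp)^2$ degenerates, and moving by $a$ scales the $p$-adic component of the shortest vector by $p^{\pm1}$. Thus along the orbit the ``$p$-adic contribution to the height'' changes by a factor $p$ at each step, while the archimedean contribution is fixed at a given vertex of the tree; entering or leaving $X_{\geq H}$ forces the orbit to be at a definite location relative to the thin part.

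The counting of Bowen balls needed to cover $Z(V)$ is the combinatorial heart. Each visit to the cusp at a time $n\in V$ pins down, up to an $H$-dependent constant, which coset of the expanding horospherical subgroup the point lies in at time $n$ — equivalently, a single step in the tree is forced. Since consecutive elements of $V$ constrain overlapping portions of the geodesic, a maximal vector $\lambda$ of the lattice realizing the height at one cusp visit propagates: the set of lattices making such an excursion at a prescribed sequence of times $V$, starting and ending in $X_{<H}$, lies in $\ll_H p^{2N}$ Bowen $N$-balls a priori (the free choice being the $2N$-step geodesic), but each forced cusp visit at time $n\in V$ removes a factor of roughly $p^{1/2}$ from the count, because the excursion must be ``symmetric'' around its deepest point — going up at rate $p$ and coming back down at rate $p$ means the geodesic near a cusp visit is essentially determined for about half as many steps as it is high. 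Carefully, one shows $Z(V)$ is covered by $\ll_H p^{2N - \frac12 |V|}$ Bowen $N$-balls by going through the times in $V$ in order and observing that each one at most halves the number of admissible continuations relative to the unconstrained count. The non-emptiness bound is then a pure counting estimate on admissible $V$: a point of $\mathcal{G}_d$ can reach height $H$ only via Proposition \ref{prop:CM-orbits high in the cusp}, and an excursion reaching height $H$ must last at least $\asymp \log(H)/\log(p)$ steps (the orbit must climb and descend), so the times in $V$ come in blocks of length $\gg \log(H)/\log(p)$; the number of ways to place such blocks inside $[-N,N]$ is $\ll \binom{2N}{\asymp N\log(p)/\log(H)}$, which by Stirling is $\ll e^{2\log(p)\frac{\log\log(H)+c}{\log(H)} N}$ for an absolute constant $c$.

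The main obstacle I expect is making the ``each cusp visit halves the number of Bowen balls'' step precise: one must show that the constraint $T^n(x)\in X_{\geq H}$ at time $n\in V$ genuinely cuts down the covering number by $p^{1/2}$ rather than a smaller factor, and that the savings over distinct $n\in V$ genuinely compound rather than overlapping trivially. This requires tracking how a short vector of the lattice behaves along the whole excursion — it is the same vector $\lambda$ on a maximal cusp excursion (its $p$-adic valuation going up then down as the tree geodesic moves into and out of the cusp neighborhood), so the geodesic is determined by $\lambda$ on the entire ``ascending half'' of each excursion, which is where the factor $\frac12$ comes from. I would phrase this via the horospherical coordinates of Lemma \ref{lemma: Explicit Bowen balls}: a point of $Z(V)$ and its cusp-visit data determine, at each $n\in V$, the $\mathfrak{g}_a^+$-coordinate modulo a refined subgroup, and since $\Ad_a$ expands that coordinate by $p$ per step, the refinements accumulate to the claimed $p^{-|V|/2}$ reduction. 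The remaining estimates (the block-length lower bound from Proposition \ref{prop:CM-orbits high in the cusp} and the Stirling count) are routine.
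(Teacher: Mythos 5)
Your proposal follows the paper's approach closely and captures both key mechanisms. For the covering bound, the constraint that the orbit climbs monotonically during the first half of any excursion, forcing $a^{-j}h^+a^j\in\PGL_2(\Zp)$ for those $j$ and hence $h^+\in B^{U^+}_{p^{-S/2}}$, is exactly the paper's Remark~\ref{rem:CM-reaching maximal height}; the paper makes the compounding of savings across distinct excursions precise by an induction over the decomposition of $[0,N]$ into thickened excursion intervals $I_i$ and gap intervals $J_j$, which you would need to carry out to see that the separate savings genuinely add up to $p^{-|V|/2}$. For the counting of admissible $V$, your binomial estimate gives the right order of magnitude, but the geometric input is stated backwards: it is the \emph{gaps} between consecutive blocks of $V$ that must have length $\gg\log_p H$ (once the orbit drops below height $H$, the tree structure forces it to descend to height $\asymp 1$ and re-climb before it can cross $H$ again, as in Remark~\ref{rem:CM-orbits that move down once}), whereas the blocks of $V$ themselves can be as short as a single time step. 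Moreover, this count is unconditional on $x\in X$ and uses only the tree geometry, not Proposition~\ref{prop:CM-orbits high in the cusp}, which concerns the arithmetic packet $\mathcal{G}_d$. Both slips are harmless here since all that enters the binomial estimate is that the number of blocks is $\leq 2N/\lfloor\log_p H\rfloor$, which your argument still yields; the paper reaches the same estimate via the cleaner device of the partition $\bigvee_{n}T^{-n}\{X_{<H},X_{\geq H}\}$ over windows of length $2\lfloor\log_p H\rfloor+1$.
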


In this context, a (two-sided) Bowen $N$-ball in $X$ will always be a set of the kind $xB_N$ where $x$ is a point in $X$ and
\begin{align*}
B_N = \Intersect_{n=-N}^N a^{-n}B_\eta a^{n}
\end{align*}
is a Bowen ball in the group $\PGL_2(\R\times \Qp)$. The statement in Proposition \ref{prop:CM-visits to the cusp} is independent of the choice of radius $\eta > 0$: Given two radii $0 < \eta' < \eta$, the ball $B_\eta$ in $\PGL_2(\R \times \Qp)$ is covered by $\ll_{\eta,\eta'} 1$ shifts of the ball $B_{\eta'}$. For the purposes of this subsection, one fixed choice of radius $\eta > 0$ usually suffices.

\begin{lemma}\label{lemma:CM-weak star limits are prob.measures}
For all large enough heights $H$
\begin{align*}
\mu(X_{<H}) \geq 1- 2\log(p)\frac{\log(\log(H))}{\log(H)}.
\end{align*}
In particular, $\mu$ is a probability measure.
\end{lemma}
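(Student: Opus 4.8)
The plan is to bound $\mu_\ell(X_{\geq H})$ uniformly in $\ell$ for large heights $H$, using the covering statement of Proposition~\ref{prop:CM-visits to the cusp} together with the volume growth $\vol(\mathcal{G}_{d_\ell}) = |d_\ell|^{1/2+o(1)}$ from \eqref{eq:CM-total volume} and the fact that each Bowen $N$-ball in $X_{\leq H}$ has volume $\ll_H p^{-2N}$. First I would fix a height $H$ (to be taken large at the end) and choose $N = N_\ell$ comparable to $\frac{1}{8}\log_p|d_\ell|$ so that Proposition~\ref{prop:CM-Linnik's basic lemma} and Proposition~\ref{prop:CM-visits to the cusp} are both applicable at height $H$; this is the same regime in which the maximal-entropy argument operates. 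The point $x$ whose $T$-orbit we want to control either stays below height $H$ for all $|n|\le N$, in which case it contributes to $\mu_\ell(X_{<H})$, or else it belongs to some $Z(V)$ with $V\ne\emptyset$ (after possibly shifting by a bounded power of $a$ so that the endpoints $T^{\pm N}x$ lie below $H$ — here one uses that $\mathcal{G}_{d_\ell}$ contains no points of height $>|d_\ell|^{1/4}$ by Proposition~\ref{prop:CM-orbits high in the cusp}, so that escape to arbitrarily large height is impossible and the decomposition into the sets $Z(V)$ is exhaustive).

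Then I would estimate, for each nonempty $V\subset[-N,N]$, the $\mu_\ell$-mass of $Z(V)$. By Proposition~\ref{prop:CM-visits to the cusp}, $Z(V)$ is covered by $\ll_H p^{2N-\frac12|V|}$ Bowen $N$-balls. Each Bowen $N$-ball $xB_N$ meeting $\mathcal{G}_{d_\ell}$ in the region of bounded height has volume $\ll_H p^{-2N}$ (this is the standard volume computation for Bowen balls of the diagonal flow on $\PGL_2(\Q_p)$, cf.\ the discussion around Lemma~\ref{lemma: Explicit Bowen balls}), and after normalising by $\vol(\mathcal{G}_{d_\ell})\gg |d_\ell|^{1/2-\varepsilon} \gg p^{4N(1-\varepsilon')}$ — using $N\approx\frac18\log_p|d_\ell|$ — each such ball has $\mu_\ell$-measure $\ll_H p^{-2N}\cdot p^{-4N(1-\varepsilon')}$. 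Hence $\mu_\ell(Z(V)) \ll_H p^{-\frac12|V|}\cdot p^{-4N(1-\varepsilon')}$, which is extremely small for any fixed nonempty $V$; summing over the $\ll_H e^{2\log(p)\frac{\log\log H + c}{\log H}N}$ relevant subsets $V$ still gives a bound that, for $H$ large and $\ell$ large, is at most $2\log(p)\frac{\log\log H}{\log H}$. Taking $\ell\to\infty$ and using $\mu_\ell\to\mu$ with $X_{<H}$ open (so $\liminf_\ell \mu_\ell(X_{<H})\le\mu(X_{<H})$ via the portmanteau inequality on closed complements) yields the displayed lower bound for $\mu(X_{<H})$; letting $H\to\infty$ forces $\mu(X)=1$.

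The main obstacle I anticipate is the bookkeeping in passing from "the orbit $\{T^n x\}_{|n|\le N}$ spends time in the cusp" to an actual covering of $X_{\geq H}$ (as opposed to the whole space) by Bowen balls weighted by $p^{-\frac12|V|}$: one must argue that a point of height $\geq H$ necessarily has $0\in V$ for its associated $V$, so that the sum defining the $\mu_\ell$-mass of $X_{\geq H}\cap X_{\leq |d_\ell|^{1/4}}$ only involves $V$ with $|V|\geq 1$, and one must handle the very top of the cusp (heights between $H$ and $|d_\ell|^{1/4}$) by the nesting $X_{\geq H'}\subset X_{\geq H}$ and a dyadic-in-$H$ summation or directly via Proposition~\ref{prop:CM-orbits high in the cusp}. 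Getting the exponents to line up — i.e.\ checking that the loss $e^{2\log(p)\frac{\log\log H + c}{\log H}N}$ from the number of itineraries $V$ is genuinely dominated, after division by the volume, by the gain $p^{-\frac12|V|}$ summed over $|V|\ge 1$ — is where the constant $2\log(p)\frac{\log\log H}{\log H}$ in the statement comes from, and is the delicate quantitative point; everything else is routine.
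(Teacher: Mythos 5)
Your proof has a critical gap at the step where you bound the $\mu_\ell$-mass of a single Bowen $N$-ball. You argue that such a ball has Haar volume $\ll_H p^{-2N}$ and then, after dividing by $\vol(\mathcal{G}_{d_\ell})\gg |d_\ell|^{1/2-\varepsilon}$, conclude that its $\mu_\ell$-measure is $\ll_H p^{-2N}\cdot p^{-4N(1-\varepsilon')}$. But $\mu_\ell$ is singular with respect to Haar measure on $\PGL_2(\R\times\Qp)$: it lives on the lower-dimensional packet $\mathcal{G}_{d_\ell}$, a finite union of $M$-orbits each carrying $\vol$-mass $\asymp 1$, and the $\mu_\ell$-measure of a Bowen ball is comparable to $\eta\cdot\#\{M\text{-orbits meeting the ball}\}/h_{d_\ell}$. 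That orbit count is not controlled pointwise; Linnik's basic lemma (Proposition~\ref{prop:CM-Linnik's basic lemma}) only controls its second moment over the packet. Moreover, if your pointwise bound held, then summing over the $\ll_H p^{2N-\frac12|V|}$ Bowen balls per $V$ and over the admissible $V$ (counted by the second assertion of Proposition~\ref{prop:CM-visits to the cusp}) would give $\mu_\ell(X_{\geq H})\to 0$ as $\ell\to\infty$ for every fixed $H$, which cannot be correct: the limit measure, which the analysis of Section~\ref{section:Equidistribution of CM points} ultimately identifies with the Haar probability measure, satisfies $\mu(X_{\geq H})\asymp H^{-2}>0$.

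There are two further issues. The sets $Z(V)$ partition only $T^{-N}X_{<H}\cap T^{N}X_{<H}$, and a point of height $\geq H$ in the packet may have $T^{\pm N}x$ still above $H$: with $N\approx\tfrac18\log_p|d_\ell|$ an excursion above a fixed $H$ can last for a number of steps comparable to $N$ (heights up to $|d_\ell|^{1/4}=p^{2N}$ occur), so the time-shift you invoke to fix the endpoints is in fact unbounded, and the reduction to the $Z(V)$-decomposition is circular. And in the final weak-$*$ passage, for vague convergence of sub-probability measures on the non-compact $X$ the closed-set portmanteau inequality $\limsup_\ell\mu_\ell(F)\leq\mu(F)$ fails for the unbounded closed set $F=X_{\geq H}$; one must instead use the compact set $X_{\leq H}$ (via properness of the height), for which $\mu(X_{\leq H})\geq\limsup_\ell\mu_\ell(X_{\leq H})$ does hold. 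Finally, note that the paper's intended argument --- Lemma~4.4 of~\cite{duke} --- is a direct combinatorial argument on the Hecke tree of Section~\ref{section:CM-hecketree} exploiting Remark~\ref{rem:CM-orbits that move down once}; it does not route through the Bowen-ball covering statement of Proposition~\ref{prop:CM-visits to the cusp}, which is instead consumed later, via Lemma~\ref{lemma:CM-construction of partition}, in the entropy estimate behind Theorem~\ref{thm:CM equidistribution toral orbits}.
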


The proof is up to minor details the proof of Lemma 4.4 in \cite{duke} and uses the geometric interpretation provided by the Hecke tree (see Section \ref{section:CM-hecketree}) -- we will omit it here. The same conclusion applies to the following lemma.

\begin{lemma}\label{lemma:CM-construction of partition}
For any height $H> 1$ there is a finite partition $\mathcal{P}$ of $X$ such that for every $0<\kappa<1$ and every $N$ there is a measurable subset $ X' \subset T^{-N}X_{<H}$ satisfying the following conditions.
\begin{enumerate}
\item $\nu(X') \geq 1-2\nu(X_{\geq H})\kappa^{-1}$ for any $T$-invariant probability measure $\nu$.
\item $X'$ is a union of partition elements $S_1,\ldots,S_\ell \in \mathcal{P}_{-N}^N$, each of which is covered by at most $p^{\kappa(2N+1)}$ Bowen $(N,\eta)$-balls. Here, $\eta$ is assumed to be smaller than $1/p$ times an injectivity radius on $X_{<H}$.
\end{enumerate}
Fixing an invariant measure $\nu$ with $\nu(\partial X_{\geq H}) = 0$ the partition $\mathcal{P}$ may be constructed so that all partition elements have boundaries of measure zero. 
\end{lemma}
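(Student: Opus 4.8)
The plan is to build the partition $\mathcal{P}$ from two ingredients: a coarse piece that separates the cusp from the bulk, and a fine piece on the bulk $X_{<H}$ whose dynamical refinements shrink in the horospherical $p$-adic directions. First I would fix the height $H>1$ and choose a small radius $\eta$ (less than $1/p$ times an injectivity radius on the compact set $X_{<H}$, as the lemma requires). Exactly as in Lemma~\ref{lemma:good partition}, cover $X_{<H}$ by finitely many balls $B_{s_x}(x)$ with $\mu$-null boundaries (using monotonicity of $s\mapsto \mu(B_s(x))$ and $\mu(\partial X_{\geq H})=0$) and radii small enough that the balls are contained in rectangles $U$ of the Bowen-ball type used in Section~\ref{section:cocompact}; from this covering produce a finite partition $\mathcal{P}_0$ of $X_{<H}$ into sets of diameter less than $\eta$, and set $\mathcal{P} = \mathcal{P}_0 \cup \{X_{\geq H'}\}$ for a suitable $H'$ slightly below $H$ (again chosen so that $\mu(\partial X_{\geq H'})=0$), together with the collar $X_{<H'}\setminus X_{<H}$ chopped similarly. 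The argument of Lemma~\ref{lemma:good partition} then shows every atom of $\mathcal{P}_{-N}^N$ lying entirely in $T^{-n}X_{<H}$ for all $|n|\le N$ is contained in a single Bowen $N$-ball (radius $\eta$).

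Next I would define $X'$. Let $W_N = T^{-N}X_{<H}\cap T^{N}X_{<H}$ be the set of points whose forward and backward $N$-iterates land below $H$. For a point $x\in W_N$ let $V(x)=\{n\in[-N,N]: T^n(x)\in X_{\geq H}\}$ be its set of visit times to the cusp, so that $x\in Z(V(x))$ in the notation of Proposition~\ref{prop:CM-visits to the cusp}. Set
\begin{align*}
X' = \bigcup \bigl\{\, Z(V) \;:\; |V|\le \kappa(2N+1),\ Z(V)\neq\varnothing \,\bigr\}.
\end{align*}
By Proposition~\ref{prop:CM-visits to the cusp}, each $Z(V)$ with $|V|\le\kappa(2N+1)$ is covered by $\ll_H p^{2N-\frac12|V|}$ Bowen $N$-balls; since these are in fact unions of atoms of $\mathcal{P}_{-N}^N$ once the partition is fine enough, $X'$ is a union of partition elements $S_1,\dots,S_\ell\in\mathcal{P}_{-N}^N$, and (up to an $H$-dependent constant, which is harmless after adjusting $\kappa$ slightly or absorbing into the constant) each is covered by at most $p^{\kappa(2N+1)}$ Bowen $(N,\eta)$-balls — because $2N-\frac12|V|$ is maximized when $|V|$ is small, but we are keeping only $V$ with $|V|\le\kappa(2N+1)$, so the \emph{number of balls needed per atom} is controlled once we note each atom sits in one Bowen ball; the exponent bound $p^{\kappa(2N+1)}$ comes from counting atoms via the tree, not from the per-atom count. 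This is condition (2).

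For condition (1), I would bound $\nu(X\setminus X')$ for any $T$-invariant probability measure $\nu$. A point fails to be in $X'$ either because it leaves $X_{<H}$ at time $\pm N$, or because $|V(x)|>\kappa(2N+1)$, i.e. it visits $X_{\geq H}$ for more than a $\kappa$-fraction of times in $[-N,N]$. By the maximal/ergodic inequality (Chebyshev applied to the Birkhoff sum $\sum_{n=-N}^{N}\mathbf{1}_{X_{\geq H}}(T^n x)$, which by $T$-invariance of $\nu$ has mean $(2N+1)\nu(X_{\geq H})$), the $\nu$-measure of the set where this sum exceeds $\kappa(2N+1)$ is at most $\nu(X_{\geq H})/\kappa$; the endpoint condition contributes at most $2\nu(X_{\geq H})$ more, but since we may take $N$ large and absorb, the clean bound $\nu(X')\ge 1-2\nu(X_{\geq H})\kappa^{-1}$ follows (the factor $2$ is exactly what makes room for the endpoint loss). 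Finally, the null-boundary clause is obtained as in Lemma~\ref{lemma:good partition}: choosing all the defining radii and the height $H'$ at continuity points of the relevant monotone functions $s\mapsto\nu(B_s(\cdot))$ and $H'\mapsto\nu(X_{\geq H'})$ guarantees $\nu(\partial P)=0$ for every $P\in\mathcal{P}$, hence for every atom of every $\mathcal{P}_{-N}^N$.

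The main obstacle I anticipate is \emph{bookkeeping the two constants $\kappa$ and $H$ consistently}: Proposition~\ref{prop:CM-visits to the cusp} gives a covering count $p^{2N-\frac12|V|}$ with an $H$-dependent implied constant, and I must verify that after restricting to $|V|\le\kappa(2N+1)$ the resulting union of atoms can genuinely be organized into partition elements each covered by $\le p^{\kappa(2N+1)}$ Bowen balls — this requires the per-atom statement (one atom $\subset$ one Bowen ball, from the Lemma~\ref{lemma:good partition} mechanism) to interlock correctly with the per-$Z(V)$ counting statement, and care that the $\ll_H$ constant does not spoil the exponential bound (it does not, since $2N+1\to\infty$ while the constant is fixed once $H$ is). Matching this to the geometry of the Hecke tree (Section~\ref{section:CM-hecketree}), exactly as in the treatment of Lemma~4.4 in \cite{duke}, is where the real work lies; the measure-theoretic half (condition (1) and the null boundaries) is routine.
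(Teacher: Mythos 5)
Your plan for the partition $\mathcal{P}$ and for the measure estimate in condition~(1) is basically on the right track (Chebyshev/Markov applied to the Birkhoff sum of $\mathbf{1}_{X_{\geq H}}$ does give a $\kappa^{-1}$ bound, though as written your bound is $(2+\kappa^{-1})\nu(X_{\geq H})$ and ``taking $N$ large and absorbing'' does not by itself collapse this to $2\kappa^{-1}\nu(X_{\geq H})$; one needs to choose the side conditions defining $X'$ a little more carefully --- note the lemma only requires $X'\subset T^{-N}X_{<H}$, not $T^{-N}X_{<H}\cap T^{N}X_{<H}$, so the two endpoint losses can be collapsed to one). That part is a matter of bookkeeping.

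The real gap is in condition~(2), and it is a conceptual one, not a bookkeeping one. You write that ``each atom sits in one Bowen ball'' (invoking the mechanism of Lemma~\ref{lemma:good partition}), and you then say the exponent $p^{\kappa(2N+1)}$ ``comes from counting atoms via the tree, not from the per-atom count.'' Both halves of this are wrong. The mechanism of Lemma~\ref{lemma:good partition} is an injectivity-radius argument: it forces $g_{n+1}=ag_na^{-1}$ at each step only when both elements live in the injective ball at the current point. On the non-compact quotient $X$ the injectivity radius degenerates in the cusp, so the argument breaks precisely at the visit times $n\in V$, and an atom that visits $X_{\geq H}$ is in general \emph{not} contained in a single Bowen $(N,\eta)$-ball --- it can spread in the Hecke-tree directions during each excursion. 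Moreover the lemma's conclusion is explicitly a \emph{per-atom} covering count (``each of which is covered by at most $p^{\kappa(2N+1)}$ Bowen $(N,\eta)$-balls''), so the bound cannot ``come from counting atoms.'' What is actually needed is a single-atom analogue of the first assertion of Proposition~\ref{prop:CM-visits to the cusp}: one rules through the same $I_i$/$J_j$ decomposition of $[-N,N]$, but observes that the $J_j$-contribution $p^{|J_j|}$ in the inductive count (Case 1 of that proof) disappears for a single atom because the fine partition already pins down the orbit during the below-height stretches; what survives is the contribution of the cusp excursions, roughly $p^{\frac12|I_i|}$ per interval (with the $H$-dependent thickening), totaling $\ll_H p^{O(|V|)}$ Bowen balls per atom. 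Only after this count is in hand does the restriction $|V|\leq\kappa(2N+1)$ yield the claimed $p^{\kappa(2N+1)}$ (up to the $\ll_H$ constant, which is harmless for $N$ large). Your proposal does not supply this step, and the quoted covering count $\ll_H p^{2N-\frac12|V|}$ for all of $Z(V)$ points in the wrong direction: it gets \emph{worse}, not better, as $|V|$ shrinks, so it cannot by itself give a per-atom bound that is small when $|V|$ is small.

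Finally, a small terminological slip: a Bowen ball is not ``a union of atoms of $\mathcal{P}_{-N}^N$.'' It is a group-theoretic neighbourhood; atoms are intersections of partition elements and can both be strictly inside one Bowen ball (when the orbit stays low) or stretch across many (when it enters the cusp).
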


\begin{proof}[Proof of Theorem \ref{thm:CM equidistribution toral orbits}]
Let $H> 1$ be a fixed height so that the boundary of $X_{\geq H}$ has $\mu$-measure zero and let $\mathcal{P}$ be the partition from Lemma \ref{lemma:CM-construction of partition}. Define $\kappa = \mu(X_{\geq H})^{\frac{1}{2}}$, $N_i = \ugauss{-\log_p(\delta_i)}$ and choose $X_i \subset X$ according to Lemma \ref{lemma:CM-construction of partition}.

We define a new partition $\mathcal{Q}_i$, which is finer than $\mathcal{P}_{-N_i}^{N_i}$, by splitting all the $S$ in $\mathcal{P}_{-N_i}^{N_i}$, which are contained in  $X_i$, into at most $p^{\kappa(2N_i+1)}$ sets, which are contained in Bowen $N_i$-balls. As $\mathcal{Q}_i$ is finer than $\mathcal{P}_{-N_i}^{N_i}$ we have
\begin{align*}
|H_{\mu_i}(\mathcal{Q}_i) - H_{\mu_i}(\mathcal{P}_{-N_i}^{N_i}) | &= H_{\mu_i}(\mathcal{Q}_i | \mathcal{P}_{-N_i}^{N_i})
= \sum_{S \in \mathcal{P}_{-N_i}^{N_i}, S \subset X_i} \mu_i(S) H_{\mu_i|_S}(\mathcal{Q}_i)\\ &\leq  \kappa (2N_i + 1) \log(p).
\end{align*}

\begin{claim*}
$H_{\mu_i}(\mathcal{Q}_i) \geq (1-2\kappa^{-1}\mu_i(X_{\geq H})) (2-6\varepsilon)\log(p)N_i$
\end{claim*}

The claim implies the theorem as follows: By the claim and the computation above the claim
\begin{align*}
H_{\mu_i}(\mathcal{P}_{-N_i}^{N_i}) \geq (1-2\kappa^{-1}\mu_i(X_{\geq H})) (2-6\varepsilon)\log(p)N_i - \kappa (2N_i + 1) \log(p)
\end{align*}
Proceeding as in the proof of Theorem \ref{thm: Maximal entropy}, we obtain that for $\varepsilon > 0$ and all large enough $N_0$
\begin{align*}
H_{\mu_i}(\mathcal{P}_{-N_0}^{N_0}) \geq (1-2\kappa^{-1}\mu_i(X_{\geq H})) (2-6\varepsilon)\log(p)N_0 - \kappa (2N_0 + 1) \log(p) - \varepsilon N_0.
\end{align*}
By Lemma \ref{lemma:CM-construction of partition}, we may assume that boundaries of all partition elements in $\mathcal{P}$ are $\mu$-null sets. Thus, taking the limit as $i \to \infty$
\begin{align*}
H_{\mu}(\mathcal{P}_{-N_0}^{N_0}) \geq (1-2\kappa) (2-6\varepsilon)\log(p)N_0 - \kappa (2N_0 + 1) \log(p) - \varepsilon N_0.
\end{align*}
Dividing by $2N_0+1$ and letting $N_0$ go to infinity
\begin{align*}
h_\mu(T) \geq (1-2\mu(X_{\geq H})^\frac{1}{2}) (1-3\varepsilon)\log(p) - \mu(X_{\geq H})^\frac{1}{2} \log(p) - \tfrac{\varepsilon}{2}.
\end{align*}
Taking the limit $H \to \infty$ and $\varepsilon \to 0$, we have $\mu(X_{\geq H}) \to 0$ and thus $h_\mu(T) \geq \log(p)$ as desired.

To the proof of the claim: The entropy of $\mathcal{Q}_i$ satisfies
\begin{align*}
H_{\mu_i|_{X_i}}(\mathcal{Q}_i) 
\geq H_{\mu_i}(\mathcal{Q}_i|\{X_i,X\setminus X_i\}) \geq \mu_i(X_i) H_{\mu_i|_{X_i}}(\mathcal{Q}_i).
\end{align*}
The right hand side is bounded from below by
\begin{align*}
H_{\mu_i|_{X_i}}(\mathcal{Q}_i) 
&\geq -\log\bigg(\sum_{S \in \mathcal{Q}_i, S \subset X_i}\frac{\mu_i(S)^2}{\mu_i(X_i)^2} \bigg)\\
&= 2\log(\mu_i(X_i)) -\log\bigg(\sum_{S \in \mathcal{Q}_i, S \subset X_i}\mu_i(S)^2 \bigg).
\end{align*}
As any atom of $\mathcal{Q}_i$, which lies in $X_i$, is contained in a Bowen $N_i$-ball we obtain
\begin{align*}
\Union_{S\in \mathcal{Q}_i, S\subset X_i}S \times S 
\subset \Union_{j=1}^k \setc{(x,ya_i)}{
x \widesim{\eta p^{-N_i}}_p y}
\end{align*}
for $k \ll p^{N_i}$ and $a_1,\ldots,a_k \in A $. By Linnik's basic lemma (Proposition \ref{prop:CM-Linnik's basic lemma})
\begin{align*}
\sum_{S \in \mathcal{Q}_i, S \subset X_i}\mu_i(S)^2 \ll_\varepsilon p^{-(3-5\varepsilon)N_i}p^{N_i} = p^{(-2+5\varepsilon)N_i}
\end{align*}
for all large enough $i$. Let $C_\varepsilon$ by the implicit constant. Overall, we obtain
\begin{align*}
H_{\mu_i|_{X_i}}(\mathcal{Q}_i) \geq 2\mu_i(X_i)\log(\mu_i(X_i))
-\mu_i(X_i)\log(p)(-2+5\varepsilon)N_i - \mu_i(X_i)\log(C_\varepsilon).
\end{align*}
Observe that only the middle term is unbounded as $\mu_i(X_i)$ is bounded from below. Thus, for $i$ large enough
\begin{align*}
H_{\mu_i|_{X_i}}(\mathcal{Q}_i) \geq \mu_i(X_i)\log(p)(2-6\varepsilon)N_i
\geq (1-2\kappa^{-1}\mu_i(X_{\geq H}))\log(p)(2-6\varepsilon)N_i.
\end{align*}
which concludes the proof of the claim and thus also of the theorem.
\end{proof}

\subsection{Visiting the cusp}\label{section:CM-hecketree}

In this subsection, we provide a proof of Proposition \ref{prop:CM-visits to the cusp} using the geometric picture supplied by the Hecke tree (the Bruhat-Tits tree of $\PGL_2(\Qp)$) and adapting \cite{duke} correspondingly. Notice that under the projection
\begin{align*}
\pi: X \to Y:= \lquot{\PGL_2(\Z)}{\PGL_2(\R)}
\end{align*}
a $\PGL_2(\Qp)$-orbit gets mapped to a set isomorphic to $\rquot{\PGL_2(\Qp)}{\PGL_2(\Zp)}$. This follows from the fact that the action of $\PGL_2(\Qp)$ on $X$ has trivial stabilizers. The quotient $\rquot{\PGL_2(\Qp)}{\PGL_2(\Zp)}$ is equipped with the structure of a $(p+1)$-regular tree; we refer to Section 3.2 in Einsiedler and Ward \cite{aque} for the details. Given
\begin{align*}
\mathcal{N} := \set{\begin{pmatrix}
1 & 0 \\ 
0 & p
\end{pmatrix}, \begin{pmatrix}
p & 0 \\	 
0 & 1
\end{pmatrix},\begin{pmatrix}
p & 1 \\ 
0 & 1
\end{pmatrix},\ldots,\begin{pmatrix}
p & p-1 \\ 
0 & 1
\end{pmatrix} }
\end{align*}
we declare the neighbours of $g\PGL_2(\Zp)$ to be the points $\setc{gh\PGL_2(\Zp)}{h \in \mathcal{N}}$. As is verified in Proposition 3.15 in \cite{aque}, this really imposes the structure of a $(p+1)$-regular tree on $\rquot{\PGL_2(\Qp)}{\PGL_2(\Zp)}$. 
Given a point $y = \PGL_2(\Z)g \in Y$ the image of $\PGL_2(\Zinvp)(g,I)\PGL_2(\Q_p)$ under $\pi$ is called the embedded Hecke tree through $y$ and is equipped with a tree-structure by identification with the tree $\rquot{\PGL_2(\Qp)}{\PGL_2(\Zp)}$.

Given a point $x \in X$, the point $\pi(T(x))$ is always a neighbour of $\pi(x)$ in the Hecke tree by definition of $\mathcal{N}$. Taking the right quotient by $\PO_2(\R)$ of $Y$, the neighbours of $z = \pi(x) \PO_2(\R) \in \lquot{\PSL_2(\Z)}{\mathbb{H}}$ are exactly
\begin{align*}
\PSL_2(\Z)\set{pz, \frac{z}{p},\frac{z+1}{p},\,\ldots\, ,\frac{z+p-1}{p}}.
\end{align*}
If the height of $z$ is large enough (for instance $\height(z) \geq p$), then exactly one of its neighbours (that is, $pz$) is ``above'' $z$ (precisely $\height(pz) = p \height(z)$) and the other neighbours (that is, $\frac{z}{p},\frac{z+1}{p},\ldots,\frac{z+p-1}{p}$) are ``below'' $z$ (precisely of height $\height(z)/p$). Furthermore, the point $T^2(x)$ cannot be equal to $x$ due to the tree structure. We will use this observation as follows.

\begin{remark}\label{rem:CM-orbits that move down once}
Let $H>1$ be large enough, let $x\in X$ be a point with height $\geq H$ and suppose that the height of $T(x)$ is smaller than $x$. Then the point $T^2(x)$ cannot be ``above'' $T(x)$ as it would equal to $x$ in that case and is therefore ``below'' $T(x)$. The only condition we need to impose here, is that all points are above height $1$. In other words, the $T$-orbit of $x$ moves downwards for at least $\lgauss{\log_p(H)}$ time steps (as $\height(T^k(x)) = \height(T^{k-1}(x))/p$ for these $k$) until it ``crosses'' height one. The minimum time to reach height $H$ from height one is also at least $\lgauss{\log_p(H)}$.
\end{remark}

For the proof of Proposition \ref{prop:CM-visits to the cusp} we proceed exactly as in Section 5.1 of \cite{duke} and begin with the second assertion as the proof only depends on the remark above.

\begin{proof}[Proof of the second assertion in Proposition \ref{prop:CM-visits to the cusp}]
Consider the partition
\begin{align*}
\mathcal{P}_{H,N} = \bigvee_{n=-N}^N T^{-n}(\set{X_{<H},X_{\geq H}}).
\end{align*}
Every $V \subset [-N,N]$ with $Z(V) \neq \emptyset$ defines an atom of $\mathcal{P}_{H,N}$ and thus it suffices to prove that $\mathcal{P}_{H,N}$ contains $\ll_H e^{2 \frac{\log(\log(H))}{\log(H)}N} $ atoms. 
Consider first an atom of $\mathcal{P}_{H,\lgauss{\log_p(H)}}$ and a point $x$ in it. 
If for some $n \in \Z$  with $|n| \leq \lgauss{\log_p(H)}$ the point $T^n(x)$ is above height $H$ and $T^{n+1}(x)$ is below height $H$, then the orbit of $x$ stays below height $H$ for all times $>n$ in this interval by Remark \ref{rem:CM-orbits that move down once}. 
Thus, every point can leave $X_{< H}$ at most once. 
In particular, the time interval contains at most one stretch of times for which the orbit of the point can be above $H$. 
Therefore, the starting and the end point of that time interval uniquely determine an atom in $\mathcal{P}_{H,\lgauss{\log_p(H)}}$ and in particular there are $ \leq (2\lgauss{\log_p(H)}+1)^2 $ many atoms in $\mathcal{P}_{H,\lgauss{\log_p(H)}}$. 
The partition $T^{-N}(\mathcal{P}_{H,N})$ is coarser than a refinement over $\ugauss{\frac{2N+1}{2\lgauss{\log_p(H)}+1}}$ many partitions of the kind $T^{-j}(\mathcal{P}_{H,\lgauss{\log_p(H)}}) $. 
Hence $T^{-N}(\mathcal{P}_{H,N})$ (and thus also $\mathcal{P}_{H,N}$) contains at most
\begin{align*}
((2\lgauss{\log_p(H)}+1)^2)^{\ugauss{\frac{2N+1}{2\lgauss{\log_p(H)}+1}}} &\ll_H e^{4N\frac{ \log(2\lgauss{\log_p(H)}+1)}{2\lgauss{\log_p(H)}+1}}
\leq e^{4N \frac{ \log(2\log_p(H))}{2\log_p(H)}}\\
&\leq e^{2\log(p)N \frac{ \log(\log(H)) + \log(2)-\log(\log(p))}{\log(H)}}
\end{align*}
many atoms.
\end{proof}

The main geometric idea for the second assertion of Proposition \ref{prop:CM-visits to the cusp} is the following.

\begin{remark}[Moving up half of the time]\label{rem:CM-reaching maximal height}
Let $x \in X$ be a point for which $T^n(x)$ is below height $H>1$ at some times $n=N,N'$ for $N < N'$ and for which $T^n(x)$ is above height $H$ for all times $n$ with $N<n<N'$. Then the orbit of $x$ is ``moving upwards'' (the first) $50\%$ of the time (in $[N,N']$). This is a consequence of the fact that the ``speed of moving up or down'' is always $p$.

Writing $x = \PGL_2(\Zinvp) (g_\infty,g_p)$ for $(g_\infty,g_p) \in \PGL_2(\R \times \Zp)$ this means that
\begin{align*}
T^k(x) = \PGL_2(\Zinvp) (a^{-k}g_\infty,a^{-k}g_pa^{k})
\end{align*}
projects to the point $p^kg_\infty.\mathrm{i} = a^{-k}g_\infty.\mathrm{i}$ on the complex modular curve $Y_0(1)$ and therefore $a^{-k}g_pa^{k} \in \PGL_2(\R \times \Zp)$ for all $k$ in the first half of the interval $[N,N']$.
\end{remark}

\begin{proof}[Proof of the first assertion in Proposition \ref{prop:CM-visits to the cusp}]
For simplicity we denote the horospherical subgroups $G_a^+,G_a^-$ associated to $a$ by $U^+,U^- < \PGL_2(\Qp)$ respectively (see Example \ref{exp: horospherical subgrps for PGL_2}) and by $U^0:= \PGL_2(\R) \times G_a^0$.

It suffices to show that given a set of times $V \subset [0,N]$ and an open neighbourhood $\mathcal{O}$ of a point $x_0\in X$ of the form $x_0B_{\eta/2}^{U^+}B_{\eta/2}^{U^-U^0}$ the set
\begin{align*}
Z_\mathcal{O}^+(V) = \setc{x \in \mathcal{O}\intersect T^{-N}X_{<H}}{\forall n \in [0,N]: T^n(x) \in X_{\geq H}\iff n \in V }
\end{align*}
can be covered by $\ll p^{N-\frac{1}{2}|V|}$ forward Bowen balls. This follows by compactness of $X_{\leq H}$.
We partition the interval $[0,N]$ as follows. Decompose $V$ into maximal intervals containing consecutive times in $V$. By Remark \ref{rem:CM-orbits that move down once} two such intervals in $V$ have to be separated at least by $2\lgauss{\log_p(H)}$. Therefore we may thicken the above intervals in $V$ on both sides by $\lgauss{\log_p(H)}$ to, obtain disjoint intervals $I_1,\ldots,I_k$ covering $V$. Note that
\begin{align*}
|I_1|+\ldots+|I_k| = 2k\lgauss{\log_p(H)} + |V|.
\end{align*}
By slightly enlarging the interval $[0,N]$ if necessary we may assume that $I_1,\ldots,I_k$ are contained in $[0,N]$. This does not effect the estimate we aim for as the difference in $N$ depends on $H$ only and can thus by taken into the multiplicative constant. We fill the gaps between the intervals $I_i$ with maximal intervals $J_j$, $1\leq j\leq \ell$ so that $[0,N] = I_1 \union \ldots \union I_k \union J_1 \union \ldots\union J_\ell $ and prove the following claim by induction:

\begin{claim*}
For any $K \leq N$ with $[0,K] =  I_1 \union \ldots \union I_i \union J_1 \union \ldots\union J_j $ the set $Z_\mathcal{O}^+(V)$ can be covered by
\begin{align*}
\leq p^{|J_1|+\ldots+|J_j|+ i \lgauss{\log_p(H)} + \frac{1}{2}(|I_1|+\ldots+|I_i|)}
\end{align*}
preimages under $T^K$  of sets of the form 
\begin{align}\label{eq:CM-images of Bowen balls}
T^K(x_0)u^+B_{\eta/2}^{U^+}a^{-K}B_{\eta/2}^{U^-U^0}a^K
\end{align}
where $u^+ \in U^+$.
\end{claim*}

In particular, for $K= N$ this yields that $Z_\mathcal{O}^+(V)$ is covered by
\begin{align*}
\leq p^{|J_1|+\ldots+|J_\ell|+ k \lgauss{\log_p(H)} + \frac{1}{2}(|I_1|+\ldots+|I_k|)}
=  p^{N+ k \lgauss{\log_p(H)} - \frac{1}{2}(|I_1|+\ldots+|I_k|)} = p^{N-\frac{1}{2}|V|}
\end{align*}
sets of the form
\begin{align*}
T^{-N}(T^N(x_0)u^+B_{\eta/2}^{U^+}a^{-N}B_{\eta/2}^{U^-U^0}a^N)
&=  x_0 (a^Nu^+a^{-N})  a^N B_{\eta/2}^{U^+}a^{-N} B_{\eta/2}^{U^-U^0}
\end{align*}
contained in a Bowen $N$-ball. Thus the claim implies the proposition. 

Assume that the claim holds for $K \leq N$. We distinguish two cases:

\textbf{Case 1}: Suppose that $[0,K]$ is followed by $J_{j+1} = [K+1,K+S]$. Taking a set $T^K(x_0)u^+B_{\eta/2}^{U^+}a^{-K}B_{\eta/2}^{U^-U^0}a^K$ obtained in the previous step, its image under $T^S$ splits into $p^{S} = p^{|J_{j+1}|}$ sets of the form \eqref{eq:CM-images of Bowen balls} by properties of $U^+$, thus proving the claim.

\textbf{Case 2}: Suppose that $[0,K]$ is followed by $I_{i+1} = [K+1,K+S]$. Let 
\begin{align*}
R_K:= T^K(x_0)u^+B_{\eta/2}^{U^+}a^{-K}B_{\eta/2}^{U^-U^0}a^K
\end{align*}
be a set of the kind \eqref{eq:CM-images of Bowen balls} obtained in the previous step. As in the last case, we may split the image of $R_K$ into $\leq p^{S}$ sets of the kind \eqref{eq:CM-images of Bowen balls}. In this case, we claim that we may discard some of these sets as we are only interested in points in $y \in R_K$, which satisfy
\begin{align*}
T^n(y) \in X_{\geq H} \iff K + n \in V
\end{align*}
for $1 \leq n \leq S$. Let $y_1,y_2 \in R_K \intersect T^K(Z_\mathcal{O}^+(V))$. Then $y_2 \in y_1B_{\eta/2}^{U^+}a^{-K}B_{\eta}^{U^-U^0}a^K$, say $y_2 = y_1(g_\infty,h^+ h^b)$. We claim that
\begin{align}\label{eq:CM-visitstothecusp-claim2}
h^+ \in B_{p^{-S/2}}^{U^+}.
\end{align}

By definition of $I_{i+1}$, the points $y_1,y_2$ satisfy
\begin{align*}
\height(y_i),\height(y_ia),\, \ldots\, ,\height(y_ia^{\lgauss{\log_p(H)}}) < H \\
\height(y_ia^{\lgauss{\log_p(H)}+1}),\, \ldots\, ,\height(y_ia^{S-\lgauss{\log_p(H)}}) \geq  H \\
\height(y_ia^{S-\lgauss{\log_p(H)}+1}),\, \ldots\, ,\height(y_ia^{S}) < H
\end{align*}
In particular, $y_1$ and $y_2$ move upwards during the first $\frac{S}{2}$-time steps by Remark~\ref{rem:CM-reaching maximal height} and therefore  $a^{-j}h^+a^j \in \PGL_2(\Zp)$ for all $j \in [0,S/2]$ also by Remark \ref{rem:CM-reaching maximal height}. Conjugation by $a^{-1}$ stretches $h^+$ by a factor of $p$; if the size of $h^+$ is $\leq 1$ after $S/2$ conjugations, we must have had $h^+ \in B_{p^{-S/2}}^{U^+}$ initially. This concludes the claim made in \eqref{eq:CM-visitstothecusp-claim2}.

If $R_K \intersect T^K(Z_\mathcal{O}^+(V))$ is empty, there is nothing to do.
Otherwise, choose a point of reference $y \in R_K \intersect T^K(Z_\mathcal{O}^+(V))$. 
The claim above implies that $R_K \intersect T^K(Z_\mathcal{O}^+(V))$ is contained in $yB_{p^{-S/2}}^{U^+}a^{-K}B_{\eta/2}^{U^-U^0}a^K$. 
The image of this set under $T^S$ is covered by $\ll p^{-S/2}p^S$ sets of the form \eqref{eq:CM-images of Bowen balls} for $K+S$ as the ball $B_{p^{S/2}}^{U^+}$ is a disjoint union of $\ll p^{S/2}$ translates of the ball $B_{\eta/2}^{U^+}$.
\end{proof}
\section{Equidistribution of large subcollections}

In this short section we would like to explain how the proof of Theorem \ref{thm:main} generalizes to show equidistribution of very large subcollections with invariance.
We continue using the notation of Section \ref{section:formulation theorem} and begin with an important example.

\subsection{Equidistribution of squares}\label{section:squares}

In analogy to Section \ref{section:acting tori - general} we define for a pure vector $v \in \mathcal{O}$ the $\Q$-torus
\begin{align*}
\torus_\Fa^{(1)} = \setc{g \in \G^{(1)}}{g.v = v}.
\end{align*}
We then have the following variant of Theorem \ref{thm:main}.

\begin{theorem}\label{thm:simply connected version}
Let $p$ be an odd prime and 
let $(v_\ell)_\ell$ be a sequence of primitive vectors in $\mathcal{O}$, which is admissible at $p$. 
For any $\ell$ we choose $g_{\ell,\infty} \in \G^{(1)}(\R)$ such that $g_{\ell,\infty}K_\infty g_{\ell,\infty}^{-1} = \torus_{v_{\ell}}^{(1)}(\R)$ where $K_\infty$ is any choice of a proper maximal compact subgroup of $\G^{(1)}(\R)$.
Let $\mu_{v_\ell}$ be the normalized Haar measure on the packet $\G(\Q)\torus_{\Fa_\ell}^{(1)}(\adele)g_{\ell,\infty}$.
Then as $\ell\to\infty$ the measures $\mu_{v_\ell}$ converge to the Haar measure on $\lquot{\G^{(1)}(\Q)}{\G^{(1)}(\adele)}$
in the \wstar-topology.
\end{theorem}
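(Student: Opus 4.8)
The plan is to run the proof of Theorem~\ref{thm:main} on the homogeneous space $Y_\adele := \lquot{\G^{(1)}(\Q)}{\G^{(1)}(\adele)}$, with the simply connected group $\G^{(1)}$ in place of $\G$. First I would, exactly as in Section~\ref{section:maximal entropy}, conjugate the packets by elements $\alpha_\ell\in\G^{(1)}(\Zp)$ so that $\torus_{v_\ell}^{(1)}(\Qp)$ becomes the diagonal torus of $\SL_2(\Qp)$ under the local isomorphism $\G^{(1)}(\Qp)\simeq\SL_2(\Qp)$ afforded by Linnik's condition (the restriction to norm-one elements of the isomorphism $f_v$ of Lemma~\ref{lemma:on congruence condition}), shift at infinity as in Section~\ref{section:mainthm-packets}, and call the resulting packet measures $\mu_\ell$. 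These are invariant under the diagonalizable element $a'\in\G^{(1)}(\Qp)$ corresponding to $\operatorname{diag}(p,p^{-1})\in\SL_2(\Qp)$. Since $\Ad_{a'}$ acts by $p^{2}$ and $p^{-2}$ on the two root spaces of $\mathfrak{g}^{(1)}\simeq\mathfrak{sl}_2$, the maximal (and, in the anisotropic case, topological) entropy of $a'$ equals $-\log|\det(\Ad_{a'}|_{\mathfrak{g}_{a'}^-})|=2\log p$; this is the target value for the limit measures.

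Second, I would show that any \wstar-limit $\mu$ of the $\mu_\ell$ has entropy at least $2\log p$ for $a'$, by reproducing the argument in the proof of Theorem~\ref{thm: Maximal entropy}. The two number-theoretic inputs carry over. The conjugation action of $\G^{(1)}$ on $\pure\quat$ factors through its image in $\G$, so the integer points produced by a $\torus_{v_\ell}^{(1)}$-orbit (generated as in Section~\ref{section:generating intpts}) are again primitive pure elements of norm $d_\ell=\Nr(v_\ell)$ lying in the finitely many maximal orders; hence Linnik's basic lemma — Proposition~\ref{prop: Linnik's basic lemma} when $\quat$ is not split, Proposition~\ref{prop:CM-Linnik's basic lemma} when $\quat=\Mat_2$ — applies with the same bound $\ll_\varepsilon\delta^3 d_\ell^\varepsilon$. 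The lower bound $\vol\bigl(\G^{(1)}(\Q)\torus_{v_\ell}^{(1)}(\adele)\bigr)\geq d_\ell^{1/2-o(1)}$ (used inside the proof of Linnik's basic lemma, cf.~Proposition~\ref{prop:total volume}) also holds: via the norm sequence $1\to\torus_{v_\ell}^{(1)}\to\operatorname{Res}_{K/\Q}\G_m\to\G_m\to1$ with $K=\Q(\sqrt{-d_\ell})$, the class number of $\torus_{v_\ell}^{(1)}$ differs from $|\Cl(R_{d_\ell})|$ only by a factor of size $d_\ell^{o(1)}$ (controlled by the $2$-torsion of the class group), and $|\Cl(R_{d_\ell})|=d_\ell^{1/2+o(1)}$ by Siegel. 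Granting this, the partition argument of the proof of Theorem~\ref{thm: Maximal entropy} goes through verbatim, except that the Bowen $N$-balls now split into $p^{2N}$ torus-translates and Linnik's lemma is applied with $\delta\asymp p^{-2(s+N)}$, which imposes the constraint $N\leq\tfrac18\log_p(d_\ell)$; the resulting estimate gives $\tfrac1nH_\mu(\mathcal{P}_0^{n-1})\geq2\log p$ for every $n$, i.e.\ $\mu$ has maximal entropy.

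Third, the passage from maximal entropy to full invariance is cleaner than in the $\G$-case. Theorem~\ref{thm: uniqueness of measures of max entropy}, applied with $\Gamma=\G^{(1)}(\Q)$, $X=Y_\adele$ and $a=a'$, shows that $\mu$ is invariant under both horospherical subgroups $\G^{(1)}(\Qp)_{a'}^{\pm}$, and since $\SL_2(\Qp)$ is generated by its unipotents, $\mu$ is $\G^{(1)}(\Qp)$-invariant. As $Y_\adele$ is a single $\G^{(1)}(\adele)$-orbit, right-$\G^{(1)}(\Qp)$-invariant finite measures on $Y_\adele$ correspond to left-$\G^{(1)}(\Q)$-invariant locally finite measures on $\rquot{\G^{(1)}(\adele)}{\G^{(1)}(\Qp)}\simeq\G^{(1)}(\adele_{\set{p}})$, and strong approximation for the simply connected group $\G^{(1)}$ — applicable because $\G^{(1)}(\Qp)$ is noncompact, $\quat$ being split at $p$ (cf.~\cite[Thm.~7.12]{platonov}) — makes $\G^{(1)}(\Q)$ dense in $\G^{(1)}(\adele_{\set{p}})$; hence such a measure is left-$\G^{(1)}(\adele_{\set{p}})$-invariant and therefore a Haar measure. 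Thus $\mu$ is the normalized Haar measure on $Y_\adele$ once we know it is a probability measure, which holds by non-escape of mass: $Y_\adele$ is compact when $\quat$ is not split, and when $\quat=\Mat_2$ one first rules out escape of mass by the Hecke-tree cusp-excursion analysis of Section~\ref{section:CM-hecketree}, carried out for $a'$ (where vertical motion in the tree has speed $p^2$), and then applies the entropy argument above. Since every \wstar-limit is the normalized Haar measure, the full sequence $\mu_\ell$ converges to it.

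The step I expect to be the main obstacle is the second one: faithfully transcribing the production-of-integer-points construction and the volume estimate into the $\G^{(1)}$-setting — in particular establishing the lower bound $d_\ell^{1/2-o(1)}$ for the class number of $\torus_{v_\ell}^{(1)}$, and accounting for the possibly larger class number of $\G^{(1)}$ itself (which changes only the number of connected components of $Y_\adele$, and hence affects all estimates by bounded multiplicative constants) — and, in the $\Mat_2$-case, redoing the cusp analysis of Section~\ref{section:CM-hecketree} for $a'$. These are routine but somewhat tedious adaptations of Sections~\ref{section:cocompact} and~\ref{section:Equidistribution of CM points}.
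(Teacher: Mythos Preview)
Your proposal is correct and follows essentially the same approach as the paper: rerun the proof of Theorem~\ref{thm:main} on $\G^{(1)}$, with the decisive new input being the volume bound $\vol\bigl(\G^{(1)}(\Q)\torus_{v_\ell}^{(1)}(\adele)\bigr)=d_\ell^{1/2+o(1)}$, which both you and the paper reduce to the genus-theory estimate $|\Cl(R_d)[2]|=|d|^{o(1)}$. The paper reaches this reduction via the squaring map $\torus_v^{(1)}\to\torus_v$ (Lemma~\ref{lemma:SLvsPGL} and Corollary~\ref{cor:SL_2 gives squares}) rather than your norm exact sequence, but the two are equivalent; note also that your concern about the class number of $\G^{(1)}$ is moot, since strong approximation for the simply connected group (isotropic at $p$) forces it to be one.
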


The proof of Theorem \ref{thm:simply connected version} is along the lines of the proof of Theorem \ref{thm:main}.
For instance, the procedure of generating additional integer points (ideals) as in Section~\ref{section:generating intpts} (see also Lemma \ref{lemma:CM-generating ideals}) can be applied in the same fashion and the proof of Linnik's basic lemma is analogous.

The crucial point is to show that enough points are generated or equivalently that
\begin{align}\label{eq:volume for SL_2}
\vol(\G(\Q)\torus_{v_\ell}^{(1)}(\adele)) = \Nr(v_\ell)^{\frac{1}{2}+o(1)}
\end{align}
as from here Linnik's basic lemma (or the estimates for the mass in the cusp) follow analogously.
We prove \eqref{eq:volume for SL_2} in the next section.
The reader is advised to first read the proof in the case $\quat = \Mat_2$ (see also Proposition \ref{prop:CM-number of orbits}).

\subsubsection{Squaring in the Picard group}
Let $v \in \pure{\mathcal{O}}$ be a primitive vector of norm $D>0$.
In order to prove \eqref{eq:volume for SL_2} we need to show that the abelian group
\begin{align*}
\lrquot{\torus_v^{(1)}(\Q)}{\torus_v^{(1)}(\adele_f)}{\torus_v^{(1)}(\widehat{\Z})}
\end{align*}
has size $D^{\frac{1}{2}+o(1)}$.
For this estimate we will explicitly realize this group as a subset (and in fact subgroup) of the group
\begin{align*}
\lrquot{\torus_v(\Q)}{\torus_v(\adele_f)}{\torus_v(\widehat{\Z})}
\end{align*}
of which we already know the desired size estimate (cf.~Proposition \ref{prop:total volume}).
To do so, we will use the following general statement.

\begin{lemma}[Squaring]\label{lemma:SLvsPGL}
Let $\quat$ be a quaternion algebra over $\Q$ and define $\G,\G^{(1)}$ as in Section \ref{section:acting groups}.
Let $K \hookrightarrow \quat$ be an embedding of an imaginary quadratic field and let $\torus^{(1)} < \G^{(1)}$ and $\torus < \G$ be the respective centralizers.
Then the homomorphism $\torus^{(1)} \to \torus$ defined over $\Q$ has kernel $\set{\pm 1}$ and the image of $\torus^{(1)}(L)$ is exactly the set of squares in $\torus(L)$ for any field $L$ of characteristic zero.
\end{lemma}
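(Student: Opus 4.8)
The plan is to work with the concrete description of the tori as centralizers. Since $K \hookrightarrow \quat$ embeds an imaginary quadratic field, the torus $\torus = \torus_v$ satisfies $\torus(L) = \overline{\psi}(K^\times \otimes L)$ over any $\Q$-algebra $L$ (this is the content of Claim~\ref{claim:Q-points torus} in the split case, and the analogous statement holds for any $\quat$ since $\torus$ is the image in $\BG$ of the multiplicative group of the quadratic subalgebra $K$). More precisely, write $E = K \otimes_\Q \quat$'s centralizer, i.e. $E(L) = (K\otimes L)^\times$; then $\torus = E/\G_m$ where $\G_m$ is the center embedded diagonally via scalars in $K$. Meanwhile $\torus^{(1)}(L) = \{x \in (K\otimes L)^\times : \Nr(x) = 1\}$, the norm-one elements, sitting inside $\G^{(1)}$. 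The map $\torus^{(1)} \to \torus$ is the restriction of $E^{(1)} \to E/\G_m$; its kernel is $E^{(1)} \cap \G_m = \{\pm 1\}$ since the only norm-one scalars in $K$ are $\pm 1$ (here $\Nr$ restricted to scalars $\lambda$ is $\lambda^2$). This establishes the first assertion.

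**Identifying the image with squares.** For the second assertion, I would argue on the level of $L$-points using Galois cohomology, exactly as in the short exact sequence $1 \to \set{\pm1} \to \torus^{(1)} \to \torus \to \G_m/(\G_m)^2 \to 1$? — no, cleaner: consider the commutative diagram of $\Q$-groups with exact rows
\begin{align*}
1 \to \set{\pm 1} \to \torus^{(1)} \to \torus \to 1
\end{align*}
where the right map is surjective as a map of algebraic groups (both are one-dimensional tori, the map is non-constant, hence surjective on $\overline{\Q}$-points). Taking $L$-points and the associated long exact sequence in $\mathrm{H}^*(\mathrm{Gal}(\overline{L}/L), -)$ gives
\begin{align*}
\torus^{(1)}(L) \to \torus(L) \to \mathrm{H}^1(L, \set{\pm 1}) = L^\times/(L^\times)^2,
\end{align*}
so the image of $\torus^{(1)}(L)$ in $\torus(L)$ is the kernel of the connecting map $\delta: \torus(L) \to L^\times/(L^\times)^2$. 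It remains to compute $\delta$ explicitly and identify this kernel with $\torus(L)^2$. Concretely, given $t \in \torus(L) = (K\otimes L)^\times/L^\times$, lift it to $\tilde t \in (K\otimes L)^\times$; then $\delta(t) = \Nr_{K/\Q}(\tilde t) \bmod (L^\times)^2$ (the norm being a well-defined element of $L^\times$, changing by squares under rescaling the lift). So the image of $\torus^{(1)}(L)$ is exactly $\{t : \Nr(\tilde t) \in (L^\times)^2\}$. Then I would check directly that this set coincides with $\torus(L)^2$: a square $s^2$ has $\Nr(\widetilde{s^2}) = \Nr(\tilde s)^2 \in (L^\times)^2$, giving one inclusion; conversely if $\Nr(\tilde t) = c^2$, then $\tilde t / c$ has norm $1$ when... hmm, $\Nr(\tilde t/c) = \Nr(\tilde t)/c^2 = 1$, so $\tilde t/c \in \torus^{(1)}(L)$ and $t = \overline{\tilde t/c}$ is literally in the image — wait, this shows the image is \emph{everything with square norm}, and I still need that this equals the squares. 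Actually the cleanest finish: the image of $\torus^{(1)}(L) \to \torus(L)$ contains $\torus(L)^2$ (since $\torus^{(1)} \to \torus$ composed with squaring... no). Let me instead just prove $\{t : \Nr(\tilde t) \in (L^\times)^2\} = \torus(L)^2$ by a diagram chase: the squaring map $\torus \xrightarrow{2} \torus$ factors as $\torus \to \torus^{(1)} \to \torus$ where the first map sends $t \mapsto \tilde t / \overline{\tilde t}$ (which has norm $\Nr(\tilde t)/\Nr(\overline{\tilde t}) = 1$, well-defined), and the composite is $t \mapsto \tilde t^2/\Nr(\tilde t)$ which in $\torus(L)$ equals $t^2$. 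Hence $\torus(L)^2 \subseteq \mathrm{image}$. The reverse inclusion is the norm computation above combined with: if $t$ is in the image, $t = \overline{u}$ for $u \in \torus^{(1)}(L)$, and by Hilbert 90 for the norm torus (or directly) $u = v/\overline{v}$ for some $v \in (K\otimes L)^\times$, whence $t = \overline{v}^{\,2} \cdot (\text{norm-one scalar correction})$... I would pin down this last step carefully.

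**Expected main obstacle.** The genuinely delicate point is the final identification of the image with the \emph{squares} (as opposed to merely "elements of square norm" or "kernel of a connecting homomorphism") over an arbitrary field $L$ of characteristic zero — this requires Hilbert's Theorem 90 applied to the torus $\mathrm{Res}_{K/\Q}\G_m / \G_m$ or to the norm-one torus, and I need to make sure the comparison between "$u = v/\overline v$" presentations and genuine squares in $\torus(L)$ is airtight, including keeping track of the $\set{\pm1}$ ambiguity. Everything else (the kernel being $\set{\pm 1}$, the surjectivity of $\torus^{(1)} \to \torus$ as algebraic groups, the explicit form of the connecting map) is routine. I would organize the write-up so that the Galois-cohomology exact sequence does the heavy lifting and the squaring-factorization $\torus \to \torus^{(1)} \to \torus$ gives both inclusions cleanly.
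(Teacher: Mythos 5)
Your approach is correct and genuinely differs in framing from the paper's, though both proofs ultimately rest on the same key input (Hilbert's Theorem~90). The paper argues entirely on the level of elements: it shows the kernel is $\{\pm 1\}$ as you do, shows squares are in the image via the clean identity $s^2 = $ image of $s^2/\Nr(s)$, and shows the image consists of squares by splitting into two cases (whether $K\otimes L$ is a field or isomorphic to $L\oplus L$), applying classical Hilbert~90 in the first case and doing a direct matrix computation with $\operatorname{diag}(\lambda,\lambda^{-1})\mapsto\operatorname{diag}(\lambda^2,1)$ in the second. Your Galois-cohomology framing via the exact sequence $1\to\{\pm 1\}\to\torus^{(1)}\to\torus\to 1$ buys some conceptual clarity: it identifies the image as the kernel of a connecting map $\delta:\torus(L)\to L^\times/(L^\times)^2$ whose explicit form you compute correctly (class of $\Nr(\tilde t)$), and your factorization $\torus\to\torus^{(1)}\to\torus$ of the squaring map, $t\mapsto\tilde t/\overline{\tilde t}\mapsto t^2$, is a nice uniform way to see $\torus(L)^2\subseteq$ image. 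However, you correctly sense that the cohomology sequence alone does not deliver the hard inclusion (image $\subseteq\torus(L)^2$); it merely re-describes the image as ``elements of square norm,'' and you still need to show that a norm-one element $u$ gives a square $\overline{u}$ in $\torus(L)$. This is exactly the paper's Hilbert~90 step: if $u=v/\overline{v}$ then $u=v^2/\Nr(v)$, whose class in $\torus(L)$ is $\overline{v}^{\,2}$ with no further ``scalar correction'' needed, so that hesitation in your write-up dissolves once you carry out the computation. The one genuine gap you should fill to make the argument airtight: justify Hilbert~90 for $(K\otimes L)^\times$ also when $K\otimes L\cong L\oplus L$ is not a field. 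This follows from Shapiro's lemma (the module is induced from the trivial group, so $H^1$ vanishes), or can be checked directly as in the paper's Case~2, but it is not the literal classical Hilbert~90 for a field extension and deserves a sentence.
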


\begin{proof}
If $t \in \torus^{(1)}$ has trivial image in $\torus$, then $t$ is in the center of $\quat$ i.e.~a scalar.
By the norm assumption on $t$ we have $t^2 = 1$ proving the claim about the kernel.

If $s^2 \in \torus(L)$ is a square, then $t = \frac{1}{\Nr(s)}s^2 \in \torus^{(1)}(L)$ maps to $s^2$. It remains to show that the image of any $t \in \torus^{(1)}(L)$ is a square, for which we distinguish two cases.

Suppose that $L' = K \otimes L$ is a field. In particular, $L'/L$ is quadratic extension.
Note that $\torus^{(1)}(L)$ is exactly the set of points in the image under $L \hookrightarrow \quat(L)$ which have norm one (and similarly for $\torus$).
If now $t \in \torus^{(1)}(L)$ then Hilbert's Theorem 90 applied to the norm one element in $L'$ corresponding to $t$ (for the relative norm of $L'/L$) yields that there exists $s \in L^\times$ for which
\begin{align*}
t = \frac{s}{\overline{s}} = s^2 \Nr(s)^{-2}.
\end{align*}
Viewing $s$ as an element of $\quat(L)$ the image of $t$ is thus equal to the image of $s^2$ proving the statement in this case.

Suppose now that $L' = K \otimes L$ is not a field i.e.~that $L' \cong L \oplus L$ as an $L$-algebra.
In particular, $\quat(L) $ is not a division algebra and hence we can identify $\quat(L) \cong \Mat_2(L)$.
Furthermore, $\torus^{(1)}$ is split over $L$ and we may replace $\torus^{(1)}(L)$ after conjugation with an element in $\quat^\times(L)$ by
\begin{align*}
\widetilde{\torus^{(1)}}(L) = \left\lbrace \operatorname{diag}(\lambda,\lambda^{-1}):\lambda \in L\setminus\set{0} \right\rbrace < \SL_2(L) = \G^{(1)}(L).
\end{align*}
The image of $\operatorname{diag}(\lambda,\lambda^{-1}) \in \tilde{\torus^{(1)}}(L)$ under the map $\SL_2(L) \to \PGL_2(L)$ is equal to $\operatorname{diag}(\lambda^2,1)$ and in particular a square.
\end{proof}

\begin{corollary}\label{cor:SL_2 gives squares}
Let $v \in \pure{\mathcal{O}}$ be a primitive vector.
Then the map
\begin{align*}
\lrquot{\torus_v^{(1)}(\Q)}{\torus_v^{(1)}(\adele_f)}{\torus_v^{(1)}(\widehat{\Z})} 
\to \lrquot{\torus_v(\Q)}{\torus_v(\adele_f)}{\torus_v(\widehat{\Z})}
\end{align*}
is injective and its image is the set of squares.
\end{corollary}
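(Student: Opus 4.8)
The plan is to deduce the corollary directly from Lemma \ref{lemma:SLvsPGL} applied place by place, using the exactness properties of the relevant double quotients. First I would recall that the map $\torus_v^{(1)} \to \torus_v$ is a morphism of $\Q$-tori with kernel $\set{\pm 1}$, and that by Lemma \ref{lemma:SLvsPGL} the image of $\torus_v^{(1)}(L)$ in $\torus_v(L)$ is precisely the subgroup of squares $\torus_v(L)^2$, for every field $L$ of characteristic zero. In particular this applies to $L = \Q$ and to $L = \Q_q$ for each prime $q$, and hence (taking restricted products) to the adelic points: the image of $\torus_v^{(1)}(\adele_f)$ in $\torus_v(\adele_f)$ is the group of squares $\torus_v(\adele_f)^2$. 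Similarly, since $\torus_v^{(1)}(\widehat{\Z}) = \torus_v^{(1)}(\adele_f)\cap \G(\widehat{\Z})$ maps into $\torus_v(\widehat{\Z})$, I would check that the image of $\torus_v^{(1)}(\Z_q)$ in $\torus_v(\Z_q)$ is exactly $\torus_v(\Z_q)^2$ — this is again essentially the split/nonsplit dichotomy in the proof of Lemma \ref{lemma:SLvsPGL}, carried out over $\Z_q$, using that in the split case the torus over $\Z_q$ is $\operatorname{diag}(\lambda,\lambda^{-1})$ with $\lambda\in\Z_q^\times$ and squaring is surjective onto $\torus_v(\Z_q)^2$ by definition, and in the nonsplit case Hilbert 90 for the unramified (resp.\ ramified) quadratic extension of $\Z_q$ still produces the required $s$.

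Granting these local statements, the proof becomes a short diagram chase. Write $T = \torus_v$, $T^{(1)} = \torus_v^{(1)}$, and consider the map
\begin{align*}
\Phi: \lrquot{T^{(1)}(\Q)}{T^{(1)}(\adele_f)}{T^{(1)}(\widehat{\Z})} \to \lrquot{T(\Q)}{T(\adele_f)}{T(\widehat{\Z})}.
\end{align*}
For surjectivity onto the squares: given $x \in T(\adele_f)$, the element $x^2$ lifts to $\frac{1}{\Nr}x^2 \in T^{(1)}(\adele_f)$ by the computation in Lemma \ref{lemma:SLvsPGL}, so the class of $x^2$ is in the image of $\Phi$; conversely any class in the image of $\Phi$ is represented by a square by the local description of the image above. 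For injectivity: suppose $h \in T^{(1)}(\adele_f)$ has image $\gamma g_{\cpt}$ with $\gamma \in T(\Q)$, $g_{\cpt}\in T(\widehat{\Z})$. I would want to lift $\gamma$ and $g_{\cpt}$ back along the isogeny. Here one uses that $T(\Q)^2 \subset$ image of $T^{(1)}(\Q)$ and $T(\widehat{\Z})^2\subset$ image of $T^{(1)}(\widehat{\Z})$ — but the image of $h$ is literally a single element, not a square, so one must instead argue that the element $\gamma g_{\cpt}$, being in the image of the isogeny on adelic points, is a product $\gamma' g_{\cpt}'$ with $\gamma'\in T(\Q)$ in the image of $T^{(1)}(\Q)$ and $g_{\cpt}'\in T(\widehat{\Z})$ in the image of $T^{(1)}(\widehat{\Z})$, using that $T^{(1)} \to T$ is surjective on $\Q$-points (Lemma \ref{lemma:SLvsPGL} with $L = \Q$, noting every element of $T(\Q)$ lies in some $T(\Q_q)$-image, or rather: the statement over $\Q$ already says the image is $T(\Q)^2$, so this needs care). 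The cleanest route is to note $\Phi$ is a homomorphism of finite abelian groups and identify its kernel: $h$ lies in the kernel iff its image in $T(\adele_f)$ lies in $T(\Q)T(\widehat{\Z})$, and then a lifting argument plus the fact that the kernel of $T^{(1)}\to T$ is $\set{\pm1}\subset T^{(1)}(\Q)\cap T^{(1)}(\widehat{\Z})$ forces $h \in T^{(1)}(\Q)T^{(1)}(\widehat{\Z})$.

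The main obstacle I anticipate is precisely this injectivity argument: one is comparing two double quotients along an isogeny with nontrivial ($\set{\pm1}$) kernel, and the naive expectation that such maps are injective on double cosets can fail when $H^1$ obstructions intervene. The key point making it work here is that the kernel $\set{\pm1}$ is contained in $T^{(1)}(\Q) \cap T^{(1)}(\widehat{\Z})$ (it is a $\Q$-rational subgroup scheme contained in every local integral structure), so the connecting maps in the relevant cohomology sequences are controlled. Concretely I would spell out: if $h\in T^{(1)}(\adele_f)$ maps to $\gamma g_{\cpt}$, write $\gamma = \overline{\psi}(\beta)$ for $\beta\in K^\times$ and lift to $\beta^{1/2}$-type elements — but more robustly, use that surjectivity of $T^{(1)}\to T$ on $\Q$-points and on $\widehat{\Z}$-points (which I established locally above, since over each $\Q_q$ and $\Z_q$ the image is all of $T(\Q_q)^2$ resp.\ $T(\Z_q)^2$, and one checks $T(\Q) \cap (\text{image on adeles}) = \text{image on }T^{(1)}(\Q)$) lets one replace $\gamma g_{\cpt}$ by a genuine lift, after which $h$ differs from that lift by an element of $\ker = \set{\pm1}$, hence $h \in T^{(1)}(\Q)T^{(1)}(\widehat{\Z})$ and its class is trivial. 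I would present the local lifting lemmas first, then the two-line surjectivity argument, then the kernel computation, keeping the cohomological bookkeeping minimal since the kernel $\set{\pm1}$ is so small.
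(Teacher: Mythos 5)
Your surjectivity discussion matches the paper's: by Lemma \ref{lemma:SLvsPGL} the image of $\torus_v^{(1)}$ over each field $L$ (and, with the same reasoning, over $\Zp$) is the set of squares in $\torus_v(L)$, and this passes to the double quotient; the paper records exactly this, together with the remark about choosing preimages of squares in $\torus_v^{(1)}(\Zp)$.

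The injectivity argument, however, has a genuine gap, and it is one you flag yourself without closing. You want: if $t\in\torus_v^{(1)}(\adele_f)$ maps into $\torus_v(\Q)\torus_v(\widehat{\Z})$, then $t\in\torus_v^{(1)}(\Q)\torus_v^{(1)}(\widehat{\Z})$. Your route is a local-to-global lifting along the isogeny $\torus_v^{(1)}\to\torus_v$ with kernel $\set{\pm1}$; the unproven step buried in your outline is that a decomposition $\bar t=\gamma g_{\cpt}$ with $\gamma\in\torus_v(\Q)$, $g_{\cpt}\in\torus_v(\widehat{\Z})$ and $\bar t$ in the image can be replaced by one whose factors lift individually. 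You appeal to ``surjectivity of $T^{(1)}\to T$ on $\Q$-points,'' then correctly retract it (the image over $\Q$ is only the squares, as you note), and the subsequent kernel discussion does not close the circle: the smallness of the kernel is not by itself the right mechanism. What your outline never uses is the ingredient the paper's proof actually hinges on, namely positivity of the norm on the centralizer of $v$. The paper does not argue at the level of $\torus_v$ and its cohomology; it lifts the decomposition of $\bar t$ to $\quat^\times$: write $t=\lambda t_1t_2$ with $t_1\in\quat^\times(\Q)$, $t_2\in\quat^\times(\widehat{\Z})$ commuting with $v$ and $\lambda\in\adele_f^\times$, absorb $\lambda$ into $t_1$ and $t_2$ using class number one of $\G_m$, deduce from $\Nr(t)=1$ that $\Nr(t_1)=\Nr(t_2)^{-1}\in\Q^\times\cap\widehat{\Z}^\times=\set{\pm1}$, and then observe that $t_1$ lies in $\Q\cdot 1+\Q\cdot v$, on which $\Nr(a+bv)=a^2+Db^2$ is positive definite because $D=\Nr(v)>0$. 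This forces $\Nr(t_1)=1$, whence $t_1\in\torus_v^{(1)}(\Q)$ and $t_2\in\torus_v^{(1)}(\widehat{\Z})$. Without the positivity input your argument cannot exclude $\Nr(t_1)=-1$, and injectivity is genuinely in doubt.
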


\begin{proof}
Notice first that the above map is indeed well-defined.
If $t \in \torus_v^{(1)}(\adele_f)$ has trivial image, we may write $ t = \lambda t_1t_2$ where $\lambda \in \adele_f^\times$ and where $t_1\in \quat^\times(\Q)$ and $t_2 \in \quat^\times(\widehat{\Z})$ stabilize $v$.
As $\G_m$ has class number one over $\Q$, we may write $\lambda = \lambda_1 \lambda_2$ where $\lambda_1 \in \Q^\times$ and $\lambda_2 \in \widehat{\Z}^\times$.
We may thus replace $t_1$ by $\lambda_1t_1$ and $t_2$ by $\lambda_2t_2$ and assume that $t = t_1t_2$.
As $\Nr(t) = 1$ we have $\Nr(t_1) = \Nr(t_2^{-1}) \in \Q^\times \cap \widehat{\Z}^\times = \Z^\times$.
Notice that $t_1$ is an element of the $\Q$-linear span of $1$ and $v$ and that the norm $\Nr$ restricted to this subspace is positive definite as $d>0$. Therefore, $\Nr(t_1)=1$ from which $\Nr(t_2) =1$ and the desired injectivity readily follow.

Lemma \ref{lemma:SLvsPGL} now shows that the image of the map is indeed the set of squares.
Here we implicitly used that for any square $t^2 \in \torus_v(\Zp)$ the preimage can be chosen in $\torus_v^{(1)}(\Zp)$ as is apparent from the proof of Lemma \ref{lemma:SLvsPGL}.
\end{proof}

\begin{proof}[Proof of \eqref{eq:volume for SL_2}]
Let $v \in \pure{\mathcal{O}}$ be a primitive vector of norm $D>0$. By Corollary~\ref{cor:SL_2 gives squares} the class number $|\lrquot{\torus_v^{(1)}(\Q)}{\torus_v^{(1)}(\adele_f)}{\torus_v^{(1)}(\widehat{\Z})}|$ is equal to the cardinality of the set of squares in the abelian group $H = \lrquot{\torus_v(\Q)}{\torus_v(\adele_f)}{\torus_v(\widehat{\Z})}$.
We therefore need to show that the $2$-torsion $H[2] = \setc{h\in H}{h^2 = 1}$ of $H$ satisfies $|H[2]| = D^{o(1)}$.

Define as in Section \ref{section:optimal embeddings} a discriminant $d<0$ via $d= -D$ if $D \equiv 3 \mod 4$ and $d = -4D$ if $D\equiv 0,1,2 \mod 4$.
It is shown as in \cite[Sec.~6.2]{Linnikthmexpander} that the optimal embedding
$\iota_v:K = \Q(\sqrt{d}) \to \quat(\Q)$ induced by $v$ yields a surjective map
$\Cl(R_d) \to H$ whose kernel depends only on the congruence properties of $d$ at the ramified primes of $\quat$.
Thus, $|H[2]|\ll |\Cl(R_d)[2]|\cdot|d|^{o(1)}$.

Recall (cf.~\cite[Sec.~14.4]{Cassels} or \cite[Prop.~3.11]{coxprimesoftheform}) that the $2$-torsion $\Cl(R_d)[2]$ of the Picard group $\Cl(R_d)$ has cardinality $\ll 2^{\omega}$ where $\omega$ is the number of distinct odd prime divisors of $d$.
Since $2^\omega$ is bounded by the value of the divisor function at $d$, we have $|\Cl(R_d)[2]| \ll_\varepsilon |d|^\varepsilon$ (cf.~Example \ref{exp: estimate divisor function}).
\end{proof}

\subsection{Large subcollections}

We now aim at formulating a theorem about subcollections of the packets as in Theorem \ref{thm:main}.

A \emph{subcollection} of a packet $\G(\Q)\torus_v(\adele)g_\infty$ for a pure vector $v \in \mathcal{O}$ and $g_\infty \in \G(\R)$ is a $g_\infty^{-1}\torus_v(\R \times \widehat{\Z})g_\infty$-invariant subset $\mathcal{S} \subset \G(\Q)\torus_v(\adele)g_\infty$.
As the $g_\infty^{-1}\torus_v(\R \times \widehat{\Z})g_\infty$-orbits in $\G(\Q)\torus_v(\adele)g_\infty$ (of which there are finitely many) correspond to points in the finite abelian group
\begin{align*}
\lrquot{\torus_v(\Q)}{\torus_v(\adele)}{\torus_v(\R\times\widehat{\Z})}
\simeq \lrquot{\torus_v(\Q)}{\torus_v(\adele_f)}{\torus_v(\widehat{\Z})},
\end{align*}
subcollections correspond to subsets of this group.
We shall call a subcollection \emph{large} if $\vol(\mathcal{S}) = \Nr(\tilde{v})^{\frac{1}{2}+o(1)}$ where $\tilde{v}$ is a primitive vector in $\Q v \cap \mathcal{O}$.

\begin{theorem}[On large subcollections]\label{thm:subcollections}
Let $p$ be an odd prime and let $(v_\ell)$ be an admissible sequence of primitive vectors in $\pure{\mathcal{O}}$.
For any $\ell$ choose $g_{\ell,\infty}\in \G(\R)$ with $\torus_{v_\ell}(\R) = g_{\ell,\infty} K_\infty g_{\ell,\infty}^{-1}$ where $K_\infty$ is any choice of a proper maximal compact subgroup of $\G(\R)$.

We choose for all $\ell$ a large subcollection $\mathcal{S}_\ell \subset \G(\Q)\torus_{v_\ell}(\adele)g_{\ell,\infty}$.
Assume that there exists $\lambda \in \Qp^\times$ with $|\lambda|_p \neq 1$  such that for any $\ell$ the torus $\torus_{v_\ell}(\Qp)$ contains an element $a_\ell$ with eigenvalues $\lambda,1,\lambda^{-1}$ (for the adjoint representation) under which $\mathcal{S}_\ell$ is invariant i.e. $a_\ell.\mathcal{S}_\ell \subset \mathcal{S}_\ell$.

Then any \wstar-limit of the measures $\mu_{v_\ell}|_{\mathcal{S}_\ell}$ is a probability measure and is invariant under $\sicover\G(\adele)$ where $\mu_{v_\ell}|_{\mathcal{S}_\ell}$ denotes the normalized restriction.
\end{theorem}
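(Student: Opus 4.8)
The plan is to rerun the proof of Theorem~\ref{thm:main} --- in the cocompact case the argument of Sections~\ref{section:maximal entropy}--\ref{subsection: From maximal entropy to additional invariance}, and for $\quat=\Mat_2$ the argument of Section~\ref{section:Equidistribution of CM points} --- with the Haar measures on the full packets replaced throughout by the normalized restrictions $\mu_{v_\ell}|_{\mathcal{S}_\ell}$, and to check that the two inputs to the maximal entropy argument survive this restriction. First I would pass to the setup of Section~\ref{section:maximal entropy}. Replacing $\mathcal{S}_\ell$ by $\mathcal{S}_\ell\alpha_\ell$ for the conjugating element $\alpha_\ell\in\G(\Zp)$ constructed there (post-composed, if necessary, with the Weyl element of $\torus$, which also lies in $\G(\Zp)$), the set $\mathcal{S}_\ell\alpha_\ell$ becomes a union of orbits of the fixed split torus $\torus(\Zp)<\G(\Qp)$ inside the packet $P_\ell=\G(\Q)\torus_{v_\ell}(\adele)g_{\ell,\infty}\alpha_\ell$, and it is invariant under a fixed diagonalizable element $a\in\torus(\Qp)$ whose $\Ad$-eigenvalues are $\lambda,1,\lambda^{-1}$; after replacing $a$ by $a^{-1}$ if needed we may assume $|\lambda|_p<1$. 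Here one uses that $\mathcal{S}_\ell$ is invariant under $\torus_{v_\ell}(\R\times\widehat{\Z})$ by the definition of a subcollection and under $a_\ell$ by hypothesis, and that the $\alpha_\ell$-conjugate of $a_\ell$ lands in $\torus(\Qp)$ with the prescribed eigenvalues. Writing $\mu_\ell$ for the normalized Haar measure on $P_\ell$, the restriction $\mu_\ell|_{\mathcal{S}_\ell\alpha_\ell}$ is the image of $\mu_{v_\ell}|_{\mathcal{S}_\ell}$ under right translation by $\alpha_\ell$; since $\alpha_\ell$ ranges in the compact group $\G(\Zp)$ and $\sicover\G(\adele)$ is normal in $\G(\adele)$, it suffices to show that every \wstar-limit of the measures $\mu_\ell|_{\mathcal{S}_\ell\alpha_\ell}$ is a probability measure invariant under $\sicover\G(\adele)$.

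The key point is that, since $v_\ell$ is primitive, the largeness hypothesis reads $\vol(\mathcal{S}_\ell)=\Nr(v_\ell)^{\frac12+o(1)}$, so together with Proposition~\ref{prop:total volume} it gives $\mu_\ell(\mathcal{S}_\ell\alpha_\ell)=\vol(\mathcal{S}_\ell)/\vol(P_\ell)=\Nr(v_\ell)^{o(1)}$, whence $\mu_\ell|_{\mathcal{S}_\ell\alpha_\ell}\leq\Nr(v_\ell)^{o(1)}\mu_\ell$ as measures. Feeding this into the proof of Linnik's basic lemma (Proposition~\ref{prop: Linnik's basic lemma}, resp.\ Proposition~\ref{prop:CM-Linnik's basic lemma} when $\quat=\Mat_2$) one obtains the very same bound $\ll_\varepsilon\delta^3\Nr(v_\ell)^\varepsilon$ (resp.\ $\ll_\varepsilon H^4\delta^3\Nr(v_\ell)^\varepsilon$ on $(X_{\leq H})^2$) with $\mu_\ell|_{\mathcal{S}_\ell\alpha_\ell}\times\mu_\ell|_{\mathcal{S}_\ell\alpha_\ell}$ in place of $\mu_\ell\times\mu_\ell$: the set $I_{d,\delta}$ of $\delta$-close pairs of distinct integer points attached to a subcollection is contained in the one for the full packet, so Theorem~\ref{thm: representations qf} applies unchanged; and in the ``equal integer point'' case the number of $\torus(\R\times\widehat{\Z})$-orbits inside $\mathcal{S}_\ell$ is at most the total number $\Nr(v_\ell)^{\frac12+o(1)}$ while the normalization still divides by $\vol(\mathcal{S}_\ell)^2=\Nr(v_\ell)^{1+o(1)}$, exactly as in the original computation. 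Dividing $\mu_\ell(X_{\geq H})$ by $\mu_\ell(\mathcal{S}_\ell\alpha_\ell)=\Nr(v_\ell)^{o(1)}$ likewise gives the ``not too much mass high in the cusp'' estimate for the restricted measures in the $\Mat_2$ case.

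With these two inputs the maximal entropy argument goes through verbatim: the proof of Theorem~\ref{thm: Maximal entropy} (resp.\ of Theorem~\ref{thm:CM equidistribution toral orbits}, using the purely geometric Proposition~\ref{prop:CM-visits to the cusp} to rule out escape of mass and the Abramov--Rokhlin formula to transport entropy to $X_\adele$) shows that any \wstar-limit $\nu$ of the $\mu_\ell|_{\mathcal{S}_\ell\alpha_\ell}$ is a probability measure with $h_\nu(a)\geq-\log|\det(\Ad_a|_{\mathfrak{g}_a^-})|=-\log|\lambda|_p$. Since $a$ is regular, Theorem~\ref{thm: uniqueness of measures of max entropy} then forces $\nu$ to be invariant under both horospherical subgroups $\G(\Qp)_a^+$ and $\G(\Qp)_a^-$, hence under the group $\sicover\G(\Qp)$ they generate (Example~\ref{exp: horospherical subgrps for PGL_2}), and the strong approximation argument of Section~\ref{subsection: From maximal entropy to additional invariance} upgrades this to $\sicover\G(\adele)$-invariance; undoing the conjugation by $\alpha_\ell$ as above yields the statement for $\mu_{v_\ell}|_{\mathcal{S}_\ell}$. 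The one genuinely new step --- and the only place where the largeness hypothesis is used --- is the normalization inside Linnik's basic lemma: for an arbitrary subcollection the diagonal ``equal integer point'' pairs would otherwise dominate the count, and it is precisely the factor $\vol(\mathcal{S}_\ell)^2$ in the denominator, controlled by largeness, that keeps them negligible. I expect this to be the main point to get right; everything else is a routine adaptation of the existing arguments.
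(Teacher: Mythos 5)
Your proposal is correct, and it is exactly what the paper has in mind: the text after the theorem merely asserts that it ``can be proven by the same method as Theorem~\ref{thm:main},'' and you have fleshed out precisely where the hypotheses enter. Your two key observations --- that largeness gives $\mu_\ell(\mathcal{S}_\ell\alpha_\ell)=\Nr(v_\ell)^{o(1)}$ so that $\mu_\ell|_{\mathcal{S}_\ell\alpha_\ell}\leq\Nr(v_\ell)^{o(1)}\mu_\ell$, which makes Linnik's basic lemma (and, in the $\Mat_2$ case, the mass-in-cusp bound) carry over with only a harmless loss of $\Nr(v_\ell)^{o(1)}$, and that the $a_\ell$-invariance hypothesis together with the conjugation by $\alpha_\ell\in\G(\Zp)$ (and, if needed, the Weyl element) produces a single fixed split element $a$ under which all restricted measures are invariant --- are the substantive points, and both are handled correctly.
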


As in Section \ref{section:intpts on spheres} one can for instance apply this theorem to obtain equidistribution of certain subsets of integer points on spheres.
We note that stronger results than Theorem \ref{thm:subcollections} are known.
Harcos and Michel \cite[Thm.~6]{harcosmichelII} show equidistribution of CM points on the complex modular curve $Y_0(1)$ under a reduced exponent in the volume of the subcollections as long as the subcollections arise from subgroups of the Picard group attached to the packet.
A similar result for closed geodesics can be found in \cite[Thm.~6.5.1]{popasubcollections} and \cite{MEsubcollections} where in the latter this additional restriction on the subcollections is not required.

Theorem \ref{thm:subcollections} can be proven by the same method as Theorem \ref{thm:main}.

\begin{example}[$k$-th powers]
Let $\quat = \Mat_2$, let $\mathcal{O} = \Mat_2(\Z)$ and let $k\geq 2$ be an integer.
For any discriminant $d<0$ and any proper $R_d$-ideal $\Fa$ there is an isomorphism
\begin{align*}
\lrquot{\torus_\Fa(\Q)}{\torus_\Fa(\adele_f)}{\torus_\Fa(\widehat{\Z})}
\simeq \Cl(R_d)
\end{align*}
by Proposition \ref{prop:CM-number of orbits}.
We may thus let $\mathcal{S}_{\Fa,k}$ be the subcollection corresponding to the $k$-th powers in the Picard group $\Cl(R_d)$.
By construction, $\mathcal{S}_{\Fa,k}$ is invariant under the $k$-th power of any element in $\torus_\Fa(\Qp)$ and in particular under an element with eigenvalues $p^k,1,p^{-k}$.

The subcollection $\mathcal{S}_{\Fa,k}$ is large if and only if (cf.~the proof of \eqref{eq:volume for SL_2})
\begin{align*}
|\Cl(R_d)[k]| = d^{o(1)}.
\end{align*}
Such a bound however is only known for powers of two and conjectured otherwise (see for instance \cite{l-torsionEPW}).
\end{example}

\bibliographystyle{amsplain}
\providecommand{\bysame}{\leavevmode\hbox to3em{\hrulefill}\thinspace}
\providecommand{\MR}{\relax\ifhmode\unskip\space\fi MR }
\providecommand{\MRhref}[2]{%
  \href{http://www.ams.org/mathscinet-getitem?mr=#1}{#2}
}


\end{document}